\documentclass[11pt,letterpaper,reqno]{amsart}

\numberwithin{equation}{section}


\usepackage{floatrow}

\usepackage{amssymb}
\usepackage{amsmath}
\usepackage{mathrsfs}
\usepackage{dsfont}
\usepackage{verbatim}
\usepackage{stmaryrd, MnSymbol}
\usepackage{enumerate, xspace}
\usepackage{changebar}
\usepackage{hyperref}
\usepackage{comment}
\usepackage{mathtools}
\mathtoolsset{showonlyrefs,showmanualtags}
\usepackage{tikz}
\usepackage{tikz-cd}
\usepackage{youngtab}
\usetikzlibrary{graphs,shapes}
\usepackage[edges]{forest}
\usepackage{algorithm2e}
\setcounter{MaxMatrixCols}{25}
\usepackage{cancel}
\usepackage{here}
\usepackage{amsthm}
\usepackage{caption}
\captionsetup{justification   = raggedright,
              singlelinecheck = false}
\usepackage{ulem}
\usepackage[foot]{amsaddr}


\newcommand{\xc}[2]{ \xcancel{\color{#1}{#2}} }

\newtheorem{theorem}{Theorem}[section]
\newtheorem*{theorem*}{Theorem}
\newtheorem{lemma}[theorem]{Lemma}
\newtheorem{corollary}[theorem]{Corollary}
\newtheorem{proposition}[theorem]{Proposition}
\newtheorem*{proposition*}{Proposition}

\newtheorem{remark}[theorem]{Remark}

\numberwithin{equation}{section}

\newtheorem{claim}{Claim}

\renewcommand{\a}{\alpha}
\renewcommand{\b}{\beta}
\newcommand{\e}{\varepsilon}

\newcommand{\q}{\mathbf{q}}

\def\i{{\mathrm{i}}}
\def\R{{\mathbb{R}}}
\def\N{{\mathbb{N}}}
\def\Z{{\mathbb{Z}}}

\def\E{{\mathbb{E}}}

\def\E{{\mathbb{E}}}

\title{Scaling limits of solitons in the Box-Ball system}

\author{Stefano Olla${}^{1}$}
\address{${}^{1}$ CEREMADE, Universit\'e Paris Dauphine - PSL Research University
\emph{and}  Institut Universitaire de France \\
\emph{and} GSSI, L'Aquila}

\author{Makiko Sasada${}^{2}$}
\address{${}^{2}$Graduate School of Mathematical Sciences, University of Tokyo}

\author{Hayate Suda${}^{3}$}
\address{${}^{3}$Department of Physics, Institute of Science Tokyo}

\subjclass{37B15(primary) ; 82C22, 82C23 (secondary)}

\begin{document}

\begin{abstract}

    We study the space-time scaling limits of solitons in the box-ball system with random initial distribution. In particular, we show that any recentered tagged soliton converges to a Brownian motion in the diffusive space-time scale, and also prove the large deviation principle for the tagged soliton under certain shift-ergodic invariant distributions, including Bernoulli product measures and two-sided Markov distributions. Furthermore, in the diffusive space-time scaling, we show that two tagged solitons converge to the same Brownian motion even if they are macroscopically far apart.
     
\end{abstract}

\maketitle

\section{Introduction}

    An integrable many-body system is a deterministic dynamical system consisting of an infinite type of quasi-local conserved quantities that behave like particles interacting with each other. These quasi-local conserved quantities are called quasi-particles.  Solitary waves (solitons) in solitonic systems are examples of quasi-particles.
    Recently, integrable many-body systems have attracted much attention from the viewpoint of non-equilibrium statistical mechanics, and in particular, generalized hydrodynamics, which describes the macroscopic behavior of quasi-particles, see the reviews \cite{D,Sp} and references therein. 
    In the Euler space-time scale, it is expected that the hydrodynamics is described by the following generalized hydrodynamic equation (GHD equation) for $\mathbf{y}(u,t) = \left( y_{a}\left(u,t\right)\right)_{a}$ of the universal form, regardless of models : 
        \begin{align}
            \partial_t y_{a}\left(u,t\right) + \partial_u \left( v^{\mathrm{eff}}_{a}\left(\mathbf{y}\left(u,t\right)\right)y_{a}\left(u,t\right) \right) = 0,
        \end{align}
    where $y_{a}\left(u,t\right)$ is the macroscopic density of quasi-particles of type $a$ at macroscopic coordinate $(u,t), u \in \R, t \ge 0$, and $v^{\mathrm{eff}}_{a}$ is called the effective velocity of quasi-particles of type $a$. The specific form of the effective velocity depends on the scattering rule between quasi-particles, and this is where the differences among models arise. 
    Although such studies have been rapidly developed in the physics literature, and the GHD theory is expected to be applicable for a wide class of integrable systems including classical and quantum gases, chains and field theory models, very few rigorous results are obtained till now, which are for the hard rods dynamics and its generalization \cite{BDS,FFGS}, and the box-ball system (BBS) \cite{CS}. 

    While studies at the Euler scale have been progressing, there was no clear physical prediction on the behavior in a longer time scale. Therefore, it is important to obtain mathematically rigorous results on specific models in order to derive the universality for integrable many-body systems in the diffusive scale. 
    Recently, by \cite{FO}, the fluctuations for hard-rods in diffusive scale has been proved rigorously. 
    The difference from diffusive fluctuations in chaotic systems is the strong correlations between quasi-particles of the same type, i.e., { quasi-particles of the same type starting at macroscopic distance converge to the same Brownian motion. }
    However, the scattering rule in the hard-rods does not depend on the velocity of quasi-particles. This is a different feature from general integrable models, and no results have been known for the case where the scattering rule depends on velocities of the quasi-particles. 

    In this paper, we consider the BBS, which is a solitonic system with a scattering rule depending on velocities of quasi-particles (solitons). We rigorously show that any tagged soliton converges to a Brownian motion in diffusive space-time scale, and also prove the large deviation principle for the tagged soliton. This is the first mathematical result { for the central limit theorem and the large deviation principle for quasi-particles} in integrable systems with scattering rules depending on velocities of quasi-particles, unlike the hard-rods. Furthermore, {we} rigorously prove that solitons of the same type converge to the same Brownian motion, i.e., strong correlations between quasi-particles as observed in the hard-rods. In order to roughly describe the results, we first introduce the BBS below. 

    The BBS is a one-dimensional cellular automaton introduced by \cite{TS}, whose integrable structure has been extensively studied in the past, see the review \cite{IKT} for details. The BBS exhibits solitonic behavior and is understood as a discrete counterpart of the KdV equation, {which is a central example of an integrable system having solitary wave solutions}. The configuration space is $\left\{ 0, 1 \right\}^{\Z}$, where for $\eta \in \{0,1\}^{\Z}$ and $x \in \Z$, $\eta(x) = 1$ means that there is a ball at $x$, and $\eta(x) = 0$ means that $x$ is empty.
    When the total number of $1$s in $\eta \in \left\{ 0, 1 \right\}^{\Z}$ is finite, the one-step time evolution $\eta \mapsto T\eta \in \left\{ 0, 1 \right\}^{\Z}$ is described by the following rules : 
            \begin{itemize}
                \item An empty carrier enters the system from the left end (i.e. $-\infty$) and moves to the right {end (i.e. $\infty$)};
                \item If there is a ball at site $x$, then the carrier picks up the ball;
                \item If the site $x$ is empty and the carrier is not empty, then the carrier puts down a ball;
                \item Otherwise, the carrier just {passes} through.
            \end{itemize}
    Clearly, the total number of $1$s are conserved. If we denote by $W(x)$ the number of balls on the carrier at $x$, then $W(\cdot)$ satisfies $W(x) = 0$ for any $|x| > L$ with sufficiently large $L > 0$, and
        \begin{align}\label{def:carrier}
            W(x) - W(x-1) = 
            \begin{dcases}
                1 & \text{ if } \eta(x) = 1, \\
                -1 & \text{ if } \eta(x) = 0 \text{ and } W(x - 1) > 0, \\
                0 & \text{otherwise.}
            \end{dcases}
        \end{align}
    In addition, $T\eta$ can be represented by using $W(\cdot)$ as 
        \begin{align}\label{def:T_carrier}
            T\eta(x) = \eta(x) - W(x) + W(x-1). 
        \end{align}
    Figure \ref{ex:def_half} shows an example how $T\eta$ can be obtained from $\eta$. 
            \begin{figure}[H]
                    \footnotesize
                    \centering
                    \begin{equation}
                        \begin{matrix}
                         \eta : & \dots & 1 & 1 & 0 & 0 & 0 & 1 & 1 &  1 & 0 & 0 & 1 &  1 & 0 & 0 &  1& 0 & 1 &  1 & 0 & 0 & 0\dots   \\ 
                         W : & 0 & 1 & 2 & 1 & 0 & 0 & 1 & 2 & 3 & 2 & 1 & 2 & 3 & 2 & 1 & 2 & 1 & 2 & 3 & 2 & 1 & 0\dots   \\
                         T\eta : & \dots & 0 & 0 & 1 & 1 & 0 & 0 & 0 & 0 & 1 & 1 & 0 & 0 & 1 & 1 & 0 & 1 & 0 & 0 & 1 & 1 & 1\dots 
                        \end{matrix}
                    \end{equation}
                    \caption{$W$ and $T\eta$ obtained from $\eta =\dots 1100011100110010110000\dots$, where $\dots$ represents the consecutive $0$s.}\label{ex:def_half}
            \end{figure}
    \noindent It is known that the above rule can be extended to $\eta \in \Omega \subset \left\{ 0, 1 \right\}^{\Z}$, where 
        \begin{align}
            \Omega := \left\{ \eta \in \left\{0,1 \right\}^{\Z} \ ; \ \exists \lim_{x \to \infty} \frac{1}{x} \sum_{y = 1}^{x} \eta\left(y\right) < \frac{1}{2}, \ \exists \lim_{x \to \infty} \frac{1}{x} \sum_{y = 1}^{x} \eta\left(-y\right) < \frac{1}{2}  \right\},
        \end{align}
    see Section \ref{sec:BBS} for details. We note that by \cite{CKST}, more detailed results are obtained for the configuration space in which the dynamics of the BBS can be defined via the Pitman transform. 
    
    In recent years, the BBS started from random initial configurations, called the randomized BBS, has been studied in terms of its statistical aspects; characterizations of classes of invariant measures for the randomized BBS \cite{CS2,CKST,FG}, limit theorems under invariant measures \cite{CKST, FNRW, KL, KLO18, LLP, LLPS, S}. Also, the randomized BBS has been studied from the viewpoint of hydrodynamics for integrable systems \cite{CS, KMP, KMP2, KMP3}. Currently, only the BBS and hard-rods are known to be mathematically tractable models for deriving hydrodynamics from integrable systems, and thus the BBS is recognized as an important model in statistical mechanics. 

    In this article, we consider the BBS on the state space $\{0,1\}^{\Z}$ {under invariant measures}, and derive the scaling limits of the tagged soliton. 
    To give an overview of our results, we introduce the law of large numbers for the tagged soliton proved in \cite{FNRW}. 
    As mentioned at the beginning, the BBS is a soliton system and there are infinite types of solitons in the BBS labeled by positive integers $k \in \N$ respectively, called $k$-soliton. A $k$-soliton consists of $k$ $1$s and $k$ $0$s and is identified by a certain algorithm, see Section \ref{subsec:soliton} for details.  If there are only $k$-solitons {in the configuration}, then they move to right with velocity $k$ at each time evolution. When solitons of different {size}s exist and a $k$-soliton is to the left of an $\ell$-soliton with $\ell < k$, such solitons will meet at some time and {\it phase shift} will occur between them. In particular, during the interaction, the smaller soliton is stranded and cannot move. For example, in the following figure showing the time evolution of BBS, red \textcolor{red}{$1$}s and \textcolor{red}{$0$}s constitute a 2-soliton and \textcolor{blue}{blue} \textcolor{blue}{$1$} and \textcolor{blue}{$0$} constitute a 1-soliton, and while the 2-soliton tries to overtake the 1-soliton, the 1-soliton cannot move from its position. 
                    \begin{equation}
                        \begin{matrix}
                        \eta & \dots & \color{red}{1} & \color{red}{1} & \color{red}{0} & \color{red}{0} & 0 & \color{blue}{1} & \color{blue}{0} & 0 & 0 & 0 & 0 & 0 & 0 & 0 & 0 & 0 & \dots \\
                        T\eta & \dots & 0 & 0 & \color{red}{1} & \color{red}{1} & \color{red}{0} & \color{red}{0} & \color{blue}{1} & \color{blue}{0} & 0 & 0 & 0 & 0 & 0 & 0 & 0 & 0 & \dots \\
                        T^2\eta & \dots & 0 & 0 & 0 & 0 & \color{red}{1} & \color{red}{1} & \color{blue}{0} & \color{blue}{1} & \color{red}{0} & \color{red}{0} & 0 & 0 & 0 & 0 & 0 & 0 & \dots \\
                        T^3\eta & \dots & 0 & 0 & 0 & 0 & 0 & 0 & \color{blue}{1} & \color{blue}{0} & \color{red}{1} & \color{red}{1} & \color{red}{0} & \color{red}{0} & 0 & 0 & 0 & 0 & \dots \\
                        T^4\eta & \dots & 0 & 0 & 0 & 0 & 0 & 0 & 0 & \color{blue}{1} & \color{blue}{0} & 0 & \color{red}{1} & \color{red}{1} & \color{red}{0} & \color{red}{0} & 0 & 0 & \dots 
                        \end{matrix}
                    \end{equation}
    {After the interaction, 1-soliton is shifted backward $2$ sites and 2-soliton is shifted forward $2$ sites from where they should have come without interaction. This is the phase shift.} Thus, given a random initial configuration, even if the time evolution rule of the BBS is deterministic, the position of the tagged $k$-soliton at time $n$ is randomized by the random presence of other solitons of different sizes{, which is a random environment for the tagged soliton}. 
    In \cite{FNRW}, the authors show that the tagged soliton satisfies the law of large numbers (LLN) when the initial distribution ${\mu}$ is invariant for $T$ and {shift-ergodic}. Bernoulli product measures {of uniform density ${\mu}\left(\eta\left(x\right) = 1\right) = {\mu}\left(\eta\left(0\right) = 1\right) < 1/2$, $x \in \Z$,} and two-sided {space-homogeneous} Markov distributions supported on $\Omega$ are important examples of ${\mu}$ satisfying the assumptions below.
        \begin{theorem*}[\cite{FNRW}]
            Let $X_{k}(n)$ be the position of the leftmost component of a tagged $k$-soliton at time $n$ and ${\mu}$ be a probability measure on $\{0,1\}^{\Z}$ satisfying the following. 
                \begin{itemize}
                    \item ${\mu}(\Omega) = 1$. 
                    \item ${\mu}$ is an invariant measure of the BBS, {namely $T{\mu}={\mu}$.}
                    \item ${\mu}$ is a shift-ergodic measure. 
                \end{itemize}
            Assume that for some $k \in \N$, $k$-solitons exist with positive probability under ${\mu}$. Then, there exists some $v^{\mathrm{eff}}_{k} =v^{\mathrm{eff}}_k({\mu}) > 0$ such that 
                \begin{align}
                    \lim_{n \to \infty} \frac{X_{k}(n)}{n} = v^{\mathrm{eff}}_{k} \quad {\mu}\text{-a.s.}
                \end{align}
        \end{theorem*}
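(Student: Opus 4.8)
The plan is to realize the displacement of the tagged soliton as an additive functional of an auxiliary stationary process---the environment viewed from the tagged $k$-soliton---and then apply Birkhoff's ergodic theorem. First I would record the microscopic mechanism. If the tagged $k$-soliton were isolated it would advance by exactly $k$ at each step, so the only source of additional displacement is the phase shift produced when it meets another soliton. By the BBS scattering rule (already illustrated in the excerpt, where the $2$-soliton advances by $2$ and the $1$-soliton retreats by $2$), meeting an $\ell$-soliton shifts the tagged soliton by $+2\ell$ when it overtakes a smaller one ($\ell<k$) and by $-2k$ when it is overtaken by a larger one ($\ell>k$); in both cases the magnitude is $2\min(k,\ell)$. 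Consequently the one-step increment $\Delta_m := X_{k}(m+1)-X_{k}(m)$ is a fixed measurable function of the configuration $T^{m}\eta$ read off from the current location of the tagged soliton, namely $k$ plus the signed phase shifts of all interactions occurring at that step.

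Next I would pass to the Palm measure $\mu_{k}$ obtained by conditioning $\mu$ on the presence of a $k$-soliton whose leftmost component sits at a fixed reference site, and recenter the configuration at the tagged soliton after each application of $T$. Because $\mu$ is $T$-invariant and shift-ergodic, this recentred \emph{environment seen from the soliton} is a stationary process, and the increments $\Delta_{m}$ are one and the same measurable function of its state. Birkhoff's theorem would then give
\[
    \frac{X_{k}(n)-X_{k}(0)}{n} = \frac{1}{n}\sum_{m=0}^{n-1}\Delta_{m} \longrightarrow \E_{\mu_{k}}[\Delta_{0}] =: v^{\mathrm{eff}}_{k} \qquad \mu\text{-a.s.},
\]
provided two things hold: the recentred environment process is ergodic, and $\Delta_{0}$ is $\mu_{k}$-integrable.

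The hard part will be exactly these two provisos. Integrability of $\Delta_{0}$ requires controlling how many solitons, and of what sizes, the tagged one can meet in a single step; this is where the hypotheses $\mu(\Omega)=1$ and the strict sub-$1/2$ density enter, forcing the expected total size of the solitons crossing a fixed location per unit time to be finite, hence $\E_{\mu_{k}}[|\Delta_{0}|]<\infty$. Establishing ergodicity of the environment process is the genuinely delicate point, and is the main obstacle: shift-ergodicity of $\mu$ does not transfer automatically to the Palm measure under the space--time recentring, so one must show that the invariant $\sigma$-field of the recentred process is trivial, relating its invariant events back to shift-invariant events of $\mu$ through the soliton (slot) decomposition. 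This is the step on which essentially all of the work in \cite{FNRW} is concentrated.

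Finally I would verify $v^{\mathrm{eff}}_{k}>0$. Since the free contribution is $k>0$ and the only negative contributions to $\E_{\mu_{k}}[\Delta_{0}]$ come from backward shifts caused by larger solitons, it suffices to bound the expected backward drift strictly below $k$. This follows because larger solitons have strictly positive relative velocity and, by the sub-critical density, occur with summable density, so that the tagged soliton is trapped for an asymptotic fraction of time strictly less than one; the assumption that $k$-solitons exist with positive probability guarantees that $\mu_{k}$ is well defined and that the argument is nondegenerate.
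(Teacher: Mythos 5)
This theorem is quoted from \cite{FNRW}; the present paper does not reprove it, so your proposal has to be measured against the argument in that reference and against the linearization machinery the paper builds around it. Your route (environment viewed from the tagged soliton under a Palm measure, then Birkhoff) is different from what is actually done, and as written it has a concrete error at the very first step: the one-step increment. By \eqref{eq:inc_sol}, $X(\gamma(n))-X(\gamma(n-1))$ equals $k+2\sum_{\ell<k}\ell N_{\ell}(\gamma,n)$ when the soliton is free and equals $0$ otherwise; it is never negative. The $-2k$ phase shift incurred when a larger soliton passes is not a single-step signed jump but the cumulative effect of being stranded (increment $0$ instead of $k$) over the roughly two steps the overtaking lasts, cf.\ \eqref{ineq:overtaken}. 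So $\Delta_0$ is not ``$k$ plus the signed phase shifts at that step,'' and the additive functional you propose to feed to Birkhoff does not telescope to $X_k(n)-X_k(0)$. This is repairable by switching to the free/not-free bookkeeping of \eqref{eq:pos_n}, but the formula as stated is wrong, and your positivity argument (``the only negative contributions come from backward shifts'') inherits the error: in the correct decomposition there are no negative terms, and positivity is instead the statement that the asymptotic fraction of free times, $\lim_n (n-M_k(n))/n$, is strictly positive.

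The second and decisive issue is that the stationarity and ergodicity of the recentred environment process is the entire content of the theorem, and you leave it as a proviso. Conditioning $\mu$ on a $k$-soliton at the origin and recentring after each application of $T$ does not obviously yield a stationary ergodic process; $T$-invariance and shift-ergodicity of $\mu$ do not transfer to a space--time recentred Palm measure without a real argument, and none is supplied. The proof in \cite{FNRW} (and the version of it embedded in this paper) avoids the environment process entirely: in slot coordinates the dynamics linearizes, $\zeta_k(T\eta,\,i+k+o_k(\eta))=\zeta_k(\eta,i)$ as in \eqref{eq:linear_seat}, so the slot coordinate of the tagged soliton advances by exactly $k$ plus an offset which is an additive functional of the orbit $(T^m\eta)_m$; Birkhoff's theorem is applied to the time shift $T$ using $T$-invariance of $\mu$, and the limit is identified as a constant because it is spatially shift-invariant while $\mu$ is shift-ergodic --- precisely the mechanism used to prove \eqref{eq:erg_r} here. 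Converting back from slot coordinates to actual positions is where the sub-$1/2$ density hypothesis and the positive density of records enter, and is what yields $v^{\mathrm{eff}}_k>0$. Your plan names the right destination but defers both steps (integrability and ergodicity of the environment) that constitute the proof, so as it stands it is an outline rather than a proof.
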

    {The constant $v^{\mathrm{eff}}_{k}$ is called the effective velocity for $k$-solitons. A characterization formula for $v^{\mathrm{eff}}_{k}$, $k \in \N$ has been obtained by \cite[(1.12)]{FNRW}. We will present an alternative formula for $v^{\mathrm{eff}}_{k}$, see Proposition \ref{prop:char_velo} for details. Also, we give a different proof for \cite[(1.12)]{FNRW}, see Remark \ref{rem:eff_v}}.

    Our main results are the central limit theorem (CLT) and the large deviation principle (LDP) {for the increment $Y_{k}\left(n\right) := X_{k}\left(n\right) - X_{k}\left(0\right)$,} corresponding to the {LLN}. As a by-product, we also show {the LLN in $\mathbb{L}^{p}$, $p \ge 1$.} 
        \begin{claim}[Limit theorems for a tagged soliton]\label{thm:intro_LT}
            Assume that ${\mu}$ is a {space-homogeneous} Bernoulli product measure or two-sided Markov distribution supported on $\Omega$ {and that $\mu\left(\eta\left(0\right) = \eta\left(1\right) = 1\right) > 0$.}
            Then, for any $k \in \N$, we have the following.
                \begin{enumerate}

                    \item \label{item:claim1} Under {$\mu$}, the diffusive space-time scaling, the step-interpolation of the discrete-time process $n \mapsto {Y_{k}\left(n\right)} -  v^{\mathrm{eff}}_{k} n$ converges weakly to a centered Brownian motion $B_{k}(t)$ with variance $D_{k} = D_{k}\left({\mu}\right) > 0$. 

                    \item \label{item:claim2} {Under $\mu$}, the sequence $\left({Y_{k}\left(n\right)} / n\right)_{n \in \N}$ satisfies the LDP with a smooth convex rate function.  

                    \item \label{item:claim3} { For any $p \ge 1$, we have 
                    \begin{align}
                        \lim_{n \to \infty} \E_{{\mu}}\left[ \left| \frac{{Y_{k}\left(n\right)}}{n} - v^{\mathrm{eff}}_{k} \right|^{p} \right] = 0. 
                    \end{align}}
                \end{enumerate}
        \end{claim}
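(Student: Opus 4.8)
The plan is to reduce all three assertions to classical limit theorems for partial sums of a stationary sequence, exploiting the fact that the randomness in the trajectory of the tagged $k$-soliton is inherited entirely from the spatial randomness of the initial configuration, since the dynamics itself is deterministic. The structural input is the soliton (slot/record) decomposition underlying the law of large numbers of \cite{FNRW,CKST}: in the linearized slot coordinates the tagged $k$-soliton advances deterministically, so that, working under the Palm measure for $k$-solitons, its physical displacement can be written as a partial sum
\begin{align}
    Y_{k}(n) = \sum_{i=1}^{M_{k}(n)} \xi_{i} + (\text{boundary error}),
\end{align}
where $\xi_{i}$ records the physical length (inter-soliton gap plus accumulated phase-shift contribution) swept as the soliton crosses the $i$-th slot of its environment, and $M_{k}(n)$ is the number of slots crossed by time $n$. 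The first task is to show that $(\xi_{i})_{i}$ is spatially stationary: an i.i.d.\ sequence in the Bernoulli case and a function of a geometrically ergodic Markov chain in the two-sided Markov case. The hypothesis $\mu(\eta(0)=\eta(1)=1)>0$ guarantees positive density of solitons of different sizes, so that the construction is non-degenerate and the tagged soliton genuinely interacts with its environment.

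Next I would establish the moment control. The crucial estimate is that the $\xi_{i}$ have \emph{finite exponential moments}: a single crossing displacement is governed by the local soliton sizes and inter-soliton gaps, whose laws have exponentially decaying tails under both families of measures on $\Omega$. Simultaneously the counter obeys its own law of large numbers, $M_{k}(n)/n \to c_{k}$ for an explicit $c_{k}>0$, and combining $c_{k}$ with $\E[\xi_{1}]$ recovers the velocity $v^{\mathrm{eff}}_{k}=c_{k}\,\E[\xi_{1}]$ of Proposition \ref{prop:char_velo}.

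Given this structure, each conclusion follows from a standard tool applied to the sum together with a random time change. For item (\ref{item:claim1}), Donsker's invariance principle for i.i.d.\ (resp.\ Markov) partial sums yields a functional CLT for $\sum_{i\le \lfloor c_{k}nt\rfloor}(\xi_{i}-\E[\xi_{1}])$, and composing with the CLT for $M_{k}(n)$ via an Anscombe-type argument gives weak convergence of the rescaled $Y_{k}(n)-v^{\mathrm{eff}}_{k}n$ to a Brownian motion with variance $D_{k}$ built from $\mathrm{Var}(\xi_{1})$ (plus a Green--Kubo correction in the Markov case); here $D_{k}>0$ because $\xi_{1}$ is genuinely non-constant once solitons of size $k$ coexist with others. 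For item (\ref{item:claim2}), Cram\'er's theorem (Bernoulli) and the G\"artner--Ellis theorem applied to the log-Perron--Frobenius eigenvalue of the tilted transition kernel (Markov) give the LDP for $\sum_{i}\xi_{i}$; finiteness of exponential moments makes the scaled cumulant generating function finite, analytic, and strictly convex near the origin, so the rate function is smooth and convex, and the LDP for $Y_{k}(n)/n$ is transferred through the time change $M_{k}(n)$ by contraction. For item (\ref{item:claim3}), the uniform exponential-moment bound yields uniform integrability of $\{|Y_{k}(n)/n|^{p}\}_{n}$ for every $p\ge 1$, which upgrades the almost sure LLN of \cite{FNRW} to convergence in $\mathbb{L}^{p}$.

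I expect the main obstacle to be the rigorous construction of the spatially stationary increment sequence together with its exponential-moment bound. Controlling one crossing displacement requires tracking the phase shifts accumulated over all soliton interactions, which forces sharp tail estimates on the sizes and multiplicities of the surrounding solitons; carrying this out within the Palm framework, verifying that the boundary/initial-slot errors are negligible after the diffusive rescaling (and in the exponential scale for the LDP), and showing that $M_{k}(n)$ and $\sum_{i}\xi_{i}$ can be coupled without introducing long-range correlations in the Markov case, is the technical heart of the argument. Establishing the strict positivity $D_{k}>0$ and the non-degeneracy of the rate function is comparatively soft but still requires exhibiting two environments that produce distinct crossing displacements.
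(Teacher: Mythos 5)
Your high-level strategy (linearize via the slot decomposition, work under the record-conditioned/Palm measure, reduce to partial sums of a stationary spatial sequence composed with a random time change, then transfer back to $\mu$) matches the skeleton of the paper's argument, and you correctly flag where the difficulty lies. But there is a genuine gap exactly there, in two places. First, your increments $\xi_{i}$ and your counter $M_{k}(n)$ are not independent, and the fluctuations of the counter enter the limit variance at leading order: the number of slots crossed by time $n$ is governed by the time the tagged soliton spends stranded while being overtaken by larger solitons, and this stranding is driven by the same environment that determines the crossing displacements. An Anscombe-type substitution $S_{M(n)}\approx S_{cn}$ only recovers the variance of the $\xi_{i}$ and misses the contribution of the fluctuations of $M_{k}(n)$; and for the LDP, Cram\'er plus contraction through a dependent random index is not available. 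The paper's resolution is the orthogonal decomposition of Lemma \ref{lem:rep_W}: using the $k$-skip map, $Y_{k}(n)$ splits into a term proportional to $n-M_{k}(n)$ (a function of the solitons of size larger than $k$ only, via $\Psi_{k-1}(\eta)$) plus, for each $h<k$, a centered sum of $\zeta_{h}$ over the traversed interval; under $\q$-statistics these pieces are genuinely independent (Remark \ref{lem:indep_skip}, Proposition \ref{prop:dec}), which yields the exact variance formula \eqref{eq:dif_co} and, more importantly, the exact exponential-moment identity $\E_{\nu}\left[e^{\lambda Y_{k}(n)}\right]=\E_{\nu}\left[e^{U_{\q,k}(\lambda)(n-M_{k}(n))}\right]$ of Lemma \ref{lem:expbound}, on which both the LDP and the $\mathbb{L}^{p}$ convergence rest.

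Second, you give no mechanism for proving the CLT/LDP of the counter itself, and this is not a soft step: it does not follow from the i.i.d.\ structure of the spatial environment, because the non-free time is a temporal sum $\sum_{m<n}\left(1-r\left(T^{m}\Psi_{k}(\tilde{\eta}),J_{k}(0)\right)\right)$ whose correlations in $m$ must be controlled. The paper's key input here is Lemma \ref{lem:markov_eta}: under a two-sided Markov initial measure, the temporal sequence $\left(T^{n}\eta(x)\right)_{n\in\Z}$ is itself an ergodic two-sided Markov chain on $\{0,1\}$, which gives the functional CLT by strong mixing and the LDP by Perron--Frobenius for the tilted kernel, with the explicit formulas \eqref{eq:dif_M_AM} and \eqref{eq:log_gen}. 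Without an analogue of this temporal Markov property, or of the independence supplied by the $k$-skip map, your renewal-reward reduction cannot be closed.
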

        {The condition $\mu\left(\eta\left(0\right) = \eta\left(1\right) = 1\right) > 0$ guarantees the existence of $k$-solitons with positive probability, as we will see later in Section \ref{subsec:qstat}. If $\mu$ is a space-homogeneous two-sided Markov distribution with $\mu\left(\eta\left(0\right) = \eta\left(1\right) = 1\right) = 0$, then there are no solitons at all or only $1$-solitons, which is an obvious situation with no interaction between solitons and is not considered in this paper. We note that if $\mu$ is the Bernoulli product measure with the marginal density $0 < \rho < 1/2$, then the variance $D_{k}$ can be computed explicitly as a function of $\rho$, see Remark \ref{rem:diff_rho}. }
        {In Claim \ref{thm:intro_LT}, (\ref{item:claim1}) and (\ref{item:claim3}) are still valid if $Y_{k}(n)$ is replaced by $X_{k}(n)$, see Remark \ref{rem:nutomu}. For Claim \ref{thm:intro_LT}, (\ref{item:claim2}) $X_k(n)$ and $Y_k(n)$ may have different rate functions due to the deviations of $X_k(0)$, and in this paper we only focus on $Y_k(n)$.}
        
        Claim \ref{thm:intro_LT} will be stated precisely as Theorem \ref{thm:IPLDP}, and it will be proven {via Theorems \ref{thm:lln_lp}, \ref{thm:main} and \ref{thm:Markov}, which are limiting theorems for $Y_{k}\left(n\right)$ under the conditional probability measure $\nu := \mu\left( \ \cdot \ | \Omega_{0} \right)$, $\Omega_{0} \subset \Omega$, where 
        \begin{align}\label{def:omega_0}
            \Omega_{0} := \left\{ \eta \in \Omega \ ; \ \text{there is no soliton crossing the origin at time } 0\right\}.
        \end{align}
    By combining the limiting theorems under $\nu$, the exponential integrability of an excursion under $\mu$ and Proposition \ref{prop:main}, we will obtain the limiting theorems under $\mu$. We note that with additional assumptions on $\mu$ and $k$, the statement of Claim \ref{thm:intro_LT} holds under more general initial distributions, see Figure \ref{fig:table_limit}.
    
    We actually prove Theorems \ref{thm:lln_lp}, \ref{thm:main} and \ref{thm:Markov} under more general initial distributions $\mu$ conditioned on $\Omega_{0}$.
    These are given by invariant measures introduced in \cite{FG} with a further condition for probability of the existence of large solitons.
    }
   These invariant measures introduced in \cite{FG} are called ${\q}$-statistics, {see Section \ref{subsec:qstat} for the precise definition}. 

    \begin{remark}
            For the tagged ball (with a certain rule to identify the positions of distinguished balls), instead of the tagged soliton, the LLN, the CLT and the LDP are shown in \cite{CKST} under the Bernoulli product measures. However, in the case of {the} BBS, balls are local quantities, whereas solitons are quasi-local quantities, so a more sophisticated mathematical treatment is needed to show scaling limits for solitons.
        \end{remark}
    
    Furthermore, we will show that two $k$-solitons are strongly correlated in the diffusive space-time scaling even when they are far apart at the macroscopic level.  
        \begin{claim}[Strong correlations between $k$-solitons] \label{thm:intro_cor}
            Assume that {the initial distribution} $\nu$ is a ${\q}$-statistics {conditioned on $\Omega_{0}$} with a certain second moment condition. Then, even if two $k$-solitons are far apart after taking the space scaling, those fluctuations converge to the same Brownian motion obtained in Claim \ref{thm:intro_LT} (\ref{item:claim1}) in the diffusive scaling.
        \end{claim}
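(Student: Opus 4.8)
The plan is to prove Claim \ref{thm:intro_cor} by showing that, under $\nu$, the pair of recentered and rescaled displacement processes of the two tagged $k$-solitons converges jointly to a two-dimensional Gaussian process whose two coordinates are almost surely equal. Write $X_{k}^{(1)}, X_{k}^{(2)}$ for the two tagged $k$-solitons and $Y_{k}^{(i)}(n) := X_{k}^{(i)}(n) - X_{k}^{(i)}(0)$ for their increments. By Claim \ref{thm:intro_LT} (\ref{item:claim1}), established through Theorems \ref{thm:main} and \ref{thm:Markov}, each process $t \mapsto N^{-1/2}(Y_{k}^{(i)}(\lfloor Nt\rfloor) - v^{\mathrm{eff}}_{k}\lfloor Nt\rfloor)$ converges to a Brownian motion with variance $D_{k}$, and in particular is tight; hence the pair is jointly tight and it remains only to identify the limit. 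For this I would show that the difference
\[
    \Delta_{N}(t) := N^{-1/2}\left( Y_{k}^{(2)}(\lfloor Nt\rfloor) - Y_{k}^{(1)}(\lfloor Nt\rfloor)\right)
\]
converges to $0$ in probability and, via a maximal inequality, uniformly on compact time intervals. Alternatively, and equivalently at the level of the Gaussian limit, one checks that the cross-covariance of the two rescaled displacements converges to the same kernel $D_{k}(s\wedge t)$ as the auto-covariance, which forces the two limiting Brownian motions to coincide.

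To produce such a bound I would exploit the representation that underlies the single-soliton central limit theorem in Theorems \ref{thm:main} and \ref{thm:Markov}: up to an error negligible after dividing by $\sqrt{N}$, the recentered displacement of a tagged $k$-soliton is a centered additive functional of the initial configuration recording the net effect of its scattering with solitons of sizes $\ell \neq k$, namely the smaller solitons it overtakes and the larger ones that overtake it, each scattering event contributing a bounded phase shift. Denoting by $S_{x}$ the spatial shift, this yields $Y_{k}^{(i)}(n) - v^{\mathrm{eff}}_{k} n = \Psi_{n}\!\left(S_{X_{k}^{(i)}(0)}\eta\right) + o(\sqrt{n})$ for one common functional $\Psi_{n}$. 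Since the two tagged solitons have the same size, they share the same effective velocity $v^{\mathrm{eff}}_{k}$ by Proposition \ref{prop:char_velo}, so in the comoving frame they are exposed to the same far-field flux of other solitons; the discrepancy $\Psi_{n}\!\left(S_{X_{k}^{(2)}(0)}\eta\right)-\Psi_{n}\!\left(S_{X_{k}^{(1)}(0)}\eta\right)$ can then be written as a signed, phase-shift-weighted count of exactly those scattering events that involve only one of the two tagged solitons, i.e. interactions with solitons initially located in the region between them.

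The heart of the argument, and the step I expect to be the main obstacle, is to prove that this private contribution is $o(\sqrt{n})$ in $\mathbb{L}^{2}(\nu)$. I would estimate its variance by decomposing the signed count size by size, controlling the number of $\ell$-solitons that interact with a single tagged soliton through decay-of-correlation properties of the soliton configuration under the $\q$-statistics $\nu$, and then summing the resulting bounds against the phase shifts over all $\ell$. The genuinely new difficulty, absent in the hard-rod analysis of \cite{FO}, is precisely that the scattering rule depends on the sizes, hence velocities, of both solitons: the phase shift grows with $\ell$, so the sum over the unbounded range of sizes need not converge a priori. This is exactly where the second moment condition on $\nu$ is used, guaranteeing that large solitons, which carry the large phase shifts, are rare enough for their contribution to the discrepancy to be summable and subleading. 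Once the private contribution is shown negligible, the shared functional drives both rescaled processes; combining this with the joint tightness and the covariance identification gives convergence of the pair to $(B_{k}, B_{k})$, completing the proof.
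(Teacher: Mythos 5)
Your high-level strategy coincides with the paper's: both reduce the claim to showing that the difference of the two recentered displacement processes is $o(n)$ in $\mathbb{L}^{2}$ at the diffusive time $n^{2}$, and both locate the source of any discrepancy in interactions with solitons lying \emph{between} the two tagged $k$-solitons, whose number scales with the initial separation rather than with the elapsed time. However, your proposal leaves precisely the step you call ``the heart of the argument'' unproven, and the heuristics you offer for it are not the ones that actually close the gap. First, the paper does not use ``decay-of-correlation properties'' of the soliton field: under a $\q$-statistics the components $(\zeta_{\ell}(i))_{\ell,i}$ are exactly independent, and the whole argument runs through the orthogonal decomposition of $Y^{i}_{k}$ (Lemma \ref{lem:rep_W}, Proposition \ref{prop:dec}) obtained from the $k$-skip map, which splits the displacement into a term driven by $M^{i}_{k}$ (blocking by larger solitons) plus mutually independent window sums of centered $\zeta_{h}$. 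A recursion over $h$ then reduces everything to the single estimate $\E_{\nu_{\q}}[|M^{(\lfloor n^{a}u\rfloor)}_{k}(n^{2})-M^{(\lfloor n^{a}v\rfloor)}_{k}(n^{2})|^{2}]=o(n^{2})$, i.e.\ \eqref{eq:stcor_pur}. Second, the quantitative input for that estimate is not a bound on phase shifts summed over sizes $\ell$ (the backward shift suffered by a $k$-soliton per overtaking is $2k$, bounded for fixed $k$); it is the combinatorial localization \eqref{ineq:M_kl}--\eqref{ineq:MiMj}, which bounds $|M^{(i)}_{k}(n)-M^{(j)}_{k}(n)|$ \emph{uniformly in $n$} by the number of larger solitons whose representatives in $\Psi_{k}(\tilde{\eta})$ sit between $J_{k}(\tilde{\eta},i)$ and $J_{k}(\tilde{\eta},j)$ at times $0$ and $n^{2}$. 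That number is a sum of $O(n^{a})$ i.i.d.\ terms, so Doob's inequality gives an $\mathbb{L}^{2}$ bound of order $n^{a}\le n=o(n^{2})$.

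Without an analogue of \eqref{ineq:M_kl} your claim that the ``private contribution is $o(\sqrt{n})$ in $\mathbb{L}^{2}$'' is unsubstantiated: a priori a larger soliton initially between the two tagged ones could block each of them for times growing with $n$, and it is only the conservation of the effective distance (Remark \ref{rem:eff_dis}) together with the $k$-skip representation \eqref{eq:M_kl} of $M_{k}$ as a record-count at a \emph{fixed} site of $\Psi_{k}(\tilde{\eta})$ that rules this out. You have also misattributed the role of the second moment condition: $\E_{\nu_{\q}}[s_{\infty}(1)^{2}]<\infty$ enters through Lemmas \ref{lem:lln_ini_prob} and \ref{lem:st_cor_3}, i.e.\ the $\mathbb{L}^{2}$ law of large numbers for the initial positions $X^{(\lfloor n^{a}u\rfloor)}_{k}(0)$ and for the indices $\sigma^{(i)}_{k,\ell}$ that delimit the in-between windows, not through summability of large-soliton phase shifts. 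In short: right roadmap, but the decisive lemma is missing, and the stated reasons why the missing step should hold would not survive being made precise.
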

    In Section \ref{sec:results}, we will restate Claim \ref{thm:intro_cor} as Theorem \ref{thm:st_cor} with precise assumptions. 
    
    From the above results, it is expected that the macroscopic fluctuations of the density of $k$-solitons at diffusive time scale $t$ can be obtained by shifting the initial fluctuation field by $B_{k}(t)$ obtained in Claim \ref{thm:intro_LT} (\ref{item:claim1}). In other words, the macroscopic fluctuation {field $\mathcal{Y}_k(u,t)$} should follow the following stochastic partial differential equations for each $k \in \N$ : 
        \begin{align}\label{eq:rigid_de}
            d \mathcal{Y}_{k}(u,t) = \frac{D_{k}}{2} \Delta_{u} \mathcal{Y}_{k}(u,t) dt + \partial_{u}  \mathcal{Y}_{k}(u,t) dB_{k}(t),
        \end{align}
    It is noteworthy that the noise driving \eqref{eq:rigid_de} {does not depend on the spatial variables,} which is in contrast to typical diffusive fluctuations for chaotic systems {where an additive space-time white noise drives the macroscopic equation. We expect that \eqref{eq:rigid_de} is a universal equation in completely integrable many-body systems, and was recently derived rigorously for the first time from hard-rods dynamics \cite{FO}.}  We also note that diffusive corrections to Euler scale hydrodynamics for integrable many-body systems have been studied in physics literature \cite{DBD, DBD2, DDMP, Sp}. It would be an interesting problem to derive \eqref{eq:rigid_de} rigorously from the BBS, to prove that $\{B_{k}(t) ; t \ge 0, k \in \N\}$ is a centered Gaussian field, and to specify the correlation between $B_{k}(t)$ and $B_{\ell}(t)$ with $k \neq \ell$.

    Our approach is based on two different linearization methods for the BBS. One is called the seat number configuration, which is recently introduced by \cite{MSSS, S}, and it is a generalization of the slot configuration developed by \cite{FNRW}. We note that the slot configuration has played an important role in the study of the dynamical aspects of BBS \cite{CS, FG, FNRW}.  The other is the $k$-skip map, which is a generalization of the $10$-elimination introduced by \cite{MIT} to solve the initial value problem for the BBS with periodic boundary condition. In \cite{S}, the $k$-skip map is considered in terms of the seat number configuration, and the relation between ${\q}$-statistics and the $k$-skip map is studied. The results and computations in \cite{S} are essential to obtain Lemma \ref{lem:NM_kl} and a decomposition formula \eqref{eq:orthogonal} for the position of the tagged soliton, see Section \ref{subsec:kskip} for details. 
    By using Lemma \ref{lem:NM_kl}, \eqref{eq:orthogonal} and the property of ${\q}$-statistics, the CLT/LDP for the tagged soliton can be reduced to the CLT/LDP for $M(n)$, respectively, where $M(n)$ is the number of times that the tagged soliton interacts with {solitons larger than itself} until time $n$. {By the same idea, we can show the LLN for the tagged soliton in $\mathbb{L}^p$ by that for $M(n)$.}
    Furthermore, Lemma \ref{lem:NM_kl} and \eqref{eq:orthogonal} are also useful for showing the strong correlations between solitons of the same {size}. To the best of our knowledge, this is the first time that the $10$-elimination is applied to the dynamical problem of the randomized BBS. A version of the $10$-elimination was used in \cite{LLP}, but they considered static problems. We note that our proof strategy can be applied even if the initial distribution ${\mu}$ is not a ${\q}$-statistics as long as ${\mu}$ has some nice property, see Remark \ref{rem:othernu} for details. 

    The rest of the paper is organized as follows. In Section \ref{sec:BBS} we briefly recall the basics of the BBS on $\Z$ and introduce some terminologies used in this paper, then  we present our main result, Theorem \ref{thm:IPLDP}, where the initial distribution  ${\mu}$ is a Bernoulli product measure or two-sided Markov distribution. {To prove Theorem \ref{thm:IPLDP}, we need combinatorial tools.} In Section \ref{sec:seat}, we introduce {such tools and} some notations that are essential for the proof as well as for describing the results under more general invariant distributions. In Section \ref{sec:general}, we present some technical results and our second main result, Theorem \ref{thm:st_cor}, when the initial distribution is a more general $\q$-statistics. 
    In Section \ref{subsec:kskip}, we introduce the notion of the $k$-skip map, $k \in \N$, and we prepare some lemmas for the proofs of main results. In the subsequent sections, we give proofs of results in Section \ref{sec:general}. In particular, in Section \ref{sec:st_proof}, we give a proof of Theorem \ref{thm:st_cor}.
    Finally, in Section \ref{sec:final}, we show Theorem \ref{thm:IPLDP} as a straightforward consequence of results in Section \ref{sec:general}.
    

\section{Box-Ball System}\label{sec:BBS}

    \subsection{Dynamics of the Box-Ball system}

        First we recall the definition of the one-step time evolution $\eta \mapsto T\eta$ when the total number of $1$s in $\eta \in \Omega$ is finite, presented in Introduction. A site $x \in \Z$ is called a record if $\eta(x) = T\eta(x) = 0$. Clearly, $T$ can be considered as the flipping $1$s (resp. $0$s) to $0$s (resp. $1$s) except for records, i.e., we can write $T\eta$ as 
            \begin{align}\label{eq:char_dy}
                T\eta\left(x\right) = 
                \begin{dcases}
                    1 - \eta\left(x\right) \ & \ \text{if } x \text{ is not a record}, \\
                    0 \ & \ \text{if } x \text{ is a record}.
                \end{dcases}
            \end{align}
        We note that records can be characterized as follows : 
            \begin{align}\label{eq:char_rec}
                x \text{ is a record} \text{ if and only if } \max_{z \le x} \sum_{y = z}^{x} \left(2\eta\left(y\right) - 1\right) \le - 1.
            \end{align}

        Now we define the BBS on $\Omega$. 
        The one-time step evolution $T : \Omega \to \Omega$ can be also defined via the notion of records as follows. For $\eta \in \Omega$, we define a record in $\eta$ as a site $x \in \Z$ that satisfies \eqref{eq:char_rec}. We note that there are infinitely many records in $\eta$ because the asymptotic ball density as $x \to \pm\infty$ is strictly smaller than $1/2$. Then, we define $T : \Omega \to \Omega$ by \eqref{eq:char_dy}. 
        
        We number records in $\eta$ from left to right as follows. For any $\eta \in \Omega$, we define
            \begin{align}
                s_{\infty}\left(\eta,0\right) :=\max \left\{ x \le 0 \ ; \ \max_{z \le x} \sum_{y = z}^{x}  \left(2\eta\left(y\right) - 1\right) \le -1  \right\},
            \end{align}
        and then we recursively define $s_{\infty}\left(\eta,i\right)$ as 
            \begin{align}
                s_{\infty}\left(\eta,i\right) :=\min \left\{ x > s_{\infty}\left(\eta,i - 1\right) \ ; \ \max_{z \le x} \sum_{y = z}^{x}  \left(2\eta\left(y\right) - 1\right) \le -1 \right\}, \\
                s_{\infty}\left(\eta,-i\right) :=\max \left\{ x < s_{\infty}\left(\eta,-i + 1\right) \ ; \ \max_{z \le x} \sum_{y = z}^{x}  \left(2\eta\left(y\right) - 1\right) \le -1 \right\},
            \end{align}
        for any $i \in \N$. Notice that {$s_{\infty}(\eta,i) \in \Z$ for any $i \in \Z$} because $\eta \in \Omega$. 
 
        We note that the dynamics of the BBS on $\Omega$ can also be described via the carrier process $W\left(\eta,x\right) : \Z \to \Z_{\ge 0}$ recursively defined by \eqref{def:carrier} and $W\left(\eta,s_{\infty}\left(\eta,i\right)\right) := 0$ for any $i \in \Z$. Then, by using $W$, $T : \Omega \to \Omega$ is written as \eqref{def:T_carrier}.

\subsection{Solitons in the Box-Ball configuration}\label{subsec:soliton}

        In this subsection, we explain how we can identify solitons in $\Omega$. 
        
        For given $\eta \in \Omega$, we consider the following decomposition : 
            \begin{align}\label{def:excursion}
                \eta = \cup_{i \in \Z} \mathbf{e}^{(i)}, \quad 
                \mathbf{e}^{(i)} := \left( \eta(x) \ ; \  s_{\infty}\left({\eta,}i\right) \le x < s_{\infty}({\eta,}i+1) \right).
            \end{align}
        The sequence $\mathbf{e}^{(i)}$ is called the $i$-th excursion of $\eta$. {In an abuse of notation, we write $\mathbf{e}^{(i)} \setminus \{ s_{\infty}\left(i\right) \}$ the sequence of $1$s and $0$s obtained by eliminating the leftmost $0$ from $\mathbf{e}^{(i)}$, i.e., $\mathbf{e}^{(i)} \setminus \{ s_{\infty}\left(i\right) \} := \left( \eta(x) \ ; \  s_{\infty}\left({\eta,}i\right) < x < s_{\infty}({\eta,}i+1) \right)$.}
        Then, for each $\mathbf{e}^{(i)} \setminus \{ s_{\infty}\left(i\right) \}$, we can find solitons via the Takahashi-Satsuma algorithm as follows : 
            \begin{itemize}
                
                \item Select the leftmost run of consecutive $0$s or $1$s such that {the length of the subsequent run is at least as long as the length of it.}
                \item Let $k$ be the length of the selected run. Group the $k$ element of the selected run and the first $k$ elements of the subsequent run. The grouped $2k$ elements are identified as a soliton with {size} $k$, or $k$-soliton.
                \item Remove the identified $k$-soliton, and repeat the above procedure until all $1$s are removed. 
            \end{itemize}
        {By the above algorithm, a $k$-soliton is defined as a subset of $\Z$, and its cardinality is $2k$.}
        From the definition of records, if $\mathbf{e}^{(i)} \setminus \{ s_{\infty}\left(i\right) \}$ is not empty, then all 1s and 0s in $\mathbf{e}^{(i)} \setminus \{ s_{\infty}\left(i\right) \}$ are grouped and become components of solitons. 
        We note that from the TS-algorithm, we see that solitons of the same {size} do not overlap, and a larger soliton can contain a smaller soliton inside, but not vice versa.
        An example of applying the above algorithm to $\eta = \dots01100011100110010110000\dots $ is shown in Figure \ref{ex:TSforex}. In this example, only two excursions have solitons, and there are one $3$-soliton, three $2$-solitons and one $1$-soliton in total. 
        
            \begin{figure}[H]
                    \footnotesize
                    \setlength{\tabcolsep}{4pt}
                    \centering
                    \begin{equation}
                        \begin{matrix}
                        \dots 0 & 1 & 1 & 0 & 0 & 0 & 1 & 1 & 1 & 0 & 0 & 1 & 1 & 0 & 0 & 1 & 0 & 1 & 1 & 0 & 0 & 0 & 0 \dots
                        \\
                        \dots \vline0 & \xc{red}{1} & \xc{red}{1} & \xc{red}{0} & \xc{red}{0} & \vline 0 & 1 & 1 & 1 & \xc{red}{0} & \xc{red}{0} & \xc{red}{1} & \xc{red}{1} & 0 & 0 & 1 & 0 & 1 & 1 & 0 & 0 & 0 & \vline 0  \dots
                        \\
                        \dots \vline0 & \xc{red}{1} & \xc{red}{1} & \xc{red}{0} & \xc{red}{0} & \vline0 & 1 & 1 & 1 & \xc{red}{0} & \xc{red}{0} & \xc{red}{1} & \xc{red}{1} & 0 & 0 & \xc{blue}{1} & \xc{blue}{0} & 1 & 1 & 0 & 0 & 0 & \vline0  \dots
                        \\
                        \dots \vline0 & \xc{red}{1} & \xc{red}{1} & \xc{red}{0} & \xc{red}{0} & \vline0 & 1 & 1 & 1 & \xc{red}{0} & \xc{red}{0} & \xc{red}{1} & \xc{red}{1} & \xc{red}{0} & \xc{red}{0} & \xc{blue}{1} & \xc{blue}{0} & \xc{red}{1} & \xc{red}{1} & 0 & 0 & 0 & \vline0  \dots
                        \\
                        \dots  \vline0 & \xc{red}{1} & \xc{red}{1} & \xc{red}{0} & \xc{red}{0} & \vline0 & \xc{brown}{1} & \xc{brown}{1} & \xc{brown}{1} & \xc{red}{0} & \xc{red}{0} & \xc{red}{1} & \xc{red}{1} & \xc{red}{0} & \xc{red}{0} & \xc{blue}{1} & \xc{blue}{0} & \xc{red}{1} & \xc{red}{1} & \xc{brown}{0} & \xc{brown}{0} & \xc{brown}{0} & \vline0  \dots \\
                        \dots \vline0 & \color{red}{1} & \color{red}{1} & \color{red}{0} & \color{red}{0} & \vline0 & \color{brown}{1} & \color{brown}{1} & \color{brown}{1} & \color{red}{0} & \color{red}{0} & \color{red}{1} & \color{red}{1} & \color{red}{0} & \color{red}{0} & \color{blue}{1} & \color{blue}{0} & \color{red}{1} & \color{red}{1} & \color{brown}{0} & \color{brown}{0} & \color{brown}{0} & \vline0  \dots 
                        \end{matrix}
                    \end{equation}
                    \caption{Identifying solitons in $\eta$ by the TS Algorithm. $1$-soliton is colored by blue, $2$-solitons are colored by red, and $3$-soliton is colored by brown.}\label{ex:TSforex}
                \end{figure}
        It was discovered by \cite{TS} that total number of $k$-solitons is conserved in time for each $k \in \N$, i.e., for any $\eta \in \Omega$ with the condition $\sum_{x \in \Z} \eta(x) < \infty$, we have 
            \begin{align}
                \left| \left\{ k\text{-solitons in } T\eta \right\} \right| = \left| \left\{ k\text{-solitons in } \eta \right\} \right|.
            \end{align}
        Now we observe the behaviors of solitons in time evolution. 
        If there are only solitons of the same {size}, they move to the right by $k$ :
            \begin{align}
                \begin{matrix}
                    \eta & = & \dots & \color{brown}{1} & \color{brown}{1} & \color{brown}{1} & \color{brown}{0} & \color{brown}{0} & \color{brown}{0} & 0 & 0 & \color{brown}{1} & \color{brown}{1} & \color{brown}{1} & \color{brown}{0} & \color{brown}{0} & \color{brown}{0} & 0 & 0 & 0 & \dots \\
                    T\eta & = & \dots & 0 & 0 & 0 & \color{brown}{1} & \color{brown}{1} & \color{brown}{1} & \color{brown}{0} & \color{brown}{0} & \color{brown}{0} & 0 & 0 & \color{brown}{1} & \color{brown}{1} & \color{brown}{1} & \color{brown}{0} & \color{brown}{0} & \color{brown}{0} & \dots
                \end{matrix}
            \end{align}
        If there are two solitons of different {size}s and the larger soliton is to the left of the smaller soliton, an interaction will occur between them at some time. {During the interaction,} the solitons overlap each other and the shapes of the solitons are collapsed, but they return to their original shapes after the interaction is over. Furthermore, the larger soliton accelerates when overtaking the smaller soliton, while the smaller soliton stays where it is. For example, see Figure \ref{ex:phase_shift}. 
            \begin{figure}[H]
                \footnotesize
                \centering
                    \begin{equation}
                        \begin{matrix}
                            \eta & = & \dots  & \color{brown}{1} & \color{brown}{1} & \color{brown}{1} & \color{brown}{0} & \color{brown}{0} & \color{brown}{0} & 0 & \color{blue}{1} & \color{blue}{0} & 0 & 0 & 0 & 0 & 0 & 0 & 0 & 0 & 0 & \dots \\
                            T\eta & = & \dots  & 0 & 0 & 0 & \color{brown}{1} & \color{brown}{1} & \color{brown}{1} & \color{brown}{0} & \color{brown}{0} & \color{blue}{1} & \color{blue}{0} & \color{brown}{0} & 0 & 0 & 0 & 0 & 0 & 0 & 0 &  \dots \\
                            T^2\eta & = & \dots  & 0 & 0 & 0 & 0 & 0 & 0 & \color{brown}{1} & \color{brown}{1} & \color{blue}{0} & \color{blue}{1} & \color{brown}{1} & \color{brown}{0} & \color{brown}{0} & \color{brown}{0} & 0 & 0 & 0 & 0 & \dots \\
                            T^3\eta & = & \dots  & 0 & 0 & 0 & 0 & 0 & 0 & 0 & 0 & \color{blue}{1} & \color{blue}{0} & 0 & \color{brown}{1} & \color{brown}{1} & \color{brown}{1} & \color{brown}{0} & \color{brown}{0} & \color{brown}{0} & 0 & \dots
                        \end{matrix}
                \end{equation}
                \caption{The $3$-soliton accelerates from time $2$ to $3$. On the other hand, the $1$-soliton does not move from time $1$ to $3$.}
                \label{ex:phase_shift}
            \end{figure}

    In the rest of this subsection, we introduce some notions for later use. 

\subsubsection{Set of all $k$-soltions}
        
        We denote by $ \Gamma_{k}{\left(\eta\right)}$ the set of all $k$-solitons in $\eta$. For any $\gamma \in \Gamma_{k}{\left(\eta\right)}$, we define $X\left(\gamma\right) := (\inf \gamma) - 1$, and call $X(\gamma)$ the position of $\gamma$. {To obtain our key results, which are Lemma \ref{lem:NM_kl} and the decomposition formula \eqref{eq:orthogonal}, it is important to define $X(\gamma)$ as $ (\inf \gamma) - 1$ instead of $\inf \gamma$.} 
\subsubsection{Natural numbering for solitons}

        In this paper, since we focus on a single soliton and consider its scaling limit, it is necessary to label each soliton. 
        For the BBS on $\Z$, it is convenient to use a record as a reference site for the detailed analysis. In particular, we are interested in the case where the origin is a record, i.e., $s_{\infty}\left(\eta,0\right) = 0$. However, for later use, considering the case where $s_{\infty}\left(\eta,0\right) = 0$ is not $0$, we order solitons as follows. For each $k \in \N$, a $k$-soliton to the left of $s_{\infty}\left({\eta,}0\right)$ is the 0th soliton, and $k$-solitons are numbered in order from left to right from there. More precisely, for any $k \in \N$, we denote by $\gamma^{0}_{k}$ the $k$-soliton such that 
            \begin{align}
                X\left(\gamma^{0}_{k}\right) = \max_{\gamma \in \Gamma_{k}, X(\gamma) < s_{\infty}\left({\eta,}0\right)} X\left(\gamma\right). 
            \end{align}
        Then, we recursively define $\gamma^{i}_{k}$ as the $k$-soliton such that 
            \begin{align}
                X\left(\gamma^{i}_{k}\right) = \min_{\gamma \in \Gamma_{k}, X(\gamma) > X\left(\gamma^{i-1}_{k}\right)} X\left(\gamma\right)
            \end{align}
        for $i \ge 1$, and 
            \begin{align}
                X\left(\gamma^{i}_{k}\right) = \max_{\gamma \in \Gamma_{k}, X(\gamma) < X\left(\gamma^{i+1}_{k}\right)} X\left(\gamma\right)
            \end{align}
        for $i \le -1$. 
        We call $\gamma^{i}_{k}$ the $i$-th $k$-soliton.

        In this paper, the above numbering is called the natural numbering for solitons. 

\subsubsection{Position of a soliton at time $n$}

        We can track each soliton in time evolution. First, for any $\gamma \in \Gamma_{k}$ we define heads $H(\gamma)$ and tails $T\left(\gamma\right)$ as
            \begin{align}
            H\left(\gamma\right) &:= \left\{ x \in \gamma \ ; \ \eta\left(x\right) = 1  \right\} = \left\{ H_{1}\left(\gamma\right) <  \dots <  H_{k}\left(\gamma\right)  \right\}, \\
            T\left(\gamma\right) &:= \left\{ x \in \gamma \ ; \ \eta\left(x\right) = 0  \right\} = \left\{ T_{1}\left(\gamma\right) <  \dots <  T_{k}\left(\gamma\right)  \right\}. 
        \end{align}
        From \cite[Proposition 1.3]{FNRW}, for any $\gamma \in \Gamma_{k}\left(\eta\right)$, there exists unique $\gamma' \in \Gamma_{k}\left(T\eta\right)$ such that 
        $T\left(\gamma\right) = H\left(\gamma'\right)$, and we write $\gamma'$ as $\gamma(1)$, i.e., $X\left(\gamma(1)\right)$ is the position of $\gamma$ at time $1$. By repeating the above, for any $n \in \N$, we can find $\gamma\left(n\right) \in \Gamma_{k}\left(T^n \eta\right)$ such that $T\left(\gamma\left(n - 1\right)\right) = H\left(\gamma\left(n\right)\right)$, and we call $X\left(\gamma\left(n\right)\right)$ the position of $\gamma$ at time $n$. We note that since there may be a $k$-soliton passing through the origin in time evolution,  $\gamma^{i}_{k}\left(n\right)$ is not always the $i$-th $k$-soliton in $T^n \eta$. 

        In the following, the position at time $n$ of $i$-th $k$-soliton is denoted by $X^{i}_{k}({\eta,}n)$.

\subsection{Scaling limits for solitons}

    Now we state our main results on the fluctuations of $k$-solitons when the initial distribution $\mu$ is given by a {space-homogeneous} Bernoulli product measure or two-sided Markov distribution supported on $\Omega$ {and conditioned on $\Omega_{0}$, where $\Omega_{0} $ is defined at \eqref{def:omega_0}. This is defined more precisely as $\Omega_0=\{ \eta \in \Omega \ ;  \ 0 \ \text{is a record} \}$}. {Since we are interested in the increment of the position of a fixed $k$-soliton from time $0$ to $n$, for any $k \in \N$, $i \in \Z$ and $n \in \Z_{\ge 0}$ we define 
        \begin{align}\label{def:Y}
            Y^{{i}}_{k}\left(\eta,n\right) := X^{{i}}_{k}\left( \eta,n \right) - X^{{i}}_{k}\left(\eta,0\right).
        \end{align}}
    
    \begin{theorem}\label{thm:IPLDP}
        Assume that the initial distribution ${\mu}$ is a {space-homogeneous} Bernoulli product measure or two-sided  Markov distribution supported on $\Omega$ {and that $\mu\left(\eta\left(0\right) = \eta\left(1\right) = 1\right) > 0$}. {Let $\nu$ be the conditional probability measure such that ${\mu}$ is conditioned on $\Omega_{0}$.} Then, for any $k \in \N$, we have the following. 
                \begin{enumerate}
                
                    \item \label{item:IPLDP1} {For any $i \in \Z$ and  $\mathbf{T} > 0$, under $\mu$ or $\nu$}, the following step-interpolation process,
                    \begin{align}\label{def:step_X}
                        t \mapsto \frac{1}{n} {Y}^{i}_{k}\left(\left\lfloor n^2 t \right\rfloor  \right) - nt v^{\mathrm{eff}}_{k}\left( {\mu} \right),
                    \end{align}
                    converges weakly in $D[0,\mathbf{T}]$ to a Brownian motion with variance {$D_{k}$}, defined in \eqref{eq:dif_co} below. 

                    \item \label{item:IPLDP2} {For any $i \in \Z$, under $\mu$ and $\nu$,} the sequence $\left({Y}^{i}_{k}\left(n\right)/ n\right)_{n \in {\N}}$ satisfies the LDP with a smooth convex rate function defined in \eqref{def:rate_I} below. 

                    \item \label{item:IPLDP0} {For any {$i \in \Z$} and $p \ge 1$, we have 
                        \begin{align}
                            \lim_{n \to \infty} \E_{{\mu}}\left[ \left| \frac{{Y}^{i}_{k}\left(n\right)}{n} - v^{\mathrm{eff}}_{k}\left({\mu}\right) \right|^{p} \right] = \lim_{n \to \infty} \E_{\nu}\left[ \left| \frac{{Y}^{i}_{k}\left(n\right)}{n} - v^{\mathrm{eff}}_{k}\left({\mu}\right) \right|^{p} \right] =  0.
                        \end{align}}
                \end{enumerate}
    \end{theorem}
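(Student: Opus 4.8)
The plan is to prove all three parts first under the conditional measure $\nu=\mu(\,\cdot\mid\Omega_0)$ and then transfer them to $\mu$. Conditioning on $\Omega_0$ (the event that $0$ is a record) is natural, since it anchors the origin at a soliton-free reference point from which the tagged soliton can be followed in a clean combinatorial coordinate. The heart of the argument is the decomposition formula \eqref{eq:orthogonal} together with Lemma \ref{lem:NM_kl}: using the $k$-skip map and the seat number configuration I would express the increment $Y^{i}_{k}(\eta,n)$ as an affine function of $M(n)$, the number of interactions of the tagged $k$-soliton with strictly larger solitons up to time $n$, plus boundary correction terms. Because each such interaction produces a fixed phase shift and $v^{\mathrm{eff}}_{k}$ is characterized (Proposition \ref{prop:char_velo}) by the expected interaction rate, the centered quantity $Y^{i}_{k}(n)-v^{\mathrm{eff}}_{k}n$ equals, up to negligible corrections, a rescaled and centered version of $M(n)-\E_\nu[M(n)]$. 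Thus each of (\ref{item:IPLDP1}), (\ref{item:IPLDP2}), (\ref{item:IPLDP0}) reduces to the corresponding statement for $M(n)$.

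Second, I would establish the limit theorems for $M(n)$ under $\nu$. The purpose of the $k$-skip/seat number linearization is that, under a $\q$-statistics (hence under the Bernoulli and Markov cases after conditioning on $\Omega_0$), the environment swept by the tagged soliton is encoded by an i.i.d.\ sequence (Bernoulli case) or a finite-state Markov sequence (Markov case), and $M(n)$ becomes an additive functional of this sequence whose individual increments have exponential tails. For (\ref{item:IPLDP1}) I would apply Donsker's invariance principle, or its Markov-chain version, to the step-interpolation of $M(\lfloor n^2 t\rfloor)$; the orthogonality in \eqref{eq:orthogonal} makes the limiting variance a sum of uncorrelated contributions, yielding the explicit $D_{k}$ of \eqref{eq:dif_co}. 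For (\ref{item:IPLDP2}) I would use Cram\'er's theorem in the i.i.d.\ case and the G\"artner--Ellis theorem, through the Perron--Frobenius eigenvalue of a tilted transfer operator, in the Markov case, obtaining the LDP for $M(n)/n$ and transporting it to $Y^{i}_{k}(n)/n$ via the affine relation; smoothness and convexity of the rate function \eqref{def:rate_I} follow from analyticity of the associated pressure. For (\ref{item:IPLDP0}), the exponential moment bounds on the increments of $M(n)$ give uniform integrability of $(Y^{i}_{k}(n)/n)^{p}$, so the almost sure LLN of \cite{FNRW} upgrades to $\mathbb{L}^{p}$ convergence.

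Third, I would transfer the three statements from $\nu$ to $\mu$. The two measures differ only through the excursion straddling the origin, which under $\mu$ may carry solitons crossing $0$, whereas under $\nu$ it does not. Invoking the exponential integrability of an excursion under $\mu$ together with Proposition \ref{prop:main}, the influence of this initial excursion on $Y^{i}_{k}(n)$ is controlled on an exponential scale. This is exactly what is needed to show that conditioning on $\Omega_0$ changes neither the diffusive Brownian limit in (\ref{item:IPLDP1}), nor the $\mathbb{L}^{p}$ limit in (\ref{item:IPLDP0}), nor the rate function in (\ref{item:IPLDP2}); here the restriction to the increment $Y^{i}_{k}$ rather than the absolute position $X^{i}_{k}$ is precisely what makes the rate function insensitive to the initial configuration.

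The main obstacle is the second step, specifically the proof that $M(n)$ genuinely carries the i.i.d./Markov additive structure with exponential moments asserted above. This rests entirely on the combinatorics of the $k$-skip map and the seat number configuration encapsulated in Lemma \ref{lem:NM_kl} and the orthogonal decomposition \eqref{eq:orthogonal}: one must verify that, under the $\q$-statistics, the seat occupation variables successively seen by the tagged soliton are independent (or Markov) and that their tilted moment generating functions are finite in a neighborhood of the origin. Establishing the exact orthogonality that produces the explicit variance $D_{k}$, and the finiteness of exponential moments uniformly enough to feed G\"artner--Ellis, is where the real work lies; the invariance principle, the Cram\'er/G\"artner--Ellis machinery, and the $\nu$-to-$\mu$ transfer are then comparatively standard.
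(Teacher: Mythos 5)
Your proposal follows essentially the same route as the paper: reduce the limit theorems for $Y^{i}_{k}$ to those for $M^{i}_{k}$ via the orthogonal decomposition \eqref{eq:orthogonal} and Lemma \ref{lem:NM_kl}, prove the invariance principle and LDP for $M^{i}_{k}$ as an additive functional of the time-Markov chain $(T^{m}\eta(x))_{m}$ (mixing CLT plus Perron--Frobenius of the tilted transition matrix), obtain the $\mathbb{L}^{p}$ law of large numbers from the exponential moment identity, and transfer from $\nu$ to $\mu$ using Proposition \ref{prop:main} and the exponential integrability of excursions. The only slight inaccuracy is the suggestion that the Bernoulli case yields an i.i.d.\ time sequence amenable to Cram\'er's theorem: even there the record indicator $r(T^{m}\eta,x)$ is only Markov in $m$ (and the shifted parameter $\theta^{k}\q$ is Markov, not Bernoulli), so the Markov-chain/G\"artner--Ellis machinery you also mention is the one actually needed in all cases.
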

    {Theorem \ref{thm:IPLDP} will be proved via Theorems \ref{thm:lln_lp}, \ref{thm:main}, \ref{thm:Markov} and Proposition \ref{prop:main} described in Section \ref{sec:general}, and the proof of Theorem \ref{thm:IPLDP} will be given in Section \ref{sec:final}.  
    \begin{remark}\label{rem:nutomu}
        If $\mu$ is a space-homogeneous Bernoulli product measure or two-sided  Markov distribution supported on $\Omega$, then we can show that the $\mathbb{L}^{p}$ norm of $X^{i}_{k}\left(0\right)$ with respect to $\mu$ or $\nu$ is finite, see Appendix \ref{app:expbound_ex}. Hence, we can replace $Y^{i}_{k}(n)$ in the statement of Theorem \ref{thm:IPLDP} (\ref{item:IPLDP1}), (\ref{item:IPLDP0}) by $X^{i}_{k}(n)$.
    \end{remark}}

    From Theorem \ref{thm:IPLDP} (\ref{item:IPLDP1}), if we focus on a single soliton, it converges to a Brownian motion whose variance depends only on the initial distribution ${\mu}$ and the size of the soliton $k$, and not on the number $i$. In Section \ref{sec:results}, we focus on two solitons of the same size under more general initial conditions and prove their strong correlations in diffusive space time scaling, see Theorem \ref{thm:st_cor} and Corollary \ref{cor:2BM}.

\section{Seat number configuration}\label{sec:seat}

    In order to prove the results described in Section \ref{sec:BBS}, we need the linearization method of BBS. In this section, first we introduce the notion of a soliton with volume. This is a useful notion when looking at the correspondence between the linearization method and the position of solitons, which will be introduced later. 
    and then recall the definition of the seat number configuration. Next, we recall the linearization method called “seat number configuration” introduced by \cite{MSSS,S}.

\subsection{A soliton with volume}\label{subsec:volume}

    This subsection introduces the notion of volume for solitons. We then introduce a way to number solitons with volume. Finally, we explain how the increment of a soliton from time $0$ to $n$ is described. 
    
    We fix $\eta \in \Omega$ and recall that $X\left(\gamma\right)$ is the position of $\gamma$. We denote by $\bar{X}\left(\gamma\right) := \sup \gamma$ the rightmost site of $\gamma$. 
    For any $\gamma, \gamma' \in \Gamma_{k}$ with $X\left(\gamma\right) < X\left(\gamma'\right)$, we say that $\gamma$ and $\gamma'$ are connected if there are no $\ell$-solitons with $\ell \ge k + 1$ and records in $\left[\bar{X}\left(\gamma\right), X\left(\gamma'\right)\right]$. In equation form, $\gamma$ and $\gamma'$ are connected if the followings hold : 
        \begin{itemize}
                \item $\left[\bar{X}\left(\gamma\right), X\left(\gamma'\right)  \right] \cap \left\{ s_{\infty}\left(i\right) ; i \in \Z \right\} = \emptyset$,
                \item for any $\gamma'' \in \cup_{\ell \ge k + 1} \Gamma_{\ell}$, $\left[\bar{X}\left(\gamma\right), X\left(\gamma'\right)  \right] \cap  \gamma'' = \emptyset$.
        \end{itemize}
    Then, for any $\gamma \in \Gamma_{k}$, we define  
            \begin{align}
                Con\left(\gamma\right) := \left\{ \gamma' \in \Gamma_{k} \ ; \ \gamma \text{ and } \gamma' \text{ are connected} \right\}.
            \end{align}
    Note that $\gamma' \in Con\left(\gamma\right)$ then $Con\left(\gamma\right) = Con\left(\gamma'\right)$. For later use, we denote by $\Gamma_{k}^{*}$ the set of $k$-solitons such that 
            \begin{itemize}
                \item for any $\gamma, \gamma' \in \Gamma_{k}^{*}$, $Con\left(\gamma\right) \cap Con\left(\gamma'\right) = \emptyset$,
                \item for any $\gamma \in \Gamma_{k}^{*}$, $X\left(\gamma\right) \le X\left(\gamma''\right)$ for any $\gamma'' \in Con\left(\gamma\right)$.
            \end{itemize}
    In other words, the leftmost one among the connected ones is chosen as the representative and $\Gamma_{k}^{*} = \Gamma_{k}^{*}(\eta)$ is the set of such representatives. 
    Clearly, we have $\Gamma_{k} = \cup_{\gamma \in \Gamma_{k}^{*}}Con\left(\gamma\right)$. For any $\gamma \in \Gamma_{k}^{*}$, we say that the number of solitons in $Con\left(\gamma\right)$ is the volume of $\gamma$, and write $\operatorname{Vol}\left(\gamma\right) := \left|Con\left(\gamma\right) \right|$. Also, we say that for each $k \in \N$, an element $\gamma \in  \Gamma_{k}^{*}$ is a $k$-soliton with volume. For example, in the configuration used in Figure \ref{ex:TSforex}, there are three $2$-solitons colored by red. The volume of leftmost $2$-soliton is $1$, and that of the middle $2$-soliton is $2$.

\subsubsection{Truncated numbering of solitons with volume}

        We consider the truncated numbering for solitons with volume. 
        We denote by $\gamma^{\left( 0 \right)}_{k}$ the $k$-soliton with volume such that 
            \begin{align}
                X\left(\gamma^{\left( 0 \right)}_{k}\right) = \max_{\gamma \in \Gamma^{*}_{k}, X(\gamma) < s_{\infty}\left({\eta,}0\right)} X\left(\gamma\right). 
            \end{align}
        Then, we recursively define $\gamma^{{\left(i\right)}}_{k}$ as the $k$-soliton with volume such that 
            \begin{align}
                X\left(\gamma^{(i)}_{k}\right) = \min_{\gamma \in \Gamma^{*}_{k}, X(\gamma) > X\left(\gamma^{(i-1)}_{k}\right)} X\left(\gamma\right)
            \end{align}
        for $i \ge 1$, and 
            \begin{align}
                X\left(\gamma^{(i)}_{k}\right) = \max_{\gamma \in \Gamma^{*}_{k}, X(\gamma) < X\left(\gamma^{i+1}_{k}\right)} X\left(\gamma\right)
            \end{align}
        for $i \le -1$. 
        We call $\gamma^{(i)}_{k}$ the $i$-th $k$-soliton with volume. The difference from natural numbering is that the order is assigned only to the representatives in $\Gamma^{*}_{k}$. We note that $\gamma^{(1)}_{k} = \gamma^{1}_{k}$ from the rules of numbering. 

        It is an abuse of notation, but for any $k \in \N$, $i \in \Z$ and $n \in \Z_{\ge 0}$, we denote by $X^{(i)}_{k}\left(\eta,n\right)$ the position of the $i$-th $k$-soliton with volume at time $n$, i.e., $X^{(i)}_{k}\left(\eta,n\right) = X\left(\gamma^{(i)}_{k}({\eta,}n)\right)$. {Also, we will write $Y^{\left(i\right)}_{k}\left(\eta,n\right) := X^{(i)}_{k}\left(\eta,n\right) - X^{(i)}_{k}\left(\eta,0\right)$.}

\subsubsection{Interactions between solitons}

    Recall that the points $\left( s_{\infty}\left(\eta,i\right) \right)_{i \in \Z}$ separate groups of solitons. 
    For any $\gamma \in \Gamma_{k}, \gamma' \in \Gamma_{\ell}$ with $k < \ell$, we say that $\gamma$ and $\gamma'$ are interacting if $s_{\infty}\left(i\right) < X\left(\gamma'\right) < X\left(\gamma\right) < s_{\infty}\left(i + 1\right)$ for some $i \in \Z$. We say that $\gamma \in \Gamma_{k}$ is free if $\gamma$ does not interact with any $\ell$-soliton with $\ell > k$. 

    For any $\gamma \in \Gamma_{k}, \gamma' \in \Gamma_{\ell}$ with $k > \ell$, we say that $\gamma$ overtakes $\gamma'$ ( or $\gamma'$ is overtaken by $\gamma$) at time $n$ if $X\left(\gamma\left(n - 1\right)\right) < X\left(\gamma'\left(n - 1\right)\right)$ and $X\left(\gamma\left(n\right)\right) > X\left(\gamma'\left(n\right)\right)$. We denote by $N_{\ell}\left(\gamma,n\right)$ the number of $\ell$-solitons overtaken by $\gamma$ at time $n$. It is shown by \cite[Proposition 6.4]{FNRW} that for any $\gamma\in \Gamma_{k}$, the increment $X\left(\gamma(n)\right) - X\left(\gamma(n - 1)\right)$ can be represented as 
            \begin{align}
                &X\left(\gamma(n)\right) - X\left(\gamma(n - 1)\right) \\ 
                &= 
                \begin{dcases}
                    k + 2 \sum_{\ell = 1}^{k - 1} \ell N_{\ell}\left(\gamma,n\right) \ & \ \text{if $\gamma(n - 1)$ is free,} \\
                    0 \ & \ \text{otherwise.}
                \end{dcases}\label{eq:inc_sol}
            \end{align}
        In particular, $X\left(\gamma(n)\right) - X\left(\gamma(n - 1)\right) > 0$ if and only if $\gamma(n - 1)$ is free. For later use, {for any $k, \ell \in \N$, $i \in \Z$ and $n \in \Z_{\ge 0}$,} we define 
            \begin{align} 
                N^{{i}}_{k,\ell}\left(\eta,n\right) &:= 
                    \begin{dcases}
                        N_{\ell}\left(\gamma^{{i}}_{k},n\right) \ & \ k > \ell, \\ 
                        0 \ & \ \text{otherwise}, 
                    \end{dcases} \\
                M^{{i}}_{k}\left(\eta,n\right) &:= \left| \left\{ 1 \le m \le n \ ; \ \gamma^{{i}}_{k}(m) \text{ is not free} \right\} \right|, 
            \end{align}
        and 
            \begin{align}
                &M^{{i}}_{k,\ell}\left(\eta,n\right) \\ 
                &:= 
                    \begin{dcases}
                        \left| \left\{ \gamma \in \Gamma_{\ell} \ ; \ \gamma \text{ overtakes } \gamma^{{i}}_{k} \text{ at time } m, \ 1 \le m \le n \right\} \right| \ & \ k < \ell,  \\
                        0 \ & \ \text{otherwise.}
                    \end{dcases}
            \end{align}
        {If $\gamma^{i}_{k} = \gamma^{\left(j\right)}_{k}$ for some $j \in \Z$, then we write $N^{(j)}_{k,\ell}\left(\eta,n\right) := N^{i}_{k,\ell}\left(\eta,n\right)$, $M^{(j)}_{k}\left(\eta,n\right) := M^{i}_{k}\left(\eta,n\right)$, and $M^{(j)}_{k,\ell}\left(\eta,n\right) = M^{i}_{k,\ell}\left(\eta,n\right)$.}
        {From} \eqref{eq:inc_sol}, we have 
            \begin{align}
                &X^{{i}}_{k}\left(\eta,n\right) \\ 
                &= X^{{i}}_{k}\left(\eta,0\right) + k\left(n - M^{{i}}_{k}\left(\eta,n\right) \right) + 2 \sum_{m = 1}^{n} \sum_{\ell = 1}^{k - 1} \ell N^{{i}}_{k,\ell}\left(\eta,m\right). \label{eq:pos_n}
            \end{align}
        We now observe the interactions between solitons. When a soliton $\gamma$ is free and catches up with a smaller soliton $\gamma'$, $\gamma$ overtakes all of $Con(\gamma')$ simultaneously. In addition, when $\gamma$ catches up with $Con(\gamma')$, $Con(\gamma')$ are involved in the right half of $\gamma$, and if $\gamma(1)$ is free, then in the next step $Con(\gamma')$ are involved in the left half of $\gamma$. If $\gamma(1)$ is not free, then both $\gamma$ and $Con(\gamma)$ do not move. For example, see Figure \ref{ex:int_2}, \ref{ex:int_1} and \ref{ex:int_3}. 
            \begin{figure}[H]
                    \footnotesize
                    \centering
                    \begin{equation}
                        \begin{matrix}
                        \dots & \color{red}{1} & \color{red}{1} & \color{red}{0} & \color{red}{0} & 0 & \color{blue}{1} & \color{blue}{0} & \color{blue}{1} & \color{blue}{0} & \color{blue}{1} & \color{blue}{0} & 0 & 0 & 0 & 0 & 0 & 0 & 0 & 0 & 0 & \dots \\
                        \dots & 0 & 0 & \color{red}{1} & \color{red}{1} & \color{red}{0} & \color{red}{0} & \color{blue}{1} & \color{blue}{0} & \color{blue}{1} & \color{blue}{0} & \color{blue}{1} & \color{blue}{0} & 0 & 0 & 0 & 0 & 0 & 0 & 0 & 0 & \dots \\
                        \dots & 0 & 0 & 0 & 0 & \color{red}{1} & \color{red}{1} & \color{blue}{0} & \color{blue}{1} & \color{blue}{0} & \color{blue}{1} & \color{blue}{0} & \color{blue}{1} & \color{red}{0} & \color{red}{0} & 0 & 0 & 0 & 0 & 0 & 0 & \dots \\
                        \dots & 0 & 0 & 0 & 0 & 0 & 0 & \color{blue}{1} & \color{blue}{0} & \color{blue}{1} & \color{blue}{0} & \color{blue}{1} & \color{blue}{0} & \color{red}{1} & \color{red}{1} & \color{red}{0} & \color{red}{0} & 0 & 0 & 0 & 0 & \dots \\
                        \dots & 0 & 0 & 0 & 0 & 0 & 0 & 0 & \color{blue}{1} & \color{blue}{0} & \color{blue}{1} & \color{blue}{0} & \color{blue}{1} & \color{blue}{0} & 0 & \color{red}{1} & \color{red}{1} & \color{red}{0} & \color{red}{0} & 0 & 0 & \dots
                        \end{matrix}
                    \end{equation}
                    \caption{One $2$-soliton with volume $1$ and one $1$-soliton with volume $3$ are included in this figure. These solitons are interacting from the second line to fourth line. The $2$-soliton overtakes the group of $1$-solitons simultaneously.}\label{ex:int_2}
            \end{figure} 
        \begin{figure}[H]
                    \footnotesize
                    \centering
                    \begin{equation}
                        \begin{matrix}
                        \dots & \color{red}{1} & \color{red}{1} & \color{red}{0} & \color{red}{0} & \color{red}{1} & \color{red}{1} & \color{red}{0} & \color{red}{0} & 0 & \color{blue}{1} & \color{blue}{0} & 0 & 0 & 0 & 0 & 0 & 0 & 0 & 0 & 0 & \dots \\
                        \dots & 0 & 0 & \color{red}{1} & \color{red}{1} & \color{red}{0} & \color{red}{0} & \color{red}{1} & \color{red}{1} & \color{red}{0} & \color{red}{0} & \color{blue}{1} & \color{blue}{0} & 0 & 0 & 0 & 0 & 0 & 0 & 0 & 0 & \dots \\
                        \dots & 0 & 0 & 0 & 0 & \color{red}{1} & \color{red}{1} & \color{red}{0} & \color{red}{0} & \color{red}{1} & \color{red}{1} & \color{blue}{0} & \color{blue}{1} & \color{red}{0} & \color{red}{0} & 0 & 0 & 0 & 0 & 0 & 0 & \dots \\
                        \dots & 0 & 0 & 0 & 0 & 0 & 0 & \color{red}{1} & \color{red}{1} & \color{red}{0} & \color{red}{0} & \color{blue}{1} & \color{blue}{0} & \color{red}{1} & \color{red}{1} & \color{red}{0} & \color{red}{0} & 0 & 0 & 0 & 0 & \dots \\
                        \dots & 0 & 0 & 0 & 0 & 0 & 0 & 0 & 0 & \color{red}{1} & \color{red}{1} & \color{blue}{0} & \color{blue}{1} & \color{red}{0} & \color{red}{0} & \color{red}{1} & \color{red}{1} & \color{red}{0} & \color{red}{0} & 0 & 0 & \dots \\
                        \dots & 0 & 0 & 0 & 0 & 0 & 0 & 0 & 0 & 0 & 0 & \color{blue}{1} & \color{blue}{0} & \color{red}{1} & \color{red}{1} & \color{red}{0} & \color{red}{0} & \color{red}{1} & \color{red}{1} & \color{red}{0} & \color{red}{0} & \dots 
                        \end{matrix}
                    \end{equation}
                    \caption{One $2$-soliton with volume $2$ and one $1$-soliton with volume $1$ are included in this figure. These solitons are interacting from the second line to fifth line. Each $2$-solitons overtake the $1$-soliton step by step.}\label{ex:int_1}
            \end{figure}
            \begin{figure}[H]
                    \footnotesize
                    \centering
                    \begin{equation}
                        \begin{matrix}
                        \dots & \color{green}{1} & \color{green}{1} & \color{green}{1} & \color{green}{0} & \color{green}{0} & \color{green}{0} & \color{red}{1} & \color{red}{1} & \color{blue}{0} & \color{blue}{1} & \color{red}{0} & \color{red}{0} & 0 & 0 & 0 & 0 & 0 & 0 & 0 & 0 & 0 & \dots \\
                        \dots & 0 & 0 & 0 & \color{green}{1} & \color{green}{1} & \color{green}{1} & \color{red}{0} & \color{red}{0} & \color{blue}{1} & \color{blue}{0} & \color{red}{1} & \color{red}{1} & \color{green}{0} & \color{green}{0} & \color{green}{0} & 0 & 0 & 0 & 0 & 0 & 0 & \dots \\
                        \dots & 0 & 0 & 0 & 0 & 0 & 0 & \color{red}{1} & \color{red}{1} & \color{blue}{0} & \color{blue}{1} & \color{red}{0} & \color{red}{0} & \color{green}{1} & \color{green}{1} & \color{green}{1} & \color{green}{0} & \color{green}{0} & \color{green}{0} & 0 & 0 & 0 & \dots \\
                        \dots & 0 & 0 & 0 & 0 & 0 & 0 & 0 & 0 & \color{blue}{1} & \color{blue}{0} & \color{red}{1} & \color{red}{1} & \color{red}{0} & \color{red}{0} & 0 & \color{green}{1} & \color{green}{1} & \color{green}{1} & \color{green}{0} & \color{green}{0} & \color{green}{0} & \dots
                        \end{matrix}
                    \end{equation}
                    \caption{One $3$-soliton with volume $1$, one $2$-soliton with volume $1$ and one $1$-soliton with volume $1$ are included in this figure. The $2$-soliton overtakes the $1$-soliton after being overtaken by the $3$-soliton. }\label{ex:int_3}
            \end{figure}
        \noindent Hence, if the $i$-th $k$-soliton is free at time $n - 1$, then 
            \begin{align}
                &N^{{i}}_{k,\ell}(\eta,n) \\
                &= \left| \left\{ \gamma \in \Gamma_{\ell}\left(\eta\right) \ ; \ \gamma(n - 1) \subset \left[ H_{1}\left( \gamma^{{i}}_{k}(n - 1) \right), T_{1}\left( \gamma^{{i}}_{k}(n - 1) \right) \right] \right\} \right| \label{eq:overtake}.
            \end{align}
        {In addition, for any $k \in \N$, $n \in \Z_{\ge 0}$ and $i,j \in \Z$ such that $\gamma^{i}_k \in Con\left(\gamma^{(j)}_k\right)$, $M^{i}_{k}\left(\eta,n\right) = M^{(j)}_{k}\left(\eta,n\right)$. }
        On the other hand, we have the following inequality for $M^{i}_{k}(n)$ : 
            \begin{align}\label{ineq:overtaken}
                2\sum_{\ell \ge k + 1} M^{{i}}_{k, \ell}(\eta,n) \le M^{{i}}_{k}(\eta,n) \le 1 +  2\sum_{\ell \ge k + 1} M^{{i}}_{k, \ell}(\eta,n). 
            \end{align}
        We note that since the operator $T$ and spatial shift operators are commutative, the values of $N^{{i}}_{k,\ell}(n)$, $M^{{i}}_{k}(n)$, $M^{{i}}_{k,\ell}(n)$ are invariant under any spatial shift that does not change the numbering of solitons. For later use, we write this fact as a lemma. We define spatial shift operators $\tau_{y} : \Omega \to \Omega,  y \in \Z$ as 
            \begin{align}
                \tau_{y}\eta(x) := \eta(x + y),
            \end{align}
        for any $x \in \Z$. 
            \begin{lemma}\label{lem:sp_shift}
                Suppose that $\eta \in \Omega$. Then, for any $k,\ell \in \N$, $i \in \Z$ and $n \in \Z_{\ge 0}$, we have 
                    \begin{align}
                        X^{{i}}_{k}\left( \tau_{s_{\infty}(\eta,0)}\eta, n \right) = X^{{i}}_{k}\left( \eta, n \right) -  s_{\infty}(\eta,0),
                    \end{align}
                and
                    \begin{align}
                        N^{{i}}_{k,\ell}\left( \tau_{s_{\infty}(\eta,0)}\eta, n \right) &= N^{{i}}_{k,\ell}\left( \eta, n \right), \\
                        M^{{i}}_{k}\left( \tau_{s_{\infty}(\eta,0)}\eta, n \right) &= M^{{i}}_{k}\left( \eta, n \right), \\
                        M^{{i}}_{k,\ell}\left( \tau_{s_{\infty}(\eta,0)}\eta, n \right) &= M^{{i}}_{k,\ell}\left( \eta, n \right).
                    \end{align}
            \end{lemma}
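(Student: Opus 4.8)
The plan is to reduce all four identities to the single structural fact that every object appearing in the statement — the dynamics $T$, the records $s_{\infty}(\eta,\cdot)$, the soliton decomposition, and the time-tracking of a soliton — is covariant under the spatial shift $\tau_{y}$, and then to specialize to $y = s_{\infty}(\eta,0)$. The phrase preceding the lemma (``$T$ and spatial shifts are commutative'') is exactly the engine, and the content of the lemma is that the shift by $s_{\infty}(\eta,0)$ is one that preserves the numbering of solitons.

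First I would record the shift-covariance of records. Since the characterization \eqref{eq:char_rec} involves only the partial sums of $2\eta(y)-1$ over intervals, a change of summation variable shows that $x$ is a record in $\tau_{y}\eta$ if and only if $x+y$ is a record in $\eta$; equivalently, the record set of $\tau_{y}\eta$ is $\{\,r-y : r \text{ a record in }\eta\,\}$. Combined with \eqref{eq:char_dy}, whose flipping rule is local and reads off records, this yields $T\tau_{y} = \tau_{y}T$, hence $T^{n}\tau_{y} = \tau_{y}T^{n}$ for all $n$. Feeding the record-covariance into the recursive definition of $s_{\infty}$ gives $s_{\infty}(\tau_{y}\eta,i) = s_{\infty}(\eta,i) - y$ for every $i \in \Z$; in particular, for $y = s_{\infty}(\eta,0)$ we get $s_{\infty}(\tau_{y}\eta,0) = 0$, so the reference record becomes the new origin.

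Next I would push this covariance through the soliton identification. Because the excursions \eqref{def:excursion} are cut out by records and the TS-algorithm acts run-by-run inside each excursion, the decomposition is shift-covariant: $\gamma \in \Gamma_{k}(\eta)$ if and only if $\gamma - y \in \Gamma_{k}(\tau_{y}\eta)$, with $X(\gamma - y) = X(\gamma) - y$ since the offset in $X(\gamma) = \inf\gamma - 1$ is additive. The heads and tails likewise satisfy $H(\gamma - y) = H(\gamma) - y$ and $T(\gamma - y) = T(\gamma) - y$, so the tracking relation $T(\gamma(n-1)) = H(\gamma(n))$, together with its uniqueness (\cite[Proposition 1.3]{FNRW}), propagates by induction on $n$ to give $(\gamma - y)(n) = \gamma(n) - y$. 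Finally, the natural numbering is anchored at $s_{\infty}(\eta,0)$ and defined through comparisons of the $X(\gamma)$'s; since both the anchor and every position shift by the same $y$, all comparisons are preserved and $\gamma^{i}_{k}(\tau_{y}\eta) = \gamma^{i}_{k}(\eta) - y$ for every $i$. Specializing $y = s_{\infty}(\eta,0)$ and reading positions at time $n$ yields the first identity $X^{i}_{k}(\tau_{s_{\infty}(\eta,0)}\eta,n) = X^{i}_{k}(\eta,n) - s_{\infty}(\eta,0)$.

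For the three remaining identities I would observe that $N^{i}_{k,\ell}$, $M^{i}_{k}$ and $M^{i}_{k,\ell}$ are all cardinalities of sets cut out by the relations ``free'', ``interacting'' and ``overtakes at time $m$'', each phrased purely through strict inequalities among the $X(\gamma(m))$'s and the $s_{\infty}(m)$'s. By the covariance just established all these quantities shift by the common amount $y$, so every defining inequality is preserved; as cardinalities carry no additive offset, the three counts are literally unchanged. The only point demanding care is the bookkeeping of the numbering — checking that the shift by exactly $s_{\infty}(\eta,0)$ sends the $i$-th $k$-soliton to the $i$-th $k$-soliton rather than reindexing — and this is precisely what $s_{\infty}(\tau_{y}\eta,0)=0$ guarantees. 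I expect this index-matching to be the main (if modest) obstacle; everything else is routine propagation of shift-covariance.
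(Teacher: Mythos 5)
Your argument is correct and is essentially the same one the paper uses: the lemma is stated there without a formal proof, justified only by the preceding remark that $T$ commutes with spatial shifts and that a shift by $s_{\infty}(\eta,0)$ preserves the numbering of solitons, which is precisely the covariance-plus-index-matching argument you spell out. Your write-up simply supplies the details the paper leaves implicit.
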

    
    Thanks to Lemma \ref{lem:sp_shift}, we see that $Y^{{i}}_{k}$ is a function of $\tau_{s_{\infty}(\eta,0)}\eta$.

\subsection{Seat number configuration for the box-ball system}
    
        To derive the limiting behaviors of solitons, it is useful to consider seat number configuration space in which the dynamics of the BBS is linearized. In this section, we briefly recall the linearization method introduced by \cite{MSSS} and seat by \cite{S}. The main idea of this method is to assign a different parameter to each $0,1$ in $\eta \in \Omega$ based on the fact that $\eta$ contains many kinds of solitons. Then, we introduce a class of invariant measures for the BBS that are defined via the seat number configuration space. 

        Throughout this subsection, we fix an $\eta \in \Omega$ arbitrarily. 
        First, we introduce the notion of {\it carrier with seat numbers}. We consider a situation in which the {\it seats} of the carrier $W$ are indexed by $k \in \N$, i.e, $W$ is decomposed as 
            \begin{align}
                W\left(\eta,x\right) := \sum_{k \in \N} \mathcal{W}_{k}\left(\eta,x\right), \quad \mathcal{W}_{k}\left(\eta,x\right) \in \{0,1\},
            \end{align}
        where $\mathcal{W}_{k}\left(\eta,x\right) = 1$ means that the No.$k$ seat is occupied {when the carrier is at the site $x \in \mathbb{Z}$}. 
        Then, the refined update rule of such a carrier is given as follows:
            \begin{itemize}
                \item If there is a ball at site $x$, then the carrier picks up the ball and puts it at the empty seat with the smallest seat number;  
                \item If the site $x$ is empty, namely $\eta(x)=0$, and if there is at least one occupied seat, then the carrier puts down the ball at the occupied seat with the smallest seat number;
                \item Otherwise, the carrier just passes through. 
            \end{itemize}
        In other words, $\mathcal{W}_{k}, k \in \N$ are defined as $\mathcal{W}_{k}\left(\eta,s_{\infty}\left(\eta,i\right)\right) := 0$ for any $i \in \Z$ and 
            \begin{align}
                &\mathcal{W}_{k}\left(\eta,x\right) - \mathcal{W}_{k}\left(\eta,x-1\right) \\ 
                &:=
                \begin{dcases}
                    1 \ & \ \text{ if } \ \sum_{\ell = 1}^{k-1}\mathcal{W}_{\ell}\left(\eta,x-1\right) = 1,  \mathcal{W}_{k}\left(\eta,x-1\right) = 0 \text{ and } \eta(x) = 1, \\
                    -1 \ & \ \text{ if } \ \sum_{\ell = 1}^{k-1}\mathcal{W}_{\ell}\left(\eta,x-1\right) = 0,  \mathcal{W}_{k}\left(\eta,x-1\right) = 1 \text{ and } \eta(x) = 0, \\
                    0 \ & \ \text{otherwise},
                \end{dcases}
            \end{align}
        and we call $\mathcal{W}_{k}, k \in \N$ the carrier with seat numbers. 
            
        By using the carrier with seat numbers, we define the {\it seat number configuration} $\eta^{\sigma}_{k} \in \Omega$, $\sigma \in \left\{ \uparrow, \downarrow \right\}$, $k \in \N$ and $r \in \{0,1 \}^{\Z}$ as
            \begin{align}
                \eta^{\uparrow}_{k}\left(x\right) &:= 
                \begin{dcases}
                    1 \ & \ \mathcal{W}_{k}\left(\eta,x\right) - \mathcal{W}_{k}\left(\eta,x-1\right) = 1, \\
                    0 \ & \ \text{otherwise,}
                \end{dcases} \\
                \eta^{\downarrow}_{k}\left(x\right) &:= 
                \begin{dcases}
                    1 \ & \ \mathcal{W}_{k}\left(\eta,x\right) - \mathcal{W}_{k}\left(\eta,x-1\right) = -1, \\
                    0 \ & \ \text{otherwise,}
                \end{dcases}
            \end{align}
        and    
            \begin{align}
                r\left(\eta,x\right) := 
                \begin{dcases}
                    1 \ & \ x = s_{\infty}\left(\eta,i\right) \text{ for some } i \in \Z, \\
                    0 \ & \ \text{otherwise.}
                \end{dcases}
            \end{align}
        We note that by the seat number configuration, all $1, 0$ in $\eta$ are distinguished by the parameter $(k,\sigma)$ in the following sense : for any $x \in \Z$, 
            \begin{align}
                r(x) + \sum_{k \in \N} (\eta^{\uparrow}_{k}(x) + \eta^{\downarrow}_{k}(x)) = 1.
            \end{align}
        In the following, if a site $x \in \Z$ satisfies $\eta^{\sigma}_{k}\left(x\right) = 1$ for some $k \in \N$ and $\sigma \in \{\uparrow, \downarrow\}$, then we call $x$ a $\left(k,\sigma\right)$-seat.  
        \begin{remark}\label{rem:seat_soliton}
            We note that the seat number configuration can be described in terms of solitons as follows, see \cite{MSSS, S} for details. 
                \begin{align}
                    \eta^{\uparrow}_{k}\left(x\right) &= 
                    \begin{dcases}
                        1 \ & \ x = H_{k}\left(\gamma\right) \text{ for some } \gamma \in \bigcup_{\ell \ge k} \Gamma_{\ell}, \\
                        0 \ & \ \text{otherwise,}
                    \end{dcases} \\
                    \eta^{\downarrow}_{k}\left(x\right) &= 
                    \begin{dcases}
                        1 \ & \ x = T_{k}\left(\gamma\right) \text{ for some } \gamma \in \bigcup_{\ell \ge k} \Gamma_{\ell}, \\
                        0 \ & \ \text{otherwise.}
                    \end{dcases}
                \end{align}         
            In other words, a $k$-soliton consists of exactly one $\left(\ell,\sigma\right)$-seat for each $1 \le \ell \le k$ and $\sigma \in \{\uparrow, \downarrow\}$.
        \end{remark}
        Then, by using the above configurations, for each $k \in \N$, we define a non-decreasing function  $\xi_{k}\left(\ \cdot \ \right) : \Z \to \Z $ and its inverse {(for a certain sense)} $s_{k}\left(\ \cdot \ \right) : \Z \to \Z $ as
            \begin{align}
                \xi_{k}\left( \eta, x \right) - \xi_{k}\left( \eta, x - 1 \right) &:= r\left(\eta,x\right) + \sum_{\ell \in \N} \sum_{\sigma \in \{\uparrow, \downarrow\}} \eta^{\sigma}_{k + \ell}\left( x \right),  \\
                \xi_{k}\left(\eta, s_{\infty}\left(\eta,0\right) \right) &:= 0,
            \end{align}
        and
            \begin{align}\label{def:s_k}
                s_{k}\left(\eta, x \right) := \min \left\{ y \in \Z \ ; \ \xi_{k}\left(\eta,y \right) = x \right\}.
            \end{align}
        \begin{remark}
                The intuitive meaning of $\xi_{k}$ is that it is a function that counts the number of $1$s and $0$s from the reference point $s_{\infty}(0)$, ignoring solitons of {size} $k$ or less, and ignoring up to the k-th $1$s and $0$s constituting solitons, see Remark \ref{rem:seat_soliton}.  This counting method allows us to measure the effective distance between solitons, see Remark \ref{rem:eff_dis}. 
            \end{remark}
        Finally, for any $k \in \Z$, we define $\zeta_{k} : \Z \to \Z_{\ge 0}$ as 
            \begin{align}
                \zeta_{k}\left(\eta,i\right) := \sum_{y = s_{k}\left(\eta,i\right) + 1}^{s_{k}\left(\eta,i + 1\right)} \left( \eta^{\uparrow}_{k}\left(y\right) - \eta^{\uparrow}_{k+1}\left(y\right) \right).
            \end{align}
            
        We emphasize three important properties about $\zeta_{k}$ as follows. 
        The first is that the function $\zeta_{k}$ and $k$-soltions are related via the following formula,
            \begin{align}
                \zeta_{k}\left(\eta,i\right) &= \left| \left\{ \gamma \in \Gamma_{k}\left(\eta\right) \ ; \ \gamma \subset \left[ s_{k}\left(\eta,i\right), s_{k}\left(\eta,i+1\right) \right] \right\} \right|,
            \end{align}
        i.e., $\zeta_{k}$ represents the total number of $k$-solitons satisfying $\xi_{k}(\eta,X(\gamma)) = i$. In particular, our $\zeta$ coincides with the slot decomposition introduced in \cite{FNRW}, see \cite[Section 2.1, Proposition 2.3]{MSSS} and \cite[Section 4.1]{S} for details. From the same reasons as in the discussion just before Lemma \ref{lem:sp_shift}, we can see that for any $k \in \N$, $i \in \Z$, 
            \begin{align}\label{eq:zeta_shift}
                \zeta_{k}\left( \tau_{s_{\infty}(0)} \eta, i\right) = \zeta_{k}\left( \eta, i\right).
            \end{align}
        The second is the bijectivity {between $\zeta$ and $\eta$ satisfying $s_{\infty}(0)=0$, namely the configuration such that the origin is a record}. We define {the space of such configurations } $\Omega_{0} \subset \Omega$, {and also introduce}  $\bar{\Omega} \subset \Z_{\ge 0}^{\mathbb{N} \times \Z}$ as 
                    \begin{align}
                        \Omega_{0} &:= \Omega \cap \left\{ s_{\infty}\left(0\right) = 0 \right\} \\
                        \bar{\Omega} &:= \left\{ \zeta \in  \Z_{\ge 0}^{\mathbb{N} \times \Z} \ ; \ \sum_{k \in \N} \zeta_{k}\left(i\right) < \infty \ \text{for any } i \right\}.
                    \end{align}
        It is known that $\zeta : \Omega_{0} \to \bar{\Omega}$ is a bijection, {see} \cite[Section 3]{FNRW} for details. 
        We note that we can not reconstruct the original $\eta$ from $(\zeta_{k}(\eta, \ \cdot \ ))_{k \in \N}$ in general, because there is an arbitrariness in the choice of the position of $s_{\infty}(0)$ from \eqref{eq:zeta_shift}, see also Figure \ref{fig:diag_eta_zeta} for a summary of these properties, where for any $\eta \in \Omega$, we write 
            \begin{align}\label{def:palm_eta}
                \tilde{\eta} := \tau_{s_{\infty}(0)} \eta \in \Omega_{0}.
            \end{align} 
        \begin{figure}[H]
            \centering
            \begin{tikzcd}
                \eta \arrow[r] & \arrow[l] \left( \tilde{\eta},  s_{\infty}\left(0\right) \right) \arrow[r] & \tilde{\eta} \arrow[r, "\zeta"] & \arrow[l, "\zeta^{-1}"] \left(\zeta_{k}\left(i\right)\right)_{k \in \N, i \in \Z}
            \end{tikzcd}
            \caption{The relationship between $\eta$ and $\zeta$. The arrows $\leftrightarrow$ represent certain bijections, and the arrow $\rightarrow$ represents the first coordinate projection.}
            \label{fig:diag_eta_zeta}
        \end{figure}

        The third is that the dynamics of the BBS can be linearized via $\zeta_{k}$ with a certain offset \cite{FNRW, S}. Here, we cite the result by \cite{S} for later use. 
            \begin{theorem*}[Theorem 4.{5} in \cite{S}]
                Suppose that $\eta \in \Omega$. Then, for any $k \in \N$ and $i \in \Z$, we have
                    \begin{align}\label{eq:linear_seat}
                        \zeta_{k}\left(T\eta, i +  k + o_{k}\left(\eta\right) \right) = \zeta_{k}\left(\eta, i \right),
                    \end{align}
                where the offset $o_{k}\left(\eta\right)$ is given by 
                    \begin{align}
                        &o_{k}\left(\eta\right) \\
                        &:= s_{\infty}\left(\eta,0\right) - s_{\infty}\left( T\eta,0\right) + 2\sum_{y = s_{\infty}\left(0\right)+ 1}^{0} \sum_{\ell = 1}^{k}  \eta^{\downarrow}_{\ell}\left(y\right) - 2\sum_{y = Ts_{\infty}\left(0\right)+1}^{0}\sum_{\ell = 1}^{k}  T\eta^{\uparrow}_{\ell}\left(y\right).
                    \end{align}
            \end{theorem*}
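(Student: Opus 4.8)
\emph{The plan is to} reduce the linearization identity \eqref{eq:linear_seat} to a statement about a single soliton and then drive the computation with two structural facts: a partition-of-unity formula for the increments of $\xi_{k}$, and the head/tail duality of the one-step map. Recall that $\zeta_{k}(\eta,i)$ counts the $k$-solitons $\gamma$ whose slot index satisfies $\xi_{k}(\eta,X(\gamma))=i$, and that $\gamma\mapsto\gamma(1)$ is a size-preserving bijection $\Gamma_{k}(\eta)\to\Gamma_{k}(T\eta)$ (conservation of the number of $k$-solitons together with \cite{FNRW}). Hence it suffices to prove that the slot index of every $k$-soliton is shifted by one and the same constant, i.e. for each $\gamma\in\Gamma_{k}(\eta)$,
\[
\xi_{k}\bigl(T\eta,X(\gamma(1))\bigr)=\xi_{k}\bigl(\eta,X(\gamma)\bigr)+k+o_{k}(\eta).
\]
By Remark \ref{rem:seat_soliton} the leftmost point of $\gamma$ is its first head and that of $\gamma(1)$ is its first head, which by the duality below is the first tail of $\gamma$; thus $X(\gamma)=H_{1}(\gamma)-1$ and $X(\gamma(1))=T_{1}(\gamma)-1$, turning the target into a controlled lattice count between $H_{1}(\gamma)$ and $T_{1}(\gamma)$.

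The engine is the partition identity $r(\eta,x)+\sum_{m\in\N}\bigl(\eta^{\uparrow}_{m}(x)+\eta^{\downarrow}_{m}(x)\bigr)=1$ recorded just before \eqref{eq:zeta_shift}. Substituting it into the defining increment of $\xi_{k}$, the record term and the levels $m>k$ combine, leaving
\[
\xi_{k}(\eta,x)-\xi_{k}(\eta,x-1)=1-\sum_{\ell=1}^{k}\bigl(\eta^{\uparrow}_{\ell}(x)+\eta^{\downarrow}_{\ell}(x)\bigr),
\]
so $\xi_{k}$ is simply ``arc length minus the level-$\le k$ seats.'' The second ingredient is the duality $(T\eta)^{\uparrow}_{\ell}=\eta^{\downarrow}_{\ell}$ for every $\ell\in\N$, which follows from \cite[Proposition 1.3]{FNRW} (namely $T(\gamma)=H(\gamma(1))$) combined with Remark \ref{rem:seat_soliton}: the $\ell$-th tail of $\gamma$ in $\eta$ occupies the same site as the $\ell$-th head of $\gamma(1)$ in $T\eta$.

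I would then split the target difference as $A+B$, where $A=\xi_{k}(T\eta,T_{1}(\gamma)-1)-\xi_{k}(T\eta,H_{1}(\gamma)-1)$ and $B=\xi_{k}(T\eta,H_{1}(\gamma)-1)-\xi_{k}(\eta,H_{1}(\gamma)-1)$. For $A$, the increment formula in $T\eta$ gives
\[
A=\bigl(T_{1}(\gamma)-H_{1}(\gamma)\bigr)-\sum_{y=H_{1}(\gamma)}^{T_{1}(\gamma)-1}\sum_{\ell=1}^{k}\bigl((T\eta)^{\uparrow}_{\ell}(y)+(T\eta)^{\downarrow}_{\ell}(y)\bigr);
\]
although the raw displacement $T_{1}(\gamma)-H_{1}(\gamma)$ grows with the number of larger solitons overtaken (compare \eqref{eq:inc_sol}), each such larger soliton deposits exactly its level-$\le k$ seats into the subtracted sum, and the bookkeeping should collapse $A$ to the constant $k$ for every $\gamma$, free or interacting. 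For $B$, writing both terms as sums from their respective anchors $s_{\infty}(\eta,0)$ and $s_{\infty}(T\eta,0)$ and applying the duality, the per-site integrand difference reduces (up to sign) to $\sum_{\ell=1}^{k}\bigl((T\eta)^{\downarrow}_{\ell}(y)-\eta^{\uparrow}_{\ell}(y)\bigr)$, whose total over any large interval is bounded, since the numbers of level-$\ell$ heads and of level-$\ell$ tails both equal the conserved number of solitons of size $\ge\ell$. Consequently $B$ stabilizes to a $\gamma$-independent constant, and collecting the anchor shift $s_{\infty}(\eta,0)-s_{\infty}(T\eta,0)$ together with the two residual boundary sums between the anchors and the origin reproduces precisely $o_{k}(\eta)$. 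A preliminary reduction to $\eta\in\Omega_{0}$ through \eqref{eq:zeta_shift}, setting $s_{\infty}(\eta,0)=0$, trims the first boundary sum and streamlines the computation.

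The hard part will be the exact cancellation in $A$: proving that the surplus displacement generated by overtaking larger solitons is compensated term-by-term by those solitons' level-$\le k$ seats, uniformly in $\gamma$ and in the interaction configuration. This is the combinatorial heart of the linearization. The remaining difficulty is pure bookkeeping, namely matching the boundary contributions in $B$ to the closed form of $o_{k}(\eta)$, where some care is needed because $T$ does not preserve $\Omega_{0}$ and the anchor $s_{\infty}(0)$ genuinely moves under one time step.
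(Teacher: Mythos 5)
You should first note that the paper does not prove this statement: it is quoted from \cite[Theorem 4.5]{S}, and the closest in-house argument is the proof of Lemma \ref{lem:dif_xi} in Appendix \ref{app:lem_1}, which itself imports from the proof in \cite{S} the key identity
\[
\xi_{k}\left(T\eta,x\right)-\xi_{k}\left(\eta,x\right)=W_{k}\left(T\eta,x\right)+W_{k}\left(\eta,x\right)+o_{k}\left(\eta\right),
\]
where $W_{k}$ is the carrier with capacity $k$. Your reduction of \eqref{eq:linear_seat} to a per-soliton slot shift, your increment formula for $\xi_{k}$, and the duality $\left(T\eta\right)^{\uparrow}_{\ell}=\eta^{\downarrow}_{\ell}$ are all correct and are the right ingredients. (A small caveat: $X\left(\gamma\right)=\inf\gamma-1$ need not equal $H_{1}\left(\gamma\right)-1$, since solitons can have tails to the left of heads both statically and during interactions; the identification is harmless only because every site of $\left[\inf\gamma,H_{1}\left(\gamma\right)-1\right]$ is a seat of level $\le k$ and hence contributes $0$ to $\xi_{k}$, which you should justify via the nesting structure rather than Remark \ref{rem:seat_soliton} alone.)

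The genuine gap is in the claimed values of $A$ and $B$, both of which fail when $\gamma$ is not free. If $\gamma$ is being overtaken at this step, then $X\left(\gamma\left(1\right)\right)=X\left(\gamma\right)$ and every site of $\left(H_{1}\left(\gamma\right)-1,\,T_{1}\left(\gamma\right)-1\right]$ is a level-$\le k$ seat of $T\eta$, so $A=0$, not $k$, and correspondingly $B=k+o_{k}\left(\eta\right)$, not $o_{k}\left(\eta\right)$; one can check this explicitly on the $1$-soliton trapped inside the $2$-soliton in the introduction's example, where $A=0$ and $B=1+o_{1}$. More fundamentally, $B=\xi_{k}\left(T\eta,x\right)-\xi_{k}\left(\eta,x\right)$ is not a $\gamma$-independent constant: by the displayed carrier identity it equals $o_{k}\left(\eta\right)$ plus $W_{k}\left(T\eta,x\right)+W_{k}\left(\eta,x\right)$, which varies with $x$ between $0$ and $2k$. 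Your argument for constancy --- that the per-site discrepancy $\sum_{\ell\le k}\bigl(\left(T\eta\right)^{\downarrow}_{\ell}-\eta^{\uparrow}_{\ell}\bigr)$ has bounded partial sums because head and tail counts agree --- establishes boundedness, not constancy, and constancy is false. What the proof actually needs is (i) the carrier identity above and (ii) the seat-counting evaluation $W_{k}\left(T\eta,x\right)+W_{k}\left(\eta,x\right)=k$ at the \emph{new} position $x=X\left(\gamma\left(1\right)\right)$, which holds in both cases by the dichotomy ($W_{k}\left(\eta,\cdot\right)=k$, $W_{k}\left(T\eta,\cdot\right)=0$ when free, and the reverse when not); this is exactly the computation around \eqref{eq:dif_xi_1} in Appendix \ref{app:lem_1}, and it makes the whole shift come from the time-difference at the new position, with the spatial term vanishing for $\ell=k$. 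Your $A$-cancellation, which you rightly flag as the hard part, does go through for free solitons, but the $B$ half of your plan must be replaced by the carrier identity rather than a constancy claim, and the non-free case must be treated separately.
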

        \begin{remark}
                We note that if $s_{\infty}\left(0\right) = 0$ and any soliton do not cross the origin $x = 0$ {in the evolution from $\eta$ to $T\eta$}, then $o_{k} = 0$ for any $k \in \N$. Hence, if $\eta \left(x\right) = 0$ for any $x \le 0$, then 
                    \begin{align}
                        \zeta_{k}\left(T^n\eta, i + nk\right) = \zeta_{k}\left(\eta,i\right),
                    \end{align}
                for any $k \in \N$, $n \in \N$ and $i \in \Z$. The above equation reproduces the linearization result for the BBS on $\left\{0,1 \right\}^{\N}$ shown in \cite{MSSS}. 
            \end{remark}
    Later in this paper we will use the following lemma, which can be considered as a version of \eqref{eq:linear_seat}. The proof will be given in Section \ref{app:lem_1}. 
        \begin{lemma}\label{lem:dif_xi}
            For any $k \ge \ell$, $i \in \mathbb{Z}$ and $n \in \N$, we have 
                \begin{align}
                    &\xi_{\ell}\left( T^{n}\eta, X^{(i)}_{k}\left(n\right) \right) - \xi_{\ell}\left( T^{n-1}\eta, X^{(i)}_{k}\left(n-1\right) \right) \\ 
                    &= 
                    \begin{dcases}
                        k + o_{\ell}\left(T^{n-1}\eta\right) + 2 \sum_{h = \ell + 1}^{k - 1}(h - \ell) N^{(i)}_{k,h}\left(\eta, n\right) \ & \  \text{if } \gamma^{(i)}_{k}(n-1) \text{ is free}, \\
                        \ell + o_{\ell}\left(T^{n-1}\eta\right) \ & \  \text{otherwise}.
                    \end{dcases}\label{eq:dif_xi}
                \end{align}
        \end{lemma}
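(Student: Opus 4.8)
The plan is to reduce the claim to a single time step and then compute the increment of $\xi_{\ell}$ by writing it as the \emph{actual} displacement of the soliton minus the number of low-index seats that the soliton's left edge crosses, feeding the position-increment formula \eqref{eq:inc_sol} into the seat--soliton dictionary of Remark \ref{rem:seat_soliton}. First I would reduce to $n=1$: applying the asserted identity to $\eta':=T^{n-1}\eta$ and to the $k$-soliton with volume sitting at $X^{(i)}_{k}(\eta,n-1)$ (which carries some label $j$ in $\eta'$) turns the general-$n$ statement for $(\eta,i)$ into the $n=1$ statement for $(\eta',j)$, because $N^{(i)}_{k,h}(\eta,n)=N^{(j)}_{k,h}(\eta',1)$, the offset appearing is $o_{\ell}(\eta')$, and ``free at time $n-1$'' becomes ``free at time $0$''. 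This reduction is purely definitional.

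For the single step, the key bookkeeping identity is that, by the partition of unity $r+\sum_{m}\sum_{\sigma}\eta^{\sigma}_{m}=1$ and the defining increment of $\xi_{\ell}$, one has $\xi_{\ell}(\eta,x)=\big(x-s_{\infty}(\eta,0)\big)-A_{\ell}(\eta,x)$, where $A_{\ell}(\eta,x)$ denotes the number of $(m,\sigma)$-seats with $1\le m\le\ell$ lying in $(s_{\infty}(\eta,0),x]$. Writing $\gamma=\gamma^{(i)}_{k}$ and $\Delta A_{\ell}:=A_{\ell}(T\eta,X(\gamma(1)))-A_{\ell}(\eta,X(\gamma))$, this yields
\[
\xi_{\ell}\big(T\eta,X(\gamma(1))\big)-\xi_{\ell}\big(\eta,X(\gamma)\big)
=\big[X(\gamma(1))-X(\gamma)\big]-\big[s_{\infty}(T\eta,0)-s_{\infty}(\eta,0)\big]-\Delta A_{\ell},
\]
whose first bracket is supplied directly by \eqref{eq:inc_sol}.

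Next I would evaluate the ``bulk'' part of $\Delta A_{\ell}$. By Remark \ref{rem:seat_soliton} an $h$-soliton carries exactly $2\min(h,\ell)$ seats of index $\le\ell$, so in the free case \eqref{eq:overtake} shows that each overtaken $h$-soliton contributes $+2\min(h,\ell)$ to $\Delta A_{\ell}$; combining $\sum_{h}2\min(h,\ell)N^{(i)}_{k,h}$ with $X(\gamma(1))-X(\gamma)=k+2\sum_{h=1}^{k-1}hN^{(i)}_{k,h}$ makes the indices $h\le\ell$ cancel and leaves exactly $k+2\sum_{h=\ell+1}^{k-1}(h-\ell)N^{(i)}_{k,h}$. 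In the not-free case $X(\gamma(1))=X(\gamma)$ and $N^{(i)}_{k,h}=0$, but $\gamma$ is trapped in the right half of a larger soliton $\gamma''$; as $\gamma''$ advances one step, its first $\ell$ low-index tail-seats pass from the left of the fixed position $X(\gamma)$ to its right while its low-index head-seats stay to the left, so the bulk contribution is $\Delta A_{\ell}=-\ell$, which produces the base value $\ell$.

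The remaining contributions to $\Delta A_{\ell}$ come only from low-index seats lying between the two (possibly distinct) anchors $s_{\infty}(\eta,0)$ and $s_{\infty}(T\eta,0)$, and together with $-\big[s_{\infty}(T\eta,0)-s_{\infty}(\eta,0)\big]$ they must reassemble into $o_{\ell}(\eta)$; here the boundary part of $o_{\ell}$ is precisely $2\sum_{y=s_{\infty}(0)+1}^{0}\sum_{m=1}^{\ell}\eta^{\downarrow}_{m}(y)-2\sum_{y=Ts_{\infty}(0)+1}^{0}\sum_{m=1}^{\ell}T\eta^{\uparrow}_{m}(y)$, i.e. the $\le\ell$ tail-seats of $\eta$ and head-seats of $T\eta$ near the origin. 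The hard part will be exactly this not-free/boundary analysis: justifying the collapsed-soliton count (that precisely $\ell$ low-index seats of the overtaking soliton cross the stranded position) and checking that the near-origin seat transport reproduces $o_{\ell}$ term by term, since through the shape-collapsing interaction the seats $H_{m},T_{m}$ lose their left-to-right order. The safest route is to re-run the seat-transport bookkeeping that establishes \eqref{eq:linear_seat}; the case $k=\ell$, where the claim reduces to \eqref{eq:linear_seat} itself, is a useful consistency check.
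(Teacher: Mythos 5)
Your reduction to $n=1$ and your bulk cancellation in the free case are both correct, and you have correctly located where the real work is — but you have not done that work, and your own decomposition makes it harder than it needs to be. The quantity $\Delta A_{\ell}=A_{\ell}(T\eta,X(\gamma(1)))-A_{\ell}(\eta,X(\gamma))$ compares seat counts taken in \emph{two different configurations} over \emph{two different intervals}, so evaluating it requires tracking exactly how every $(m,\sigma)$-seat with $m\le\ell$ relocates under $T$, including through collapsed-shape interactions and across the moving anchor $s_{\infty}(\cdot,0)$. Your claims that the trapped case contributes exactly $-\ell$ and that the near-origin transport ``must reassemble into $o_{\ell}$ term by term'' are precisely the content of the lemma, not consequences of anything you have established; deferring them to ``re-run the seat-transport bookkeeping that establishes \eqref{eq:linear_seat}'' leaves the proof incomplete.

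The paper avoids this entirely by splitting the increment at a different intermediate point:
\begin{align}
\xi_{\ell}\left(T\eta,X^{(i)}_{k}(1)\right)-\xi_{\ell}\left(\eta,X^{(i)}_{k}(0)\right)
=\left[\xi_{\ell}\left(T\eta,X^{(i)}_{k}(1)\right)-\xi_{\ell}\left(\eta,X^{(i)}_{k}(1)\right)\right]
+\left[\xi_{\ell}\left(\eta,X^{(i)}_{k}(1)\right)-\xi_{\ell}\left(\eta,X^{(i)}_{k}(0)\right)\right].
\end{align}
The first bracket is a time update at a \emph{single fixed site}, and the identity $\xi_{\ell}(T\eta,x)-\xi_{\ell}(\eta,x)=W_{\ell}(T\eta,x)+W_{\ell}(\eta,x)+o_{\ell}(\eta)$ from the proof of \cite[Theorem 4.5]{S} (with $W_{\ell}$ the capacity-$\ell$ carrier) packages all of the offset and seat-transport analysis into carrier values that are pinned down at a soliton's position: $W_{\ell}(\cdot,X(\gamma))$ equals $\ell$ or $0$ according to whether the configuration has a $1$ or a $0$ there, so the first bracket equals $\ell+o_{\ell}(\eta)$ in both the free and the not-free case. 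The second bracket is then a purely static count, within the single configuration $\eta$, of seats of index greater than $\ell$ in the traversed interval, which gives $0$ in the not-free case and $(k-\ell)+2\sum_{h=\ell+1}^{k-1}(h-\ell)N^{(i)}_{k,h}(1)$ in the free case. If you want to complete your version, you should either prove the carrier identity yourself or restructure your argument around this split so that the offset appears only through a one-site evaluation rather than through a global seat-transport ledger.
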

        
        \begin{remark}\label{rem:eff_dis}
                For each $k \in \N$, we define the $k$-th effective distance between $\gamma, \gamma' \in \Gamma_{k}$ as 
                    \begin{align}
                        d_{\text{eff},k}\left(\eta,\gamma, \gamma'\right) := \left| \xi_{k}\left(\eta,X\left(\gamma\right)\right) - \xi_{k}\left(\eta,X\left(\gamma'\right)\right) \right|. 
                    \end{align} 
                Then, we see that 
                    \begin{align}
                        d_{\text{eff},k}\left(\eta,\gamma, \gamma'\right) = 0 \text{ if and only if } \gamma' \in Con\left(\gamma\right).
                    \end{align}
                In addition, from Lemma \ref{lem:dif_xi}, we see that the effective distance is conserved in time, i.e., for any $\gamma, \gamma' \in \Gamma_{k}$ and $n \in \Z_{\ge 0}$, we have 
                    \begin{align}
                        d_{\text{eff},k}\left(T^n\eta,\gamma\left(n\right), \gamma'\left(n\right) \right) &= \left| \xi_{k}\left(T^n\eta , X\left(\gamma\left(n\right)\right)\right) - \xi_{k}\left(T^n\eta,  X\left(\gamma'\left(n\right)\right)\right) \right| \\
                        &= d_{\text{eff},k}\left(\eta,\gamma, \gamma'\right),
                    \end{align}
                and thus we get
                    \begin{align}
                        \left|T^n Con\left( \gamma\left(n\right) \right)\right| = \left|Con\left( \gamma \right)\right|.
                    \end{align}
            \end{remark}
            
\section{General initial distributions}\label{sec:general}

    In this section we recall a class of invariant measures for the BBS, introduced by \cite{FG}. Then, we consider scaling limits for solitons starting from such invariant measures. 

\subsection{q-statistics}\label{subsec:qstat}

       We recall a class of translation-invariant stationary measures on $\left\{0,1 \right\}^{\Z}$ introduced by \cite{FG}. We define a set of infinite number of parameters as follows : 
            \begin{align}
                \mathcal{Q} := \left\{ \mathbf{q} = \left(q_k\right)_{k \in \N} \in [0,1)^{\N} \ ; \ \sum_{k \in \N} k q_{k} < \infty \right\}.
            \end{align}
        From \cite[Theorem 4.4, 4.5]{FG}, for given $\mathbf{q} \in  \mathcal{Q}$, there exists a translation-invariant stationary measure {$\mu_{\mathbf{q}}$} such that $\left(\zeta_{k}\left(i\right)\right)_{k \in \N, i \in \Z}$ are i.i.d. for each $k$ and independent over $k$ under {$\mu_{\mathbf{q}}\left( \ \cdot \ | \Omega_{0} \right) = \mu_{\mathbf{q}}\left( \ \cdot \ | s_{\infty}\left(0\right) = 0 \right)$}, and its distribution is characterized via $\zeta$ as 
            \begin{align}\label{def:nu_q}
               {\mu}_{\mathbf{q}}\left( \zeta_{k}\left(i\right) = m | s_{\infty}\left(0\right) = 0 \right) = q_{k}^{m}\left(1 - q_{k}\right), 
            \end{align}
        for any $k \in \N$, $i \in \Z$ and $m \ge 0$. {For notational simplicity, we will write 
            \begin{align}
                \nu_{\q}\left( \ \cdot \ \right) := \mu_{\mathbf{q}}\left( \ \cdot \ | s_{\infty}\left(0\right) = 0 \right).
            \end{align}
        The measure $\mu_{\q}$ can be reconstructed from $\nu_{\q}$ by the inverse Palm transform as follows : 
            \begin{align}\label{def:inv_Palm}
                \mathbb{E}_{\mu_{\q}}\left[f\right] = \frac{\mathbb{E}_{\nu_{\q}}\left[ \sum_{z = 0}^{s_{\infty}\left(1\right) - 1} \tau_{z}f  \right]}{\mathbb{E}_{\nu_{\q}}\left[ s_{\infty}\left(1\right) \right]},
            \end{align}
        for any local function $f : \{0,1\}^{\Z} \to \R$, where $\tau_{z} f\left(\eta\right) := f\left(\tau_{z} \eta\right)$, $z \in \Z$, see \cite[Section 5]{FNRW} or \cite[Section 4.2]{FG} for details.} 
        {We note that if $\q \in \mathcal{Q}$, then the mean size of excursion under $\nu_{\q}$ is finite, that is,  $\E_{\nu_{\q}}\left[\left| \mathbf{e}^{(i)} \right|\right] = \E_{\nu_{\q}}\left[s_{\infty}\left(i+1\right) - s_{\infty}\left(i\right)\right] < \infty$, see \cite[Section 3]{FG}.}
        In the following, we call $\mu_\mathbf{q}$ the ${\q}$-statistics. For later use, we recall {some properties of {${\q}$-statistics} in the following remark.
        \begin{remark}\label{lem:ex_iid}
            Recall that the $i$-th excursion $\mathbf{e}^{(i)}(\eta)$ in $\eta$ is defined in \eqref{def:excursion}. 
            These excursions are elements of the set $\mathcal{E}$ given by
            \begin{align}
                \mathcal{E} &:= \cup_{m \in \Z_{\ge 0}} \mathcal{E}(m), \\
                \mathcal{E}(m)
                &:= \left\{ \mathbf{e} \in \{ 0, 1 \}^{2m + 1} \ ; \ \sup_{1 \le y \le 2m+1}\sum_{x = 1}^{y} \left( 2\mathbf{e}(x) - 1 \right) \le -1, \ \sum_{x = 1}^{2m+1} \mathbf{e}(x)  = m  \right\}.
            \end{align}
            For each $i \in \Z$, $\mathbf{e}^{(i)}\left(\eta\right)$ can be considered as an $\mathcal{E}$-valued random variable under $\nu_{\mathbf{q}}$. Then, from the explicit construction of $\nu_{\mathbf{q}}$ in \cite[Section 4]{FG}, $(\mathbf{e}^{(i)})_{i \in \Z}$ are an i.i.d. sequence under the conditional probability measure {$\nu_{\q}$}. In particular, the centered configuration $\tilde{\eta} \in \Omega_0$ defined in \eqref{def:palm_eta} is record-shift invariant under $\nu_\q$, i.e., 
                \begin{align}
                    \nu_{\q}\left( {\eta} \in \cdot \right) = \nu_{\q}\left( \tau_{s_{\infty}\left(x\right)} {\eta} \in \cdot \right),
                \end{align}
            for any $x \in \Z$. 
        \end{remark}}
        
        {The Bernoulli product measures and stationary Markov distributions are two important classes of ${\q}$-statistics.} Let $\text{Ber}\left(\rho\right)$ be the Bernoulli product measure on $\left\{0,1 \right\}^{\Z}$ with intensity $0 < \rho < 1/2$. By choosing $\mathbf{q}$ as 
            \begin{align}\label{def:ber_q}
            q_1:=\rho(1-\rho), \quad 
                q_{k} := \frac{\rho^{k}\left(1 - \rho\right)^{k}}{\prod_{\ell = 1}^{k-1}\left(1 - q_{\ell}\right)^{2\left(k - \ell\right)}} \ \text{for} \ k \ge 2, 
            \end{align}
        from \cite[Theorem 3.1, Corollary 4.6]{FG}, we have ${\mu}_{\mathbf{q}} = \text{Ber}\left(\rho\right)$. {We denote the class of parameters $\mathbf{q}=\q(\rho)$ given by \eqref{def:ber_q} for $0 < \rho < \frac{1}{2}$ by $\mathcal{Q}_{\mathrm{Ber}} \subset \mathcal{Q}$. }
        
        Another important class of $\q$-statistic{s} is two-sided Markov distribution on $\left\{ 0, 1 \right\}^{\Z}$ with transition matrix $P = (p_{ij})_{i, j = 0, 1}$ on $\{0,1\}$ satisfying $ 0 < p_{01} + p_{11} < 1$. 
        In \cite{FG}, it is proved that such Markov distribution can be obtained by choosing $\mathbf{q}$ as
            \begin{align}\label{def:Mar_q}
                q_1:=a, \quad q_{k} := \frac{a b^{k-1}}{\prod_{\ell = 1}^{k-1}\left(1 - q_{\ell}\right)^{2\left(k - \ell\right)}} \ \text{for} \ k \ge 2, 
            \end{align}
        where 
            \begin{align}
                a := p_{01}p_{10}, \quad b := p_{00}p_{11}.
            \end{align}
        As shown in \cite[Section 5.3]{S}, the above map $P= (p_{ij})_{i, j = 0, 1} \to (a,b)$ induces a bijection between the set of transition matrix $\{P = (p_{ij})_{i, j = 0, 1} \ ; \ 0<p_{01}+p_{11}<1 \}$ and the set of the pair of parameters $\{(a,b) \ ; \ a >0, \ 0 \le b <1, \sqrt{a} + \sqrt{b} <1 \}$. We define 
            \begin{align}\label{def:q_Markov}
                \mathcal{Q}_{\mathrm{M}} := \left\{ \q \ ; \  {\mu}_{\q} \text{ is a two-sided Markov distribution} \right\}.
            \end{align}
        For each $\q \in \mathcal{Q}_{\mathrm{M}}$, we denote by $a(\q),b(\q)$ the pair of parameters giving $\q$ by \eqref{def:Mar_q}. Note that by taking $a=b=\rho(1-\rho)$ we have $\mathcal{Q}_{\mathrm{Ber}} \subset \mathcal{Q}_{\mathrm{M}}$, and by taking $b = 0$, we have $ \left\{ \q \in \mathcal{Q} \ ; \ q_{k} = 0 \ k \ge 2 \right\} \subset \mathcal{Q}_{\mathrm{M}}$. 
            
        In the following, we will introduce another class of $\q$-statistics. To do so, we define a shift operator $\theta : [0,1)^{\N} \to [0,1)^{\N}$ as $\theta \mathbf{q} = \left(q_{k + 1}\right)_{k \in \N}$ for any $\mathbf{q} = \left(q_{k}\right)_{k \in \N} \in [0,1)^{\N}$. We note that $\theta \mathcal{Q} = \mathcal{Q}$. Moreover, in \cite[Theorem 5.{16}]{S}, it is shown that for $\q \in \mathcal{Q}_{\mathrm{M}}$, $\theta \q \in \mathcal{Q}_{\mathrm{M}}$ with 
            \begin{align}\label{eq:theta_ab}
                a\left(\theta \q\right)=\frac{a\left(\q\right)b\left(\q\right)}{\left(1-a\left(\q\right)\right)^2}, \quad b\left(\theta \q\right)=\frac{b\left(\q\right)}{\left(1-a\left(\q\right)\right)^2}.
            \end{align}
        From this, we have $\theta\mathcal{Q}_{\mathrm{M}} \subset \mathcal{Q}_{\mathrm{M}}$, but $\theta\mathcal{Q}_{\mathrm{M}} \neq \mathcal{Q}_{\mathrm{M}}$. Actually, $\mathcal{Q}_{\mathrm{Ber}} \not\subset \theta\mathcal{Q}_{\mathrm{M}}$ since $a(\q)b(\q)<1$ for any $\q \in \mathcal{Q}_{\mathrm{M}}$. We also note that for any $\q \in \mathcal{Q}_{\mathrm{Ber}}$, $\theta \q \notin \mathcal{Q}_{\mathrm{Ber}}$. 
        
        
        We say that $\mathbf{q} \in \mathcal{Q}$ is asymptotically Markov if 
        there exists some $K \in \N$ such that $\theta^{K-1} \q \in \mathcal{Q}_{\mathrm{M}}$ with convention $\theta^0 \q =\q$. 
        We define 
            \begin{align}\label{def:q_AMarkov}
                \mathcal{Q}_{\mathrm{AM}} := \left\{ \q \in \mathcal{Q} \ ; \ \q \text{ is asymptotically Markov}  \right\}.
            \end{align}
        We note that $\theta \mathcal{Q}_{\mathrm{AM}} = \mathcal{Q}_{\mathrm{AM}}$ since for any $\q \in \mathcal{Q}_{\mathrm{AM}}$, $\theta\tilde{\q}=\q$ where $\tilde{q}_1=0, \tilde{q}_{k}=q_{k-1}$ for $k \ge 2$. For $\mathbf{q} \in \mathcal{Q}_{\mathrm{AM}}$, we define $K(\mathbf{q})$ as 
            \begin{align}\label{def:Kq}
                K\left( \mathbf{q} \right) := \min\left\{ \ell \in \N \ ; \ \theta^{\ell - 1}\q \in \mathcal{Q}_{\mathrm{M}} \right\}.
            \end{align}
        In particular, for $\q \in \mathcal{Q}_{\mathrm{AM}}$, $\q \in \mathcal{Q}_{\mathrm{M}}$ if and only if $K(\q)=1$. In summary, we have $\mathcal{Q}_{\mathrm{Ber}} \subsetneqq \mathcal{Q}_{\mathrm{M}} \subsetneqq \mathcal{Q}_{\mathrm{AM}} \subsetneqq \mathcal{Q}$ and $\theta \mathcal{Q}_{\mathrm{Ber}} \not\subset \mathcal{Q}_{\mathrm{Ber}}$, $\theta \mathcal{Q}_{\mathrm{M}} \subsetneqq \mathcal{Q}_{\mathrm{M}}$, $\theta \mathcal{Q}_{\mathrm{AM}} = \mathcal{Q}_{\mathrm{AM}}$ and $\theta \mathcal{Q} = \mathcal{Q}$. 

        {We will use the following conditions on the exponential integrability of $s_{\infty}\left(1\right)$ under $\nu_{\q}$. 
            \begin{lemma}\label{lem:expbound_ex_0}
            Suppose that $\q \in \mathcal{Q}$. If there exist some $k \in \N$ and $\lambda > 0$ such that $\E_{\nu_{\theta^{k}\q}} \left[ e^{\lambda {s_{\infty}\left(1\right)}} \right] < \infty$, then there exists some $\lambda' > 0$ such that $\E_{\nu_{\q}} \left[ e^{\lambda' {s_{\infty}\left(1\right)}} \right] < \infty$. 
        \end{lemma}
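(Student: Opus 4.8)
The plan is to first reduce to the case $k=1$ and then prove that case by decomposing the first excursion according to soliton size, exploiting the independence of the slot variables $\zeta_j(i)$ across levels $j$ under $\nu_{\q}$. For the reduction it suffices to prove: if $\E_{\nu_{\theta\q}}[e^{\lambda s_{\infty}(1)}]<\infty$ for some $\lambda>0$, then $\E_{\nu_{\q}}[e^{\lambda' s_{\infty}(1)}]<\infty$ for some $\lambda'>0$, for an arbitrary parameter. Granting this, I would apply it successively to $\theta^{k-1}\q,\theta^{k-2}\q,\dots,\q$ (all in $\mathcal{Q}$ since $\theta\mathcal{Q}=\mathcal{Q}$): the hypothesis on $\theta^{k}\q$ yields finiteness under $\nu_{\theta^{k-1}\q}$, which feeds the next step, and after $k$ steps (each possibly shrinking the exponent) one reaches $\nu_{\q}$.

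For the $k=1$ case I work under $\nu_{\q}$, so $s_{\infty}(0)=0$. Let $m_{\ell}$ be the number of solitons of size at least $\ell$ contained in the first excursion $[0,s_{\infty}(1))$. Since every non-record site of the excursion is exactly one $(\ell,\sigma)$-seat and, by Remark \ref{rem:seat_soliton}, a soliton of size $\ge\ell$ carries precisely one $(\ell,\uparrow)$- and one $(\ell,\downarrow)$-seat, counting sites yields
\begin{align}
    s_{\infty}(1) = 1 + 2\sum_{\ell\ge1} m_{\ell}.
\end{align}
I would then record the nested slot structure: writing $n_j=m_j-m_{j+1}$ for the number of solitons of size exactly $j$, the level-$j$ reference points $s_j(\cdot)$ in $[0,s_{\infty}(1))$ are the record at $0$ together with the $(\ell,\sigma)$-seats with $\ell>j$, so the first excursion meets exactly $|I_j|:=1+2\sum_{r>j}m_r$ level-$j$ slots and $n_j=\sum_{i=0}^{|I_j|-1}\zeta_j(i)$. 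Solving this recursion from the top (it terminates as the maximal soliton size in an excursion is a.s.\ finite) exhibits $s_{\infty}(1)$ as an explicit functional of $(\zeta_j(i))_{j,i}$; this structural identity, resting on the seat-number / $k$-skip-map correspondence of \cite{S}, is the point to pin down with care.

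Set $S':=1+2\sum_{j\ge2}m_j$. The recursion shows that $S'$ and $|I_1|=S'$ depend only on $(\zeta_j)_{j\ge2}$. Putting $\tilde\zeta_j:=\zeta_{j+1}$, the field $(\tilde\zeta_j(i))$ has under $\nu_{\q}$ exactly the law of $(\zeta_j(i))$ under $\nu_{\theta\q}$, and $S'$ evaluated on $\tilde\zeta$ is the very same first-excursion functional; hence $S'\overset{d}{=}s_{\infty}(1)$ under $\nu_{\theta\q}$ (the probabilistic shadow of the $1$-skip map). From $s_{\infty}(1)=S'+2m_1$ with $m_1=n_1+m_2$ and $2m_2\le S'-1$ I obtain $s_{\infty}(1)\le 2n_1+2S'$. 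Under $\nu_{\q}$ the variables $(\zeta_1(i))_i$ are i.i.d.\ $\mathrm{Geom}(q_1)$ and independent of $(\zeta_j)_{j\ge2}$, so conditionally on $S'$ the count $n_1=\sum_{i=0}^{S'-1}\zeta_1(i)$ is a sum of $S'$ independent $\mathrm{Geom}(q_1)$ variables. With $\phi(t):=(1-q_1)/(1-q_1e^{t})$ the geometric moment generating function (finite for $e^{t}<1/q_1$),
\begin{align}
    \E_{\nu_{\q}}\!\left[e^{t(n_1+S')}\right] = \E\!\left[\bigl(e^{t}\phi(t)\bigr)^{S'}\right].
\end{align}
Since $e^{t}\phi(t)\to1$ as $t\to0^{+}$, I would pick $t>0$ small enough that $e^{t}<1/q_1$ and $\log(e^{t}\phi(t))\le\lambda$; then the right-hand side is at most $\E[e^{\lambda S'}]=\E_{\nu_{\theta\q}}[e^{\lambda s_{\infty}(1)}]<\infty$, and with $\lambda':=t/2$ I conclude $\E_{\nu_{\q}}[e^{\lambda' s_{\infty}(1)}]\le\E_{\nu_{\q}}[e^{t(n_1+S')}]<\infty$.

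The main obstacle is the middle step: rigorously establishing the nested slot recursion, that is, expressing the first-excursion length as the stated functional of the $\zeta$-variables and deducing $S'\overset{d}{=}s_{\infty}(1)$ under $\nu_{\theta\q}$. This is where the seat-number configuration and the $k$-skip-map results of \cite{S} are essential. Once that identity and the cross-level independence are in place, the decomposition $s_{\infty}(1)\le 2n_1+2S'$ together with the geometric moment bound makes the exponential-moment estimate routine.
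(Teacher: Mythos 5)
Your proposal is correct and follows essentially the same route as the paper: the identity $S'=1+2\sum_{j\ge2}m_j=s_{\infty}(\Psi_1(\eta),1)$ is exactly the paper's \eqref{eq:ex_size} combined with \eqref{eq:shift_q}, your bound $s_{\infty}(1)\le 2n_1+2S'$ matches the paper's estimate, and your choice of $t$ with $\log(e^{t}\phi(t))\le\lambda$ is precisely the paper's condition $u_{\q,1}(\lambda')+2\lambda'<\lambda$ with $t=2\lambda'$. The step you flag as the main obstacle is already supplied by \eqref{eq:ex_size} and Remark \ref{lem:indep_skip}, so no gap remains.
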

            \begin{lemma}\label{lem:expbound_ex}
            Suppose that $\q \in \mathcal{Q}_{{\mathrm{AM}}}$. Then, for sufficiently small $\lambda > 0$, we have $\E_{\nu_\q}\left[ e^{\lambda {s_{\infty}\left(1\right)}} \right] < \infty$.
        \end{lemma}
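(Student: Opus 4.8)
The plan is to reduce everything to the genuinely Markov case $\q\in\mathcal{Q}_{\mathrm{M}}$, treat that as the base case by a negative-drift first-passage estimate, and then propagate the bound to all of $\mathcal{Q}_{\mathrm{AM}}$ using the already-established Lemma \ref{lem:expbound_ex_0}. So the real content is the case $\q\in\mathcal{Q}_{\mathrm{M}}$, where $\mu_{\q}$ is a two-sided space-homogeneous Markov distribution with transition matrix $P=(p_{ij})$ and stationary density $\rho=\mu_{\q}(\eta(0)=1)$; since $\mu_{\q}$ is supported on $\Omega$ we have $\rho<1/2$.

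First I would identify $s_{\infty}(1)$ under $\nu_{\q}$ with a first-passage time. Writing $V(x):=\sum_{y=1}^{x}(2\eta(y)-1)$, so that $V(0)=0$ and the increments are $\pm1$, the record characterization \eqref{eq:char_rec} reads $V(x)\le\min_{w\le x-1}V(w)-1$; given that $0$ is a record the constraint coming from $w\le-1$ is inactive, so for $x>0$ this becomes $V(x)\le\min_{0\le w\le x-1}V(w)-1$. Since the walk starts at $0$ and moves by $\pm1$, the smallest such $x$ is exactly $\tau:=\inf\{x\ge1\ ;\ V(x)=-1\}$, and hence $s_{\infty}(1)=\tau$ $\nu_{\q}$-almost surely. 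Next I would pin down the law of the forward walk: the event $\{0\text{ is a record}\}$ factorizes as $\{\eta(0)=0\}\cap B$, where, once $\eta(0)=0$ is imposed, $B$ depends only on $(\eta(w))_{w\le-1}$; by the Markov property of $\mu_{\q}$, conditionally on $\eta(0)=0$ the past and the future are independent, so under $\nu_{\q}$ the forward sequence $(\eta(x))_{x\ge1}$ is precisely the chain with transition $P$ started from state $0$.

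With this reduction the exponential tail follows from negative drift. Since $\{\tau>n\}=\{V(x)\ge0\ \text{for all }1\le x\le n\}\subseteq\{V(n)\ge0\}$, a Chernoff bound gives $\Pr_{\nu_{\q}}(s_{\infty}(1)>n)\le\Pr_{0}(V(n)\ge0)\le\E_{0}[e^{\theta V(n)}]$ for any $\theta>0$, where $\Pr_{0},\E_{0}$ denote the chain started from state $0$. Introducing the tilted matrix $\hat{P}(\theta)_{ij}:=p_{ij}e^{\theta(2j-1)}$, whose logarithmic Perron eigenvalue $\Lambda(\theta)$ is convex with $\Lambda(0)=0$ and $\Lambda'(0)=2\rho-1<0$, there is $\theta_{0}>0$ with $\Lambda(\theta_{0})<0$ and $\E_{0}[e^{\theta_{0}V(n)}]\le Ce^{\Lambda(\theta_{0})n}$. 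Thus $\Pr_{\nu_{\q}}(s_{\infty}(1)>n)$ decays exponentially in $n$, which yields $\E_{\nu_{\q}}[e^{\lambda s_{\infty}(1)}]<\infty$ for all $\lambda<-\Lambda(\theta_{0})$, proving the base case.

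For general $\q\in\mathcal{Q}_{\mathrm{AM}}$ set $K:=K(\q)$ from \eqref{def:Kq}. If $K=1$ then $\q\in\mathcal{Q}_{\mathrm{M}}$ and the base case applies directly. If $K\ge2$ then $\theta^{K-1}\q\in\mathcal{Q}_{\mathrm{M}}$, so the base case gives $\E_{\nu_{\theta^{K-1}\q}}[e^{\lambda s_{\infty}(1)}]<\infty$ for some $\lambda>0$; applying Lemma \ref{lem:expbound_ex_0} with $k=K-1\ge1$ then produces $\lambda'>0$ with $\E_{\nu_{\q}}[e^{\lambda' s_{\infty}(1)}]<\infty$. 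The main obstacle is entirely in the base case: the clean identification $s_{\infty}(1)=\tau$ via \eqref{eq:char_rec} and the verification that the conditioning defining $\nu_{\q}$ fixes only $\eta(0)=0$ for the forward evolution. Once these are secured, the exponential decay is a routine large-deviation estimate for a finite-state chain, and the passage from $\mathcal{Q}_{\mathrm{M}}$ to $\mathcal{Q}_{\mathrm{AM}}$ is immediate from Lemma \ref{lem:expbound_ex_0}.
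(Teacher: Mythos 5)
Your proof is correct, and the base case is handled by a genuinely different argument than the paper's. The reduction of the general case to $\q\in\mathcal{Q}_{\mathrm{M}}$ via Lemma \ref{lem:expbound_ex_0} is identical to the paper's, but for the Markov case itself the paper does an exact computation: it invokes the explicit product formula for the excursion law $\tilde{\nu}_{\q}$ from \cite[Lemma 3.7]{FG}, counts excursions of length $2m+1$ with $z$ descents by the Narayana numbers, and sums the resulting series in closed form via the Narayana generating function, which identifies the precise critical value of $\lambda$. You instead identify $s_{\infty}(1)$ with the first-passage time of the $\pm1$ walk $V(x)=\sum_{y=1}^{x}(2\eta(y)-1)$ to level $-1$ (your verification that the $z\le 0$ part of \eqref{eq:char_rec} is inactive once $0$ is a record, and that the conditioning only fixes $\eta(0)=0$ for the forward chain, are both sound and consistent with how the paper itself uses the two-sided Markov property, e.g.\ in Lemma \ref{lem:markov_eta}), and then get exponential tails from the inclusion $\{\tau>n\}\subseteq\{V(n)\ge 0\}$ and a tilted-transition-matrix Chernoff bound with drift $2\rho-1<0$. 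Your route is softer and more robust: it uses only the Markov property and $\rho<1/2$, not the explicit i.i.d.-soliton structure of $\tilde{\nu}_{\q}$, so it would transfer to other shift-invariant laws with a uniform exponential estimate on the partial sums. What it gives up is the quantitative information the paper's computation provides, namely the exact generating function of $|\mathbf{e}^{(0)}|$ and hence the sharp range of admissible $\lambda$; for the purposes of this lemma (some $\lambda>0$ suffices) that loss is immaterial.
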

        The proofs of Lemmas \ref{lem:expbound_ex_0} and \ref{lem:expbound_ex} will be given in Section \ref{app:expbound_ex_0} and \ref{app:expbound_ex}.}

        For later use, we introduce some notations. For any $k \in \N$, we define $C_{k} : \mathcal{Q} \to \mathcal{Q}$ as 
            \begin{align}\label{def:cut_k}
                \left(C_{k}\q\right)_{\ell} := 
                \begin{dcases}
                    q_{\ell} \ & \ 1 \le \ell \le k, \\
                    0 \ & \ \ell \ge k + 1,
                \end{dcases}
            \end{align}
        for any $\q \in \mathcal{Q}$. We note that under $\nu_{C_{k}\q}$, there are no solitons larger than $k$ a.s. 
        Next, for any $k \in \N$ and $\q \in \mathcal{Q}$, we define $\a_{k}\left(\q\right)$, $\b_{k}\left(\q\right), \bar{r}_{k}\left(\q\right)$ as 
            \begin{align}
                \a_{k}\left(\q\right) &:= \E_{\nu_{\q}}\left[ \zeta_{k}\left(0\right) \right] = \frac{q_{k}}{1 - q_{k}}, \label{def:alpha} \\
                \b_{k}\left(\q\right) &:= \E_{\nu_{\q}}\left[ \left( \zeta_{k}\left(0\right) -  \a_{k}\left(\q\right) \right)^2 \right] = \frac{q_{k}}{\left(1 - q_{k}\right)^2}, \label{def:beta} \\
                \bar{r}_{k}\left(\q\right) &:= \E_{{\mu}_{\theta^{k}\q}}\left[r\left(0\right)\right] \label{def:r}.
            \end{align}
        We note that $\bar{r}_{k}\left(\q\right)$, $k \in \N$ satisfies the following system, 
            \begin{align}
                \frac{1}{\bar{r}_{k}\left(\q\right)} = 1 + 2\sum_{\ell = k+1}^{\infty} \frac{\left(\ell - k\right)\a_{\ell}\left(\q\right) }{\bar{r}_{\ell}\left(\q\right)} \label{eq:system_r},
            \end{align}
        see Section \ref{app:rec} for the derivation of \eqref{eq:system_r}. 

\subsection{Scaling limits for solitons under \texorpdfstring{$\q$}{q}-statistics}\label{sec:results} 

    In this subsection we will describe our main results on the fluctuations of $k$-solitons under the $\q$-statistics {conditioned on $\Omega_0$}. 
    
    First we recall that by \cite[Theorem 1.1, 1.5]{FNRW} and \cite[Theorem 4.5]{FG}, $Y^{{i}}_{k}\left(  \ \cdot  \  \right)$ satisfies the law of large numbers (LLN) in the hyperbolic scaling under ${\mu}_{\mathbf{q}}$. {Since $\mu_{\q}\left(s_{\infty}\left(0\right) = 0\right) > 0$, the same LLN holds under $\nu_{\mathbf{q}}$.} For later use we describe this fact as follows.
        \begin{theorem*}[Theorem 1.1, 1.5 in \cite{FNRW} + Theorem 4.5 in \cite{FG}]
            Suppose that $\mathbf{q} \in \mathcal{Q}$ and $q_{k} > 0$ for some $k \in \N$. Then, for any $i \in \Z$, we have 
                \begin{align}
                    \lim_{n \to \infty} \frac{1}{n} Y^{{i}}_{k}\left(\eta, n \right) &= v^{\mathrm{eff}}_{k}\left(\mathbf{q}\right) , \quad \mu_{\q} \text{ and } \nu_\q\text{-a.s.}
                    \label{eq:LLN_q}
                \end{align}
        \end{theorem*}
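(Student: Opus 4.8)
The plan is to start from the exact displacement identity \eqref{eq:pos_n}. Subtracting $X^{i}_{k}(\eta,0)$ from both sides gives
\begin{align}
\frac{1}{n}\,Y^{i}_{k}(\eta,n) = k\Bigl(1 - \frac{M^{i}_{k}(\eta,n)}{n}\Bigr) + \frac{2}{n}\sum_{m=1}^{n}\sum_{\ell=1}^{k-1}\ell\, N^{i}_{k,\ell}(\eta,m),
\end{align}
so the law of large numbers for $Y^{i}_{k}$ reduces to the almost sure convergence of the stranded-time fraction $M^{i}_{k}(\eta,n)/n$ and of each cumulative overtaking average $\frac{1}{n}\sum_{m=1}^{n}N^{i}_{k,\ell}(\eta,m)$ to deterministic constants; the effective velocity $v^{\mathrm{eff}}_{k}(\q)$ is then the corresponding linear combination.

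First I would pass to the Palm picture. By Lemma \ref{lem:sp_shift} the quantities $M^{i}_{k}$ and $N^{i}_{k,\ell}$ depend on $\eta$ only through the centered configuration $\tilde{\eta}=\tau_{s_{\infty}(0)}\eta$, and by Remark \ref{lem:ex_iid} the excursions of $\tilde{\eta}$ are i.i.d.\ under $\nu_{\q}$; equivalently, the slot variables $(\zeta_{j}(i))_{i\in\Z}$ are i.i.d.\ in $i$ with the geometric law \eqref{def:nu_q} and independent over $j$. Since $\mu_{\q}(\Omega_{0})>0$ and $\nu_{\q}=\mu_{\q}(\,\cdot\mid\Omega_{0})$ is absolutely continuous with respect to $\mu_{\q}$, it suffices to prove a.s.\ convergence under $\nu_{\q}$; the statement under $\mu_{\q}$ then follows from the inverse Palm formula \eqref{def:inv_Palm}. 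Furthermore, by the translation invariance of $\mu_{\q}$ the limiting constant is independent of the label $i$, so it is enough to treat a single soliton.

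The heart of the matter is the a.s.\ convergence of these two averages, which I would obtain from the i.i.d.\ slot structure. In the effective ($\xi_{k}$) coordinate of the seat-number picture, the tagged soliton sweeps a band of slots whose right edge advances by $k$ per time step up to the offset $o_{k}$ of \eqref{eq:linear_seat}, while by Remark \ref{rem:eff_dis} the effective distances among solitons of a common size are frozen in time. Consequently $\sum_{m\le n}N^{i}_{k,\ell}(\eta,m)$ and $M^{i}_{k}(\eta,n)$ can be written as cumulative sums of the i.i.d.\ fields $(\zeta_{\ell}(i))_{i}$ (for $\ell<k$) and $(\zeta_{j}(i))_{j>k,\,i}$ over the swept band, and once the band is known to grow linearly, the classical strong law identifies the limits as the spatial densities of the relevant solitons, weighted by the appropriate relative velocities. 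Collecting these limits closes a generalized-hydrodynamic system for $v^{\mathrm{eff}}_{k}(\q)$; its solvability uses $\q\in\mathcal{Q}$, so that $\sum_{k}kq_{k}<\infty$ and all densities are finite, and $q_{k}>0$, so that $k$-solitons have positive density and $v^{\mathrm{eff}}_{k}(\q)>0$.

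The main obstacle is precisely the claim I glossed over, namely that the band of slots swept by the tagged soliton grows linearly in $n$ and that the resulting self-consistency loop closes; a priori the displacement might be super- or sub-linear, so the density computation cannot be invoked before the range is controlled. I would handle this by a two-sided comparison, bounding the instantaneous speed between the free value $k$ and a density-dependent upper bound, combined with a stationarity/subadditivity argument — either Kingman's subadditive ergodic theorem applied to $n\mapsto X^{i}_{k}(\eta,n)$, or Birkhoff's theorem for the environment viewed from the tagged soliton — to force the a.s.\ existence of the limit. The conservation of effective distance (Remark \ref{rem:eff_dis}) together with the i.i.d.\ slot law under $\nu_{\q}$ is exactly what makes that environment process stationary and ergodic, and hence the limit deterministic and equal to $v^{\mathrm{eff}}_{k}(\q)$.
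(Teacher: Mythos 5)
The paper does not prove this statement: it is imported verbatim from \cite[Theorems 1.1, 1.5]{FNRW} together with \cite[Theorem 4.5]{FG} (which identifies $\mu_{\q}$ as a shift-ergodic invariant measure satisfying the hypotheses of FNRW's LLN), and the only observation added is that $\nu_{\q}=\mu_{\q}(\,\cdot\mid\Omega_{0})$ is absolutely continuous with respect to $\mu_{\q}$ because $\mu_{\q}(\Omega_{0})>0$, so the $\mu_{\q}$-a.s.\ convergence transfers to $\nu_{\q}$ in one line. Your reconstruction therefore has to be judged as a from-scratch proof of the FNRW theorem, and as such it has a genuine gap at exactly the point you flag yourself: the existence of the limit. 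The reduction via \eqref{eq:pos_n}, the i.i.d.\ slot structure, and the Palm transfer between $\mu_{\q}$ and $\nu_{\q}$ are all correct and are the easy parts; but the sums $\sum_{m\le n}N^{i}_{k,\ell}(\eta,m)$ and $M^{i}_{k}(\eta,n)$ are sums of the slot field over a band whose right endpoint is the unknown displacement itself, so the strong law cannot be invoked until the circularity is broken, and your proposal never actually breaks it. Naming Kingman or Birkhoff does not suffice here: $n\mapsto X^{i}_{k}(\eta,n)$ has no evident subadditivity, because the increment after time $n$ is the displacement of a \emph{differently labelled} soliton in the \emph{recentred} configuration $T^{n}\eta$, and the assertion that the environment seen from the tagged soliton is stationary and ergodic under the combined evolution-plus-recentring map is precisely the nontrivial Palm-theoretic content of FNRW's proof, not a routine consequence of the i.i.d.\ slot law.

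Two smaller points. First, your direction of transfer ($\nu_{\q}$-a.s.\ $\Rightarrow$ $\mu_{\q}$-a.s.) is workable — $Y^{i}_{k}$ depends on $\eta$ only through $\tilde{\eta}$ by Lemma \ref{lem:sp_shift}, and the law of $\tilde{\eta}$ under $\mu_{\q}$ is the size-biasing of $\nu_{\q}$ by the integrable weight $s_{\infty}(1)\ge 1$, which preserves null sets — but since the cited result is stated under $\mu_{\q}$, the cheaper direction is $\mu_{\q}\Rightarrow\nu_{\q}$ via $\nu_{\q}\ll\mu_{\q}$. Second, the claim that the limit is independent of $i$ "by translation invariance" needs an argument: the paper's own mechanism is the uniform-in-$n$ bound \eqref{ineq:MiMj_1} on $|M^{i}_{k}(n)-M^{(0)}_{k}(n)|$ coming from the conservation of effective distance, and an analogous bound for the overtaking counts; divided by $n$ these vanish, which is what actually removes the dependence on the label.
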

    The constant $v^{\mathrm{eff}}_{k}\left(\mathbf{q}\right)$, $k \in \N$ is called the effective velocity of $k$-solitons. 
        In this paper, we will show the $\mathbb{L}^{p}$ version of the above LLN for any $p \ge 1$.
        \begin{theorem}\label{thm:lln_lp}
            Suppose that $\mathbf{q} \in \mathcal{Q}$ and $q_{k} > 0$ for some $k \in \N$. Then for any $i \in \Z$ and $p \ge 1$, we have 
                \begin{align}
                            \lim_{n \to \infty} \E_{\nu_\q}\left[ \left| \frac{1}{n} Y^{{i}}_{k}\left( n \right) - v^{\mathrm{eff}}_{k}\left(\q\right) \right|^{p} \right] = 0.
                \end{align}
        \end{theorem}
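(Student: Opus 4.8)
The plan is to upgrade the almost sure law of large numbers \eqref{eq:LLN_q} to $\mathbb{L}^{p}$ convergence by proving a uniform moment bound and invoking uniform integrability. Concretely, it suffices to show that for every $q \ge 1$
\begin{align}
\sup_{n \in \N} \E_{\nu_{\q}}\left[ \left( \frac{1}{n}\, Y^{i}_{k}\left( n \right) \right)^{q} \right] < \infty,
\end{align}
since a uniform bound in $\mathbb{L}^{q}$ with $q>p$ renders the family $\left(Y^{i}_{k}(n)/n\right)^{p}$ uniformly integrable; combined with the almost sure convergence \eqref{eq:LLN_q} this yields $\mathbb{L}^{p}$ convergence to $v^{\mathrm{eff}}_{k}\left(\q\right)$ for every $p \ge 1$. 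Under $\nu_{\q}$ the origin is a record and the excursions are i.i.d.\ (Remark \ref{lem:ex_iid}), which is the probabilistic input I will exploit.

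First I would reduce the moment bound to a bound on the number of overtakings of smaller solitons. By the decomposition \eqref{eq:pos_n} and $0 \le M^{i}_{k}(\eta,n) \le n$,
\begin{align}
0 \le \frac{1}{n}\, Y^{i}_{k}\left( n \right) \le k + \frac{2}{n}\sum_{\ell=1}^{k-1}\ell \sum_{m=1}^{n} N^{i}_{k,\ell}\left(\eta,m\right),
\end{align}
so that the ``blocking'' contribution $k\left(n-M^{i}_{k}(\eta,n)\right)$ is controlled deterministically and only the overtaking counts $\sum_{m=1}^{n} N^{i}_{k,\ell}(\eta,m)$ of solitons of size $\ell \in \{1,\dots,k-1\}$ remain. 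This is the key structural point: the large solitons, whose excursions may carry only finitely many polynomial moments for a general $\q \in \mathcal{Q}$ (outside $\mathcal{Q}_{\mathrm{AM}}$ no exponential integrability of $s_{\infty}(1)$ is available), influence $Y^{i}_{k}$ solely through the blocking term and are thereby already tamed. Consequently the bound involves only the parameters $q_{1},\dots,q_{k-1}$, whose slot variables $\zeta_{\ell}(\cdot)$ are geometric and hence possess moments of every order by \eqref{def:alpha}--\eqref{def:beta}.

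The core step, which I expect to be the main obstacle, is therefore to show $\sup_{n} \E_{\nu_{\q}}\left[\left(n^{-1}\sum_{m=1}^{n} N^{i}_{k,\ell}(\eta,m)\right)^{q}\right] < \infty$ for each $\ell < k$. I would work in the linearized coordinate $\xi_{\ell}$ and proceed by downward induction on $\ell$ from $k-1$ to $1$. Summing Lemma \ref{lem:dif_xi} over time shows that $\xi_{\ell}\left(T^{n}\eta, X^{(i)}_{k}(n)\right)-\xi_{\ell}\left(\eta, X^{(i)}_{k}(0)\right)$ equals $nk$ plus telescoping offset/boundary terms plus $2\sum_{m}\sum_{h=\ell+1}^{k-1}(h-\ell)N^{(i)}_{k,h}(\eta,m)$; by the inductive hypothesis the latter sum is $O(n)$ in $\mathbb{L}^{q}$, and the offset terms only gather seat counts of solitons of size $\le \ell$ near the origin, which are well behaved. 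Hence the number of $\xi_{\ell}$-slots swept up to time $n$ is of order $n$ in $\mathbb{L}^{q}$. The overtaken $\ell$-solitons reside in these swept slots and are counted there by the i.i.d.\ geometric variables $\zeta_{\ell}(\cdot)$, so $\sum_{m=1}^{n} N^{i}_{k,\ell}(\eta,m)$ is dominated by a sum of order $n$ independent copies of $\zeta_{\ell}$. A Rosenthal-type (or Marcinkiewicz--Zygmund) $\mathbb{L}^{q}$ inequality for sums of i.i.d.\ variables with all moments finite, combined with Minkowski's inequality and the random-but-$\mathbb{L}^{q}$-controlled number of summands, then gives $\E_{\nu_{\q}}\left[\left(\sum_{m=1}^{n} N^{i}_{k,\ell}(\eta,m)\right)^{q}\right]=O(n^{q})$, closing the induction.

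The two delicate points I anticipate are the rigorous identification of the overtaken $\ell$-solitons with the independent slot variables $\zeta_{\ell}$ — this is exactly where the seat-number/slot decomposition of Section \ref{sec:seat} and the conservation of the effective distance (Remark \ref{rem:eff_dis}) do the real work, and where the combinatorial bookkeeping is heaviest — and the control of the boundary/offset terms uniformly in $n$, for which I would use the record-shift invariance of $\nu_{\q}$ and the fact that these terms involve only seats of small solitons. Granting the uniform moment bound, uniform integrability and hence the desired $\mathbb{L}^{p}$ law of large numbers follow immediately for every $p \ge 1$.
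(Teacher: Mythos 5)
Your proposal is correct in its overall skeleton and reaches the result, but it takes a genuinely different route to uniform integrability than the paper. The paper's proof (Section \ref{sec:lln_lp}) also combines the almost sure LLN \eqref{eq:LLN_q} with uniform integrability, but instead of polynomial moment bounds it uses the exact exponential-moment identity of Lemma \ref{lem:expbound}, namely $\E_{\nu_{\q}}[\exp(\lambda Y^{i}_{k}(n))] = \E_{\nu_{\q}}[\exp(U_{\q,k}(\lambda)(n-M^{i}_{k}(n)))]$, together with the deterministic bound $0\le M^{i}_{k}(n)\le n$ to get $\E_{\nu_{\q}}[\exp(\lambda Y^{i}_{k}(n)/n)]\le \exp(nU_{\q,k}(\lambda/n))$, which is bounded uniformly in $n$ by smoothness of $U_{\q,k}$ at $0$; Chebyshev plus the elementary inequality $x^{p}\le(\lfloor p\rfloor+1)!\,e^{x}$ then gives uniform integrability in one stroke, and the same identity is reused for the LDP in Theorem \ref{thm:main}~(\ref{item:2}). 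Your route — bounding $Y^{i}_{k}(n)$ via \eqref{eq:pos_n}, absorbing the blocking term deterministically, and controlling $\sum_{m\le n}N^{i}_{k,\ell}(\eta,m)$ by downward induction on $\ell$ with a Rosenthal/power-mean inequality for the i.i.d.\ geometric slot counts — is sound, and the two "delicate points" you flag are already resolved in the paper: identity \eqref{eq:N_kl} of Lemma \ref{lem:NM_kl} expresses $\sum_{m=1}^{n}N^{i}_{k,\ell}(\eta,m)$ exactly as a sum of $Y^{i}_{k-\ell}(\Psi_{\ell}(\tilde{\eta}),n)$ of the variables $\zeta_{\ell}(\cdot)$ with the range of summation measurable with respect to $\Psi_{\ell}(\tilde{\eta})$ and hence independent of the summands (Remark \ref{lem:indep_skip}), and the offset terms you worry about cancel in that formula, so you need not re-derive it from Lemma \ref{lem:dif_xi}; the induction then closes via \eqref{eq:shift_N_k} and the base case $Y^{i}_{1}(\Psi_{k-1}(\eta),n)=n-M^{i}_{k}(n)\le n$. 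What your approach buys is elementariness (only polynomial moments of the geometrics are used); what the paper's approach buys is brevity and the fact that the exponential estimate is exactly the input needed later for the large deviation analysis.
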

    {We will show Theorem \ref{thm:lln_lp} in Section \ref{sec:lln_lp}.}
        \begin{remark}\label{rem:lln_lp}
            If $X^{{i}}_{k}(0)$ has the finite $p$-th moment, then one can show the $\mathbb{L}^{p}$ convergence for $X^{{i}}_{k}\left( n \right) / n$ instead of $Y^{{i}}_{k}\left( n \right) / n$. When {$s_{\infty}\left(1\right)$ has the exponential integrability under $\nu_{\q}$,} then $X^{{i}}_{k}(0)$ has the finite $p$-th moment for any $p \ge 1$, see Section \ref{app:Lp_X0}. 
        \end{remark}

    We will use the following relation between effective velocities. Recall that $\a_k, \bar{r}_k$ are defined in \eqref{def:alpha} and \eqref{def:r}.
        \begin{proposition}\label{prop:char_velo}
            Suppose that $\mathbf{q} \in \mathcal{Q}$ and $q_{k} > 0$ for some $k \in \N$. Then, we have
                \begin{align}\label{eq:v_eff}
                    v^{\mathrm{eff}}_{k}\left(\mathbf{q}\right) = k v^{\mathrm{eff}}_{1}\left(\theta^{k-1}\q\right) + 2 \sum_{\ell = 1}^{k - 1} \ell \a_{\ell}\left(\q \right) v^{\mathrm{eff}}_{k-\ell}\left(\theta^{\ell} \q \right), 
                \end{align}
            and 
                \begin{align}\label{eq:v_eff_1_r}
                    v^{\mathrm{eff}}_{1}\left(\theta^{k-1}\q\right) = \bar{r}_{k}\left(\q\right).
                \end{align}
        \end{proposition}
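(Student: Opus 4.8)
The plan is to start from the exact position decomposition \eqref{eq:pos_n}. Writing $Y^{(i)}_{k}(\eta,n) = k\bigl(n - M^{(i)}_{k}(\eta,n)\bigr) + 2\sum_{m=1}^{n}\sum_{\ell=1}^{k-1}\ell\, N^{(i)}_{k,\ell}(\eta,m)$, I would divide by $n$ and let $n\to\infty$. By the LLN \eqref{eq:LLN_q} the left-hand side converges to $v^{\mathrm{eff}}_{k}(\q)$, while by the record-shift invariance of $\nu_{\q}$ (Remark \ref{lem:ex_iid}) and Birkhoff's ergodic theorem the two averages on the right converge a.s.\ to deterministic constants $m_{k}(\q) := \lim_{n} M^{(i)}_{k}(\eta,n)/n$ and $n_{k,\ell}(\q) := \lim_{n} \frac1n\sum_{m=1}^{n} N^{(i)}_{k,\ell}(\eta,m)$; the uniform integrability needed to interchange average and limit is supplied by Theorem \ref{thm:lln_lp}. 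This yields the master identity $v^{\mathrm{eff}}_{k}(\q) = k\bigl(1 - m_{k}(\q)\bigr) + 2\sum_{\ell=1}^{k-1}\ell\, n_{k,\ell}(\q)$, so that \eqref{eq:v_eff} is reduced to the two identifications $1 - m_{k}(\q) = v^{\mathrm{eff}}_{1}(\theta^{k-1}\q)$ and $n_{k,\ell}(\q) = \a_{\ell}(\q)\, v^{\mathrm{eff}}_{k-\ell}(\theta^{\ell}\q)$.

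Both identifications I would obtain from the seat-number linearization encoded in Lemma \ref{lem:dif_xi} and \eqref{eq:linear_seat}, i.e.\ the $k$-skip reduction. Applying the $(k-1)$-skip map sends the tagged $k$-soliton to a tagged $1$-soliton of the $\theta^{k-1}\q$-statistics while preserving, step by step, whether it is free or blocked; since by \eqref{eq:inc_sol} a $1$-soliton advances exactly one site when free and none when blocked, its effective velocity equals its asymptotic free fraction, giving $1 - m_{k}(\q) = v^{\mathrm{eff}}_{1}(\theta^{k-1}\q)$. For the overtaking rate I would fix $\ell < k$ and apply the $\ell$-skip map: the tagged soliton becomes a $(k-\ell)$-soliton of the $\theta^{\ell}\q$-statistics, advancing at velocity $v^{\mathrm{eff}}_{k-\ell}(\theta^{\ell}\q)$ measured in $\xi_{\ell}$-slots per unit time, and by \eqref{eq:overtake} each slot it crosses contains on average $\a_{\ell}(\q) = \E_{\nu_{\q}}[\zeta_{\ell}(0)]$ solitons of size $\ell$ (see \eqref{def:alpha}); multiplying gives $n_{k,\ell}(\q) = \a_{\ell}(\q)\, v^{\mathrm{eff}}_{k-\ell}(\theta^{\ell}\q)$. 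Substituting the two identities into the master identity produces \eqref{eq:v_eff}.

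For \eqref{eq:v_eff_1_r} I would first reduce to $k=1$: since $\bar{r}_{1}(\theta^{k-1}\q) = \E_{\mu_{\theta^{k}\q}}[r(0)] = \bar{r}_{k}(\q)$ by \eqref{def:r}, it suffices to prove $v^{\mathrm{eff}}_{1}(\p) = \bar{r}_{1}(\p)$ for every $\p \in \mathcal{Q}$. For the tagged $1$-soliton, Lemma \ref{lem:dif_xi} with $k=\ell=1$ shows that $\xi_{1}$ of its position increases by exactly $1 + o_{1}(T^{m-1}\eta)$ at every step, independently of the free/blocked alternative; summing and applying the ergodic theorem, $\xi_{1}$ grows at the deterministic rate $1 + \E[o_{1}]$. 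On the other hand $\xi_{1}(T^{n}\eta,\cdot)$ is, up to an $O(1)$ reference shift, the physical coordinate of the once-reduced ($\theta\p$) configuration, whose record density is $\bar{r}_{1}(\p)$; converting the growth rate of $\xi_{1}$ into the physical growth rate $v^{\mathrm{eff}}_{1}(\p)$ and using record-shift invariance (Remark \ref{lem:ex_iid}) to control $\E[o_{1}]$ identifies the two. As an alternative route avoiding the direct evaluation of $\E[o_{1}]$, one can compute the blocked fraction through $m_{1}(\p) = 2\sum_{\ell\ge 2}\lim_{n} M^{(i)}_{1,\ell}(\eta,n)/n$ via \eqref{ineq:overtaken}, express these overtaking rates through the densities $\a_{\ell}$ by an overtaking-balance argument, and verify that $w_{k} := 1 - m_{k}$ satisfies the same linear system \eqref{eq:system_r} as $\bar{r}_{k}$; uniqueness of its solution then forces $w_{k} = \bar{r}_{k}$.

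The main obstacle I expect to be the careful bookkeeping of the offsets $o_{\ell}$ and of solitons crossing the origin. The skip-map identifications of the second paragraph are clean only if the reduction intertwines the dynamics one time step to one time step and transports the free/blocked status exactly; the residual index offsets $o_{\ell}(T^{m-1}\eta)$ appearing in Lemma \ref{lem:dif_xi}, together with the arbitrariness in the position of $s_{\infty}(0)$, must be shown to wash out in the Ces\`aro limit, which is precisely where record-shift invariance of $\nu_{\q}$, the conditioning on $\Omega_{0}$, and the inverse Palm relation \eqref{def:inv_Palm} enter. Likewise, justifying that the averages $m_{k}$ and $n_{k,\ell}$ exist and are constant relies on the ergodicity of the tagged-soliton environment and the integrability from Theorem \ref{thm:lln_lp}, and closing \eqref{eq:v_eff_1_r} requires the uniqueness of solutions to \eqref{eq:system_r}.
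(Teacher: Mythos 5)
Your treatment of \eqref{eq:v_eff} is essentially the paper's argument: the decomposition \eqref{eq:pos_n} combined with the skip-map identities $n - M^{(i)}_{k}\left(\eta,n\right) = Y^{(i)}_{1}\left(\Psi_{k-1}\left(\eta\right),n\right)$ from \eqref{eq:lem_shift_NM} and $\sum_{m\le n} N^{(i)}_{k,\ell}\left(\eta,m\right) = \sum_{j} \zeta_{\ell}\left(\tilde{\eta},j\right)$ over the window traversed by the reduced soliton from \eqref{eq:N_kl}, followed by the a.s.\ LLN \eqref{eq:LLN_q} and the independence of $\Psi_{\ell}\left(\eta\right)$ from $\left(\zeta_{h}\right)_{h\le\ell}$ (Remark \ref{lem:indep_skip}). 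One quibble: the existence of the Ces\`{a}ro limits $m_{k}$ and $n_{k,\ell}$ does not follow from ``record-shift invariance plus Birkhoff'' (spatial shift-invariance says nothing about time averages along the $T$-orbit); it follows from the two identities just cited together with \eqref{eq:LLN_q}. No uniform integrability or appeal to Theorem \ref{thm:lln_lp} is needed, since \eqref{eq:pos_n} is a pointwise identity and $v^{\mathrm{eff}}_{k}$ is defined by a.s.\ convergence.

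The gap is in \eqref{eq:v_eff_1_r}. Your primary route --- tracking the growth of $\xi_{1}\left(T^{n}\eta, X^{(i)}_{1}\left(n\right)\right)$ at rate $1+\E\left[o_{1}\right]$ and then ``converting'' to the physical velocity --- is not yet an argument: you do not say how the conversion is performed, with respect to which measure and which dynamics $\E\left[o_{1}\right]$ is taken, or why the resulting quantity equals $\bar{r}_{1}\left(\p\right)$. The mechanism the paper uses is different and is the real content of the identity: by \eqref{eq:M_kl} the tagged $1$-soliton of $\Psi_{k-1}\left(\eta\right)$ is free at time $m$ if and only if the \emph{fixed} site $J_{k}\left(\tilde{\eta},i\right)$ is a record of $T^{m}\Psi_{k}\left(\tilde{\eta}\right)$, so $v^{\mathrm{eff}}_{1}\left(\theta^{k-1}\q\right)$ is the time-average of a record indicator at a fixed site under the stationary measure $\mu_{\theta^{k}\q}$. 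Even then one needs ergodicity of $T$, which is not assumed; the paper circumvents this by noting that the Birkhoff limit is measurable with respect to the $T$-invariant $\sigma$-field and is also spatially shift-invariant, hence a.s.\ constant by shift-ergodicity and equal to $\E_{\mu_{\theta^{k}\q}}\left[r\left(0\right)\right]=\bar{r}_{k}\left(\q\right)$. Your fallback route (showing $w_{k}:=1-m_{k}$ solves \eqref{eq:system_r} and invoking uniqueness) is also incomplete: uniqueness of solutions of \eqref{eq:system_r} is neither stated nor proved in the paper, and since the system only determines each $\bar{r}_{k}$ from the tail $\left(\bar{r}_{\ell}\right)_{\ell>k}$, for $\q$ with infinitely many nonzero entries you would first have to establish a boundary condition at infinity before uniqueness could be used.
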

    The proof of Proposition \ref{prop:char_velo} will be given in Section \ref{app:prop_1}. 

    Our purpose in this paper is to consider the fluctuations of $Y^{{i}}_{k}\left(  \ \cdot  \  \right)$ corresponding to the law of large numbers mentioned above. {The following} result implies that the invariance principle(IP)/large deviations principle(LDP) for $Y^{{i}}_{k}\left(  \ \cdot  \  \right)$ can be reduced to the IP/LDP for $M^{{i}}_{k}\left(  \ \cdot  \  \right)$ {under $\nu_{\q}$}. 
        \begin{theorem}\label{thm:main}
        \mbox{}
            \begin{enumerate}
            
                \item \label{item:1} Suppose that there exist some $\q \in \mathcal{Q}$ {and} $k \in \N$ such that $q_{k} > 0$ and the following step-interpolation process, 
                    \begin{align}\label{def:step_M}
                        t \mapsto \frac{1}{n} M^{{\left(0\right)}}_{k}\left(\eta, \left\lfloor n^2 t \right\rfloor \right) - t  \mathbb{E}_{\nu_{\q}}\left[ M^{{\left(0\right)}}_{k}\left( n \right) \right],
                    \end{align} 
                converges weakly {in $D\left([0,\mathbf{T}]\right)$, $\mathbf{T} > 0$} to the centered Brownian motion with variance $G_{k}\left(\q\right)$ under $\nu_{\q}$. Then, {for any $i \in \Z$,} the following step-interpolation process 
                    \begin{align}\label{def:step_Y}
                        t \mapsto 
                        \frac{1}{n} Y^{{i}}_{k}\left(\eta, \left\lfloor n^2 t \right\rfloor \right) - nt v^{\mathrm{eff}}_{k}\left( \q \right),
                    \end{align}
                also converges weakly {in $D\left([0,\mathbf{T}]\right)$, $\mathbf{T} > 0$} to the centered Brownian motion with variance $D_{k}\left(\mathbf{q}\right)$ under $\nu_{\q}$, where $D_{k}\left(\mathbf{q}\right)$ is given by 
                    \begin{align}\label{eq:dif_co}
                        D_{k}\left(\mathbf{q}\right) := \frac{v^{\mathrm{eff}}_{k}\left(\q\right)^2 G_{k}\left(\mathbf{q} \right)}{v^{\mathrm{eff}}_{1}\left(\theta^{k-1}\q\right)^2}  + 4\sum_{\ell = 1}^{k-1} \frac{v^{\mathrm{eff}}_{\ell}\left(\q\right)^2 v^{\mathrm{eff}}_{k-\ell}\left(\theta^{\ell}\q\right) \b_{\ell}\left(\q\right)}{v^{\mathrm{eff}}_{1}\left(\theta^{\ell-1}\q\right)^2},
                    \end{align}
                {and $\b_{\ell}\left(\q\right)$ is defined in \eqref{def:beta}}.

                \item \label{item:2} Suppose that there exist some $\q \in \mathcal{Q}$, $k \in \N$ and $i \in \Z$ such that $q_{k} > 0$ and the following limit 
                    \begin{align}\label{def:Lambda_M}
                        \Lambda^{M {,i}}_{\q,k}\left(\lambda\right) &:= \lim_{n \to \infty} \frac{1}{n} \log\left(\E_{\nu_{\mathbf{q}}}\left[ \exp\left( \lambda \left(n - M^{{i}}_{k}\left( n \right) \right) \right) \right] \right) \in \R,
                    \end{align}
                exists for any $\lambda \in \R$, and $\Lambda^{M{,i}}_{\q,k}\left( \ \cdot \ \right)$ is essentially smooth in the sense of \cite[Definition 2.3.5]{DZ}. Then, the following limit 
                    \begin{align}
                        \Lambda^{Y{,i}}_{\q, k}\left(\lambda\right) &:= \lim_{n \to \infty} \frac{1}{n} \log\left(\E_{\nu_{\mathbf{q}}}\left[ \exp\left( \lambda  Y^{{i}}_{k}\left( n \right) \right)  \right] \right) \in \R \cup \{\infty\},
                    \end{align}
                exists for any $\lambda \in \R$, and $\Lambda^{Y{,i}}_{\q, k}\left(\lambda\right)$ satisfies \eqref{eq:Lambda_Y}.  In addition, we have   $\sup_{|\lambda| \le \delta} \left| \Lambda^{Y{,i}}_{\q,k}\left(\lambda\right) \right| < \infty$ for sufficiently small $\delta > 0$, and
                $\Lambda^{Y{,i}}_{\q,k}\left( \ \cdot \ \right)$ is also essentially smooth. Consequently, thanks to the G\"{a}rtner-Ellis theorem (cf. \cite[Theorem 2.3.6]{DZ}), {under $\nu_{\q}$,} the sequence {$\left(Y^{{i}}_{k}\left(n\right) / n\right)_{n \in \N}$} satisfies the LDP with the good rate function $I^{Y{,i}}_{\q,k}$, where 
                \begin{align}\label{def:rate_I}
                    I^{Y{,i}}_{\q,k}\left(u\right) := \sup_{\lambda \in \R} \left\{ \lambda u - \Lambda^{Y{,i}}_{\q,k}\left(\lambda\right) \right\}.
                \end{align}
            \end{enumerate}
        \end{theorem}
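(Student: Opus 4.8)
The plan is to read off both statements from the representation \eqref{eq:pos_n}, which expresses
$$Y^{i}_{k}(\eta,n) = k\left(n - M^{i}_{k}(\eta,n)\right) + 2\sum_{m=1}^{n}\sum_{\ell=1}^{k-1}\ell\, N^{i}_{k,\ell}(\eta,m),$$
so that the fluctuations of $Y^{i}_{k}(n)$ are driven by two sources: the count $M^{i}_{k}(n)$ of non-free steps, and the cumulative overtaking counts $\sum_{m}N^{i}_{k,\ell}(m)$ for each $\ell<k$. The first source is controlled by hypothesis; the second I would control through the $\q$-statistics structure. Indeed, by \eqref{eq:overtake} each increment $N^{i}_{k,\ell}(m)$ counts the $\ell$-solitons swept through during a free step, and under $\nu_{\q}$ the slot occupations $\zeta_{\ell}$ are i.i.d.\ geometric with mean $\a_{\ell}(\q)$ and variance $\b_{\ell}(\q)$ from \eqref{def:alpha}, \eqref{def:beta}, and are independent across $\ell$. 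The decomposition \eqref{eq:orthogonal} together with Lemma \ref{lem:NM_kl} is the device that rewrites each $\sum_{m}N^{i}_{k,\ell}(m)$, after passing to the effective ($\xi_{\ell}$) coordinate via Lemma \ref{lem:dif_xi}, as a sum of these i.i.d.\ variables indexed by the slots the tagged soliton has traversed, up to a controllable boundary error.

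For part (\ref{item:1}) I would first reduce to $i=(0)$: by the record-shift invariance of $\nu_{\q}$ (Remark \ref{lem:ex_iid}), Lemma \ref{lem:sp_shift}, and the identity $M^{i}_{k}=M^{(j)}_{k}$ whenever $\gamma^{i}_{k}\in Con(\gamma^{(j)}_{k})$, the law of the centered process is independent of $i$. Granting the assumed Donsker-type convergence of the $M^{(0)}_{k}$ process to a Brownian motion of variance $G_{k}(\q)$, I would invoke the functional CLT for the i.i.d.\ slot sums to obtain joint convergence of $(n-M^{(0)}_{k},\;\sum_{m}N^{(0)}_{k,\ell}(m))_{\ell<k}$ to a vector of Brownian motions. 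The crucial point is that these limiting Brownian motions are \emph{mutually independent}: the level-$\ell$ overtaking fluctuation is a functional of $(\zeta_{\ell}(j))_{j}$ alone, and these are independent over $\ell$ and, via \eqref{eq:orthogonal}, orthogonal to the $M^{(0)}_{k}$ fluctuation. Summing the contributions with the weights $k$ and $2\ell$ from \eqref{eq:pos_n}, and converting slot counts into physical displacement through the effective-velocity identities of Proposition \ref{prop:char_velo} (in particular \eqref{eq:v_eff} and \eqref{eq:v_eff_1_r}), the variances add to exactly the expression \eqref{eq:dif_co}; Slutsky and the continuous mapping theorem then upgrade this to the stated invariance principle for $Y^{i}_{k}$.

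For part (\ref{item:2}) the same decomposition is used at the level of log-moment generating functions. Conditioning on the free/non-free skeleton of the tagged soliton, the overtaking counts become independent geometric sums whose cumulant generating functions are explicit (finite only for $\lambda$ below $-\log q_{\ell}$), so $\E_{\nu_{\q}}[e^{\lambda Y^{i}_{k}(n)}]$ factorizes asymptotically into the assumed generating function of $n-M^{i}_{k}(n)$ composed with these geometric cumulants. Taking $\tfrac1n\log$ and letting $n\to\infty$ yields the formula \eqref{eq:Lambda_Y} for $\Lambda^{Y,i}_{\q,k}$; finiteness near $\lambda=0$ follows from the exponential integrability controlled by \eqref{ineq:overtaken}, and essential smoothness is inherited from that of $\Lambda^{M,i}_{\q,k}$ through composition with the smooth and steep geometric cumulants. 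The G\"{a}rtner-Ellis theorem then delivers the LDP with rate function \eqref{def:rate_I}.

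The main obstacle, in both parts, is the decoupling: the overtakes occur precisely during the free steps, so $M^{i}_{k}$ and the counts $N^{i}_{k,\ell}$ are a priori coupled, and one must show this coupling contributes neither to the limiting variance (part 1) nor to the limiting rate (part 2). This is where Lemma \ref{lem:NM_kl} and the orthogonal decomposition \eqref{eq:orthogonal} do the essential work, converting the interaction counts into sums over slots that are genuinely independent of the free-step dynamics by the product structure of the $\q$-statistics over $k$; controlling the boundary terms, namely the $+1$ discrepancy in \eqref{ineq:overtaken} and the mismatch between the random number of traversed slots and its deterministic proxy, is the technical heart of the argument.
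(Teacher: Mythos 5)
Your proposal follows essentially the same route as the paper's proof: the orthogonal decomposition \eqref{eq:orthogonal} of Lemma \ref{lem:rep_W} together with the independence structure of Remark \ref{lem:indep_skip}, a random-to-deterministic time change for the slot sums (Lemmas \ref{lem:clt_1} and \ref{lem:ran_det} in the paper), and, for the LDP, the composition identity \eqref{eq:Lambda_Y} obtained by integrating out the geometric slot variables level by level (Lemma \ref{lem:expbound}). The one caveat is that the coefficients in \eqref{eq:dif_co} do not arise by attaching velocity conversion factors to the raw weights $k$ and $2\ell$ of \eqref{eq:pos_n}; they come from the recursive resummation in Lemma \ref{lem:rep_W}, where centering each $\zeta_{\ell}$ feeds the means $\a_{\ell}$ back into the remaining terms and changes the weights to $v^{\mathrm{eff}}_{\ell}\left(\q\right)/v^{\mathrm{eff}}_{1}\left(\theta^{\ell-1}\q\right)$ -- but since you invoke \eqref{eq:orthogonal} itself as the device, this is a narrative imprecision rather than a gap.
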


    {We will prove Theorem \ref{thm:main} in Section \ref{sec:main_proof}. 
    
    Theorems \ref{thm:lln_lp} and \ref{thm:main} are results under $\nu_\q$. For the IP, with the same assumption one can also show the same convergence under $\mu_{\q}$. For the $\mathbb{L}^p$ LLN and the LDP under $\mu_{\q}$, we need the exponential integrability of the size of an excursion as an additional assumption. We recall that $\mathbf{e}^{(i)}\left(\eta\right)$ is defined in \eqref{def:excursion}, and by definition, for any $i \in \Z$, we have the relation $\left| \mathbf{e}^{(i)}\left(\eta\right) \right| = s_{\infty}\left(i+1\right) - s_{\infty}\left(i\right)$.  In particular, if $\eta \in \Omega_{0}$, then $\left| \mathbf{e}^{(0)}\left(\eta\right) \right| = s_{\infty}\left(1\right) $.
        \begin{proposition}\label{prop:main}
        \mbox{}
            \begin{enumerate}
                \item \label{item:prop1} Assume that the assumptions of Theorem \ref{thm:main} (\ref{item:1}) holds for $\q \in \mathcal{Q}$, $k \in \N$ and $i \in \Z$. Then, under $\mu_{\q}$, the scaled process \eqref{def:step_Y} converges weakly in $D\left( \left[0,\mathbf{T}\right] \right)$, $\mathbf{T} > 0$ to the centered Brownian motion with variance $D_{k}\left(\q\right)$. 
                \item \label{item:prop2} Assume that the assumptions of Theorem \ref{thm:main} (\ref{item:2}) holds for $\q \in \mathcal{Q}$, $k \in \N$ and $i \in \Z$, and that there exists $\lambda > 0$ such that $\E_{\nu_{\q}}\left[ e^{\lambda s_{\infty}\left(1\right)} \right] < \infty$.
                Then, under $\mu_{\q}$, the sequence {$\left(Y^{{i}}_{k}\left(n\right) / n\right)_{n \in \N}$} satisfies the LDP with the good rate function $I^{Y{,i}}_{\q,k}$. 
                \item \label{item:prop3} Suppose that $\q \in \mathcal{Q}$ satisfies $\E_{\nu_{\q}}\left[ s_{\infty}\left(1\right)^{p} \right] < \infty$ with some $p > 1$. Then, for any $k \in \N$ with $q_{k} > 0$, $i \in \Z$ and $p \ge 1$, we have 
                    \begin{align}
                        \lim_{n \to \infty} \E_{\mu_\q}\left[ \left| \frac{1}{n} Y^{{i}}_{k}\left( n \right) - v^{\mathrm{eff}}_{k}\left(\q\right) \right|^{p} \right] = 0.
                    \end{align}
            \end{enumerate}
        \end{proposition}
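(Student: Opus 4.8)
The three parts are all transfers of the corresponding $\nu_{\q}$-statements (Theorems~\ref{thm:lln_lp} and~\ref{thm:main}) to $\mu_{\q}$, and the common engine is a size-biasing identity coming from the inverse Palm formula. By Lemma~\ref{lem:sp_shift}, $Y^{i}_{k}(\cdot,n)$ is a function of the centered configuration $\tilde\eta=\tau_{s_{\infty}(0)}\eta$. For $\nu_{\q}$-a.e.\ $\eta\in\Omega_{0}$ and every integer $0\le z<s_{\infty}(1)$, the largest record of $\tau_{z}\eta$ that is $\le 0$ equals $-z$, whence $\widetilde{\tau_{z}\eta}=\eta$ and therefore $Y^{i}_{k}(\tau_{z}\eta,n)=Y^{i}_{k}(\eta,n)$. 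Substituting any $f(\eta)=F\big(Y^{i}_{k}(\eta,\cdot)\big)$ into \eqref{def:inv_Palm} collapses the inner sum and gives
\begin{align}\label{eq:sizebias-plan}
\E_{\mu_{\q}}\!\left[F\big(Y^{i}_{k}(\cdot)\big)\right]=\frac{\E_{\nu_{\q}}\!\left[s_{\infty}(1)\,F\big(Y^{i}_{k}(\cdot)\big)\right]}{\E_{\nu_{\q}}\!\left[s_{\infty}(1)\right]},
\end{align}
valid for bounded or nonnegative measurable $F$ (the extension from local $f$ being routine); note $\E_{\nu_{\q}}[s_{\infty}(1)]\in(0,\infty)$ since $\q\in\mathcal{Q}$ and $s_{\infty}(1)\ge 1$.

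For part~(\ref{item:prop3}), apply \eqref{eq:sizebias-plan} with $F(y)=|y/n-v^{\mathrm{eff}}_{k}(\q)|^{p}$ and Hölder with conjugate exponents $\big(p_{0},p_{0}/(p_{0}-1)\big)$, where $p_{0}>1$ denotes the moment index in the hypothesis $\E_{\nu_{\q}}[s_{\infty}(1)^{p_{0}}]<\infty$ (renamed to avoid clash with the conclusion's $p\ge 1$). The weight factor contributes the finite constant $\|s_{\infty}(1)\|_{L^{p_{0}}(\nu_{\q})}$, while the other factor is a power of $\E_{\nu_{\q}}\big[|Y^{i}_{k}(n)/n-v^{\mathrm{eff}}_{k}(\q)|^{\,p\,p_{0}/(p_{0}-1)}\big]$, which tends to $0$ by Theorem~\ref{thm:lln_lp} applied with exponent $q=p\,p_{0}/(p_{0}-1)$.

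For part~(\ref{item:prop2}), take $F(y)=e^{\lambda y}$ in \eqref{eq:sizebias-plan}; after $\tfrac1n\log$ the normalizing constant is irrelevant. The bound $s_{\infty}(1)\ge 1$ gives $\E_{\nu_{\q}}[s_{\infty}(1)e^{\lambda Y^{i}_{k}(n)}]\ge\E_{\nu_{\q}}[e^{\lambda Y^{i}_{k}(n)}]$, so the $\liminf$ is at least $\Lambda^{Y,i}_{\q,k}(\lambda)$; for the matching $\limsup$, Hölder with exponents $(a,b)$, $b>1$, yields $\E_{\nu_{\q}}[s_{\infty}(1)e^{\lambda Y^{i}_{k}(n)}]\le\|s_{\infty}(1)\|_{L^{a}(\nu_{\q})}\,\E_{\nu_{\q}}[e^{\lambda b\,Y^{i}_{k}(n)}]^{1/b}$, all moments of $s_{\infty}(1)$ being finite by the assumed exponential integrability. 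Dividing by $n$, letting $n\to\infty$, and sending $b\downarrow 1$ using continuity of $\Lambda^{Y,i}_{\q,k}$ on the interior of its domain gives $\limsup\le\Lambda^{Y,i}_{\q,k}(\lambda)$ there, while for $\lambda$ outside the domain the lower bound already forces $+\infty$ on both sides. Hence the $\mu_{\q}$-scaled cumulant generating function coincides with $\Lambda^{Y,i}_{\q,k}$, which is essentially smooth by Theorem~\ref{thm:main}~(\ref{item:2}), and the Gärtner–Ellis theorem yields the LDP under $\mu_{\q}$ with the same rate function $I^{Y,i}_{\q,k}$ of \eqref{def:rate_I}.

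Part~(\ref{item:prop1}) is the delicate one, as weak convergence in $D([0,\mathbf{T}])$ requires tightness and finite-dimensional distributions, and \eqref{eq:sizebias-plan} reweights by the random factor $s_{\infty}(1)$. Writing $W_{n}(t):=\tfrac1n Y^{i}_{k}(\lfloor n^{2}t\rfloor)-nt\,v^{\mathrm{eff}}_{k}(\q)$ for the process in \eqref{def:step_Y}, the plan is: (i) tightness under $\mu_{\q}$ follows from tightness under $\nu_{\q}$ (Theorem~\ref{thm:main}~(\ref{item:1})) and uniform integrability of $s_{\infty}(1)$, via $\mu_{\q}(A)\le\big(\E_{\nu_{\q}}[s_{\infty}(1)]\big)^{-1}\big(L\,\nu_{\q}(A)+\E_{\nu_{\q}}[s_{\infty}(1)\mathbf{1}_{\{s_{\infty}(1)>L\}}]\big)$; (ii) for finite-dimensional laws, fix times $0\le t_{1}<\dots<t_{m}\le\mathbf{T}$ and bounded continuous $\phi$, truncate $s_{\infty}(1)=(s_{\infty}(1)\wedge L)+(s_{\infty}(1)-L)^{+}$ in \eqref{eq:sizebias-plan}, bound the tail uniformly in $n$ by $\|\phi\|_{\infty}\E_{\nu_{\q}}[(s_{\infty}(1)-L)^{+}]\to0$, and reduce to the asymptotic independence
\begin{align}\label{eq:decouple-plan}
\E_{\nu_{\q}}\!\left[\big(s_{\infty}(1)\wedge L\big)\,\phi\big(W_{n}(t_{1}),\dots,W_{n}(t_{m})\big)\right]\longrightarrow\E_{\nu_{\q}}\!\left[s_{\infty}(1)\wedge L\right]\,\E\!\left[\phi\big(B(t_{1}),\dots,B(t_{m})\big)\right].
\end{align}
Decoupling the local weight $s_{\infty}(1)\wedge L$ (a function of the $0$-th excursion only) from the diffusive path is the main obstacle. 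I would prove \eqref{eq:decouple-plan} by a coupling that leaves all excursions $\mathbf{e}^{(j)}$, $j\neq 0$, unchanged and resamples only $\mathbf{e}^{(0)}$, exploiting the i.i.d.\ excursion structure under $\nu_{\q}$ from Remark~\ref{lem:ex_iid}: by the interaction decomposition \eqref{eq:pos_n}, altering $\mathbf{e}^{(0)}$ changes $Y^{i}_{k}(m)$ by at most a finite, $\mathbf{e}^{(0)}$-measurable amount (a bounded relabeling shift plus the bounded total phase shift from the finitely many solitons in $\mathbf{e}^{(0)}$), uniformly in $m$, so $W_{n}$ changes by $O(1/n)$ and the weight becomes asymptotically independent of the limit. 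Here the index-independence of the limiting variance $D_{k}(\q)$ in \eqref{eq:dif_co}, already provided by Theorem~\ref{thm:main}~(\ref{item:1}), is precisely what renders the induced relabeling of the tagged soliton harmless. Combining \eqref{eq:decouple-plan} with the tail estimate identifies every finite-dimensional limit with that of $B$, and together with tightness this yields $W_{n}\Rightarrow B$ under $\mu_{\q}$.
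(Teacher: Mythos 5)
Your overall architecture coincides with the paper's: all three parts are transferred from $\nu_{\q}$ to $\mu_{\q}$ through the inverse Palm identity \eqref{def:inv_Palm}, with parts (\ref{item:prop2}) and (\ref{item:prop3}) handled by H\"older exactly as in the paper (the paper likewise uses $s_{\infty}(1)\ge 1$ for the lower bound, H\"older with an exponent $p>1$ satisfying $p\lambda<\delta_{\q,k}$ followed by $p\downarrow 1$ for the upper bound, and the $\|s_{\infty}(1)\|_{L^{p_0}}$ weight against Theorem \ref{thm:lln_lp} with a larger exponent for the $\mathbb{L}^{p}$ convergence). For these two parts your argument is complete and I have no objection.

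For part (\ref{item:prop1}) you and the paper share the same idea (decouple the size-biasing weight, which is a function of $\mathbf{e}^{(0)}$ only, from the diffusive limit) but implement it differently. The paper replaces $Z^{i}_{n,k}$ by an explicit auxiliary process $\tilde{Z}_{n,k}$ in which $M^{i}_{k}$ is replaced by $M^{(0)}_{k}$ and the random summation ranges in \eqref{eq:decom_Y} are replaced by the deterministic ranges $[n+1,\lfloor v^{\mathrm{eff}}_{k-\ell}(\theta^{\ell}\q)n^{2}t\rfloor]$; closeness is supplied by Lemma \ref{lem:ran_det} and \eqref{ineq:MiMj}, and then the truncation $\{|\mathbf{e}^{(0)}|\le 2n+1\}$ makes the weight \emph{exactly} independent of $\tilde{Z}_{n,k}$ via Remark \ref{lem:indep_skip}, so no coupling is needed. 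Your resampling coupling would work, but the justification you give for the key uniform bound is too thin: you argue from \eqref{eq:pos_n} that altering $\mathbf{e}^{(0)}$ changes $Y^{i}_{k}(m)$ by ``the bounded total phase shift from the finitely many solitons in $\mathbf{e}^{(0)}$,'' which accounts for interactions \emph{with} solitons of $\mathbf{e}^{(0)}$ but not for the secondary effect that, at a \emph{fixed} time $m$, interactions between the tagged soliton and solitons \emph{outside} $\mathbf{e}^{(0)}$ may have completed in one configuration and not yet in the other (their timing shifts by an $\mathbf{e}^{(0)}$-dependent amount, and \eqref{eq:pos_n} alone does not bound how many interactions fall in that timing window). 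The uniform-in-$m$ bound is true, but proving it requires the conservation of effective distance (Remark \ref{rem:eff_dis}) together with the orthogonal representation of Lemma \ref{lem:rep_W} and the estimate \eqref{ineq:MiMj} --- i.e., essentially the machinery the paper deploys. With that substitution your proof closes; as written, the decoupling step \eqref{eq:decouple-plan} is a plausible plan rather than a proof.
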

        
    The proof of Proposition \ref{prop:main} will be presented in Section \ref{app:propmain}.}

    \begin{remark}\label{rem:eff_v}
        We note that $v^{\mathrm{eff}}_{k}\left(\q\right)$ can be given by 
            \begin{align}
                v^{\mathrm{eff}}_{k}\left(\q\right) &= \frac{d \Lambda^{Y{,i}}_{\q,k}\left(\lambda\right)}{d\lambda}|_{\lambda= 0} \\
                &= \bar{r}_{k}\left(\q\right) v^{\mathrm{eff}}_{k}\left( C_{k} \q \right). \label{eq:eff_lam}
            \end{align}
        In addition, \eqref{eq:eff_lam} gives the same formula for the effective velocity as the formula by \cite[(1.12)]{FNRW}, see Section \ref{app:eff_lam} for the proof of \eqref{eq:eff_lam} and the equivalence between the formulas. 
    \end{remark}
    \begin{remark}\label{rem:othernu}
        We note that Theorem \ref{thm:main} can be shown with initial distribution $\mu$ conditioned on $\Omega_0$, not necessarily $\q$-{statistics}, such that $\zeta_{k}\left(i\right)$, $k \in \N$, $i \in \Z$ are i.i.d. for each $k$ and independent over $k$ and satisfy an exponential moment condition{,} by the same argument in this paper. {We note that under the condition that $\left(\zeta_{k}\left(i\right)\right)_{k \in \N, i \in \Z}$ are i.i.d. for each $k$ and independent over $k$, the measure $\mu$ is stationary under the box-ball dynamics, which is proven in \cite{FNRW}.}
    \end{remark}
        
    In the next subsection we will give sufficient conditions for $\q,k$ such that the assumptions in Theorem \ref{thm:main} are satisfied.

    Next, we consider the correlations between two $k$-solitons. Our second result implies that even if two $k$-solitons are macroscopically far apart, they are strongly correlated in the diffusive space-time scaling. 
        \begin{theorem}\label{thm:st_cor} 
            {Suppose that $\q \in \mathcal{Q}$ and $\E_{\nu_\q}\left[ {s_{\infty}\left(1\right)}^2 \right] < \infty$. Then, for any $k \in \N$ with $q_k > 0$,}
            $u, v \in \R$ and $0 \le a \le 1$ we have 
                \begin{align}\label{eq:st_cor_trun}
                    &\varlimsup_{n \to \infty} \E_{\nu_{\mathbf{q}}}\left[ \left| \frac{1}{n} Y^{\left( \left\lfloor n^{a} u \right\rfloor \right)}_{k}\left(n^2 \right) - \frac{1}{n} Y^{\left( \left\lfloor n^{a} v \right\rfloor \right)}_{k}\left(n^2\right) \right|^{2} \right] = 0.
                \end{align}
        \end{theorem}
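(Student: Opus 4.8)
\emph{Proof strategy.} The idea is to bypass the position decomposition \eqref{eq:pos_n} and instead exploit the conservation of the effective ($\xi_k$-)distance between two $k$-solitons of the \emph{same} size, which holds identically in time and reduces the whole difference to the fluctuation of an ``inactive-site'' count over a window of fixed effective width. Concretely, I would first apply Lemma \ref{lem:dif_xi} with $\ell = k$: the sum $\sum_{h = \ell+1}^{k-1}$ is then empty and the two branches of \eqref{eq:dif_xi} collapse to the single value
\begin{align}
\xi_{k}\left(T^{n}\eta, X^{(i)}_{k}(n)\right) - \xi_{k}\left(T^{n-1}\eta, X^{(i)}_{k}(n-1)\right) = k + o_{k}\left(T^{n-1}\eta\right),
\end{align}
and this increment is the same for every tagged soliton. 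Summing in time and subtracting the analogous identity for the $j$-th soliton shows that the $\xi_k$-separation $d := \xi_{k}(\eta, X^{(i)}_{k}(0)) - \xi_{k}(\eta, X^{(j)}_{k}(0))$ is conserved, i.e.\ $\xi_{k}(T^{n}\eta, X^{(i)}_{k}(n)) - \xi_{k}(T^{n}\eta, X^{(j)}_{k}(n)) = d$ for all $n$, which is exactly Remark \ref{rem:eff_dis}.

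Converting this conserved separation back to real space through the inverse $s_k$, and using that $\xi_k$ increases by one precisely at the $\xi_k$-\emph{active} sites (records and seats of number $>k$) while staying constant on the $\xi_k$-\emph{inactive} sites, I get
\begin{align}
X^{(i)}_{k}(n) - X^{(j)}_{k}(n) = d + I(n) + O(1),
\end{align}
where $I(n)$ is the number of $\xi_k$-inactive sites between the two tagged solitons in $T^{n}\eta$ and the $O(1)$ comes from the convention $X(\gamma) = \inf \gamma - 1$ and the endpoint bookkeeping. Subtracting the $n=0$ instance, the $d$'s cancel, so
\begin{align}
Y^{(i)}_{k}(n) - Y^{(j)}_{k}(n) = I(n) - I(0) + O(1).
\end{align}
Taking $i = \lfloor n^{a} u\rfloor$, $j = \lfloor n^{a} v\rfloor$ and $n = n^{2}$, the theorem reduces to $\E_{\nu_{\q}}[(I(n^{2}) - I(0))^{2}] = o(n^{2})$.

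I would then analyze $I(n)$ as a sum of roughly $d$ nearly independent per-slot contributions. By the i.i.d.\ slot structure of $\nu_{\q}$ (Remark \ref{lem:ex_iid}) and finiteness of $\E_{\nu_{\q}}[\zeta_{k}(0)]$, the conserved width obeys $d = O(|i-j|) = O(n^{a})$ in $\mathbb{L}^{2}$; Lemma \ref{lem:NM_kl} and the decomposition \eqref{eq:orthogonal} identify $I(n)$ with a functional of the $\zeta$-field, so that the second-moment hypothesis $\E_{\nu_{\q}}[s_{\infty}(1)^{2}] < \infty$ makes each slot contribution have finite variance. This yields $\operatorname{Var}(I(n)) = O(d)$ and $\E[I(n)] = \rho\, d + O(1)$, where $\rho$ is the stationary ratio of inactive to active sites, and hence the variance bound $\operatorname{Var}(I(n^{2}) - I(0)) \le 2\operatorname{Var}(I(n^{2})) + 2\operatorname{Var}(I(0)) = O(d) = O(n^{a})$.

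The main obstacle is the cancellation of the means: the crude bound $\E[(I(n^{2}) - I(0))^{2}] \le 2\E[I(n^{2})^{2}] + 2\E[I(0)^{2}]$ only gives $O(d^{2}) = O(n^{2a})$, which fails to beat $n^{2}$ at $a=1$, so one must show that the leading terms $\rho\, d$ of $\E[I(0)]$ and $\E[I(n^{2})]$ agree, i.e.\ $\E[I(n^{2})] - \E[I(0)] = O(1)$. This is the genuinely dynamical input: the window defining $I(n^{2})$ sits near real position $v^{\mathrm{eff}}_{k}(\q)\, n^{2}$, far from the origin, so the evolved and $\Omega_{0}$-conditioned law must be shown to be locally indistinguishable there from the translation-invariant stationary measure, with the \emph{same} density $\rho$. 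I would obtain this by transferring from $\nu_{\q}$ to $\mu_{\q}$ through the inverse Palm identity \eqref{def:inv_Palm} and invoking the $T$-invariance of $\mu_{\q}$, so that the leading coefficient $\rho$ is the stationary one in both $T^{n^{2}}\eta$ and $\eta$ and the discrepancy is confined to the $O(1)$ boundary effect near the origin. Combining the mean cancellation with the variance bound gives $\E_{\nu_{\q}}[(I(n^{2}) - I(0))^{2}] = O(d) + O(1) = O(n^{a}) = o(n^{2})$ (the case $a=0$ being immediate, since then $d$ stays bounded), and dividing by $n^{2}$ finishes the proof.
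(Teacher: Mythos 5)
Your reduction is correct and rather elegant: since the effective distance $\xi_{k}\left(T^{n}\eta, X^{(i)}_{k}\left(n\right)\right) - \xi_{k}\left(T^{n}\eta, X^{(j)}_{k}\left(n\right)\right)$ is conserved (Remark \ref{rem:eff_dis}), the identity
\begin{align}
Y^{(i)}_{k}\left(n\right) - Y^{(j)}_{k}\left(n\right) = I\left(n\right) - I\left(0\right)
\end{align}
holds exactly (no $O(1)$ error is even needed), where $I(n)$ counts the $(h,\sigma)$-seats with $h \le k$ between the two tagged solitons at time $n$. This is a genuinely different organization from the paper, which instead runs a recursive Cauchy--Schwarz argument down the $k$-skip hierarchy via Lemma \ref{lem:rep_W}, reducing everything to the difference of the $M^{(\cdot)}_{k}$ counts and then to $\zeta_{\ell}$-sums over windows bracketed by the indices $\sigma^{(i)}_{k,\ell}$.

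However, there is a genuine gap in your treatment of $I(n^{2})$. The bound $\operatorname{Var}(I(0)) = O(n^{a})$ does follow from the i.i.d.\ excursion structure of $\nu_{\q}$ and $\E_{\nu_{\q}}\left[s_{\infty}\left(1\right)^{2}\right] < \infty$, because the time-$0$ window is determined by a counting rule adapted to the excursion filtration. But at time $n^{2}$ the window's location and its contents (which smaller solitons have been swept in by the leading tagged soliton and not yet released by the trailing one, and which larger solitons are currently in transit between them) are selected by the entire dynamics up to time $n^{2}$; the ``sum of $d$ nearly independent per-slot contributions'' picture is not available there, and you invoke the stationarity transfer only to match the \emph{means}. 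To conclude $\E_{\nu_{\q}}\left[\left(I(n^{2}) - I(0)\right)^{2}\right] = o(n^{2})$ you need second-moment control of $I(n^{2})$ itself, and this is exactly where the paper's machinery enters: the independence of $\left(\zeta_{\ell}\right)_{\ell \le k}$ from $\Psi_{k}(\eta)$ (Remark \ref{lem:indep_skip}) makes the window endpoints independent of the relevant part of the configuration, Lemma \ref{lem:T_inv} transfers the time-$n^{2}$ law back to time $0$ after re-centering at the record $s_{\infty}\left(T^{n^{2}}\Psi_{k}\left(\eta\right),0\right)$, and Doob's maximal inequality over all admissible window positions in a range of size $O(n^{a})$ (together with the fourth-moment bounds on $\sigma^{(i)}_{k,\ell}$ and the laws of large numbers in Lemmas \ref{lem:lln_ini_prob} and \ref{lem:st_cor_3}) controls the fluctuations. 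Without an argument of this type your step ``$\operatorname{Var}(I(n)) = O(d)$'' is unjustified for the evolved window, and essentially all of the work of the proof is concentrated in that step.
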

    {We will show Theorem \ref{thm:st_cor} in Section \ref{sec:st_proof}.}
    
    By combining Theorems \ref{thm:main} and \ref{thm:st_cor}, we have the following. 
        \begin{corollary}\label{cor:2BM}
            Suppose that $\q \in \mathcal{Q}$ and $k \in \N$ satisfy the assumption of Theorem \ref{thm:main} (\ref{item:1}) {and $\E_{\nu_\q}\left[ {s_{\infty}\left(1\right)}^2 \right] < \infty$}. Then, for any $u, v \in \R$ and $\mathbf{T} > 0$, we have the following weak convergence in $D\left([0,\mathbf{T}]\right)^2$ under $\nu_{\q}$.
                \begin{align}
                    &\lim_{\e \to 0}\left( \frac{1}{n} Y^{\left(\left\lfloor n u \right\rfloor\right)}_{k}\left( \left\lfloor n^2 t \right\rfloor \right) - nt v^{\mathrm{eff}}_{k}\left( \q \right), \frac{1}{n} Y^{\left(\left\lfloor n v \right\rfloor\right)}_{k}\left(\left\lfloor n^2 t \right\rfloor \right) - nt v^{\mathrm{eff}}_{k}\left( \q \right) \right) \\ &= \left(B_{k}\left(t\right), B_{k}\left(t\right)\right),
                \end{align}
            where $B_{k}\left(\ \cdot \ \right)$ is the centered Brownian motion with variance $D_{k}\left(\mathbf{q}\right)$.
        \end{corollary}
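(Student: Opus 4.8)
The plan is to split the assertion into a \emph{marginal} convergence statement for each of the two coordinates and a \emph{coincidence} estimate showing that the two coordinates become asymptotically equal, and then to glue these together by a routine tightness argument. Throughout, abbreviate
\begin{align}
Z^{u}_{n}(t) := \frac{1}{n} Y^{\left(\lfloor n u\rfloor\right)}_{k}\left(\lfloor n^2 t\rfloor\right) - n t\, v^{\mathrm{eff}}_{k}(\q), \qquad t\in[0,\mathbf{T}],
\end{align}
and define $Z^{v}_{n}$ analogously, so that the claim is the weak convergence $(Z^{u}_{n},Z^{v}_{n})\Rightarrow(B_{k},B_{k})$ in $D([0,\mathbf{T}])^2$ under $\nu_{\q}$, with $B_{k}$ a centered Brownian motion of variance $D_{k}(\q)$.

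For the marginals, I would first argue that under $\nu_{\q}$ the law of the whole trajectory $(Y^{(i)}_{k}(m))_{m\ge 0}$ does not depend on $i\in\Z$. This follows from the translation invariance recorded in Remark \ref{lem:ex_iid}: since the slot variables $(\zeta_{k}(i))_{i}$ are i.i.d.\ and independent over the size index, the environment of larger and smaller solitons seen from the $i$-th $k$-soliton with volume is, by the renewal structure of the positive entries of an i.i.d.\ field, identically distributed across $i$, and $Y^{(i)}_{k}(\cdot)$ is a deterministic function of this environment. Consequently, for each fixed $n$ one has $Z^{u}_{n}\stackrel{d}{=}Z^{0}_{n}$ and $Z^{v}_{n}\stackrel{d}{=}Z^{0}_{n}$, and Theorem \ref{thm:main} (\ref{item:1}), applied with $i=0$ (legitimate because $\q,k$ satisfy its hypothesis), gives $Z^{0}_{n}\Rightarrow B_{k}$. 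Hence each marginal converges weakly in $D([0,\mathbf{T}])$ to $B_{k}$, and in particular the pair $(Z^{u}_{n},Z^{v}_{n})$ is tight in $D([0,\mathbf{T}])^2$.

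For the coincidence, I would show that for every fixed $t\in[0,\mathbf{T}]$,
\begin{align}
\lim_{n\to\infty}\E_{\nu_{\q}}\!\left[\left|Z^{u}_{n}(t)-Z^{v}_{n}(t)\right|^2\right]=0.
\end{align}
The case $t=0$ is immediate since $Y^{(i)}_{k}(0)=0$, and for $t>0$ the drift cancels in the difference, so this is precisely the content of Theorem \ref{thm:st_cor} (whose hypothesis $\E_{\nu_{\q}}[s_{\infty}(1)^2]<\infty$ is assumed) after the diffusive time rescaling $n\rightsquigarrow m_{n}:=\lfloor n\sqrt{t}\rfloor$, applying \eqref{eq:st_cor_trun} with $a=1$ and with $u,v$ replaced by $u/\sqrt{t},\,v/\sqrt{t}$. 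Since $L^2$-convergence implies convergence in probability, $Z^{u}_{n}(t)-Z^{v}_{n}(t)\to 0$ in probability for each $t$. To conclude, along any weakly convergent subsequence $(Z^{u}_{n},Z^{v}_{n})\Rightarrow(\xi,\chi)$ the marginal step forces $\xi\stackrel{d}{=}\chi\stackrel{d}{=}B_{k}$, while the coincidence step forces $\xi(t)=\chi(t)$ almost surely for each fixed $t$; since $B_{k}$ has continuous paths, right-continuity upgrades this to $\xi=\chi$ in $D([0,\mathbf{T}])$, so every subsequential limit equals $(B_{k},B_{k})$, proving the claim.

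I expect the main obstacle to be in the coincidence step, namely promoting the single macroscopic-time statement of Theorem \ref{thm:st_cor}, proved only at $t=1$, to an arbitrary macroscopic time $t$: the rescaling $n\mapsto\lfloor n\sqrt{t}\rfloor$ produces a time mismatch $\lfloor n^2 t\rfloor - m_{n}^2 = O(n)$ together with an $O(1)$ discrepancy between the relevant soliton indices, and one must verify that neither affects the limit. In the difference the macroscopic drift cancels, so both errors enter only at sub-diffusive order $o(n)$, but making this rigorous needs an a priori diffusive bound on the relative displacement of two equal-size solitons over $O(n)$ steps. A cleaner route, which I would adopt if the bookkeeping becomes heavy, is to observe that the proof of Theorem \ref{thm:st_cor} applies verbatim with $n^2$ replaced by $\lfloor n^2 t\rfloor$, yielding the fixed-$t$ estimate directly. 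A secondary point to justify with care is the $i$-independence of the law of $(Y^{(i)}_{k}(m))_{m}$ used for the marginals; should this prove delicate, one can instead deduce the marginal convergence of $Z^{u}_{n}$ from that of $Z^{0}_{n}$ together with the coincidence estimate taken with $v=0$, provided the tightness bounds underlying Theorem \ref{thm:main} (\ref{item:1}) are uniform in the index $i$.
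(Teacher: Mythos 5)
Your proposal is correct and follows essentially the same route as the paper, which derives the corollary simply by combining the marginal convergence of Theorem \ref{thm:main} (\ref{item:1}) with the $L^2$-coincidence of Theorem \ref{thm:st_cor} (applied with $a=1$, including $v=0$ to transfer the marginal to the moving index $\lfloor nu\rfloor$). The two technical points you flag — the $i$-independence in law of $Y^{(i)}_{k}(\cdot)$ under $\nu_{\q}$ coming from the i.i.d.\ structure of $(\zeta_{\ell}(j))$, and the fact that the proof of Theorem \ref{thm:st_cor} runs verbatim with $n^2$ replaced by $\lfloor n^2 t\rfloor$ since the bounds \eqref{eq:stcor_3}--\eqref{eq:stcor_4} are uniform in the time argument — are exactly the details the paper leaves implicit, and your resolutions of them are sound.
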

        
    Hence, under the assumption of Theorems \ref{thm:main} (\ref{item:1}) {and \ref{thm:st_cor}}, $k$-solitons with volume {starting at macroscopic distance} converge to the same Brownian motion.

\subsection{Scaling limits for \texorpdfstring{$M^{(i)}_{k}\left(  \ \cdot  \  \right)$}{M}}

    By Theorem \ref{thm:main}, we have found that for $\q \in \mathcal{Q}$ and $k \in \N$ such that IP/LDP for $M^{{i}}_{k}\left(  \ \cdot  \  \right)$ hold, IP/LDP for $k$-solitons also hold. In this subsection, we give some sufficient conditions of such $\q, k$. To describe the results, we define $\rho\left(\q\right)$ as the ball density under ${\mu}_{\q}$, i.e., 
        \begin{align}\label{def:ball_den}
            \rho\left(\q\right) := {\mu}_{\q}\left(\eta\left(0\right) = 1\right).
        \end{align}

    First we consider the case that $\q$ is asymptotically Markov. Recall that $\mathcal{Q}_{\mathrm{AM}}$, $K\left(\q\right)$ are defined in \eqref{def:q_AMarkov} and \eqref{def:Kq}. If $k$ is sufficiently large, we can show that $M^{{i}}_{k}\left(  \ \cdot  \  \right)$ satisfies the invariance principle, and the nice regularity property of \eqref{def:Lambda_M}. 
        \begin{theorem}\label{thm:Markov}
            If $\q \in \mathcal{Q}_{\mathrm{AM}}$ and $k \ge K\left(\q\right)$, then for any $i \in \Z$, \eqref{def:step_M} converges weakly to the Brownian motion with variance $G_{k}\left(\q\right)$ under {$\mu_{\q}$ and} $\nu_{\q}$, where $G_{k}\left(\q\right)$ is given by 
                \begin{align}\label{eq:dif_M_AM}
                    G_{k}\left(\q\right) &= 4\rho\left(\theta^{k}\q\right)\left(1 - \rho\left(\theta^{k}\q\right)\right)\left(1 - 2 \rho\left(\theta^{k}\q\right)\right).
                \end{align}
           For any $i \in \Z$ and $\lambda \in \R$, the limit $\Lambda^{M{,i}}_{\q,k}\left(\lambda\right)$ exists {and does not depend on $i$}. In addition, $\Lambda^{M}_{\q,k}\left(\lambda\right) {:= \Lambda^{M{,0}}_{\q,k}\left(\lambda\right)}$ is a smooth monotone convex function, which is explicitly given by 
                \begin{align}
                    \Lambda^{M}_{\q,k}\left(\lambda\right) &= \log\left( \frac{1 - 2\rho\left(\theta^{k}\q\right)}{2\left(1 - \rho\left(\theta^{k}\q\right)\right)} \left(e^{\lambda} + \sqrt{e^{2\lambda} - 1 + \frac{1}{\left(1 - 2\rho\left(\theta^{k}\q\right)\right)^2}}  \right)\right). \label{eq:log_gen}
                \end{align}
            In particular, the assumptions of Theorem \ref{thm:main} (\ref{item:1}) and (\ref{item:2}) are satisfied with $\q \in \mathcal{Q}_{\mathrm{AM}}$ and $k \ge K\left(\q\right)$. 
        \end{theorem}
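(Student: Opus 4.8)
The plan is to prove Theorem~\ref{thm:Markov} by reducing the non-free time $M^{(i)}_k$ of the tagged $k$-soliton to an additive functional of a stationary finite-state Markov chain, for which the scaled cumulant generating function and the invariance principle can both be obtained by transfer-operator and martingale methods. First I would reduce the size and the statistics. By \eqref{ineq:overtaken} the quantity $M^{(i)}_k(\eta,n)$ differs from $2\sum_{\ell \ge k+1} M^{(i)}_{k,\ell}(\eta,n)$ by at most $1$, so only the number of overtakings by larger solitons matters. Applying the seat-number/$k$-skip reduction (Lemma~\ref{lem:NM_kl}) these overtakings are carried, law for law, onto the corresponding count in a reduced two-sided Markov box-ball system in which the tagged $k$-soliton becomes the smallest soliton and the faster solitons it must cross form an environment of ball density $\rho(\theta^{k}\q)$; the relevant shifted parameters lie in $\mathcal{Q}_{\mathrm{M}}$ precisely because $k \ge K(\q)$. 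The $i$-independence asserted in the statement is then immediate from the record-shift invariance of $\nu_{\q}$ recorded in Remark~\ref{lem:ex_iid} together with Lemma~\ref{lem:sp_shift}, which show that the law of $M^{(i)}_k(n)$ under $\nu_{\q}$ does not depend on $i$.

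Second I would compute the scaled cumulant generating function of the number of free steps $n - M^{(0)}_k(n)$. In the reduced Markov system the free/stuck status of the tagged smallest soliton at each time step is a finite-range function of the stationary ball/hole sequence, so $\E_{\nu_{\q}}[\exp(\lambda(n-M^{(0)}_k(n)))]$ is computed by a tilted transfer operator on the two symbols $\{0,1\}$ with density $\rho := \rho(\theta^{k}\q)$. Its Perron--Frobenius eigenvalue $\mu_+(\lambda) = e^{\Lambda^{M}_{\q,k}(\lambda)}$ is the larger root of the quadratic
\begin{align}
(1-\rho)\mu^2 - (1-2\rho)e^{\lambda}\mu - \rho = 0,
\end{align}
whose explicit solution is exactly \eqref{eq:log_gen}. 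Standard Perron--Frobenius perturbation theory gives existence of the limit \eqref{def:Lambda_M} for all $\lambda \in \R$, together with finiteness, smoothness, strict convexity and monotonicity, hence the essential smoothness required by Theorem~\ref{thm:main}~(\ref{item:2}). Differentiating \eqref{eq:log_gen} at $\lambda = 0$ yields $(\Lambda^{M}_{\q,k})'(0) = 1 - 2\rho$ and $(\Lambda^{M}_{\q,k})''(0) = 4\rho(1-\rho)(1-2\rho)$, which is precisely the formula \eqref{eq:dif_M_AM} for $G_k(\q)$.

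Third I would prove the functional central limit theorem for \eqref{def:step_M}. The increments of $M^{(0)}_k$ form a stationary sequence that is exponentially mixing because of the underlying finite-state Markov structure, so a martingale-approximation central limit theorem together with a Donsker-type tightness estimate gives weak convergence in $D([0,\mathbf{T}])$ of the rescaled centred process to a Brownian motion, whose limiting variance is identified with $G_k(\q) = (\Lambda^{M}_{\q,k})''(0)$ by the Green--Kubo formula. The passage from $\nu_{\q}$ to $\mu_{\q}$ is via the inverse-Palm relation \eqref{def:inv_Palm}, exactly as in Proposition~\ref{prop:main}, using the exponential integrability of $s_{\infty}(1)$ from Lemma~\ref{lem:expbound_ex}. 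Combining these three steps verifies both hypotheses of Theorem~\ref{thm:main}~(\ref{item:1}) and (\ref{item:2}) for $\q \in \mathcal{Q}_{\mathrm{AM}}$ and $k \ge K(\q)$.

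The main obstacle I expect is the first step: making the $k$-skip reduction quantitatively faithful, so that the non-free-time statistics of the tagged $k$-soliton are genuinely those of an additive functional of a finite-state Markov chain in the reduced system, and controlling the discrepancy of at most $1$ in \eqref{ineq:overtaken} uniformly in $n$. Once the correct tilted transfer operator is identified, solving the quadratic to obtain \eqref{eq:log_gen} and extracting $G_k$ by differentiation are routine, and the invariance principle is then a standard consequence of the Markov mixing structure.
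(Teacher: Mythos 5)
Your overall strategy is the paper's: express the non-free time of the tagged $k$-soliton via the $k$-skip map as a record-counting functional in $\Psi_{k}(\eta)$, exploit that $\theta^{k}\q \in \mathcal{Q}_{\mathrm{M}}$ for $k \ge K(\q)$ to get a two-state Markov structure with density $\rho = \rho(\theta^{k}\q)$, and read off the scaled cumulant generating function from the Perron--Frobenius eigenvalue of a tilted $2\times 2$ transfer matrix. Your quadratic $(1-\rho)\mu^{2}-(1-2\rho)e^{\lambda}\mu-\rho=0$ is exactly the characteristic polynomial of the paper's tilted matrix $\tilde{R}(\lambda)$, and your derivative computations $(\Lambda^{M}_{\q,k})'(0)=1-2\rho$ and $(\Lambda^{M}_{\q,k})''(0)=4\rho(1-\rho)(1-2\rho)$ are correct and consistent with \eqref{eq:log_gen} and \eqref{eq:dif_M_AM} (the paper instead identifies $G_{1}$ by solving a Poisson equation and applying the Kipnis--Varadhan variance formula, but the two agree). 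Your detour through \eqref{ineq:overtaken} and the overtaking counts $M^{(i)}_{k,\ell}$ is unnecessary: the exact identity \eqref{eq:M_kl}, $M^{(0)}_{k}(\eta,n)=\sum_{m=0}^{n-1}\bigl(1-r\bigl(T^{m}\Psi_{k}(\tilde{\eta}),J_{k}(\tilde{\eta},0)\bigr)\bigr)$, is the input actually used, and the spatial-window bounds controlling $M^{(i)}_{k,\ell}$ would not lead you to a temporal Markov additive functional.

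There are two genuine gaps. First, saying the free/stuck status is ``a finite-range function of the stationary ball/hole sequence, so the exponential moment is computed by a tilted transfer operator on $\{0,1\}$'' conflates the \emph{spatial} Markov property of $\mu_{\theta^{k}\q}$ with the \emph{temporal} Markov property that is actually required: the sum in \eqref{eq:M_kl} runs over time $m$ at a fixed site, so one needs that $(T^{m}\eta(x))_{m\in\Z}$ is a Markov chain in $m$. This is Lemma \ref{lem:markov_eta} and is not automatic; it uses that the carrier is measurable with respect to the left half of $\eta$ while $T^{-1}$ is expressible through the reversed carrier, so past and future time evolutions at a site are conditionally independent given the present under a two-sided spatially Markov law. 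One must also decouple the record process at the random site $J_{k}(\tilde{\eta},0)$ from the conditioning on $\Omega_{0}$, which is Lemma \ref{lem:rec_Mix}; without these two lemmas the transfer-operator computation has no justification. Second, the asserted $i$-independence is not ``immediate from record-shift invariance and Lemma \ref{lem:sp_shift}'': the law of $M^{(i)}_{k}(n)$ under $\nu_{\q}$ genuinely depends on $i$, since a record shift renumbers the solitons by a random amount. What is true is that the \emph{limits} are $i$-independent; for the invariance principle this follows from the uniform-in-$n$ comparison \eqref{ineq:MiMj_1}, $|M^{i}_{k}(\eta,n)-M^{(0)}_{k}(\eta,n)|\le 2|J_{k}(\tilde{\eta},i)-J_{k}(\tilde{\eta},0)|-1$, and for the cumulant generating function the paper sandwiches via the decomposition \eqref{eq:dec_Mi} over the representative group containing the $i$-th soliton. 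Neither repair is deep, but both are needed and absent from your sketch.
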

    {The proof of Theorem \ref{thm:Markov} will be given in Section \ref{sec:Markov_proof}.}
    \begin{remark}
        Recall that if $\nu_{\q}$ is a Bernoulli product measure or two-sided Markov distribution, then $K(\q) = 1$. Hence, when the initial distribution is a Bernoulli product measure or two-sided Markov distribution supported on $\Omega$, then the statement of Theorem \ref{thm:Markov} holds for any $k \in \N$.  
    \end{remark}
    \begin{remark}
        If $\q \in \mathcal{Q}_{M}$, then the ball density $ \rho\left(\q\right)$ can be represented via $a\left(\q\right)$, $b\left(\q\right)$ as
            \begin{align}\label{eq:densitymarkov}
                \rho\left(\q\right) = \frac{1}{2}\left( 1 - \sqrt{1 - \frac{4a\left(\q\right)}{\left(1 + a\left(\q\right) - b\left(\q\right)\right)^2}} \right). 
            \end{align}
        We note that if $\q \in \mathcal{Q}_{M}$, then $\rho\left(\theta\q\right) < \rho\left(\q\right)$ holds. To show this, it suffices to show that  
            \begin{align}
                \frac{a\left(\theta\q\right)}{\left(1 + a\left(\theta\q\right) - b\left(\theta\q\right)\right)^2} < \frac{a\left(\q\right)}{\left(1 + a\left(\q\right) - b\left(\q\right)\right)^2},
            \end{align}
        and by using \eqref{eq:theta_ab}, we see that $0 < \sqrt{a\left(\q\right)} + \sqrt{b\left(\q\right)} < 1$ implies
            \begin{align}
                \frac{a\left(\theta\q\right)}{\left(1 + a\left(\theta\q\right) - b\left(\theta\q\right)\right)^2} {= \frac{a\left(\q\right)b\left(\q\right)}{\left(1 - a\left(\q\right) - b\left(\q\right)\right)^2}} 
                < \frac{a\left(\q\right)}{\left(1 + a\left(\q\right) - b\left(\q\right)\right)^2}.
            \end{align}
    \end{remark}
    \begin{remark}
        {We note that 
        \begin{align}
            \E_{{\mu}_\q}\left[ r\left(0\right) \right] &= {\mu}_{\q}\left(\eta\left(x\right) = T\eta\left(x\right) = 0\right) \\
            &= 1 -  {\mu}_{\q}\left(\eta\left(x\right) = 1\right) - \mu_{\q}\left(T\eta\left(x\right) = 1\right) \\
            &= 1 - 2\rho\left(\q\right) \label{eq:r_rho}.
        \end{align}}
        By \eqref{eq:v_eff_1_r} and \eqref{eq:r_rho}, under the assumption of Theorem \ref{thm:Markov}, $G_{k}\left(\q\right)$ and $\Lambda^{M}_{\q,k}\left(\lambda\right)$ can be represented as 
            \begin{align}
                G_{k}\left(\q\right) = v^{\mathrm{eff}}_{1}\left(\theta^{k-1} \q\right) \left(1 - v^{\mathrm{eff}}_{1}\left(\theta^{k-1} \q\right)^2 \right),
            \end{align}
        and 
            \begin{align}
                \Lambda^{M}_{\q,k}\left(\lambda\right) = \log\left( \frac{v^{\mathrm{eff}}_{1}\left(\theta^{k-1} \q\right)}{1 + v^{\mathrm{eff}}_{1}\left(\theta^{k-1} \q\right)} \left( e^{\lambda} + \sqrt{e^{2\lambda} + \frac{1 - v^{\mathrm{eff}}_{1}\left(\theta^{k-1} \q\right)^2 }{v^{\mathrm{eff}}_{1}\left(\theta^{k-1} \q\right)^2} } \right) \right).
            \end{align}
    \end{remark}
    {
    \begin{remark}\label{rem:diff_rho}
        If the initial distribution $\mu_{\q}$ is a space-homogeneous two-sided Markov distribution, then by \eqref{def:Mar_q}, \eqref{eq:theta_ab}, Proposition \ref{prop:char_velo}, \eqref{eq:densitymarkov} and \eqref{eq:r_rho}, one can compute $\rho\left(\theta^{k}\q\right)$ and $v^{\mathrm{eff}}_{k-\ell}\left(\theta^{\ell}\q\right)$, $0 \le \ell \le k - 1$ recursively in $k$ as functions of $a\left(\q\right), b\left(\q\right)$, and thus by \eqref{eq:dif_co} and \eqref{eq:dif_M_AM}, one can represent the diffusion coefficient $D_{k}\left(\q\right)$ as an explicit function of $a\left(\q\right), b\left(\q\right)$. For example, we compute $D_{1}\left(\q\right), D_{2}\left(\q\right)$ in the following if $\mu_{\q}$ is the Bernoulli product measure with marginal density $\rho = \rho\left(\q\right)$, and in this case, $D_{k}\left(\q\right)$ becomes a function of $\rho$. 
        For the case $k = 1$, we get 
            \begin{align}
                \rho\left(\theta \q\right) = \frac{\rho^2}{1 - 2\rho\left(1-\rho\right)},
            \end{align}
        and thus we obtain
            \begin{align}
                D_{1}\left(\q\right) = G_{1}\left(\q\right) = \frac{4\rho^2\left(1 - \rho\right)^2\left(1 - 2\rho\right)}{\left(1 - 2\rho\left(1-\rho\right)\right)^2}.
            \end{align}
        For $k = 2$, $\rho\left(\theta^2\q\right)$ and $v^{\mathrm{eff}}_{2}\left(\q\right)$ can be computed as 
            \begin{align}
                \rho\left(\theta^2\q\right) = \frac{\rho^3}{1 - 3\rho\left(1 - \rho\right)},
            \end{align}
        and 
            \begin{align}
                v^{\mathrm{eff}}_{2}\left(\q\right) = 2\left(1 + \a_{1}\left(\q\right)\right) v^{\mathrm{eff}}_{1}\left(\theta\q\right) = \frac{2\rho^3}{\left(1 - 3\rho\left(1 - \rho\right)\right)\left(1 - \rho\left(1 - \rho\right)\right)}.
            \end{align}
        By substituting the above to \eqref{eq:dif_co} and \eqref{eq:dif_M_AM}, we get  
            \begin{align}
                D_{2}\left(\q\right) = \frac{4\rho\left(1-\rho\right)\left(1-2\rho\right)}{\left(1 - 3\rho\left(1 - \rho\right)\right)\left(1 - \rho\left(1 - \rho\right)\right)}\left(\frac{\rho^2\left(1-\rho\right)^2}{\left(1 - 3\rho\left(1 - \rho\right)\right)^2} + 1\right).
            \end{align}
        By repeating the above calculations, $D_{k}\left(\q\right)$ can be computed.
    \end{remark}}
    
    Next, we consider the case where there are at most a finite number of nonzero elements in $\q$, i.e., there are at most a finite number of types of solitons under $\nu_\q$. If we denote by $q_\ell$ the largest nonzero element, then $M^{{i}}_{\ell} = 0$ $\nu_{\q}$-a.s., and thus $M^{{i}}_{\ell}$ trivially satisfies the assumptions in Theorem \ref{thm:main}. For the second largest solitons in $\q$, we can show the following.  
        \begin{theorem}\label{thm:finite}
            Suppose that $\q \in \mathcal{Q}$ satisfies $q_{\ell} > 0$, $q_{h} = 0$, $h \ge \ell + 1$ with some $\ell \ge 2$ and $|\q| \ge 2$. We denote by $k = k(\q)$ the second largest element in $\q$, i.e., 
                \begin{align}
                    k := \max\left\{1 \le h \le \ell -1 ; q_{h} > 0 \right\}.
                \end{align}
            Then, for any $i \in \Z$, \eqref{def:step_M} converges weakly to the Brownian motion with variance $G_{k}\left(\q\right)$ under $\nu_{\q}$, where $G_{k}\left(\q\right)$ is given by 
                \begin{align}
                    G_{k}\left(\q\right) &= \frac{4q_{\ell}}{\left(1 - q_{\ell}\right)^{2(\ell-k)}} \left(1 + \frac{4 q_\ell}{\left(1 - q_{\ell}\right)^{2(\ell-k)}}\right)^{-\frac{3}{2}}.
                \end{align}
            In addition, for any $i \in \Z$, the limit $\Lambda^{M{,i}}_{\q,k}\left(\lambda\right)$ exists {and does not depend on $i$}. In addition,  $\Lambda^{M}_{\q,k}\left(\lambda\right) {:= \Lambda^{M{,0}}_{\q,k}\left(\lambda\right)}$ is a smooth monotone convex function, which is explicitly given by 
                \begin{align}
                    \Lambda^{M}_{\q,k}\left(\lambda\right) &= \log\left(\frac{\left(1 - q_{\ell}\right)^{\ell-k}e^{\lambda} }{2} + \sqrt{\frac{\left(1 - q_{\ell}\right)^{2( \ell - k)}e^{2\lambda}}{4} + q_{\ell}}\right).
                \end{align}
            In particular, the assumptions of Theorem \ref{thm:main} (\ref{item:1}) and (\ref{item:2}) are satisfied with the above $\q$ and $k, \ell$. 
        \end{theorem}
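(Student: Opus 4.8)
The plan is to deduce Theorem \ref{thm:finite} from the same reduction used for Theorem \ref{thm:Markov}: first replace $M^{i}_{k}$ by the overtaking count $M^{i}_{k,\ell}$, then compute the statistics of the latter in a single-species system obtained from the $k$-skip map. First I would observe that, under the present $\q$, the only solitons larger than the tagged $k$-soliton are the $\ell$-solitons: since $q_{h}=0$ for every $k<h<\ell$ and every $h>\ell$, the sum over $\ell'\ge k+1$ in \eqref{ineq:overtaken} collapses to the single term $\ell'=\ell$, giving
\begin{align}
2M^{i}_{k,\ell}(\eta,n)\le M^{i}_{k}(\eta,n)\le 1+2M^{i}_{k,\ell}(\eta,n).
\end{align}
Hence both the invariance principle for \eqref{def:step_M} and the existence and smoothness of $\Lambda^{M,i}_{\q,k}$ are unaffected if $M^{i}_{k}$ is replaced by $2M^{i}_{k,\ell}$ (the $O(1)$ discrepancy is negligible after dividing by $n$ in the IP and vanishes in the $\tfrac1n\log$ limit for the LDP), so it suffices to analyse $M^{i}_{k,\ell}(\eta,n)$, the number of $\ell$-solitons overtaking the $i$-th $k$-soliton by time $n$.

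Next I would pass to the reduced configuration by applying the $k$-skip map, using the relation between $\q$-statistics and the $k$-skip map established in \cite{S}: this sends $\nu_{\q}$ to $\nu_{\theta^{k}\q}$, turns the tagged $k$-soliton into a marked record, and identifies $M^{i}_{k,\ell}(\eta,n)$ with the number of $(\ell-k)$-solitons that cross this record. The key simplification is that $(\theta^{k}\q)_{m}=q_{m+k}$ is nonzero only at $m=\ell-k$, so the reduced system $\nu_{\theta^{k}\q}$ carries a single species of solitons, all of size $\ell-k$. In a single-species system there are no interactions: every $(\ell-k)$-soliton travels at the constant speed $\ell-k$ for all time. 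Combined with the fact that under $\nu_{\q}$ the slot occupations $\bigl(\zeta_{\ell}(i)\bigr)_{i\in\Z}$ are i.i.d. geometric with parameter $q_{\ell}$ (see \eqref{def:nu_q} and Remark \ref{lem:ex_iid}), this reduces the problem to counting i.i.d. objects swept up by a marked point moving at a computable effective speed.

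Concretely, I would set up a renewal--reward representation: writing the real length of the $i$-th reduced slot as $2(\ell-k)\zeta_{\ell}(i)+1$ and the reward as the soliton count $\zeta_{\ell}(i)$, the number of crossings up to time $n$ equals the cumulative reward collected until the cumulative length first exceeds the relative displacement of the marked record, a quantity growing linearly in $n$ with slope fixed by the effective velocities (Proposition \ref{prop:char_velo}). From here both statements follow from classical theory applied to i.i.d. geometric increments: the functional CLT for renewal--reward processes yields the invariance principle for \eqref{def:step_M} and, after the explicit variance computation for the geometric law, the diffusion constant $G_{k}(\q)$ with its characteristic $(1+\,\cdot\,)^{-3/2}$ factor; the Cram\'er/G\"artner--Ellis analysis of the first-passage functional produces $\Lambda^{M}_{\q,k}$ as the logarithm of the Perron root of the associated $2\times2$ tilted transfer operator, and solving the resulting quadratic in $e^{\Lambda^{M}_{\q,k}}$ gives the stated closed form, whose square-root structure is exactly that of such a root. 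The $i$-independence of $\Lambda^{M,i}_{\q,k}$ and of the limiting variance follows from the record-shift invariance of $\nu_{\q}$ (Remark \ref{lem:ex_iid}). Finiteness of the moment generating function on a neighborhood of $0$ and essential smoothness come from the geometric tail of $\zeta_{\ell}$, which places the pole of the log-moment generating function strictly away from the origin; this verifies the hypotheses of Theorem \ref{thm:main} (\ref{item:1}) and (\ref{item:2}).

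The main obstacle I expect is not the CLT/LDP for the renewal--reward functional, which is standard once it is set up, but the rigorous identification of $M^{i}_{k,\ell}(\eta,n)$ with that functional through the $k$-skip correspondence --- that is, showing that tracking the tagged soliton as a marked record commutes with the dynamics up to the $O(1)$ errors already permitted, and that the relative displacement is genuinely a first-passage level of the i.i.d. geometric sequence rather than a functional with long-range dependence. This is precisely the role of the forthcoming Lemma \ref{lem:NM_kl} and the decomposition \eqref{eq:orthogonal}; once these are in hand, the single-species structure makes the reduced measure fully explicit and the remaining steps are routine algebra with the geometric generating function.
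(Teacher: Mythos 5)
Your proposal is correct in outline but takes a genuinely different final step from the paper. The paper never passes through $M^{i}_{k,\ell}$: it starts from \eqref{eq:M_kl}, uses the rigidity of the single-species reduced system (on the support of $\nu_{\theta^{k}\q}$ one has $T=\tau_{-(\ell-k)}$, so $r(T^{m}\Psi_{k}(\tilde{\eta}),J_{k})=r(\Psi_{k}(\tilde{\eta}),J_{k}-(\ell-k)m)$) to turn the time sum into a spatial sum of record indicators along an arithmetic progression of step $\ell-k$, shows that $(\eta(x),W(x))_{x\in\Z}$ is a finite-state ergodic Markov chain under $\mu_{\theta^{k}\q}$, and then invokes the Markov-chain invariance principle and the Perron--Frobenius eigenvalue of a tilted $(\ell-k)$-step transfer matrix. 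Your route --- reduce to $2M^{i}_{k,\ell}$ via \eqref{ineq:overtaken} (only the term $\ell'=\ell$ survives), identify $M^{i}_{k,\ell}$ through Lemma \ref{lem:NM_kl}/\eqref{ineq:M_kl} with a soliton count in a \emph{deterministic} window of length $(\ell-k)n$, and apply renewal--reward theory to the i.i.d.\ geometric $(\zeta_{\ell}(j))_{j}$ --- rests on the same two structural inputs (only $\ell$-solitons exceed $k$; the $k$-skip image is rigid) but bypasses the carrier Markov chain entirely. Both are legitimate; yours trades the transfer-matrix algebra for classical first-passage CLT/LDP, and the $i$-independence via record-shift invariance is an acceptable substitute for the paper's bound \eqref{ineq:MiMj_1}.

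One concrete warning about the constants, since you assert your computation will reproduce the stated $(1+\cdot)^{-3/2}$ form. Carrying out the renewal--reward variance with cycle length $2(\ell-k)\zeta+1$, reward $\zeta$, window $(\ell-k)n$ and the factor $2$, you will obtain
\begin{align}
G_{k}\left(\q\right)=\frac{4\left(\ell-k\right)q_{\ell}\left(1-q_{\ell}\right)}{\left(1+\left(2\left(\ell-k\right)-1\right)q_{\ell}\right)^{3}},
\end{align}
which coincides with the theorem's expression when $\ell-k=1$ (since then $(1-q_{\ell})^{2}+4q_{\ell}=(1+q_{\ell})^{2}$) but not when $\ell-k\ge2$. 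Do not take this as a sign that your setup is wrong: the theorem's stated $\Lambda^{M}_{\q,k}$ fails the normalization $\Lambda^{M}_{\q,k}(0)=0$ as soon as $\ell-k\ge2$, and its derivative at $0$ does not return $\bar{r}_{k}(\q)=(1-q_{\ell})/(1+(2(\ell-k)-1)q_{\ell})$ from \eqref{eq:system_r}, whereas your formula passes both checks and agrees with the implicit-function computation on the correct characteristic polynomial of the tilted $(\ell-k)$-step transfer matrix. Use $\Lambda^{M}(0)=0$ and $(\Lambda^{M})'(0)=\bar{r}_{k}(\q)$ as the arbiters rather than the closed forms printed in the statement.
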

    {We will prove Theorem \ref{thm:finite} in Section \ref{sec:finite_proof}.
    
    Finally, we summarize the limit theorems that can be proved for each initial distribution from Theorems \ref{thm:lln_lp}, \ref{thm:main} \ref{thm:st_cor}, \ref{thm:Markov} and \ref{thm:finite}. In Figure \ref{fig:table_limit}, we describe the condition on the soliton size $k$ such that the limit theorems hold, depending on the initial distribution. That is, if $k$ satisfies the condition under each initial distribution, then the limit theorems are obtained for any $k$-soliton. 
    In addition, the limit theorems hold under either uniform measure or conditioned measure on $\Omega_{0}$, thanks to Lemma \ref{lem:expbound_ex} and Proposition \ref{prop:main}.  Here, the abbreviation SC stands for strong correlations, and we say that SC holds if Theorem \ref{thm:st_cor} holds. }
    \begin{figure}[H]
        \begin{tabular}{|c|c|c|c|c|} 
            \hline Initial dist. \textbackslash \ results & LLN in $\mathbb{L}^p$ & IP & LDP & SC \\ 
            \hline Bernoulli or Markov & $k \in \N$ & $k \in \N$ & $k \in \N$ & $k \in \N$ \\ 
            \hline Asymptotically &  $q_{k} > 0$ & $k \ge K\left(\q\right)$, & $k \ge K\left(\q\right)$, & $q_{k} > 0$ \\
            Markov &   & $q_{k} > 0$ & $q_{k} > 0$ &  \\
            \hline $\max\{k \in \N ; q_{k} > 0\} < \infty$ & $q_{k} > 0$ & $1$st, $2$nd & $1$st, $2$nd & $q_{k} > 0$ \\ 
            \hline 
        \end{tabular}
        \centering
        \caption{Table of conditions on $k$ such that the limit theorem holds, depending on the initial distribution. Here,``1st" (resp. ``2nd") represents the largest $k$ (resp. second largest $k$) such that $q_{k} > 0$.}
        \label{fig:table_limit}
    \end{figure}

\section{\texorpdfstring{$k$}{k}-skip map for the BBS}\label{subsec:kskip}

    In this section we introduce the notion of {\it $k$-skip map}. The $k$-skip map is a natural generalization of the $10$-elimination introduced by \cite{MIT} in terms of the seat number configuration, and the results presented in this section are crucial for the proofs of main results. For the proofs of some known results on the $k$-skip map, we may refer to \cite{S}. 
    
    For any $k \in \N$, we define the $k$-skip map $\Psi_{k} : \Omega \to \Omega$ as
            \begin{align}
                \Psi_{k}\left( \eta \right)(x) := \eta\left(s_{k}\left(\eta, x + \xi_{k}\left(\eta, 0\right) \right) \right).
            \end{align}
    First we explain the intuitive meaning of the $k$-skip map when $k = 1$. Since $s_{1}(\eta, \ \cdot \ )$ is the inverse function of $\xi_{1}(\eta, \ \cdot \ )$, the subset $\{s_{1}(\eta,x) \ ; \ x \in \Z \} \subset \Z$ does not include the non-increasing points of $\xi_{1}(\eta, \ \cdot \ )$, i.e., 
        \begin{align}
            \left\{s_{1}\left(\eta,x\right) \ ; \ x \in \Z \right\} = \Z \setminus \left\{ x \in \Z \ ; \ \eta^{\uparrow}_{1}\left(x\right) + \eta^{\downarrow}_{1}\left(x\right) = 1 \right\}.
        \end{align}
    When $0 \in \{s_{1}(x) \ ; \ x \in \Z \}$, then $\Psi_{1}(\eta)$ is obtained by removing all $1,0$ with parameter $(1, \sigma)$, $\sigma \in \{\uparrow, \downarrow\}$ from $\eta$, and numbering the remaining $1$ and $0$ from left to right with respect to the origin $\eta(0)$. For the case $0 \nin \{s_{1}(\eta,x) \ ; \ x \in \Z \}$, we first translate $\eta$ by 
        \begin{align}
            \inf\left\{ s_{1}(\eta,x) \ ; \ s_{1}(\eta,x) < 0 \right\} = s_{1}\left(\eta,\xi_{1}\left(\eta,0\right)\right),
        \end{align}
    so that $0 \in \{s_{1}( \tau_{s_{1}(\eta,\xi_{1}(\eta,0))} \eta, x) \ ; \ x \in \Z \}$, and then we perform the same operation for $\tau_{s_{1}(\eta,\xi_{1}(\eta,0))} \eta$. 

    \begin{figure}[H]
                \footnotesize
                \setlength{\tabcolsep}{4pt}
                \begin{center}
                \renewcommand{\arraystretch}{2}
                \begin{tabular}{rccccccccccccccccccccc}
                    $x$ & -5 & -4 & -3 &  -2 &  -1 &  0 &  1 &  2 & 3 &  4 & 5 & 6 & 7 & 8 & 9 & 10 & 11 & 12 & 13 & 14 & 15  \\
                 \hline\hline 
                    $\eta(x)$ & \dots & 0 & 1 & 1 & 0 & 0 & 1 & 1 & 1 & 0 & 1 & 0 & 1 & 1 & 0 & 0 & 0 & 1 & 0 & 0 & \dots \\
                    \hline 
                    $\eta^{\uparrow}_{1}(x)$ & \dots & 0 & 1 & 0 & 0 & 0 & 1 & 0 & 0 & 0 & 1 & 0 & 1 & 0 & 0 & 0 & 0 & 1 & 0 & 0 & \dots \\
                    \hline 
                    $\eta^{\downarrow}_{1}(x)$ & \dots & 0 & 0 & 0 & 1 & 0 & 0 & 0 & 0 & 1 & 0 & 0 & 0 & 0 & 1 & 0 & 0 & 0 & 1 & 0 & \dots \\
                    \hline 
                    & \dots & 0 & \xcancel{1} & 1 & \xcancel{0} & 0 & \xcancel{1} & 1 & 1 & \xcancel{0} & \xcancel{1} & \xcancel{0} & \xcancel{1} & 1 & \xcancel{0} & 0 & 0 & \xcancel{1} & \xcancel{0} & 0 & \dots 
                    \\
                    $x$ & -3 & -2 &  &  -1 &   &  0 &   &  1 & 2 &   &  &  &  & 3 &  & 4 & 5 &  &  & 6 & 7 \\
                    \hline \hline 
                    $\Psi_{1}(\eta)$ & \dots & 0 &  & 1 &  & 0 &  & 1 & 1 &  &  &  &  & 1 &  & 0 & 0 &  &  & 0 & \dots 
                   \end{tabular}
                \end{center}
                \caption{How $\Psi_{1}(\eta)$ can be obtained from $\eta$ for the case $0 \in \{s_{1}(\eta,x) \ ; \ x \in \Z \}$, where $\dots$ represents the consequtive $0$s with infinite length. }
            \end{figure}
    \begin{figure}[H]
                \footnotesize
                \setlength{\tabcolsep}{4pt}
                \begin{center}
                \renewcommand{\arraystretch}{2}
                \begin{tabular}{rccccccccccccccccccccc}
                    $x$ & -6 & -5 & -4 & -3 &  -2 &  -1 &  0 &  1 &  2 & 3 &  4 & 5 & 6 & 7 & 8 & 9 & 10 & 11 & 12 & 13 & 14  \\
                 \hline\hline 
                    $\eta(x)$ & \dots & 0 & 1 & 1 & 0 & 0 & 1 & 1 & 1 & 0 & 1 & 0 & 1 & 1 & 0 & 0 & 0 & 1 & 0 & 0 & \dots \\
                    \hline 
                    $\tau_{- 1}\eta(x)$ & \dots & 0 & 0 & 1 & 1 & 0 & 0 & 1 & 1 & 1 & 0 & 1 & 0 & 1 & 1 & 0 & 0 & 0 & 1 & 0 & \dots \\
                    \hline 
                    $\tau_{- 1}\eta^{\uparrow}_{1}(x)$ & \dots & 0 & 0 & 1 & 0 & 0 & 0 & 1 & 0 & 0 & 0 & 1 & 0 & 1 & 0 & 0 & 0 & 0 & 1 & 0 & \dots \\
                    \hline 
                    $\tau_{- 1}\eta^{\downarrow}_{1}(x)$ & \dots & 0 & 0 & 0 & 0 & 1 & 0 & 0 & 0 & 0 & 1 & 0 & 0 & 0 & 0 & 1 & 0 & 0 & 0 & 1 & \dots \\
                    \hline 
                    & \dots & 0 & 0 & \xcancel{1} & 1 & \xcancel{0} & 0 & \xcancel{1} & 1 & 1 & \xcancel{0} & \xcancel{1} & \xcancel{0} & \xcancel{1} & 1 & \xcancel{0} & 0 & 0 & \xcancel{1} & \xcancel{0} & \dots  
                    \\ 
                    $x$ & -4 & -3 & -2 &  &  -1 &   &  0 &   &  1 & 2 &   &  &  &  & 3 &  & 4 & 5 &  &  & 6 \\
                    \hline \hline 
                    $\Psi_{1}(\eta)$ & \dots & 0 & 0 &  & 1 &  & 0 &  & 1 & 1 &  &  &  &  & 1 &  & 0 & 0 &  &  & \dots 
                   \end{tabular}
                \end{center}
                \caption{How $\Psi_{1}(\eta)$ can be obtained from $\eta$ for the case $0 \nin \{s_{1}(\eta,x) \ ; \ x \in \Z \}$. }
            \end{figure}
    The above observations can be made for any $k \in \N$ as well. 
    Now, we cite some results by \cite{S}. The following means that there is a one-to-one correspondence between cites in $\eta$ with parameter $(k + \ell, \sigma)$ and cites in $\Psi_{k}(\eta)$ with parameter $(\ell, \sigma)$, for any $k, \ell \in \N$ and $\sigma \in \{ \uparrow, \downarrow \}$. This property implies that $\Psi_{k}$ has the semi-group property and that $\Psi_{k}$ is a shift operator for $\zeta_{ \ \cdot  \ }$, see \cite{S} for the details and proofs. 
            \begin{proposition*}[Proposition 4.{11} in \cite{S}]
                Suppose that $\eta \in \Omega$. Then, for any $k, \ell \in \N$, $\sigma \in \{\uparrow, \downarrow\}$ and $x \in \Z$, we have 
                    \begin{align}\label{eq:seat_psi}
                        \Psi_{k}\left( \eta \right)^{\sigma}_{\ell}(x) = \eta^{\sigma}_{ k + \ell}\left(s_{k}\left(\eta, x + \xi_{k}\left(\eta, 0\right) \right) \right).
                    \end{align}
                In addition, we have 
                    \begin{align}\label{eq:semig_psi}
                        \Psi_{k }\left( \Psi_{\ell}\left(\eta\right) \right)\left( \ \cdot \ \right) =  \Psi_{k + \ell }\left( \eta\right)\left( \ \cdot \ \right),
                    \end{align}
                and 
                    \begin{align}
                        \zeta_{k}\left(\Psi_{\ell}\left( \eta \right), \ \cdot \ \right) = \zeta_{k+\ell}\left(\eta, \ \cdot \ \right) \label{eq:semig_zeta}.
                    \end{align}
            \end{proposition*}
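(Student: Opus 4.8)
The plan is to prove the seat correspondence \eqref{eq:seat_psi} first and then obtain \eqref{eq:semig_psi} and \eqref{eq:semig_zeta} as formal consequences. Everything is driven by one structural fact about the carrier with seat numbers $\mathcal{W}_{k}(\eta,\cdot)$: at each site the update changes the occupancy of exactly one seat, and that seat has index $\le k$ precisely on the sites erased by $\Psi_{k}$ and index $\ge k+1$ precisely on the sites kept by $\Psi_{k}$. Indeed, writing a site $x$ as a $(m,\sigma)$-seat or a record, if $m\le k$ the carrier only fills or empties seat $m$ and leaves every seat $\ge k+1$ untouched; these are exactly the sites where $\xi_{k}(\eta,\cdot)$ stays constant, i.e.\ the sites not of the form $s_{k}(\eta,\cdot)$ in \eqref{def:s_k}. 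Hence the occupancies of seats $k+1,k+2,\dots$ change only along the kept sites $\{s_{k}(\eta,y):y\in\Z\}$.

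First I would set up the comparison. Let the \emph{reduced carrier} be the occupancies of seats $k+1,k+2,\dots$ of $\eta$ sampled along the kept sites, and let $\widetilde{\mathcal{W}}_{\ell}$ be the carrier with seat numbers built directly from $\Psi_{k}(\eta)$; then \eqref{eq:seat_psi} is equivalent to $\mathcal{W}_{k+\ell}(\eta, s_{k}(\eta,x+\xi_{k}(\eta,0))) = \widetilde{\mathcal{W}}_{\ell}(\Psi_{k}(\eta),x)$ for all $\ell\ge 1$. Both carriers are reset to $0$ at records, so I would anchor an induction at a record of $\eta$ --- which, by the normalization $\xi_{k}(\eta,s_{\infty}(\eta,0))=0$ and the definition of $s_{k}$, is sent to a record of $\Psi_{k}(\eta)$ --- and propagate to the left and right, using $\eta\in\Omega$ to guarantee records in both directions. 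Then I would run the induction along kept sites. On a kept $(m,\uparrow)$-seat with $m\ge k+1$ we have $\eta=1$ and seats $1,\dots,m-1$ full, so seats $1,\dots,k$ are full and the update fills the lowest empty seat among $\{k+1,k+2,\dots\}$; on a kept $(m,\downarrow)$-seat with $m\ge k+1$ we have $\eta=0$ and seats $1,\dots,m-1$ empty, so the update empties the lowest occupied seat among $\{k+1,\dots\}$; on a kept record the whole carrier is empty. Reading $\Psi_{k}(\eta)$ off these three kinds of kept sites gives $1$, $0$, $0$, and the carrier-with-seats rule applied to $\Psi_{k}(\eta)$ performs the very same fill/empty-lowest-available operation on $\{1,2,\dots\}$. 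Thus the induction hypothesis (the two occupancy vectors agree under the index shift $k+\ell\leftrightarrow\ell$) is preserved across kept sites and trivially across erased sites, which proves \eqref{eq:seat_psi}.

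With \eqref{eq:seat_psi} in hand the remaining identities are short. For \eqref{eq:semig_psi}, applying \eqref{eq:seat_psi} once to $\Psi_{k}(\Psi_{\ell}(\eta))$ and once to $\Psi_{\ell}(\eta)$ expresses both $\Psi_{k}(\Psi_{\ell}(\eta))^{\sigma}_{m}$ and $\Psi_{k+\ell}(\eta)^{\sigma}_{m}$ through $\eta^{\sigma}_{k+\ell+m}$, provided the inner index maps compose as $s_{\ell}(\eta,\,s_{k}(\Psi_{\ell}(\eta),\,\cdot+\xi_{k}(\Psi_{\ell}(\eta),0))+\xi_{\ell}(\eta,0)) = s_{k+\ell}(\eta,\,\cdot+\xi_{k+\ell}(\eta,0))$, which is exactly the kept-site bookkeeping; since the partition identity $r(x)+\sum_{j}(\eta^{\uparrow}_{j}(x)+\eta^{\downarrow}_{j}(x))=1$ shows that the full family of seat configurations together with the record indicator determines the underlying sequence, the two sides of \eqref{eq:semig_psi} coincide. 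Finally, \eqref{eq:semig_zeta} follows by substituting \eqref{eq:seat_psi}, with absolute seat indices $k+\ell$ and $k+\ell+1$, into the definition of $\zeta_{k}(\Psi_{\ell}(\eta),i)$ and using that the summation range there indexes the same kept sites as $s_{k+\ell}(\eta,\cdot)$.

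The main obstacle is \eqref{eq:seat_psi}, and within it the verification that the reduced carrier of $\eta$ is a genuine carrier with seat numbers for $\Psi_{k}(\eta)$. The delicate points are: that the lowest empty (resp.\ occupied) seat of the full carrier, evaluated at a kept site, always lies in $\{k+1,k+2,\dots\}$, which rests on the seat-$m$ update forcing seats $1,\dots,k$ to be uniformly full at $\uparrow$ kept sites and uniformly empty at $\downarrow$ kept sites; the correct treatment of the offset $\xi_{k}(\eta,0)$, with the two cases according to whether $0$ is itself a kept site, so that the origin of $\Psi_{k}(\eta)$ lands on the intended record; and the propagation of the induction from a single anchoring record in both directions. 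Once these are settled, \eqref{eq:semig_psi} and \eqref{eq:semig_zeta} are bookkeeping.
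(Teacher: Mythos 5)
This proposition is quoted verbatim from \cite{S} (Proposition 4.11 there) and the present paper gives no proof of it --- it simply refers the reader to \cite{S} --- so there is no in-paper argument to compare yours against. On its own merits, your proof is sound and is essentially the natural argument one would expect: the single observation that the seat-number update at an erased site (an $(m,\sigma)$-seat with $m\le k$) touches only seat $m$ and therefore leaves seats $k+1,k+2,\dots$ unchanged, while at a kept $(m,\sigma)$-seat with $m\ge k+1$ the seats $1,\dots,k$ are uniformly full (for $\uparrow$) or uniformly empty (for $\downarrow$), so the update acts on seats $\{k+1,k+2,\dots\}$ exactly as the ``fill/empty the lowest available seat'' rule acts on the reduced configuration $\Psi_k(\eta)$. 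This correctly reduces \eqref{eq:seat_psi} to an induction along kept sites, and \eqref{eq:semig_psi}, \eqref{eq:semig_zeta} then follow by composing the index maps (your composition identity for the $s$-functions is precisely \eqref{eq:S_417}, which the paper cites from \cite{S}) and using that the family $\{\eta^{\sigma}_{m}\}_{m,\sigma}$ together with $r$ determines $\eta$. Note that you are implicitly correcting a typo in the paper's displayed update rule for $\mathcal{W}_k$ (the condition $\sum_{\ell=1}^{k-1}\mathcal{W}_{\ell}(\eta,x-1)=1$ should read $=k-1$, as the verbal description makes clear); your reading is the intended one.

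The one point where your sketch leans on something not yet established is the anchoring of the induction: you reset both carriers at records, but the records of $\Psi_k(\eta)$ are defined intrinsically via \eqref{eq:char_rec} applied to $\Psi_k(\eta)$, and identifying them with the (kept) records of $\eta$ is not available until the two carriers are known to agree --- a mild circularity. This is repairable: your case analysis shows that the reduced carrier is a non-negative solution of the recursion \eqref{def:carrier} for the sequence $\Psi_k(\eta)$ which vanishes at infinitely many sites tending to $-\infty$ (the images of the records of $\eta$), and any such solution coincides with the carrier $W(\Psi_k(\eta),\cdot)$; alternatively one can first treat configurations with finitely many balls, where no boundary condition is needed, and pass to $\Omega$ by locality. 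With that addendum the argument is complete.
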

    In \cite{S}, the following result has been proven. 
            \begin{theorem*}[Corollary 5.13 in \cite{S}]
                {Suppose that $\mathbf{q} \in \mathcal{Q}$.
                Then, for any $k \in \N$ and local function $f : \{0,1\}^{\Z} \to \R$, we have
                    \begin{align}\label{eq:shift_q}
                        \int_{\Omega} d\nu_{\mathbf{q}}\left(\eta\right) f\left( \Psi_{k}\left(\eta\right)\right) = \int_{\Omega} d\nu_{\theta^{k}\mathbf{q}}\left(\eta\right) f\left( \eta\right).
                    \end{align}}
            \end{theorem*}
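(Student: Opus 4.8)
The plan is to prove the stronger statement that the pushforward $\left(\Psi_{k}\right)_{*}\nu_{\mathbf{q}}$ coincides with $\nu_{\theta^{k}\mathbf{q}}$ as probability measures on $\Omega_{0}$; the asserted identity \eqref{eq:shift_q} then follows immediately by integrating the local function $f$. Since both $\nu_{\mathbf{q}}$ and $\nu_{\theta^{k}\mathbf{q}}$ are supported on $\Omega_{0}$ and the map $\zeta : \Omega_{0} \to \bar{\Omega}$ is a bijection, it suffices to check two things: that $\Psi_{k}$ maps $\Omega_{0}$ into $\Omega_{0}$, and that the law of the $\bar{\Omega}$-valued random variable $\zeta\left(\Psi_{k}\left(\eta\right)\right)$ under $\nu_{\mathbf{q}}$ agrees with the law of $\zeta\left(\eta\right)$ under $\nu_{\theta^{k}\mathbf{q}}$.

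First I would verify $\Psi_{k}\left(\Omega_{0}\right) \subset \Omega_{0}$. For $\eta \in \Omega_{0}$ we have $s_{\infty}\left(\eta,0\right) = 0$, hence $\xi_{k}\left(\eta, 0\right) = 0$, so $\Psi_{k}\left(\eta\right)\left(x\right) = \eta\left(s_{k}\left(\eta, x\right)\right)$. By the definition of the increments of $\xi_{k}$, this function jumps by $0$ or $1$ at each site, so $\left\{ s_{k}\left(\eta, x\right) ; x \in \Z \right\}$ is exactly the set of records of $\eta$ together with the $\left(m,\sigma\right)$-seats with $m \ge k+1$. Using the seat identity $r\left(x\right) + \sum_{m}\left(\eta^{\uparrow}_{m}\left(x\right) + \eta^{\downarrow}_{m}\left(x\right)\right) = 1$ together with \eqref{eq:seat_psi}, a record of $\eta$ is carried to a record of $\Psi_{k}\left(\eta\right)$ while an $\left(m,\sigma\right)$-seat is carried to an $\left(m-k,\sigma\right)$-seat. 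Since $0$ is a record, $\xi_{k}$ jumps by $1$ at $0$, whence $\xi_{k}\left(\eta,-1\right) = -1$ and $s_{k}\left(\eta,0\right) = 0$; thus the origin is a record of $\eta$ carried to a record of $\Psi_{k}\left(\eta\right)$, giving $s_{\infty}\left(\Psi_{k}\left(\eta\right), 0\right) = 0$ and $\Psi_{k}\left(\eta\right) \in \Omega_{0}$.

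With this in hand, the relation \eqref{eq:semig_zeta} gives $\zeta_{\ell}\left(\Psi_{k}\left(\eta\right), \cdot\right) = \zeta_{k+\ell}\left(\eta, \cdot\right)$ for every $\ell \in \N$. Under $\nu_{\mathbf{q}}$ the family $\left(\zeta_{k+\ell}\left(\eta, i\right)\right)_{\ell \in \N, i \in \Z}$ consists, by \eqref{def:nu_q}, of independent random variables, i.i.d. in $i$ for each $\ell$ with the geometric law $\nu_{\mathbf{q}}\left(\zeta_{k+\ell}\left(i\right) = m\right) = q_{k+\ell}^{m}\left(1 - q_{k+\ell}\right)$ and independent across $\ell$. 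Since $\left(\theta^{k}\mathbf{q}\right)_{\ell} = q_{k+\ell}$, this is precisely the joint law of $\left(\zeta_{\ell}\left(\eta, i\right)\right)_{\ell \in \N, i \in \Z}$ under $\nu_{\theta^{k}\mathbf{q}}$. Hence $\zeta\left(\Psi_{k}\left(\eta\right)\right)$ under $\nu_{\mathbf{q}}$ and $\zeta\left(\eta\right)$ under $\nu_{\theta^{k}\mathbf{q}}$ have the same law on $\bar{\Omega}$; transporting back through the bijection $\zeta^{-1}$ yields $\left(\Psi_{k}\right)_{*}\nu_{\mathbf{q}} = \nu_{\theta^{k}\mathbf{q}}$ on $\Omega_{0}$, and integrating $f$ gives \eqref{eq:shift_q}.

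The step I expect to be the main obstacle is the verification that $\Psi_{k}$ preserves $\Omega_{0}$, and in particular the bookkeeping showing that records and higher seats are transported by $\Psi_{k}$ exactly as claimed; once the record-to-record correspondence and the index shift $\ell \mapsto k+\ell$ are established, the distributional identity is a direct consequence of the product structure of $\nu_{\mathbf{q}}$. One should also confirm the measurability of $\zeta$ and $\zeta^{-1}$ so that equality of laws on $\bar{\Omega}$ transfers back to $\Omega_{0}$, but this is routine given the explicit construction of the bijection.
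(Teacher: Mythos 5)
Your argument is correct: the identity \eqref{eq:semig_zeta} together with the product-geometric description \eqref{def:nu_q}, the bijectivity of $\zeta : \Omega_{0} \to \bar{\Omega}$, and the fact that $s_{\infty}\left(\Psi_{k}\left(\eta\right),0\right) = -\xi_{k}\left(\eta,0\right) = 0$ for $\eta \in \Omega_{0}$ do yield $\left(\Psi_{k}\right)_{*}\nu_{\mathbf{q}} = \nu_{\theta^{k}\mathbf{q}}$, which is stronger than the stated integral identity. Note that the paper itself offers no proof of this statement --- it is imported verbatim as Corollary 5.13 of \cite{S} --- but your derivation is the natural one from exactly the ingredients the paper quotes from that reference, so there is nothing to contrast.
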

        \begin{remark}
            Thanks to \eqref{eq:shift_q}, we have 
            \begin{align}
                \a_{k}\left(\theta^{\ell}\q\right) &= \a_{k+\ell}\left(\q\right), \quad
                \b_{k}\left(\theta^{\ell}\q\right) = \b_{k+\ell}\left(\q\right),
            \end{align}
        for any $k \in \N$, $\ell \in \Z_{\ge 0}$. Calculations similar to the above appear frequently in this paper. 
        \end{remark}

    From now on we prepare some lemmas for the proofs of main results. First we check the relation between $\Psi_{k}$ and $\tau_{s_{\infty}(0)}$. 
        \begin{lemma}\label{lem:palm_psi}
            Assume that $\eta \in \Omega$. Then, for any $k \in \N$ and $x \in \Z$, we have 
                \begin{align}
                    \Psi_{k}\left( \tilde{\eta} \right)\left(x\right) = \widetilde{\Psi_{k}\left(\eta\right)}\left(x\right).
                \end{align}
        \end{lemma}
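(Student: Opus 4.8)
The plan is to reduce the claimed commutation to a single record-counting identity, namely $s_\infty(\Psi_k(\eta),0) = -\xi_k(\eta,0)$, after checking that the linearizing data transform covariantly under recentering. Throughout write $s := s_\infty(\eta,0)$, so that $\tilde\eta = \tau_s\eta$.

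\emph{Step 1 (covariance of the linearizing data).} I would first observe that the carrier-with-seat-numbers recursion and the record condition \eqref{eq:char_rec} are translation covariant, and that records shift as $s_\infty(\tilde\eta,i) = s_\infty(\eta,i) - s$. Consequently $\tilde\eta^\sigma_\ell(x) = \eta^\sigma_\ell(x+s)$ and $r(\tilde\eta,x) = r(\eta,x+s)$ for all $\ell \in \N$, $\sigma \in \{\uparrow,\downarrow\}$, $x \in \Z$. Plugging this into the defining increments of $\xi_k$ and using the normalizations $\xi_k(\eta,s) = 0 = \xi_k(\tilde\eta,0)$ gives $\xi_k(\tilde\eta,x) = \xi_k(\eta,x+s)$; inverting via \eqref{def:s_k} yields $s_k(\tilde\eta,x) = s_k(\eta,x) - s$, and in particular $\xi_k(\tilde\eta,0) = 0$.

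\emph{Step 2 (reduction).} Substituting into the definition of the $k$-skip map,
\begin{align}
\Psi_k(\tilde\eta)(x) = \tilde\eta\big(s_k(\tilde\eta,x)\big) = \eta\big(s_k(\eta,x)\big) = \Psi_k(\eta)\big(x - \xi_k(\eta,0)\big),
\end{align}
so that $\Psi_k(\tilde\eta) = \tau_{-\xi_k(\eta,0)}\Psi_k(\eta)$. Since $\widetilde{\Psi_k(\eta)} = \tau_{s_\infty(\Psi_k(\eta),0)}\Psi_k(\eta)$, the lemma is equivalent to the identity $s_\infty(\Psi_k(\eta),0) = -\xi_k(\eta,0)$.

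\emph{Step 3 (records of the image).} To locate records of $\Psi_k(\eta)$ I would combine the seat identity \eqref{eq:seat_psi} with the partition-of-unity relation that every site is either a record or a unique $(\ell,\sigma)$-seat. For $y := s_k(\eta, x + \xi_k(\eta,0))$, which is an increase point of $\xi_k$, the $\xi_k$-increment $r(\eta,y) + \sum_{\ell \ge 1,\sigma}\eta^\sigma_{k+\ell}(y)$ equals $1$; a one-line computation then gives $r(\Psi_k(\eta),x) = r(\eta,y)$. Thus records of $\Psi_k(\eta)$ correspond precisely to the records of $\eta$ that survive the skip. The record $s$ itself is an increase point with $\xi_k(\eta,s) = 0$, hence the leftmost preimage of the value $0$, and therefore sits at position $-\xi_k(\eta,0) \le 0$ in $\Psi_k(\eta)$. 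Finally, a record of $\Psi_k(\eta)$ in $(-\xi_k(\eta,0),0]$ would correspond to a record of $\eta$ among the increase points $s_k(\eta,j)$, $1 \le j \le \xi_k(\eta,0)$, all of which lie in $(s,0]$; but $s = s_\infty(\eta,0)$ is by definition the rightmost record $\le 0$, so that interval contains no records of $\eta$. Hence $s_\infty(\Psi_k(\eta),0) = -\xi_k(\eta,0)$, closing the proof.

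\emph{Main obstacle.} The covariance bookkeeping of Step 1 is routine. The real content is Step 3: correctly pinning down $s_\infty(\Psi_k(\eta),0)$ requires the record-correspondence $r(\Psi_k(\eta),x) = r(\eta, s_k(\eta,x+\xi_k(\eta,0)))$ together with the observation that recentering at $s_\infty(\eta,0)$ leaves no records in $(s_\infty(\eta,0),0]$, which is exactly what makes $-\xi_k(\eta,0)$ the nearest image record to the left of the origin.
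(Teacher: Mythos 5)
Your proof is correct and follows essentially the same route as the paper's: the translation covariance of the linearizing data (the paper's identity $s_{k}(\tilde{\eta},x)+s_{\infty}(0)=s_{k}(\eta,x)$) followed by the reduction of the lemma to the single identity $s_{\infty}(\Psi_{k}(\eta),0)=-\xi_{k}(\eta,0)$. The only difference is that the paper obtains this last identity by quoting \eqref{eq:s_psi_xi}, a special case of (4.17) in \cite{S}, whereas your Step 3 re-derives it from the seat correspondence \eqref{eq:seat_psi} and the absence of records of $\eta$ in $(s_{\infty}(\eta,0),0]$; that derivation is sound and simply makes the argument self-contained.
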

        \begin{proof}[Proof of Lemma \ref{lem:palm_psi}]
            First we observe that for any $k \in \N$ and $x \in \Z$,
                \begin{align}
                    s_{k}\left( \tilde{\eta}, x \right) + s_{\infty}\left(0\right) = s_{k}\left( \eta, x \right).
                \end{align}    
            Next, from \cite[(4.1{7})]{S}, for any $k \in \N$, {$\ell \in \N \cup \{\infty\}$ and $x \in \Z$, we have 
                \begin{align}\label{eq:S_417}
                    s_{\ell}\left( \Psi_{k}\left(\eta\right), x \right) =\xi_{k}\left(\eta, s_{k+\ell}\left(\eta,x\right)\right) -\xi_{k}\left(\eta, 0\right).
                \end{align}
            In particular, we get}
            \begin{align}\label{eq:s_psi_xi}
                    s_{\infty}\left( \Psi_{k}\left(\eta\right), 0 \right) = -\xi_{k}\left(\eta, 0\right).
                \end{align}
            By using the above, we obtain
                \begin{align}
                    \Psi_{k}\left( \tilde{\eta} \right)\left(x\right) &= \eta\left( s_{k}\left( \tilde{\eta}, x \right) + s_{\infty}\left(0\right) \right) \\
                    &= \eta\left( s_{k}\left( \eta, x \right) \right) \\
                    &= \Psi_{k}\left(\eta\right)\left( x - \xi_{k}\left(\eta, 0\right) \right) \\
                    &= \Psi_{k}\left(\eta\right)\left( x + s_{\infty}\left( \Psi_{k}\left(\eta\right), 0 \right) \right) \\
                    &= \widetilde{\Psi_{k}\left(\eta\right)}\left(x\right).
                \end{align}
        \end{proof}
    By combining \eqref{eq:semig_psi}, \eqref{eq:semig_zeta}, Lemma \ref{lem:palm_psi} and the diagram in Figure \ref{fig:diag_eta_zeta}, the relation between $\eta, \zeta$ and the $k$-skip map can be expressed by the diagram, see Figure \ref{fig:diag_psi}.
    \begin{figure}[H]
        \centering
        \begin{tikzcd}[column sep=2cm]
          \eta \ar[r, "\tau_{s_{\infty}\left(0\right)}"] \ar[d, "\Psi_{1}"] & \tilde{\eta} \ar[r, "\zeta"] \ar[d, "\Psi_{1}"] & \ar[l, "\zeta^{-1}"] \left(\zeta_{k}\left(i\right)\right)_{k \in \N, i \in \Z} \ar[d, "\Psi{1}"] \\
          \Psi_{1}\left(\eta\right) \ar[r, "\tau_{s_{\infty}\left(\Psi_{1}\left(\eta\right), 0\right)}"] \ar[d, "\Psi_{1}"] & \widetilde{\Psi_{1}\left(\eta\right)} \ar[r, "\zeta"] \ar[d, "\Psi_{1}"] & \ar[l, "\zeta^{-1}"] \left(\zeta_{k+1}\left(i\right)\right)_{k \in \N, i \in \Z} \ar[d, "\Psi_{1}"] \\
          \Psi_{2}\left(\eta\right) \ar[r, "\tau_{s_{\infty}\left(\Psi_{2}\left(\eta\right), 0\right)}"] \ar[d, "\Psi_{1}"] & \widetilde{\Psi_{2}\left(\eta\right)} \ar[r, "\zeta"] \ar[d, "\Psi_{1}"] & \ar[l, "\zeta^{-1}"] \left(\zeta_{k+2}\left(i\right)\right)_{k \in \N, i \in \Z} \ar[d, "\Psi_{1}"] \\
          \cdots & \cdots & \cdots
        \end{tikzcd}
        \caption{The relationships between $\eta$, $\zeta$ and the $k$-skip map.}\label{fig:diag_psi}
    \end{figure}

    Next we claim that for any $k > \ell$ and $i \in \N$, there is a one-to-one correspondence between the $i$-th $k$-soliton in $\eta$ and the $i$-th $(k - \ell)$-soliton in $\Psi_{\ell}(\eta)$. 
            \begin{lemma}\label{lem:shift_X}
                Assume that $\eta \in \Omega$.  For any $k, \ell \in \N$, $k > \ell$, $h \in \Z_{\ge 0}$ and $i \in \Z$, we have
                    \begin{align}
                        \xi_{\ell + h}\left(\eta, s_{k + h}\left(\eta, i\right)\right) 
                &= \xi_{\ell}\left(\Psi_{h}\left(\eta\right), s_{k}\left(\Psi_{h}\left(\eta\right), i\right) \right) \\
                &= s_{k - \ell}\left( \Psi_{\ell + h}\left(\eta\right), i \right) - s_{\infty}\left(\Psi_{\ell + h}\left(\eta\right), 0\right) \label{eq:shift_X_1} .
                    \end{align}
                In particular, if $s_{k + h}(\eta, i) = X^{(j)}_{k + h}(\eta,0)$ for some $j \in \Z$, then 
                    \begin{align}
                        s_{k - \ell}\left(\Psi_{\ell + h}\left(\eta\right), i \right) = X^{(j)}_{k-\ell}\left(\Psi_{\ell + h}\left(\eta\right),0\right) \label{eq:shift_X_2},
                    \end{align}
                and thus
                    \begin{align}
                        \xi_{\ell + h}\left(\eta, X^{(j)}_{k + h}\left(\eta,0\right)\right) &= \xi_{\ell}\left(\Psi_{h}\left(\eta\right), X^{(j)}_{k}\left(\Psi_{h}\left(\eta\right),0\right) \right) \\
                        &= X^{(j)}_{k - \ell}\left( \Psi_{\ell+h}\left(\eta\right),0 \right) - s_{\infty}\left(\Psi_{\ell + h}\left(\eta\right), 0\right) \\
                        &= X^{(j)}_{k - \ell}\left( \Psi_{\ell+h}\left(\tilde{\eta}\right) ,0 \right) \label{eq:shift_X}. 
                    \end{align}
            \end{lemma}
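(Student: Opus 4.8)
The plan is to prove \eqref{eq:shift_X_1} first as a purely formal consequence of the $k$-skip identities already recorded, and then to upgrade it to \eqref{eq:shift_X_2} and \eqref{eq:shift_X} by matching soliton labels across the skip map. The whole of \eqref{eq:shift_X_1} reduces to \eqref{eq:S_417}, \eqref{eq:s_psi_xi} and the semigroup property \eqref{eq:semig_psi}: I would show that both the first and the second expressions equal the common right-hand side $s_{k-\ell}(\Psi_{\ell+h}(\eta),i)-s_\infty(\Psi_{\ell+h}(\eta),0)$. For the first expression, apply \eqref{eq:S_417} to $\eta$ with the dummy substitution $(k,\ell,x)\mapsto(\ell+h,\,k-\ell,\,i)$; since $(\ell+h)+(k-\ell)=k+h$ this reads $s_{k-\ell}(\Psi_{\ell+h}(\eta),i)=\xi_{\ell+h}(\eta,s_{k+h}(\eta,i))-\xi_{\ell+h}(\eta,0)$, and combining with $\xi_{\ell+h}(\eta,0)=-s_\infty(\Psi_{\ell+h}(\eta),0)$ from \eqref{eq:s_psi_xi} gives $\xi_{\ell+h}(\eta,s_{k+h}(\eta,i))=s_{k-\ell}(\Psi_{\ell+h}(\eta),i)-s_\infty(\Psi_{\ell+h}(\eta),0)$. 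For the second expression, set $\eta':=\Psi_h(\eta)$, apply \eqref{eq:S_417} to $\eta'$ with $(k,\ell,x)\mapsto(\ell,\,k-\ell,\,i)$, use $\Psi_\ell(\Psi_h(\eta))=\Psi_{\ell+h}(\eta)$ from \eqref{eq:semig_psi}, and apply \eqref{eq:s_psi_xi} to $\eta'$; this yields $\xi_\ell(\Psi_h(\eta),s_k(\Psi_h(\eta),i))=s_{k-\ell}(\Psi_{\ell+h}(\eta),i)-s_\infty(\Psi_{\ell+h}(\eta),0)$ as well. Equating the two completes \eqref{eq:shift_X_1}, and this part is entirely routine.

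The substantive step is \eqref{eq:shift_X_2}, which I would isolate as the general claim: for any skip amount $m\in\Z_{\ge 0}$, any $p\in\N$ and any $i,j$, if $s_{p+m}(\eta,i)=X^{(j)}_{p+m}(\eta,0)$ then $s_{p}(\Psi_m(\eta),i)=X^{(j)}_{p}(\Psi_m(\eta),0)$; the instances $(p,m)=(k-\ell,\ell+h)$ and $(p,m)=(k,h)$ then furnish \eqref{eq:shift_X_2} and the first equality of \eqref{eq:shift_X} respectively. To prove the claim I would show that $\Psi_m$ acts as an order-preserving bijection from the $(p+m)$-solitons of $\eta$ onto the $p$-solitons of $\Psi_m(\eta)$: by Remark \ref{rem:seat_soliton} a $(p+m)$-soliton carries exactly one $(\ell',\sigma)$-seat for each $1\le \ell'\le p+m$, and by the seat correspondence \eqref{eq:seat_psi} the seats of index $>m$ survive and become the seats of index $1,\dots,p$ of a $p$-soliton in $\Psi_m(\eta)$, the site map $x\mapsto s_m(\eta,x+\xi_m(\eta,0))$ being increasing. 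Tracking the first surviving head through this bijection, and using that $X(\gamma)=(\inf\gamma)-1$, identifies the position of the image soliton with the $\xi$-coordinate supplied by \eqref{eq:shift_X_1}; moreover the effective-distance characterization of $Con$-classes in Remark \ref{rem:eff_dis}, together with $\zeta_{p}(\Psi_m(\eta),\cdot)=\zeta_{p+m}(\eta,\cdot)$ from \eqref{eq:semig_zeta}, shows that $Con$-classes, hence volumes and the truncated numbering $j$, are preserved. Granting this, the hypothesis says the $j$-th $(p+m)$-soliton with volume sits at the left boundary $s_{p+m}(\eta,i)$ of slot $i$, and the correspondence transfers this to $s_p(\Psi_m(\eta),i)=X^{(j)}_p(\Psi_m(\eta),0)$. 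I expect this soliton bookkeeping — matching volume labels and checking that slot boundaries map to slot boundaries under the skip map — to be the main obstacle, precisely because it is where the combinatorics of the Takahashi--Satsuma grouping meets the seat-number linearization.

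Finally I would assemble \eqref{eq:shift_X}. The first two equalities are the instances $(p,m)=(k,h)$ and $(p,m)=(k-\ell,\ell+h)$ of the claim substituted into \eqref{eq:shift_X_1}. For the last equality I would rewrite the combination $X^{(j)}_{k-\ell}(\Psi_{\ell+h}(\eta),0)-s_\infty(\Psi_{\ell+h}(\eta),0)$ in terms of the centered configuration: by the shift-covariance of positions established around Lemma \ref{lem:sp_shift} (the volume analogue, valid since shifting by $s_\infty(0)$ does not change the numbering) together with the definition $\widetilde{\zeta}=\tau_{s_\infty(\zeta,0)}\zeta$ from \eqref{def:palm_eta}, this equals $X^{(j)}_{k-\ell}(\widetilde{\Psi_{\ell+h}(\eta)},0)$, and Lemma \ref{lem:palm_psi} gives $\widetilde{\Psi_{\ell+h}(\eta)}=\Psi_{\ell+h}(\tilde{\eta})$, which is exactly the claimed right-hand side.
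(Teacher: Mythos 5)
Your treatment of \eqref{eq:shift_X_1} and of the final equality in \eqref{eq:shift_X} coincides with the paper's: the paper likewise disposes of \eqref{eq:shift_X_1} as a direct consequence of \eqref{eq:semig_psi}, \eqref{eq:S_417} and \eqref{eq:s_psi_xi}, and the passage to $\Psi_{\ell+h}(\tilde{\eta})$ via Lemma \ref{lem:sp_shift} and Lemma \ref{lem:palm_psi} is exactly what is needed. Where you diverge is in \eqref{eq:shift_X_2}. The paper does not construct a soliton-by-soliton bijection; it argues entirely at the level of slot counts: from the hypothesis it has $\zeta_{k+h}(\eta,i)>0$, a change of variables through $s_{\ell+h}$ and $\xi_{\ell+h}$ shows this sum equals $\zeta_{k-\ell}(\Psi_{\ell+h}(\eta),i)$, hence the site $s_{k-\ell}(\Psi_{\ell+h}(\eta),i)$ carries a $(k-\ell)$-soliton with volume, say the $\tilde{j}$-th; and $j=\tilde{j}$ because both indices count the nonzero slots between $0$ and $i$, which agree by \eqref{eq:semig_zeta}. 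Your label-matching (order preservation plus conservation of $Con$-classes) is the same counting argument in different clothing, but your existence step — that the seats of index $>m$ of a single $(p+m)$-soliton reassemble under \eqref{eq:seat_psi} into the seats of a single $p$-soliton of $\Psi_m(\eta)$, with its position landing exactly at $s_p(\Psi_m(\eta),i)$ after the $(\inf\gamma)-1$ convention — is precisely the point that needs the most care, and it is not an immediate consequence of \eqref{eq:seat_psi} alone (which matches seats site by site, not soliton by soliton). The paper's $\zeta$-based computation buys you exactly this: it certifies the existence of the image soliton at the correct left slot endpoint without ever having to verify that the Takahashi--Satsuma grouping of $\Psi_m(\eta)$ regroups the surviving seats as you expect. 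If you complete your seat-tracking argument you obtain a more transparent geometric picture of the correspondence (and essentially reprove \eqref{eq:semig_zeta} along the way), but the cleaner and shorter route is the paper's: reduce everything to the identity $\zeta_{k-\ell}(\Psi_{\ell+h}(\eta),\cdot)=\zeta_{k+h}(\eta,\cdot)$ and the fact that a nonzero slot count places a representative soliton at the slot's left endpoint.
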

            \begin{proof}
                First we note that \eqref{eq:shift_X_1} is a direct consequence of \eqref{eq:semig_psi}, \eqref{eq:S_417} and \eqref{eq:s_psi_xi}. 
                
                Next we will show \eqref{eq:shift_X_2}. From the assumption we get 
                    \begin{align}
                        \sum_{x = s_{k + h}\left(\eta, i\right) + 1}^{s_{k + h}\left(\eta, i + 1\right)} \left( \eta^{\uparrow}_{k + h}\left(x\right) - \eta^{\uparrow}_{k + h + 1}\left(x\right) \right) = \zeta_{k+h}\left(\eta, i\right) > 0.
                    \end{align}
                Since for any $i \in \Z$, 
                    \begin{align}
                        s_{\ell + h}\left( \eta, \xi_{\ell + h}\left( \eta, s_{k + h}\left(\eta, i\right) \right) \right) = s_{k + h}\left(\eta, i\right),
                    \end{align}
                we obtain 
                    \begin{align}
                        &\sum_{x = s_{k + h}\left(\eta, i\right) + 1}^{s_{k + h}\left(\eta, i + 1\right)} \left( \eta^{\uparrow}_{k + h}\left(x\right) - \eta^{\uparrow}_{k + h + 1}\left(x\right) \right) \\
                        &= \sum_{x = \xi_{\ell + h}\left( \eta, s_{k + h}\left(\eta, i\right) \right) + 1}^{\xi_{\ell + h}\left( \eta, s_{k + h}\left(\eta, i + 1\right) \right)} \left( \eta^{\uparrow}_{k + h}\left(s_{\ell + h}\left( \eta, x \right)  \right) - \eta^{\uparrow}_{k + h + 1}\left(s_{\ell + h}\left( \eta, x \right)  \right) \right) \\
                        &= \sum_{x = \xi_{\ell + h}\left( \eta, s_{k + h}\left(\eta, i\right) \right) - \xi_{\ell + h}\left( \eta, 0 \right) + 1}^{\xi_{\ell + h}\left( \eta, s_{k + h}\left(\eta, i + 1\right) \right) - \xi_{\ell + h}\left( \eta, 0 \right)  } \left( \Psi_{\ell + h}\left(\eta\right)^{\uparrow}_{k - \ell}\left(x\right) - \Psi_{\ell + h}\left(\eta\right)^{\uparrow}_{k - \ell + 1}\left(x\right) \right) \\
                        &= \sum_{x = s_{k - \ell}\left( \Psi_{\ell + h}\left(\eta\right), i \right) + 1}^{s_{k - \ell}\left( \Psi_{\ell + h}\left(\eta\right), i + 1 \right) } \left( \Psi_{\ell + h}\left(\eta\right)^{\uparrow}_{k - \ell}\left(x\right) - \Psi_{\ell + h}\left(\eta\right)^{\uparrow}_{k - \ell + 1}\left(x\right) \right) \\
                        &= \zeta_{k - \ell}\left( \Psi_{\ell + h}\left(\eta\right), i \right) > 0.
                    \end{align}
                Hence there is a $(k-\ell)$-soliton with volume at site $s_{k - \ell}\left( \Psi_{\ell + h}\left(\eta\right), i \right)$ in $\Psi_{\ell + h}\left(\eta\right)$, and thus there exists some $\tilde{j} \in \Z$ such that 
                    \begin{align}
                        s_{k - \ell}\left(\Psi_{\ell + h}\left(\eta\right), i \right) = X^{(\tilde{j})}_{k-\ell}\left(\Psi_{\ell + h}\left(\eta\right),0\right).
                    \end{align}
                Now we show $j = \tilde{j}$.  For the case $i \ge 0$, from \eqref{eq:semig_zeta},
                    \begin{align}
                        j 
                        &= \left| \left\{ 0 \le i' \le i \ ; \ \zeta_{k - \ell}\left( \Psi_{\ell + h}\left(\eta\right), i' \right) > 0 \right\}  \right| \\
                        &= \left| \left\{ 0 \le i' \le i - 1 \ ; \ \zeta_{k + \ell}\left(\eta, i' \right) > 0 \right\}  \right| \\
                        &= \tilde{j}.
                    \end{align}
                For the case $i < 0$, from \eqref{eq:semig_zeta},
                    \begin{align}
                        j - 1 
                        &= \left| \left\{ i \le i' \le -1 \ ; \ \zeta_{k - \ell}\left( \Psi_{\ell + h}\left(\eta\right), i' \right) > 0 \right\}  \right| \\
                        &= \left| \left\{ 0 \le i' \le i - 1 \ ; \ \zeta_{k + \ell}\left(\eta, i' \right) > 0 \right\}  \right| \\ 
                        &= \tilde{j} - 1.
                    \end{align}
                Thus we have $j = \tilde{j}$. {Therefore we obtain \eqref{eq:shift_X_2}. \eqref{eq:shift_X} is a direct consequence of \eqref{eq:shift_X_1} and \eqref{eq:shift_X_2}. }
                
            \end{proof}
    Thanks to Lemma \ref{lem:shift_X}, we have the following. 
    \begin{lemma}\label{lem:shift_NM}
        {Assume that $\eta \in \Omega$.} For any $k, \ell, h \in \N$, $i \in \Z$ and $n \in \N$, we have 
            \begin{align}
                N^{(i)}_{k + h,\ell + h}\left(\eta, n\right) &= N^{(i)}_{k,\ell}\left(\Psi_{h}\left(\eta\right), n\right), \label{eq:shift_N_k} \\
                M^{(i)}_{k+h}\left(\eta, n\right) &= M^{(i)}_{k}\left(\Psi_{h}\left(\eta\right), n\right) \label{eq:shift_M_k}.
            \end{align}
        In particular, \eqref{eq:shift_M_k} implies 
            \begin{align}\label{eq:lem_shift_NM}
                Y^{\left(i\right)}_{1}\left(\Psi_{k-1}\left(\eta\right), n\right) = n - M^{(i)}_{k}\left(\eta, n\right). 
            \end{align}
    \end{lemma}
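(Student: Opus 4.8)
The plan is to reduce everything to the two identities \eqref{eq:shift_N_k} and \eqref{eq:shift_M_k} and then read off \eqref{eq:lem_shift_NM} as an immediate corollary of the second one. The conceptual input is that the $h$-skip map turns a $(k+h)$-soliton of $\eta$ into a $k$-soliton of $\Psi_{h}(\eta)$, bijectively and compatibly with the numbering by volume; this is exactly what Lemma \ref{lem:shift_X} provides at time $0$, since its formula \eqref{eq:shift_X} identifies, respecting the volume-numbering index $i$, the $i$-th $(k+h)$-soliton with volume of $\eta$ with the $i$-th $k$-soliton with volume of $\widetilde{\Psi_{h}\left(\eta\right)}$. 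Because $N$ and $M$ are record-shift invariant (Lemma \ref{lem:sp_shift}), working with $\Psi_{h}(\eta)$ or its centering $\widetilde{\Psi_{h}(\eta)}$ makes no difference.

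First I would upgrade this time-$0$ correspondence to a correspondence of entire trajectories. For this I use the two facts cited from \cite{S}: the linearization \eqref{eq:linear_seat}, which says that in the $\zeta$-coordinates the dynamics is a pure index shift, and $\zeta_{k}\left(\Psi_{h}\left(\eta\right), \cdot \right) = \zeta_{k+h}\left(\eta, \cdot\right)$ from \eqref{eq:semig_zeta}. Combining these, together with the bijectivity of $\zeta : \Omega_{0} \to \bar{\Omega}$ and the record-shift invariance of $N$ and $M$, shows that $\Psi_{h}$ intertwines the box-ball dynamics up to a spatial shift, i.e. $\widetilde{\Psi_{h}\left(T\eta\right)} = \widetilde{T\Psi_{h}\left(\eta\right)}$. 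Iterating, the $i$-th $k$-soliton with volume of $\Psi_{h}(\eta)$ at time $m$ corresponds, under the $h$-skip map applied to $T^{m}\eta$, to the $i$-th $(k+h)$-soliton with volume of $\eta$ at time $m$, for every $0 \le m \le n$.

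Next I transfer the two dynamical notions entering the definitions of $N$ and $M$ through $\Psi_{h}$. Using Remark \ref{rem:seat_soliton} and \eqref{eq:seat_psi}, an $(\ell+h)$-soliton of $\eta$ corresponds to an $\ell$-soliton of $\Psi_{h}(\eta)$, and since $\Psi_{h}$ only deletes seats of level $\le h$ and preserves the order of the remaining seats and the record structure, the interaction relation is preserved: $\gamma^{(i)}_{k+h}(m)$ is free in $T^{m}\eta$ if and only if the corresponding $\gamma^{(i)}_{k}(m)$ is free in $T^{m}\Psi_{h}(\eta)$, because interaction with an $\ell$-soliton, $\ell > k+h$, becomes interaction with an $(\ell-h)$-soliton, $\ell - h > k$. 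Summing the indicator of ``not free'' over $1 \le m \le n$ gives \eqref{eq:shift_M_k}. For \eqref{eq:shift_N_k} I use the explicit description \eqref{eq:overtake} of $N^{(i)}_{k,\ell}$ when the tagged soliton is free: the containment $\gamma(m-1) \subset \left[ H_{1}\left( \gamma^{(i)}_{k+h}(m - 1) \right), T_{1}\left( \gamma^{(i)}_{k+h}(m - 1) \right) \right]$ is equivalent, after applying $\Psi_{h}$, to the analogous containment for the corresponding $\ell$- and $k$-solitons, since $\Psi_{h}$ acts monotonically on positions and sends $H_{1}, T_{1}$ of a $(k+h)$-soliton to $H_{1}, T_{1}$ of its image $k$-soliton. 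Counting the overtaken solitons then yields the $N$-identity.

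Finally, \eqref{eq:lem_shift_NM} is immediate: taking $k=1$ in \eqref{eq:shift_M_k} and replacing $h$ by $k-1$ gives $M^{(i)}_{k}(\eta, n) = M^{(i)}_{1}(\Psi_{k-1}(\eta), n)$, while for a $1$-soliton the interaction sum in \eqref{eq:pos_n} is empty, so $Y^{(i)}_{1}(\cdot, n) = n - M^{(i)}_{1}(\cdot, n)$; applying this to $\Psi_{k-1}(\eta)$ completes the proof. The main obstacle is the middle step: one must verify that the \emph{times} of the overtaking and interaction events, and not merely the soliton identities at a single instant, are preserved by the skip map. This is where the dynamical intertwining $\widetilde{\Psi_{h}(T\eta)} = \widetilde{T\Psi_{h}(\eta)}$ — reconciling the different offsets $o_{\ell}$ appearing at each level — does the real work; once the whole trajectories are matched, the combinatorics of ``free'' and ``overtaken'' transfer essentially formally.
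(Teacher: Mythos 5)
Your overall skeleton matches the paper's: a time-$0$ identification of the $i$-th $(k+h)$-soliton of $\eta$ with the $i$-th $k$-soliton of $\Psi_{h}(\eta)$ via Lemma \ref{lem:shift_X}, a dynamical commutation of $\Psi_{h}$ with $T$, and then a transfer of the ``free''/``overtake'' combinatorics (the paper organizes this as an induction on $n$, with the base case $n=1$ done by an explicit $\zeta$-sum computation between the $\xi_{\ell+h}$-coordinates of the relevant endpoints). The place where your argument has a genuine gap is the intertwining step. You assert that \eqref{eq:linear_seat}, \eqref{eq:semig_zeta} and the bijectivity of $\zeta:\Omega_{0}\to\bar{\Omega}$ together yield $\widetilde{\Psi_{h}(T\eta)}=\widetilde{T\Psi_{h}(\eta)}$. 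Writing out both sides in $\zeta$-coordinates, $\zeta_{k}\bigl(\Psi_{h}(T\eta),\,\cdot\,\bigr)=\zeta_{k+h}\bigl(\eta,\,\cdot-(k+h)-o_{k+h}(\eta)\bigr)$ while $\zeta_{k}\bigl(T\Psi_{h}(\eta),\,\cdot\,\bigr)=\zeta_{k+h}\bigl(\eta,\,\cdot-k-o_{k}(\Psi_{h}(\eta))\bigr)$, so your claim is exactly equivalent to the offset identity $o_{k}\left(\Psi_{h}\left(\eta\right)\right)=h+o_{k+h}\left(\eta\right)$ for all $k$. You yourself flag ``reconciling the different offsets $o_{\ell}$'' as where ``the real work'' happens, but the proposal never does that work; it does not follow formally from the three cited facts. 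The paper avoids this by invoking the explicit commutation relation $\Psi_{h}(T^{n}\eta)=\tau_{\sum_{s,m}r(\Psi_{m-1}(T^{s}\eta),0)}T^{n}\Psi_{h}(\eta)$ from \cite[Proposition 4.12]{S}; you need either that citation or a proof of the offset identity.

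Two smaller points. First, even granting the intertwining, the commutation holds only up to a spatial shift, and recentering at a record can move you to a \emph{different} record, which changes the soliton numbering; the paper handles this by explicitly introducing the renumbered index $j$ with $X^{(j)}_{k+h}(T^{n}\eta,0)=X^{(i)}_{k+h}(\eta,n)$ at each induction step, whereas your trajectory-matching statement silently assumes the index $i$ is preserved at every time $m$. Second, in the overtaking step, $\Psi_{h}$ deletes all seats of level $\le h$, so it does not send $H_{1},T_{1}$ of a $(k+h)$-soliton to $H_{1},T_{1}$ of its image (the image's $H_{1}$ comes from the original's $H_{h+1}$); the count in \eqref{eq:overtake} still transfers, but the correct justification is the invariance of the $\zeta_{\ell+h}$-sum between the transformed $\xi$-coordinates, as in the paper, not preservation of the labelled heads and tails.
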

    \begin{proof}

        We use induction for $n \in \N$. First we consider the case $n = 1$. 
        For \eqref{eq:shift_M_k} with $n = 1$, 
        from \eqref{eq:shift_X}, we see that the $i$-th $k + h$-soliton in $\eta$ is not free at time $0$ if and only if the $i$-th $k$-soliton in $\Psi_{h}\left(\eta\right)$ is not free at time $0$. Hence we have 
            \begin{align}
                M^{(i)}_{k+h}\left(\eta, 1\right) &= M^{(i)}_{k}\left(\Psi_{h}\left(\eta\right), 1\right). 
            \end{align}
        Next we show \eqref{eq:shift_N_k} with $n = 1$. We fix $i \in \Z$ and $k, \ell, h \in \N$ such that $k > \ell$. 
        Then there exists some $j \in \Z$ such that 
            \begin{align}
                \xi_{k + h - 1}\left(\eta, X^{(i)}_{k + h}\left(\eta,0\right)\right) &= j.
            \end{align}
        In other words, we have 
            \begin{align}
                X^{(i)}_{k + h}\left(\eta,0\right) = s_{k + h - 1}\left(\eta, j \right).
            \end{align}
        In this case, we also have 
            \begin{align}
                H_{k+h}\left( \gamma^{i}_{k + h}\left(n - 1\right) \right) = s_{k + h - 1}\left(\eta, j + 1 \right). 
            \end{align}
        We observe that from \eqref{eq:overtake},  
            \begin{align}
                &N^{(i)}_{k + h,\ell + h}\left(\eta, 1\right) \\ 
                &= 
                \begin{dcases}
                    \sum_{z = \xi_{\ell + h}\left(\eta, X^{(i)}_{k + h}\left(\eta\right)\right) + 1}^{\xi_{\ell + h}\left(\eta,  s_{k + h - 1}\left( \eta, j + 1 \right)\right)} \zeta_{\ell + h}\left(\eta, z\right) \ & \ \text{if the $i$-th $k + \ell$-soliton is free}, \\
                    0 \ & \ \text{otherwise}.
                \end{dcases}
            \end{align}
        On the other hand, from Lemma \ref{lem:shift_X}, we get
            \begin{align}
                \xi_{\ell + h}\left(\eta,  X^{(i)}_{k + h}\left(\eta,0\right)\right) &= \xi_{\ell}\left(\Psi_{h}\left(\eta\right), X^{(i)}_{k}\left(\Psi_{h}\left(\eta\right),0\right) \right), \\
                \xi_{\ell + h}\left(\eta,  s_{k + h - 1}\left( \eta, j + 1 \right)\right) &= \xi_{\ell}\left(\Psi_{h}\left(\eta\right),  s_{k - 1}\left( \Psi_{h}\left(\eta\right), j + 1 \right)\right),
            \end{align}
        and 
            \begin{align}
                s_{k - 1}\left( \Psi_{h}\left(\eta\right), j + 1 \right) = H_{k}\left( \gamma^{(i)}_{k}\left(\Psi_{h}\left(\eta\right), 1\right) \right).
            \end{align}
        Hence if the $i$-th $k + \ell$-soliton is free, we obtain
            \begin{align}
                N^{(i)}_{k + h,\ell + h}\left(\eta, 1\right) &= \sum_{z = \xi_{\ell + h}\left(\eta, X^{(i)}_{k + h}\left(\eta\right)\right) + 1}^{\xi_{\ell + h}\left(\eta,  s_{k + h - 1}\left( \eta, j + 1 \right)\right)} \zeta_{\ell + h}\left(\eta, z\right) \\
                &= \sum_{\xi_{\ell}\left(\Psi_{h}\left(\eta\right), X^{(i)}_{k}\left(\Psi_{h}\left(\eta\right)\right) \right) + 1}^{\xi_{\ell}\left(\Psi_{h}\left(\eta\right),  H_{k}\left( \gamma^{(i)}_{k}\left(\Psi_{h}\left(\eta\right), n\right) \right)\right)} \zeta_{\ell}\left(\Psi_{h}\left(\eta\right), z\right) \\
                &= N^{(i)}_{k,\ell}\left(\Psi_{h}\left(\eta\right), 1\right). 
            \end{align}

        Now we assume that \eqref{eq:shift_N_k} and \eqref{eq:shift_M_k} hold up to $n \in \N$. Then, 
            \begin{align}
                N^{(i)}_{k + h,\ell + h}\left(\eta, n+1\right) &= N^{(i)}_{k + h,\ell + h}\left(\eta, n+1\right) - N^{(i)}_{k + h,\ell + h}\left(\eta, n\right) + N^{(i)}_{k + h,\ell + h}\left(\eta, n+1\right) \\
                &= N^{(j)}_{k + h,\ell + h}\left(T^{n}\eta, 1\right) + N^{(i)}_{k,\ell}\left(\Psi_{h}\left(\eta\right), n\right) \\
                &= N^{(j)}_{k,\ell}\left(\Psi_{h}\left(T^{n}\eta\right), 1\right) + N^{(i)}_{k,\ell}\left(\Psi_{h}\left(\eta\right), n\right),
            \end{align}
        where $j = j(\eta,n)$ is uniquely determined via 
            \begin{align}
                X^{\left(j\right)}_{k+h}\left(T^{n}\eta,0\right) = X^{\left(i\right)}_{k+h}\left(\eta,n\right),
            \end{align}
        i.e., $j$ is the number assigned to the $k$-soliton at $X^{(i)}_{k+h}(\eta,n)$ in $T^{n}\eta$. From \cite[Proposition 4.{12}]{S}, we have 
            \begin{align}
                \Psi_{h}\left(T^{n}\eta\right) = \tau_{\sum_{s = 1}^{n}\sum_{m = 1}^{h} r\left(\Psi_{m-1}\left(T^{s}\eta\right),0 \right) }T^{n}\Psi_{h}\left(\eta\right),
            \end{align}
        and in particular, we get 
            \begin{align}
                X^{\left(j\right)}_{k}\left(\Psi_{h}\left(T^{n}\eta\right),0 \right) = X^{\left(i\right)}_{k}\left(\Psi_{h}\left(\eta\right),n\right) - \sum_{s = 1}^{n}\sum_{m = 1}^{h} r\left(\Psi_{m-1}\left(T^{s}\eta\right),0 \right).
            \end{align}
        Since the number of $\ell$-solitons overtaken by a tagged $k$-soliton from time $0$ to $1$ is conserved by constant spatial shift, we have
            \begin{align}
                N^{(j)}_{k,\ell}\left(\Psi_{h}\left(T^{n}\eta\right), 1\right) = N^{(i)}_{k,\ell}\left(\Psi_{h}\left(\eta\right), n + 1\right) - N^{(i)}_{k,\ell}\left(\Psi_{h}\left(\eta\right), n \right),
            \end{align}
        and thus \eqref{eq:shift_N_k} holds for $n + 1$. By using the same argument, we can also show that \eqref{eq:shift_M_k} holds for $n + 1$. 
    \end{proof}

    Now we will derive some estimates for $N_{k,\ell}, M_{k,\ell}, M_{k}$ by using $\zeta$ and the $k$-skip map. A key observation is that from Lemma \ref{lem:shift_X}, if we apply the $\ell$-skip map to $\eta$, then $k$-solitons in $\eta$ with $k > \ell$ become $(k - \ell)$-solitons in $\Psi_{\ell}(\eta)$, and $\ell$-solitons in $\eta$ become certain sites in $\Psi_{\ell}(\eta)$. Thus we see that a $k$-soltion overtaking $\ell$-soltions in $\eta$ corresponds to a $(k-\ell)$-soliton passing a certain site in $\Psi_{\ell}(\eta)$. We note that different solitons may correspond to the same site, and if $\xi_{\ell}(X^{(j)}_{\ell}) = x$ for some $j \in \Z$ and $x \in \Z$, then the site $x$ of $\Psi_{\ell}(\tilde{\eta})$ corresponds to $\zeta_{\ell}(x)$ $\ell$-solitons.  
    Hence, to find the total number of $\ell$-solitons overtaken by the $i$-th $k$-soliton in $\eta$, we only need to calculate the sum of $\zeta_{\ell}(x)$ on $x \in [X^{(i)}_{k-\ell}(\Psi_{\ell}(\eta)) + 1, X^{(i)}_{k-\ell}(\Psi_{\ell}(\eta), n))]$. Conversely, $M^{(i)}_{k}(n)$ can be calculated by counting the number of solitons passing through the site in $\Psi_{k}(\eta)$ corresponding to the $i$-th $k$-soliton with volume. 
        \begin{lemma}\label{lem:NM_kl}
        Assume that $\eta \in \Omega$. Then, for any $k,\ell \in \N$, $i \in \Z$ and $n \in \Z_{\ge 0}$, we have 
            \begin{align}\label{eq:N_kl}
                \sum_{m = 1}^{n} N^{{i}}_{k,\ell}\left(\eta,m\right) = \sum_{j = X^{{i}}_{k - \ell}\left(\Psi_{\ell}\left(\tilde{\eta}\right), 0 \right) + 1}^{X^{{i}}_{k - \ell}\left(\Psi_{\ell}\left(\tilde{\eta}\right), n \right)} \zeta_{\ell}\left(\tilde{\eta},j\right),
            \end{align}
        and 
            \begin{align}\label{eq:M_kl}
                M^{(i)}_{k}\left(\eta,n\right) = \sum_{m = 0}^{n-1} \left( 1 - r\left( T^{m}\Psi_{k}\left(\tilde{\eta}\right), J_{k}\left(\tilde{\eta}, i\right) \right) \right),
            \end{align}
        where
            \begin{align}\label{def:J_stop}
                J_{k}\left(\eta, i\right) := 
                \begin{dcases}
                    \min\left\{ j \in \Z_{\ge 0} \ ; \ \sum_{h = 0}^{j} \mathbf{1}_{\left\{ \zeta_{k}\left(\eta, h\right) > 0 \right\}} = i \right\} \ & \ i \ge 1, \\
                    - \min\left\{ j \in \Z_{\ge 0} \ ; \ \sum_{h = -j}^{-1} \mathbf{1}_{\left\{ \zeta_{k}\left(\eta, h\right) > 0 \right\}} = - i + 1 \right\}  \ & \ i \le 0.
                \end{dcases}
            \end{align} 
        In addition, 
                \begin{align}\label{ineq:M_kl}
                \sum^{J_{\ell}\left( \eta, \sigma^{(i)}_{k, \ell}\left(\eta, 0 \right) \right) - 1}_{j = J_{\ell}\left( \eta, \sigma^{(i)}_{k, \ell}\left(\eta, n \right) \right)} \zeta_{\ell}\left(\eta, j \right) 
                \le M^{(i)}_{k,\ell}\left(\eta,n\right) 
                \le \sum^{J_{\ell}\left( \eta, \sigma^{(i)}_{k, \ell}\left(\eta, 0 \right) \right)}_{j = J_{\ell}\left( \eta, \sigma^{(i)}_{k, \ell}\left(\eta, n \right) \right) - 1} \zeta_{\ell}\left(\eta, j \right),
            \end{align}
        where 
            \begin{align}\label{def:sigma_kl}
                \sigma^{(i)}_{k, \ell}\left(\eta, n\right) := \inf\left\{ j \in \Z \ ; \ X^{(j)}_{\ell - k}\left(\Psi_{k}\left(\tilde{\eta}\right), n \right) \ge J_{k}\left(\tilde{\eta}, i\right)  \right\}.
            \end{align}
    \end{lemma}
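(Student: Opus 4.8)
The plan is to translate each quantity on the left-hand side — which counts soliton interactions in $\eta$ — into a counting statement about $\Psi_{\ell}(\tilde\eta)$ or $\Psi_{k}(\tilde\eta)$, using the correspondence established in Lemma \ref{lem:shift_X} that $k$-solitons in $\eta$ with $k>\ell$ become $(k-\ell)$-solitons in $\Psi_{\ell}(\eta)$, while $\ell$-solitons collapse onto the sites carrying nonzero $\zeta_{\ell}$. Throughout I would work with $\tilde\eta = \tau_{s_{\infty}(0)}\eta$, which is legitimate because all the quantities $N^{i}_{k,\ell}$, $M^{i}_{k}$, $M^{i}_{k,\ell}$, and $\zeta_{k}$ are invariant under the shift $\tau_{s_{\infty}(0)}$ by Lemma \ref{lem:sp_shift} and \eqref{eq:zeta_shift}.

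First I would prove \eqref{eq:N_kl}. By \eqref{eq:overtake}, when $\gamma^{i}_{k}(m-1)$ is free, $N^{i}_{k,\ell}(\eta,m)$ equals the number of $\ell$-solitons whose position at time $m-1$ lies in the head-to-tail interval $[H_{1}(\gamma^{i}_{k}(m-1)),T_{1}(\gamma^{i}_{k}(m-1))]$. Applying the $\ell$-skip map and Lemma \ref{lem:shift_X}, each such $\ell$-soliton corresponds to a site in $\Psi_{\ell}(\tilde\eta)$ carrying weight $\zeta_{\ell}$, and the interval in $\eta$ swept out between times $m-1$ and $m$ maps to the $\xi_{\ell}$-interval between $X^{i}_{k-\ell}(\Psi_{\ell}(\tilde\eta),m-1)$ and $X^{i}_{k-\ell}(\Psi_{\ell}(\tilde\eta),m)$. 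Summing over $m=1,\dots,n$ telescopes to give the sum of $\zeta_{\ell}(\tilde\eta,j)$ over $j$ from $X^{i}_{k-\ell}(\Psi_{\ell}(\tilde\eta),0)+1$ to $X^{i}_{k-\ell}(\Psi_{\ell}(\tilde\eta),n)$. The key input here is that the effective distance $\xi_{\ell}$ is preserved under the dynamics (Remark \ref{rem:eff_dis} and Lemma \ref{lem:dif_xi}), so the telescoping is clean and no interval is double-counted or skipped.

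For \eqref{eq:M_kl} I would use \eqref{eq:lem_shift_NM} from Lemma \ref{lem:shift_NM}, which already gives $M^{(i)}_{k}(\eta,n) = n - Y^{(i)}_{1}(\Psi_{k-1}(\eta),n)$; I then apply one more skip to reach $\Psi_{k}(\tilde\eta)$ and identify the site $J_{k}(\tilde\eta,i)$ tracked by the $i$-th $k$-soliton with volume, via the stopping-rule definition \eqref{def:J_stop} that selects the $i$-th slot with nonzero $\zeta_{k}$. The increment of a $1$-soliton at each step is $1$ exactly when the corresponding site is not a record in the time-evolved configuration, which yields the sum $\sum_{m=0}^{n-1}(1 - r(T^{m}\Psi_{k}(\tilde\eta),J_{k}(\tilde\eta,i)))$; here I would invoke the shift formula for $\Psi_{k}(T^{m}\eta)$ from \cite[Proposition 4.12]{S} (as used in the proof of Lemma \ref{lem:shift_NM}) to correctly track how the tagged site moves under $T^{m}$. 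Finally, for the two-sided bound \eqref{ineq:M_kl} on $M^{(i)}_{k,\ell}$, I would count $\ell$-solitons overtaking the tagged $k$-soliton by mapping them through $\Psi_{k}$ to $(\ell-k)$-solitons passing the tracked site $J_{k}(\tilde\eta,i)$; the index $\sigma^{(i)}_{k,\ell}$ defined in \eqref{def:sigma_kl} records which $(\ell-k)$-soliton has just reached that site, and the weight $\zeta_{\ell}$ at each crossed slot accounts for the volume, with the one-slot slack between the upper and lower bounds coming from the boundary soliton that may be only partially past the tracked site.

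The main obstacle I expect is the bookkeeping for \eqref{ineq:M_kl}: unlike the telescoping identities, here one must simultaneously track the tagged $k$-soliton (which under $\Psi_{k}$ becomes a single site) and the overtaking larger $\ell$-solitons (which under $\Psi_{k}$ become $(\ell-k)$-solitons), and reconcile the two different skip-map pictures. Getting the inequality direction and the off-by-one slack correct — precisely because a crossing is counted at the level of slots carrying $\zeta_{\ell}$ rather than individual solitons, and because $\sigma^{(i)}_{k,\ell}$ can jump by more than one when a whole connected cluster is overtaken at once — is the delicate part, and I would verify it against the explicit interaction pictures in Figures \ref{ex:int_2}, \ref{ex:int_1}, \ref{ex:int_3} before committing to the index conventions.
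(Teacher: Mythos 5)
Your plan follows essentially the same route as the paper's proof: work with $\tilde{\eta}$ via Lemma \ref{lem:sp_shift}, use Lemmas \ref{lem:dif_xi}, \ref{lem:shift_X} and \ref{lem:shift_NM} to translate overtaking/freeness events into slot-counting statements in $\Psi_{\ell}(\tilde{\eta})$ resp.\ $\Psi_{k}(\tilde{\eta})$, and obtain \eqref{ineq:M_kl} by sandwiching with $\sigma^{(i)}_{k,\ell}$; the paper phrases \eqref{eq:N_kl} as a single characterization of which $\ell$-solitons have been overtaken by time $n$ rather than a per-step telescoping, but the content is identical. One small caution: the conservation of effective distance (Remark \ref{rem:eff_dis}) holds between solitons of \emph{equal} size, whereas the $\xi_{\ell}$-distance between the tagged $k$-soliton and an $\ell$-soliton is precisely what changes when overtaking occurs — the clean bookkeeping instead rests on the fact that each $\ell$-soliton cluster occupies a fixed slot $J_{\ell}(\tilde{\eta},j)$ in the $\Psi_{\ell}$-picture (up to the common offset $o_{\ell}$), which is how the paper argues.
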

    \begin{proof}
        First we note that thanks to Lemma \ref{lem:sp_shift} and \eqref{eq:s_psi_xi}, without loss of generality we can assume that $s_{\infty}(0) = 0$. 
        
        First we prove \eqref{eq:N_kl}. Observe that {the} $i$-th $k$-soliton overtakes the $j$-th $\ell$-soliton {with volume} up to time $n$ if and only if 
            \begin{align}
                \xi_{\ell}\left(\eta, X^{{i}}_{k}\left(\eta,0\right)\right) + 1 &\le \xi_{\ell}\left(\eta, X^{(j)}_{\ell}\left(\eta,0\right)\right),
            \end{align}
        and 
            \begin{align}
                \xi_{\ell}\left(T^n\eta, X^{{i}}_{k}\left(\eta, n\right)\right) & \ge \xi_{\ell}\left(T^n\eta, X^{(j)}_{\ell}\left(\eta, n\right)\right).
            \end{align}
        {On the other hand, for any $i \in \Z$, there exists a unique $i' \in \Z$ such that $\gamma^{i}_{k} \in Con\left( \gamma^{(i')}_{k} \right)$. Since the map $\Psi_{\ell}$ skips $h$-seats with $1 \le h \le \ell$, we have 
            \begin{align}
                &X^{i}_{k-\ell}\left( \Psi_{\ell}\left(\eta\right), 0 \right) - X^{(i')}_{k-\ell}\left( \Psi_{\ell}\left(\eta\right), 0 \right) \\
                &= X^{i}_{k}\left( \eta, 0 \right) - X^{(i')}_{k}\left( \eta, 0 \right) - \sum_{h = 1}^{\ell} \sum_{\sigma \in \{\uparrow, \downarrow\}} \sum_{x = X^{i}_{k}\left( \eta, 0 \right) + 1}^{X^{(i')}_{k}\left( \eta, 0 \right)} \eta^{\sigma}_{h}\left(x\right) \\
                &= \xi_{\ell}\left(\eta, X^{i}_{k}\left( \eta, 0 \right)\right) - \xi_{\ell}\left(\eta, X^{(i')}_{k}\left( \eta, 0 \right)\right).
            \end{align}
        Also, we get 
            \begin{align}
                &X^{i}_{k-\ell}\left( \Psi_{\ell}\left(\eta\right), n \right) - X^{(i')}_{k-\ell}\left( \Psi_{\ell}\left(\eta\right), n \right) \\
                &= \xi_{\ell}\left(T^n \eta, X^{i}_{k}\left( \eta, n \right)\right) - \xi_{\ell}\left(T^n \eta, X^{(i')}_{k}\left( \eta, n \right)\right).
            \end{align}}
        By combining the above with Lemmas \ref{lem:dif_xi}, \ref{lem:shift_X} and \ref{lem:shift_NM}, {we see that 
            \begin{align}
                \xi_{\ell}\left(\eta, X^{i}_{k}\left( \eta, 0 \right)\right) = X^{i}_{k-\ell}\left( \Psi_{\ell}\left(\eta\right), 0 \right),
            \end{align}
        and 
            \begin{align}
                &\xi_{\ell}\left(T^n\eta, X^{i}_{k}\left(\eta, n\right)\right) - \left( \xi_{\ell}\left(T^n\eta, X^{(j)}_{\ell}\left(\eta, n\right)\right) - \xi_{\ell}\left(\eta, X^{(j)}_{\ell}\left(\eta,0\right)\right) \right) \\
                &= \xi_{\ell}\left(T^n \eta, X^{i}_{k}\left( \eta, n \right)\right) - \xi_{\ell}\left(T^n \eta, X^{(i')}_{k}\left( \eta, n \right)\right) \\
                & \quad + \xi_{\ell}\left(\eta, X^{(i')}_{k}\left(\eta,0\right)\right) + \left(k - \ell\right)\left(n - M_{k}\left(\eta, n\right)\right) + 2\sum_{m = 1}^{n} \sum_{h = \ell + 1}^{k - 1} N^{(i')}_{k,h}\left(\eta,m\right) \\
                &= \xi_{\ell}\left(T^n \eta, X^{i}_{k}\left( \eta, n \right)\right) - \xi_{\ell}\left(T^n \eta, X^{(i')}_{k}\left( \eta, n \right)\right) + X^{(i')}_{k - \ell}\left( \Psi_{\ell}\left(\eta\right), n \right) \\
                &= X^{i}_{k - \ell}\left( \Psi_{\ell}\left(\eta\right), n \right).
            \end{align}
        Thus, the $i$-th $k$-soliton overtakes the $j$-th $\ell$-soliton {with volume} up to time $n$ if and only if 
            \begin{align}
                X^{i}_{k-\ell}\left( \Psi_{\ell}\left(\eta\right), 0 \right) + 1 \le \xi_{\ell}\left(\eta, X^{(j)}_{\ell}\left(\eta,0\right)\right) \le X^{i}_{k - \ell}\left( \Psi_{\ell}\left(\eta\right), n \right).
            \end{align}} 
        Since 
            \begin{align}\label{eq:H_xi}
                \xi_{\ell}\left(\eta, X^{(j)}_{\ell}\left(\eta,0 \right)\right) = J_{\ell}\left(\eta, j \right),
            \end{align}
        {and there are $\zeta_{\ell}\left(\eta, J_{\ell}\left(\eta, j \right)\right)$ $\ell$-solitons at the site $J_{\ell}\left(\eta, j \right)$,} we have \eqref{eq:N_kl}. 

        Next we show \eqref{eq:M_kl} for $k < \ell$. We observe that the $i$-th $k$-soliton is free at time $n$ if and only if the site,
            \begin{align}
                s_{k}\left( T^{n}\eta, \xi_{k}\left( T^{n}\eta, X^{(i)}_{k}(\eta,n) \right) \right),
            \end{align}
        is a record in $T^n \eta$. In addition, the function $\xi_{k}(T^n\eta, \ \cdot \ )$ increases at each record in $T^{n} \eta$. Hence, the $i$-th $k$-soliton is free at time $n$ if and only if 
            \begin{align}
                \xi_{k}\left( T^{n}\eta, X^{(i)}_{k}(\eta,n) \right) \nin \left[ \xi_{k}\left( T^{n}\eta, X^{(j)}_{\ell}(\eta,n) \right) + 1, \xi_{k}\left( T^{n}\eta, \bar{X}^{(j)}_{\ell}(\eta,n) \right) \right],
            \end{align}
        for any $j \in \Z$ and $\ell > k$, where 
            \begin{align}
                \bar{X}^{(j)}_{\ell}(\eta,n) := \max\left\{ x \in \Z \  ; \  x \in Con\left(\gamma^{(j)}_{\ell}(n)\right) \right\}. 
            \end{align}
        On the other hand, from Lemmas \ref{lem:dif_xi} and  \ref{lem:shift_X}, we have 
            \begin{align}
                \xi_{k}\left( T^{n}\eta, X^{(i)}_{k}(\eta,n) \right) - \xi_{k}\left( \eta, X^{(i)}_{k}(\eta,0) \right) = \sum_{m=1}^{n}\left( k + o_{k}\left( T^{m-1}\eta \right) \right),
            \end{align}
        and 
            \begin{align}
                &\xi_{k}\left( T^{n}\eta, X^{(j)}_{\ell}(\eta,n) \right) - \xi_{k}\left( \eta, X^{(j)}_{\ell}(\eta,0) \right) \\ 
                &= \left( k - \ell\right) \left(n - M^{(j)}_{\ell}\left(\eta,n\right)\right) + 2 \sum_{h = k + 1}^{\ell - 1}(h - k) N^{(i)}_{\ell,h}\left(\eta, n\right) \\
                & \ + \sum_{m=1}^{n}\left( k + o_{k}\left( T^{m-1}\eta \right) \right). 
            \end{align}
        By \eqref{eq:shift_X}, \eqref{eq:shift_N_k} and \eqref{eq:shift_M_k} we get 
            \begin{align}
                &\xi_{k}\left( T^{n}\eta, X^{(j)}_{\ell}(\eta,n) \right) \\
                &= X^{(j)}_{\ell - k}\left(\Psi_{k}\left(\eta\right),0\right) + \left( k - \ell\right) \left(n - M^{(j)}_{\ell - k}\left(\Psi_{k}\left(\eta\right),n\right)\right) \\ 
                & \ + 2 \sum_{h = 1}^{\ell - k - 1} h N^{(i)}_{\ell - k,h}\left(\Psi_{k}\left(\eta\right), n\right)
                 + \sum_{m=1}^{n}\left( k + o_{k}\left( T^{m-1}\eta \right) \right) \\
                &= X^{(j)}_{\ell - k}\left(\Psi_{k}\left(\eta\right),n\right) + \sum_{m=1}^{n}\left( k + o_{k}\left( T^{m-1}\eta \right) \right).
            \end{align}
        Now we consider an expression of $\xi_{k}\left( T^{n}\eta, \bar{X}^{(j)}_{\ell}(\eta,n) \right)$. Observe that there exists   $j' = j'(n) \in \Z$ such that 
            \begin{align}
                X^{(j')}_{\ell}\left(T^n\eta,0\right) = X^{(j)}_{\ell}\left(\eta,n\right).
            \end{align}
        Since from Remark \ref{rem:eff_dis}, the volume of solitons are conserved in time, we have  $|Con\left(\gamma^{(j')}_{\ell}\left(T^n\eta, 0\right)\right)| = |Con\left(\gamma^{(j)}_{\ell}\left(\eta, n\right)\right)|$. In particular, $\bar{X}^{(j')}_{\ell}\left(T^n\eta,0\right)=\bar{X}^{(j)}_{\ell}(\eta,n)$. Since there are no $h$-solitons with $h \ge \ell + 1$ in the interval $[X^{(j')}_{\ell}\left(T^n\eta,0\right), \bar{X}^{(j')}_{\ell}\left(T^n\eta,0\right)]$ at time $n$, from Remark \ref{rem:seat_soliton}, the difference of $\xi_{k}\left( T^{n}\eta, \bar{X}^{(j')}_{\ell}\left(T^n\eta,0\right) \right) - \xi_{k}\left( T^{n}\eta, X^{(j')}_{\ell}\left(T^n\eta,0\right) \right)$ is equal to the total number of $h$-th head and tail with $h \ge k + 1$ in $[X^{(j')}_{\ell}\left(T^n\eta,0\right), \bar{X}^{(j')}_{\ell}\left(T^n\eta,0\right)]$, i.e.,
            \begin{align}
                &\xi_{k}\left( T^{n}\eta, \bar{X}^{(j)}_{\ell}(\eta,n) \right) - \xi_{k}\left( T^{n}\eta, X^{(j)}_{\ell}(\eta,n) \right) \\
                &= \xi_{k}\left( T^{n}\eta, \bar{X}^{(j')}_{\ell}\left(T^n\eta,0\right) \right) - \xi_{k}\left( T^{n}\eta, X^{(j')}_{\ell}\left(T^n\eta,0\right) \right) \\
                &= 2\sum_{h = k + 1}^{\ell} \sum_{x = \xi_{h}\left(T^{n}\eta, X^{(j')}_{\ell}\left(T^n\eta,0\right)\right) + 1}^{\xi_{h}\left(T^{n}\eta,\bar{X}^{(j')}_{\ell}\left(T^n\eta,0\right)\right)} (h - k) \zeta_{h}\left(T^{n}\eta,x\right).
            \end{align}
        Then from \eqref{eq:semig_zeta} and \eqref{eq:shift_X_2}, we get 
            \begin{align}
                &2\sum_{h = k + 1}^{\ell} \sum_{x = \xi_{h}\left(T^{n}\eta, X^{(j')}_{\ell}\left(T^n\eta,0\right)\right) + 1}^{\xi_{h}\left(T^{n}\eta,\bar{X}^{(j')}_{\ell}\left(T^n\eta,0\right)\right)} (h - k) \zeta_{h}\left(T^{n}\eta,x\right) \\
                &= 2\sum_{h = 1}^{\ell - k} \sum_{x = \xi_{h-k}\left(\Psi_{k}\left(T^n\eta\right), X^{(j')}_{\ell-k}\left(\Psi_{k}\left(T^n\eta\right),0\right)\right) + 1}^{\xi_{h-k}\left(\Psi_{k}\left(T^n\eta\right),\bar{X}^{(j')}_{\ell}\left(\Psi_{k}\left(T^n\eta\right),0\right)\right)} h \zeta_{h}\left(\Psi_{k}\left(T^{n}\eta\right),x\right) \\
                &= \bar{X}^{(j')}_{\ell-k}\left(\Psi_{k}\left(T^n\eta\right),0\right) - X^{(j')}_{\ell-k}\left(\Psi_{k}\left(T^n\eta\right),0\right) \\
                &= \bar{X}^{(j)}_{\ell - k}\left(\Psi_{k}\left(\eta\right),n\right) - X^{(j)}_{\ell - k}\left(\Psi_{k}\left(\eta\right),n\right).
            \end{align}
        From the above we have 
            \begin{align}
                \xi_{k}\left( T^{n}\eta, \bar{X}^{(j)}_{\ell}(\eta,n) \right) = \bar{X}^{(j)}_{\ell - k}\left(\Psi_{k}\left(\eta\right),n\right) + \sum_{m=1}^{n}\left( k + o_{k}\left( T^{m-1}\eta \right) \right).
            \end{align}
        By combining the above, we see that $i$-th $k$-soliton is free at time $n$ if and only if 
            \begin{align}
                \xi_{k}\left( \eta, X^{(i)}_{k}\left(\eta,0\right) \right)  \nin 
                \left[ X_{\ell - k}^{(j)}\left( \Psi_{k}\left(\eta\right), n \right)  + 1, \bar{X}_{\ell - k}^{(j)}\left( \Psi_{k}\left(\eta\right), n \right) \right],
            \end{align}
        for any $j \in \Z$ and $\ell > k$, and this is equivalent to 
            \begin{align}
                r\left( T^{n}\Psi_{k}\left(\eta\right), \xi_{k}\left(\eta, X^{\left(i\right)}_{k}\left(\eta,0\right) \right)\right) = 1. 
            \end{align}
        {By \eqref{eq:H_xi},} we have \eqref{eq:M_kl}.

        Finally we show \eqref{ineq:M_kl}. By the same computation as above, we see that $i$-th $k$-soliton will be overtaken by the $j$-th $\ell$-soliton up to time $n$ if and only if 
            \begin{align}
                X^{(j)}_{\ell - k}\left( \Psi_{k}\left(\eta\right),0 \right) + 1 \le \xi_{k}\left(\eta, X^{(i)}_{k}\left(\eta,0\right)\right) \le X^{(j)}_{\ell - k}\left( \Psi_{k}\left(\eta\right), n \right). 
            \end{align}
        On the other hand, we see that 
            \begin{align}
                X^{\left( \sigma^{(i)}_{k, \ell}\left(\eta, 0\right) - 1 \right)}_{\ell - k}\left( \Psi_{k}\left(\eta\right), 0 \right) < \xi_{k}\left(\eta, X^{(i)}_{k}\left(\eta, 0\right)\right)  \le X^{\left( \sigma^{(i)}_{k, \ell}\left(\eta, 0\right)\right)}_{\ell - k}\left( \Psi_{k}\left(\eta\right) \right), 
            \end{align}
        and 
            \begin{align}
                X^{\left( \sigma^{(i)}_{k, \ell}\left(\eta, n\right) - 1 \right)}_{\ell - k}\left( \Psi_{k}\left(\eta\right) , n \right) 
                < \xi_{k}\left(\eta, X^{(i)}_{k}\left(\eta,0\right)\right) 
                \le X^{\left( \sigma^{(i)}_{k, \ell}\left(\eta, n\right)\right)}_{\ell - k}\left( \Psi_{k}\left(\eta\right), n \right). 
            \end{align}
        Hence, if $\sigma^{(i)}_{k, \ell}\left(\eta, n\right) \le j \le \sigma^{(i)}_{k, \ell}\left(\eta, 0\right)- 1$, then the $j$-th $\ell$-soliton will overtake the $i$-th $k$-soliton up to time $n$. Now we observe that 
            \begin{align}
                \sum_{J_{\ell}\left(\eta, j \right) + 1}^{J_{\ell}\left(\eta, j + 1 \right) - 1} \zeta_{\ell}\left(\eta, j\right) = 0,
            \end{align}
        for any $\ell \in \N$ and $j \in \N$. From the above and \eqref{eq:H_xi}, we have \eqref{ineq:M_kl}. 
    \end{proof}

    The following representation of $Y^{{i}}_{k}(n)$ is a key to show the main results. As we will see later in Proposition \ref{prop:dec}, the representation of $Y^{{i}}_{k}(n)$ in Lemma \ref{lem:rep_W} is an orthogonal decomposition of $Y^{{i}}_{k}(n)$, unlike the original formula \eqref{eq:pos_n}. 
        \begin{lemma}\label{lem:rep_W}
            For any $\mathbf{q} \in \mathcal{Q}$, $k \in \N$, $0 \le \ell \le k - 1$, $i \in \Z$ and $n \in \Z_{\ge 0}$, we have
                \begin{align}
                    &Y^{{i}}_{k-\ell}\left(\Psi_{\ell}\left(\eta\right),n\right) \\
                    &= \frac{v^{\mathrm{eff}}_{k-\ell}\left(\theta^{\ell}\mathbf{q}\right)}{v^{\mathrm{eff}}_{1}\left(\theta^{k-1}\q\right)} \left(n -  M^{{i}}_{k}\left(\eta,n\right) \right) \\
                    & \ + 2\sum_{h = \ell + 1}^{k - 1} \frac{v^{\mathrm{eff}}_{h-\ell}\left(\theta^{\ell}\mathbf{q}\right)}{v^{\mathrm{eff}}_{1}\left(\theta^{h-1}\q\right)} \sum_{j = X^{{i}}_{k - h}\left(\Psi_{h}\left(\tilde{\eta}\right), 0 \right)  + 1}^{X^{{i}}_{k - h}\left(\Psi_{h}\left(\tilde{\eta}\right), n \right) } \left( \zeta_{h}\left({\eta,}j\right) - \a_{h}\left(\q\right) \right) \label{eq:orthogonal0},
                \end{align}
            with convention $\sum_{\ell = 1}^{0} = 0$. In particular, we have 
                \begin{align}
                    Y^{{i}}_{k}\left(\eta,n\right) 
                    &= \frac{v^{\mathrm{eff}}_{k}\left(\mathbf{q}\right)}{v^{\mathrm{eff}}_{1}\left(\theta^{k-1}\q\right)}\left(n -  M^{{i}}_{k}\left(\eta,n\right) \right) \\
                    & \ + 2\sum_{h = 1}^{k - 1} \frac{v^{\mathrm{eff}}_{h}\left(\mathbf{q}\right)}{v^{\mathrm{eff}}_{1}\left(\theta^{h-1}\q\right)} \sum_{j = X^{{i}}_{k - h}\left(\Psi_{h}\left(\tilde{\eta}\right), 0 \right)  + 1}^{X^{{i}}_{k - h}\left(\Psi_{h}\left(\tilde{\eta}\right), n \right) } \left( \zeta_{h}\left({\eta,}j\right) - \a_{h}\left(\q\right) \right) \label{eq:orthogonal}.
                \end{align}
        \end{lemma}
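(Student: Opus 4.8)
The plan is to first derive an exact ``bare'' representation of $Y^{(i)}_{k-\ell}\left(\Psi_\ell(\eta),n\right)$ with integer coefficients, and then to convert it into the claimed $v^{\mathrm{eff}}$-weighted, recentered form \eqref{eq:orthogonal0} by an algebraic identity coming from the velocity recursion \eqref{eq:v_eff}.

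\textbf{Step 1 (bare formula).} I would apply the increment formula \eqref{eq:pos_n} to the configuration $\Psi_\ell(\eta)$ and to the $(k-\ell)$-soliton. Using the skip-invariance of the stranding and overtaking counts from Lemma \ref{lem:shift_NM}, namely $M^{(i)}_{k-\ell}(\Psi_\ell(\eta),n)=M^{(i)}_{k}(\eta,n)$ via \eqref{eq:shift_M_k} and $N^{(i)}_{k-\ell,h}(\Psi_\ell(\eta),m)=N^{(i)}_{k,h+\ell}(\eta,m)$ via \eqref{eq:shift_N_k}, reindexing $h\mapsto h+\ell$, and rewriting the time-summed overtaking counts through \eqref{eq:N_kl} together with $\zeta_h(\tilde\eta,\cdot)=\zeta_h(\eta,\cdot)$ from \eqref{eq:zeta_shift}, this yields the deterministic identity
\begin{align}
Y^{(i)}_{k-\ell}\left(\Psi_\ell(\eta),n\right)=(k-\ell)\left(n-M^{(i)}_{k}(\eta,n)\right)+2\sum_{h=\ell+1}^{k-1}(h-\ell)\,S_h, \label{plan:bare}
\end{align}
where $S_h:=\sum_{j=X^{(i)}_{k-h}(\Psi_h(\tilde\eta),0)+1}^{X^{(i)}_{k-h}(\Psi_h(\tilde\eta),n)}\zeta_h(\eta,j)=\sum_{m=1}^{n}N^{(i)}_{k,h}(\eta,m)$. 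The case $\ell=0$ (giving \eqref{eq:orthogonal}) is immediate since $\Psi_0=\mathrm{id}$, and the case $\ell\ge 1$ uses Lemma \ref{lem:shift_NM} as above.

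\textbf{Step 2 (reduction to a scalar identity).} Writing $L_h:=Y^{(i)}_{k-h}\left(\Psi_h(\eta),n\right)=X^{(i)}_{k-h}(\Psi_h(\tilde\eta),n)-X^{(i)}_{k-h}(\Psi_h(\tilde\eta),0)$ (the two forms agreeing by Lemma \ref{lem:palm_psi} and the shift-invariance of $Y$ from Lemma \ref{lem:sp_shift}), the inner sum in \eqref{eq:orthogonal0} satisfies $\sum_j\left(\zeta_h(\eta,j)-\a_h(\q)\right)=S_h-\a_h(\q)L_h$. Thus \eqref{eq:orthogonal0} is \eqref{plan:bare} with the integer weights replaced by $v^{\mathrm{eff}}$-ratios and with each $S_h$ recentered. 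To show the two representations coincide I would substitute the bare formula \eqref{plan:bare} for each $L_h$ (i.e. with $\ell$ replaced by $h$) into the right-hand side of \eqref{eq:orthogonal0} and collect the coefficients of $n-M^{(i)}_k(\eta,n)$ and of each $S_{h'}$. Matching these coefficients against \eqref{plan:bare} reduces the whole claim to the single family of scalar identities
\begin{align}
\phi(\ell,j)=(j-\ell)+2\sum_{h=\ell+1}^{j-1}(j-h)\,\a_h(\q)\,\phi(\ell,h),\qquad \ell<j\le k, \label{plan:dagger}
\end{align}
where $\phi(a,b):=v^{\mathrm{eff}}_{b-a}(\theta^{a}\q)/v^{\mathrm{eff}}_{1}(\theta^{b-1}\q)$; the case $j=k$ produces the coefficient of $n-M^{(i)}_k$ and $j=h'$ the coefficient of $S_{h'}$.

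\textbf{Step 3 (the scalar identity via the velocity recursion).} Applying \eqref{eq:v_eff} with $\q$ replaced by $\theta^{\ell}\q$ and size $j-\ell$, using $\a_{\ell'}(\theta^{\ell}\q)=\a_{\ell'+\ell}(\q)$ (a consequence of \eqref{eq:shift_q}) and dividing by $v^{\mathrm{eff}}_1(\theta^{j-1}\q)$, gives the ``transpose'' recursion $\phi(\ell,j)=(j-\ell)+2\sum_{h=\ell+1}^{j-1}(h-\ell)\,\a_h(\q)\,\phi(h,j)$. Viewing $\phi$, the array $R(\ell,j):=j-\ell$, and the kernels $V(\ell,h):=2(h-\ell)\a_h(\q)$, $\tilde V(h,j):=2(j-h)\a_h(\q)$ as strictly upper-triangular (hence locally nilpotent) arrays, this recursion reads $\Phi=R+V\Phi$, while \eqref{plan:dagger} reads $\Phi=R+\Phi\tilde V$. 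The only nontrivial point is the elementary entrywise commutation $VR=R\tilde V$, both sides equalling $\sum_{\ell<h<j}2(h-\ell)(j-h)\a_h(\q)$; from $\Phi=(I-V)^{-1}R$ and $R\tilde V=VR$ one computes $R+\Phi\tilde V=R+(I-V)^{-1}VR=\Phi$, which is exactly \eqref{plan:dagger}. I expect the main obstacle to be precisely the passage from the bare formula \eqref{plan:bare} to the recentered form, i.e. recognizing that the reweighting is governed by \eqref{plan:dagger} and that this identity is the transpose of the velocity recursion, made rigorous by the trivial commutation $VR=R\tilde V$; the remaining steps are bookkeeping with the skip-map lemmas.
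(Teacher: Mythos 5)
Your proposal is correct and follows essentially the same route as the paper: both derive the bare integer-coefficient representation from Lemmas \ref{lem:shift_NM} and \ref{lem:NM_kl} (the paper's \eqref{eq:lem_rep_W} is your Step 1 formula with the $\zeta$-sums recentered), and both identify the $v^{\mathrm{eff}}$-ratios as the coefficients by matching against the velocity recursion \eqref{eq:v_eff}. The only organizational difference is that the paper solves the linear system forward, so its coefficient recursions \eqref{eq:key_1}--\eqref{eq:key_2} coincide directly with the shifted, normalized velocity recursion \eqref{eq:key_3}, whereas your back-substitution produces the recursion in the other index and you need the extra (correct and elementary) commutation $VR=R\tilde V$ to connect it to the transposed form --- a step the paper's ordering avoids.
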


\begin{proof}[Proof of Lemma \ref{lem:rep_W}]
        First we note that from Lemma \ref{lem:sp_shift}, for any $k \in \N$, $i \in \Z$ and $n \in \Z_{\ge 0}$, we have $Y^{{i}}_{k}(\eta, n) = Y^{{i}}_{k}(\tilde{\eta}, n)$.
        Hence, without loss of generality, we can assume that $s_{\infty}(0) = 0$.

        We fix $\q \in \mathcal{Q}$, {$k \in \N$, $i \in \Z$ and $n \in \Z_{\ge 0}$. Then, there exists a unique $j \in \Z$ such that $M^{i}_{k}\left(\eta,n\right) = M^{(j)}_{k}\left(\eta,n\right)$.}  
        From \eqref{eq:semig_psi}, \eqref{eq:semig_zeta}, \eqref{eq:shift_q}, Lemmas \ref{lem:shift_NM} and \ref{lem:NM_kl}, for any $0 \le \ell < k$, we get 
        \begin{align} 
            &Y^{{i}}_{k- \ell}\left(\Psi_{\ell}\left(\eta\right), n\right) \\ 
            &= \left(k - \ell\right)\left(n - M^{{(j)}}_{k - \ell}\left(\Psi_{\ell}\left(\eta\right),n\right)\right)  + 2 \sum_{h = 1}^{k - \ell - 1} h \sum_{j = X^{{i}}_{k - \ell - h}\left(\Psi_{\ell + h}\left(\eta\right), 0 \right)  + 1}^{X^{{i}}_{k - \ell - h}\left(\Psi_{\ell + h}\left(\eta\right), n \right) } \zeta_{h}\left(\Psi_{\ell}\left(\eta\right),j\right) \\
            &= \left(k - \ell\right)\left(n - M^{{(j)}}_{k}\left({\eta,}n\right)\right) \\
            & \quad + 2 \sum_{h = \ell + 1}^{k - 1} \left(h - \ell \right) \sum_{j = X^{{i}}_{k - h}\left(\Psi_{h}\left(\eta\right), 0 \right)  + 1}^{X^{{i}}_{k - h}\left(\Psi_{h}\left(\eta\right), n \right) } \left( \zeta_{h}\left({\eta,}j\right) - \a_{h}\left(\q\right) \right) \\
            & \quad + 2 \sum_{h = \ell + 1}^{k - 1} \left(h - \ell \right) \a_{h}\left(\q\right)  Y^{{i}}_{k- h}\left(\Psi_{h}\left(\eta\right), n\right) \label{eq:lem_rep_W}.
        \end{align}
    Hence, if we write 
        \begin{align}
            A_{k - \ell, \ell} &:= Y^{{i}}_{k - \ell}\left(\Psi_{\ell}\left(\eta\right), n\right), \\
            B_{\ell} &:= \sum_{j = X^{{i}}_{k - \ell}\left(\Psi_{\ell}\left(\eta\right), 0 \right)  + 1}^{X^{{i}}_{k - \ell}\left(\Psi_{\ell}\left(\eta\right), n \right) } \left( \zeta_{\ell}\left({\eta,}j\right) - \a_{\ell}\left(\q\right) \right), \\
            C &:= n - M^{{(j)}}_{k}\left({\eta,}n\right),
        \end{align}
    then for any $0 \le \ell \le k - 1$ we have the following system. 
        \begin{align}\label{eq:rec_ABC}
            A_{k-\ell, \ell} = (k - \ell) C + 2 \sum_{h = \ell + 1}^{k - 1} (h - \ell) \a_{h}\left(\q\right) A_{k-h, h} + 2 \sum_{h = \ell + 1}^{k - 1} (h - \ell) B_{h}, 
        \end{align}
    with convention $\sum_{h = k}^{k - 1} = 0$. By using \eqref{eq:rec_ABC} recursively starting from $\ell=k-1$ and then $\ell=k-2$, and so on to $\ell=0$, we can represent $A_{k-\ell,\ell}$ as a linear combination of $C$ and $B_{h}$, $\ell + 1 \le h \le k - 1$. Hence, for any $k \in \N$ and $0 \le \ell \le k - 1$, there exist some positive constants $b_{k,\ell,h}\left(\q\right)$, $\ell + 1 \le h \le k - 1$ and $c_{k,\ell}\left(\q\right)$ such that 
        \begin{align}\label{eq:k_l_Y}
            A_{k-\ell, \ell} = c_{k,\ell}\left(\q\right) C + \sum_{h = \ell + 1}^{k - 1} b_{k,\ell,h}\left(\q\right) B_{h},
        \end{align}
    with convention $\sum_{h = k}^{k-1} = 0$. 
    In the rest of the proof, we will show that 
        \begin{align}\label{eq:b_vhl}
            b_{k,\ell,h}\left(\q\right) = 
                \frac{2 v^{\mathrm{eff}}_{h-\ell}\left(\theta^{\ell}\mathbf{q}\right)}{v^{\mathrm{eff}}_{1}\left(\theta^{h-1}\q\right)},
        \end{align}
    for any $k \ge 2$, $0 \le \ell \le h - 1 \le k - 2$,
    and 
        \begin{align}\label{eq:c_vkl}
            c_{k,\ell}\left(\q\right) = \frac{v^{\mathrm{eff}}_{k-\ell}\left(\theta^{\ell}\mathbf{q}\right)}{v^{\mathrm{eff}}_{1}\left(\theta^{\ell-1}\q\right)},
        \end{align}
    for any $k \in \N$ and $0 \le \ell \le k - 1$. By using \eqref{eq:rec_ABC}, we have 
        \begin{align}
            A_{k-\ell, \ell} &= (k - \ell) C + 2 \sum_{h = \ell+1}^{k - 1} \left(h - \ell\right) \a_{h}\left(\q\right) A_{k-h, h} + 2 \sum_{h = \ell+1}^{k - 1} \left(h - \ell\right) B_{h} \\
            &= \left( k - \ell\right)C + 2 \sum_{h = \ell+1}^{k - 1} \left(h - \ell\right) \a_{h}\left(\q\right) \left( c_{k,h}\left(\q\right) C + \sum_{h' = h + 1}^{k - 1} b_{k,h,h'}\left(\q\right) B_{h'} \right) \\
            & \ + 2 \sum_{h = \ell+1}^{k - 1} \left(h - \ell\right) B_{h} \\
            &= \left( k - \ell + 2 \sum_{h = \ell+1}^{k - 1} \left(h - \ell\right) \a_{h}\left(\q\right) c_{k,h} \right) C \\
            & \ + 2 B_{\ell+1} + 2 \sum_{h = \ell + 2}^{k-1} \left( h - \ell  + \sum_{h' = \ell+1}^{h-1} \left(h' - \ell \right) \a_{h'}\left(\q\right) b_{k,h',h} \right) B_{h}.
        \end{align}
    Hence we have 
        \begin{align}
            b_{k,\ell,h}\left(\q\right) = 2\left(h - \ell \right) + 2\sum_{h' = \ell + 1}^{h-1} \left(h' - \ell \right) \a_{h'}\left(\q\right) b_{k,h',h}\left(\q\right) \label{eq:key_1},
        \end{align}
    and 
        \begin{align}
            c_{k,\ell} = k - \ell + 2 \sum_{h = \ell+1}^{k - 1} \left(h - \ell\right) \a_{h}\left(\q\right) c_{k,h}\label{eq:key_2},
        \end{align}
    with convention $\sum_{h'=\ell + 1}^{\ell} = 0$. 
    On the other hand, from \eqref{eq:v_eff} and \eqref{eq:shift_q}, we have 
        \begin{align}
            v^{\mathrm{eff}}_{k-\ell}\left(\theta^{\ell}\mathbf{q}\right) 
            &= \left( k - \ell \right) v^{\mathrm{eff}}_{1}\left(\theta^{k - 1}\q\right) + 2 \sum_{h = 1}^{k - \ell - 1} h \a_{\ell + h}\left( \q \right) v^{\mathrm{eff}}_{k-\ell-h}\left(\theta^{\ell + h}\mathbf{q}\right) \\
            &= \left( k - \ell \right) v^{\mathrm{eff}}_{1}\left(\theta^{k - 1}\q\right) + 2 \sum_{h = \ell + 1}^{k - 1} \left(h - \ell\right) \a_{h}\left( \q \right) v^{\mathrm{eff}}_{k-h}\left(\theta^{h}\mathbf{q}\right)\label{eq:key_3}.
        \end{align}
    By comparing \eqref{eq:key_1}, \eqref{eq:key_3}, we see that for fixed $k \in \N$, both the sequences $2 v^{\mathrm{eff}}_{k-\ell}\left(\theta^{\ell}\mathbf{q}\right) v^{\mathrm{eff}}_{1}\left(\theta^{k - 1}\q\right)^{-1}$ and $b_{h,\ell,k}\left(\q\right) $ satisfy the same inductive system for $0 \le \ell \le k - 1$, and these two sequence have the same value $2$ with $\ell = k - 1$. Hence we have \eqref{eq:b_vhl}. By the same argument for \eqref{eq:key_2} and \eqref{eq:key_3}, we also get \eqref{eq:c_vkl}. 
    Therefore Lemma \ref{lem:rep_W} is proved. 

\end{proof}

    In the rest of this subsection, we note some consequences from Lemma{s \ref{lem:NM_kl} and} \ref{lem:rep_W} and some materials in its proof. 
    Before describing those, we consider the following remark.  
    
    \begin{remark}\label{lem:indep_skip}
        From \eqref{eq:semig_zeta} and the bijectivity of $\zeta$, $\Psi_{k}(\tilde{\eta})$ can be described as a function of $\left(\zeta_{\ell}(i)\right)_{\ell \ge k + 1, i \in \Z}$. In particular, for any $k \in \N$, $\Psi_{k}({\eta})$ and $\left(\zeta_{\ell}(i)\right)_{\ell \le k, i \in \Z}$ are independent under ${\nu}_{\q}$, $\q \in \mathcal{Q}$.
    \end{remark} 

    First we prove the exponential bound for $Y^{(i)}_{k}\left(n\right)$. To describe the result, we prepare some functions. For any $\q \in \mathcal{Q}$, $k \in \N$ and $\lambda \in \R$, we define
        \begin{align}
            u_{\q, k}\left(\lambda\right) &:= \log\left(\E_{\nu_{\mathbf{q}}}\left[ \exp\left( 2 \lambda \zeta_{k}(0) \right) \right] \right) \\
            &= 
            \begin{dcases}
                \infty \ & \ \lambda \ge \frac{\log q_{k}^{-1}}{2}, \\
                \log\left( \frac{1 - q_{k}}{1 - e^{2\lambda} q_{k}} \right) \ & \ \lambda < \frac{\log q_{k}^{-1}}{2}.
            \end{dcases} \label{def:uqk}
        \end{align}
    By using $u_{\q, k}\left(\lambda\right)$, we inductively define $U_{\q, k}\left( \lambda \right)$ as $U_{\q, 1}\left(\lambda \right) := \lambda$, and
        \begin{align}\label{def:U}
            U_{\q, k}\left(\lambda \right) &:= k \lambda +  \sum_{\ell = 1}^{k - 1} \left(k - \ell\right) u_{\q, \ell}\left( U_{\q,\ell}\left(\lambda \right) \right),
        \end{align}
    for any $k \ge 2$. We note that {$\delta_{\q,k} := \sup\left\{ \lambda \in \R \ ; \ U_{\q, k}\left( \lambda \right) < \infty \right\}$ is positive  for any $\q, k$.} In addition, $U_{\q, k}\left( \lambda \right)$ is a smooth monotone convex function on $(-\infty, \delta_{\q,k})$. 
    \begin{lemma}\label{lem:expbound} 
        For any $\q \in \mathcal{Q}$ and $k \in \N$ with $q_k > 0$, {$\lambda < \delta_{\q,k}$,} $i \in \Z$ and $n {\in \Z_{\ge 0}}$, we have 
            \begin{align}\label{eq:expmoment_YM}
                \E_{\nu_{\q}}\left[\exp\left( \lambda Y^{{i}}_{k}\left(n\right)\right) \right] = \E_{\nu_{\q}}\left[\exp\left( U_{\q, k}\left(\lambda \right) \left(n - M^{{i}}_{k}\left(n\right)\right) \right) \right].
            \end{align}
    \end{lemma}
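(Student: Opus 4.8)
The plan is to prove the identity by iterated conditioning, integrating out the families $(\zeta_m)_{m \in \Z}$ one index at a time from the smallest to the largest, and checking that the coefficients produced along the way reproduce exactly the recursion \eqref{def:U} defining $U_{\q,k}$. By Lemma \ref{lem:sp_shift} every quantity appearing depends only on $\tilde\eta$, so I may assume $s_{\infty}(0)=0$, i.e. $\eta=\tilde\eta$.

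First I would record the uncentered decomposition underlying everything. Combining \eqref{eq:pos_n} with \eqref{eq:N_kl} gives
\[
Y^{i}_{k}(n) = k\bigl(n-M^{i}_{k}(n)\bigr) + 2\sum_{m=1}^{k-1} m\, T_m, \qquad T_m := \sum_{j=X^{i}_{k-m}(\Psi_m(\tilde\eta),0)+1}^{X^{i}_{k-m}(\Psi_m(\tilde\eta),n)} \zeta_m(\tilde\eta,j).
\]
Applying the same identity to $\Psi_\ell(\tilde\eta)$ for the $(k-\ell)$-soliton (whose law is the $\q$-statistics with parameter $\theta^{\ell}\q$), and using the semigroup relations \eqref{eq:semig_psi}, \eqref{eq:semig_zeta}, Lemma \ref{lem:shift_NM} and \eqref{eq:lem_shift_NM}, I obtain for $W_\ell := Y^{i}_{k-\ell}(\Psi_\ell(\tilde\eta),n)$ the companion formulas
\[
W_\ell = (k-\ell)\bigl(n-M^{i}_{k}(n)\bigr) + 2\sum_{m=\ell+1}^{k-1}(m-\ell)\, T_m, \qquad W_{k-1} = n-M^{i}_{k}(n),
\]
where the \emph{same} $T_m$ reappear because $\Psi_m(\Psi_\ell(\tilde\eta))=\Psi_{m+\ell}(\tilde\eta)$ and $\zeta_h(\Psi_\ell(\tilde\eta),\cdot)=\zeta_{h+\ell}(\tilde\eta,\cdot)$. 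The crucial structural facts are: $T_m$ is a sum of $W_m\ge 0$ consecutive i.i.d. copies of $\zeta_m(0)$; and, by Remark \ref{lem:indep_skip}, the endpoints $X^{i}_{k-m}(\Psi_m(\tilde\eta),\cdot)$, hence $W_m$, all $T_{m'}$ with $m'>m$, and $M^{i}_{k}(n)$, are measurable with respect to $\mathcal{F}_{>m}:=\sigma((\zeta_{m'})_{m'>m})$, whereas $(\zeta_m(j))_j$ is independent of $\mathcal{F}_{>m}$.

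Then I would compute $\E_{\nu_\q}[e^{\lambda Y^{i}_{k}(n)}]$ by the tower property along the decreasing filtration $\mathcal{F}_{>1}\supset\mathcal{F}_{>2}\supset\cdots$. At stage $p$ only $T_p$ involves $\zeta_p$ among the surviving terms, so by independence and the definition \eqref{def:uqk} of $u_{\q,p}$,
\[
\E_{\nu_\q}\bigl[e^{c\,T_p}\mid \mathcal{F}_{>p}\bigr]=\exp\bigl(u_{\q,p}(c/2)\,W_p\bigr);
\]
substituting the companion formula for $W_p$ re-expresses the exponent as a linear combination of $W_{k-1}$ and the remaining $T_m$, $m>p$. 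Carrying this out inductively, writing $g_0:=\lambda$ and $g_p:=u_{\q,p}\bigl(\sum_{j=0}^{p-1}g_j(p-j)\bigr)$, a direct bookkeeping shows that after integrating out $\zeta_1,\dots,\zeta_p$ the exponent equals $\bigl(\sum_{j=0}^{p}g_j(k-j)\bigr)W_{k-1}+2\sum_{m=p+1}^{k-1}\bigl(\sum_{j=0}^{p}g_j(m-j)\bigr)T_m$. The partial sums $\Phi_p:=\sum_{j=0}^{p-1}g_j(p-j)$ satisfy $\Phi_p=p\lambda+\sum_{j=1}^{p-1}(p-j)u_{\q,j}(\Phi_j)$, which is precisely the recursion \eqref{def:U}, so $\Phi_p=U_{\q,p}(\lambda)$ and $g_p=u_{\q,p}(U_{\q,p}(\lambda))$ for all $p$. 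Taking $p=k-1$ annihilates every $T_m$ and leaves the exponent $\Phi_k\,W_{k-1}=U_{\q,k}(\lambda)\bigl(n-M^{i}_{k}(n)\bigr)$, which yields \eqref{eq:expmoment_YM} after taking expectations.

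The main obstacle is the careful justification of the independence at each stage and the attendant measurability: one must verify via Remark \ref{lem:indep_skip} that the random interval of summation for $T_m$ is a functional of $(\zeta_{m'})_{m'>m}$ only, so that conditionally on $\mathcal{F}_{>m}$ the sum $T_m$ is genuinely a sum of $W_m$ i.i.d. copies of $\zeta_m(0)$ with $W_m$ frozen; the substitution of the companion formula for $W_p$ — needed so that $\zeta_p$ enters the surviving exponent only through $T_p$ — is what makes the iteration close. Finiteness throughout is not an issue for $\lambda<\delta_{\q,k}$: all integrands are nonnegative so Tonelli applies, and $\lambda<\delta_{\q,k}$ guarantees that each nested argument $U_{\q,m}(\lambda)$, $m\le k-1$, lies strictly below the blow-up threshold $\tfrac12\log q_m^{-1}$ of $u_{\q,m}$, keeping every factor finite.
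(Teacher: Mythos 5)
Your proposal is correct and follows essentially the same route as the paper's proof: both start from the uncentered decomposition $Y^{i}_{k-\ell}(\Psi_\ell(\tilde\eta),n)=(k-\ell)(n-M^{i}_{k}(n))+2\sum_{m=\ell+1}^{k-1}(m-\ell)T_m$, integrate out $\zeta_1,\zeta_2,\dots$ successively via Remark \ref{lem:indep_skip} and Fubini, and identify the accumulated coefficients with the recursion \eqref{def:U} defining $U_{\q,k}$. Your explicit bookkeeping with $g_p$ and $\Phi_p$ is just a more systematic record of the substitution the paper performs directly.
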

    \begin{proof}
        {First we observe that for any $k \in \N$, $i \in \Z$ and $n \in \Z_{\ge 0}$, \eqref{eq:orthogonal0} with $\ell = k - 1$ implies
            \begin{align}
                 n -  M^{{i}}_{k}\left(\eta,n\right) = Y^{{i}}_{1}\left(\Psi_{k-1}\left(\eta\right),n\right) \label{eq:expbound_0},
            \end{align}
        and we see that $M^{{i}}_{k}\left(\eta,n\right)$ is a function of $\Psi_{k-1}\left(\eta\right)$.}
        {Thus,} from Remark \ref{lem:indep_skip}, Fubini's theorem and \eqref{eq:shift_q}, we have 
            \begin{align}
                &\E_{\nu_{\q}}\left[\exp\left( \lambda Y^{(i)}_{k}(n) \right) \right] \\
                &= \E_{\nu_{\q}}\Bigg[\exp\left( k \lambda Y^{(i)}_{1}\left(\Psi_{k-1}\left({\eta}\right),n\right) + 2\lambda  \sum_{{h} = 2}^{k-1} {h} \sum_{j = X^{(i)}_{k - {h}}\left(\Psi_{{h}}\left({\eta}\right), 0 \right) + 1}^{X^{(i)}_{k - {h}}\left(\Psi_{{h}}\left({\eta}\right), n \right)} \zeta_{{h}}\left(j\right) \right)  \\
                & \quad  \quad \quad \times \exp\left( u_{\q, 1}\left(\lambda\right) Y^{(i)}_{k - 1}\left(\Psi_{1}\left({\eta}\right), n \right) \right)
                \Bigg] \label{eq:expbound_1}. 
            \end{align}
        By Lemmas \ref{lem:sp_shift},  \ref{lem:palm_psi}, \eqref{eq:lem_shift_NM} and \eqref{eq:N_kl}, {for any $0 \le \ell \le k - 1$,} we have 
            \begin{align} 
                Y^{(i)}_{k - {\ell}}\left({\Psi_{\ell}\left(\eta\right)},n\right) 
                &= {\left(k - \ell \right)} Y^{(i)}_{1}\left(\Psi_{k-1}\left(\tilde{\eta}\right),n\right) \\
                & \quad + 2  \sum_{{h} = {\ell +} 1}^{k-1} {\left(h - \ell\right)} \sum_{j = X^{(i)}_{k - {h}}\left(\Psi_{{h}}\left(\tilde{\eta}\right), 0 \right) + 1}^{X^{(i)}_{k - {h}}\left(\Psi_{{h}}\left(\tilde{\eta}\right), n \right)} \zeta_{{h}}\left(\tilde{\eta},j\right) \label{eq:expbound_2}.
            \end{align}
        By substituting {\eqref{eq:expbound_2} with $\ell = 1$} to \eqref{eq:expbound_1}, we get  
            \begin{align}
                &\E_{\nu_{\q}}\left[\exp\left( \lambda Y^{(i)}_{k}(n) \right) \right] \\
                &= \E_{\nu_{\q}}\Bigg[\exp\left( \left( \lambda k + u_{\q, 1}\left( \lambda\right)\left(k - 1\right) \right) Y^{(i)}_{1}\left(\Psi_{k-1}\left({\eta}\right),n\right) \right)  \\
                & \quad \times \exp\left( 2 \sum_{h = 2}^{k - 1} \left(\lambda h + u_{\q, 1}\left( \lambda\right) \left(h - 1\right) \right) \sum_{j = X_{k - h}\left( \Psi_{h}\left({\eta}\right), 0 \right) + 1}^{X_{k - h}\left( \Psi_{h}\left({\eta}\right), n \right)  } \zeta_{h}\left(j\right)  \right)
                \Bigg]. 
            \end{align}
        By repeating the above computation, we have \eqref{eq:expmoment_YM}. 
        
    \end{proof}
        
    Next, for any $\mathbf{q} \in \mathcal{Q}$, $k \in \N$, $i \in \Z$ and $n \in \Z_{\ge 0}$, we define $\Delta Y^{{i}}_{k, \ell}(\eta, \mathbf{q},n)$, $1 \le \ell \le k - 1$ as 
        \begin{align}\label{def:Delta_Y}
            \Delta Y^{{i}}_{k, \ell}\left(\eta, \mathbf{q}, n\right) 
            &:= \sum_{j = X^{{i}}_{k - \ell}\left(\Psi_{\ell}\left(\tilde{\eta}\right), 0 \right) + 1}^{X^{{i}}_{k - \ell}\left(\Psi_{\ell}\left(\tilde{\eta}\right), n \right) } \left( \zeta_{\ell}\left(\eta,j\right) - \a_{\ell}\left(\mathbf{q}\right) \right).
        \end{align}
    Note that from Lemma \ref{lem:palm_psi}, \eqref{eq:M_kl}, Lemmas \ref{lem:rep_W} and Remark \ref{lem:indep_skip}, $Y^{{i}}_{k}\left(n\right)$ can be represented as
        \begin{align}
            Y^{{i}}_{k}\left(\eta,n\right) - \E_{\nu_{\q}}\left[ Y^{{i}}_{k}\left(n\right) \right] &= -\frac{v^{\mathrm{eff}}_{k}\left(\mathbf{q}\right)}{v^{\mathrm{eff}}_{1}\left(\theta^{k-1}\q\right)} \left(M^{{i}}_{k}\left({\eta,}n\right) - \E_{\nu_{\mathbf{q}}}\left[ M^{{i}}_{k}\left(n\right)  \right]\right) \\
            & \ + 2\sum_{\ell = 1}^{k - 1} \frac{v^{\mathrm{eff}}_{\ell}\left(\mathbf{q}\right)}{v^{\mathrm{eff}}_{1}\left(\theta^{\ell-1}\q\right)} \Delta Y^{{i}}_{k, \ell}\left(\eta, \mathbf{q}, n\right) \label{eq:decom_Y}. 
        \end{align}
    From Lemma \ref{lem:sp_shift} and Remark \ref{lem:indep_skip}, we have the following proposition. 
        \begin{proposition}\label{prop:dec}
            For any $\mathbf{q} \in \mathcal{Q}$, $k \in \N$, $i \in \Z$ and $n \in \Z_{\ge 0}$, we have 
                \begin{align}\label{eq:uncorrelated_1}
                    \E_{\nu_{\mathbf{q}}}\left[  \Delta Y^{{i}}_{k, \ell}\left( \mathbf{q}, n\right) \right] = 0,
                \end{align}
            and
                \begin{align}
                    &\E_{\nu_{\mathbf{q}}}\left[  \Delta Y^{{i}}_{k, \ell}\left( \mathbf{q}, n\right) \Delta Y^{{i}}_{k, \ell'}\left( \mathbf{q}, n\right)  \right] \\ 
                    &= \begin{dcases}
                        \E_{\nu_{\theta^{\ell}\q}}\left[Y^{{i}}_{k-\ell}\left(n\right)\right]  \b_{\ell}\left(\q\right) \ & \ \ell = \ell', \\
                        0 \ & \ \ell \neq \ell'.
                    \end{dcases}\label{eq:uncorrelated_2}
                \end{align}
            In addition, for any $1 \le \ell \le k - 1$, we have 
                \begin{align}\label{eq:uncorrelated}
                    \E_{\nu_{\mathbf{q}}}\left[  \Delta Y^{{i}}_{k, \ell}\left( \mathbf{q}, n\right) M^{{i}}_{k}\left(n\right) \right] = 0.
                \end{align}
            By combining the above, we have 
                \begin{align}
                    &\E_{\nu_{\mathbf{q}}}\left[ \left| Y^{{i}}_{k}\left(n\right) - \E_{\nu_{\mathbf{q}}}\left[ Y^{{i}}_{k}\left( n\right) \right] \right|^2 \right] \\
                    &= \frac{v^{\mathrm{eff}}_{k}\left(\mathbf{q}\right)^2}{v^{\mathrm{eff}}_{1}\left(\theta^{k-1}\q\right)^2}  \E_{\nu_{\mathbf{q}}}\left[ \left| M^{{i}}_{k}\left(n\right) - \E_{\nu_{\mathbf{q}}}\left[ M^{{i}}_{k}\left(n\right)  \right] \right|^2 \right] \\
                    & \ + 4 \sum_{\ell = 1}^{k - 1} \frac{v^{\mathrm{eff}}_{\ell}\left(\mathbf{q}\right)^2 \E_{\nu_{\theta^{\ell}\q}}\left[Y^{{i}}_{k-\ell}\left(n\right)\right] \b_{\ell}\left(\q\right)}{v^{\mathrm{eff}}_{1}\left(\theta^{\ell-1}\q\right)^2}.
                \end{align}
        \end{proposition}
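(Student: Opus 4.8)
The plan is to exploit the independence structure supplied by Remark \ref{lem:indep_skip}: for each $\ell$, the configuration $\Psi_{\ell}(\tilde{\eta})$ is a function of $\left(\zeta_{h}(j)\right)_{h \ge \ell+1,\, j \in \Z}$ only, and is therefore independent under $\nu_{\q}$ of the i.i.d.\ family $\left(\zeta_{\ell}(j)\right)_{j \in \Z}$, whose common mean is $\a_{\ell}(\q)$ and variance $\b_{\ell}(\q)$. Set $\mathcal{G}_{\ell} := \sigma\left( \zeta_{h}(j) ;\, h \ge \ell+1,\, j \in \Z \right)$. In \eqref{def:Delta_Y} the two endpoints $L := X^{{i}}_{k-\ell}(\Psi_{\ell}(\tilde{\eta}),0)$ and $R := X^{{i}}_{k-\ell}(\Psi_{\ell}(\tilde{\eta}),n)$ are $\mathcal{G}_{\ell}$-measurable, while the summands $\zeta_{\ell}(j) - \a_{\ell}(\q)$ are centered and independent of $\mathcal{G}_{\ell}$; moreover $R \ge L$ since soliton positions are nondecreasing in time. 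Conditioning on $\mathcal{G}_{\ell}$ thus presents $\Delta Y^{{i}}_{k,\ell}(\q,n)$ as a sum over a deterministic number of centered i.i.d.\ terms, so $\E_{\nu_{\q}}\!\left[\Delta Y^{{i}}_{k,\ell}(\q,n) \mid \mathcal{G}_{\ell}\right] = 0$, which proves \eqref{eq:uncorrelated_1}.

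The cross correlations follow from the same conditioning. For $\ell \ne \ell'$, say $\ell < \ell'$, the variable $\Delta Y^{{i}}_{k,\ell'}(\q,n)$ is $\mathcal{G}_{\ell}$-measurable, since its summand $\zeta_{\ell'}$ and both of its endpoints depend only on levels $> \ell' > \ell$; hence
\[ \E_{\nu_{\q}}\!\left[ \Delta Y^{{i}}_{k,\ell}\,\Delta Y^{{i}}_{k,\ell'} \right] = \E_{\nu_{\q}}\!\left[ \Delta Y^{{i}}_{k,\ell'}\, \E_{\nu_{\q}}\!\left[\Delta Y^{{i}}_{k,\ell} \mid \mathcal{G}_{\ell}\right] \right] = 0, \]
giving the off-diagonal case of \eqref{eq:uncorrelated_2}. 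For \eqref{eq:uncorrelated}, recall from \eqref{eq:M_kl} that $M^{{i}}_{k}(n)$ is a function of $\Psi_{k}(\tilde{\eta})$, hence by Remark \ref{lem:indep_skip} is $\sigma(\zeta_{h};\, h \ge k+1)$-measurable; since $\ell \le k-1$ this $\sigma$-algebra is contained in $\mathcal{G}_{\ell}$, and the identical conditioning yields $\E_{\nu_{\q}}\!\left[\Delta Y^{{i}}_{k,\ell}M^{{i}}_{k}(n)\right] = 0$.

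For the diagonal term $\ell = \ell'$ in \eqref{eq:uncorrelated_2}, I condition on $\mathcal{G}_{\ell}$ and use that the $\left(\zeta_{\ell}(j)\right)_{j}$ are i.i.d.\ with variance $\b_{\ell}(\q)$ and independent of the endpoints to get $\E_{\nu_{\q}}\!\left[(\Delta Y^{{i}}_{k,\ell})^{2} \mid \mathcal{G}_{\ell}\right] = (R-L)\,\b_{\ell}(\q)$. Since $R - L = Y^{{i}}_{k-\ell}(\Psi_{\ell}(\tilde{\eta}),n)$, taking expectations and applying Lemmas \ref{lem:sp_shift} and \ref{lem:palm_psi} (to replace $\Psi_{\ell}(\tilde{\eta})$ by $\Psi_{\ell}(\eta)$ inside $Y$) together with the shift identity \eqref{eq:shift_q} produces $\E_{\nu_{\q}}\!\left[(\Delta Y^{{i}}_{k,\ell})^{2}\right] = \E_{\nu_{\theta^{\ell}\q}}\!\left[Y^{{i}}_{k-\ell}(n)\right]\b_{\ell}(\q)$. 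The concluding variance formula is then obtained by squaring the decomposition \eqref{eq:decom_Y} and taking expectations: the cross terms $\Delta Y^{{i}}_{k,\ell}\Delta Y^{{i}}_{k,\ell'}$ with $\ell \ne \ell'$ and the terms $\Delta Y^{{i}}_{k,\ell}\left(M^{{i}}_{k}-\E_{\nu_{\q}}[M^{{i}}_{k}]\right)$ vanish by \eqref{eq:uncorrelated_2}, \eqref{eq:uncorrelated} and \eqref{eq:uncorrelated_1}, leaving only the diagonal squares, into which the value of $\E_{\nu_{\q}}[(\Delta Y^{{i}}_{k,\ell})^{2}]$ just computed is substituted.

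The main obstacle is the careful bookkeeping around the \emph{random} summation range: one must verify rigorously that $L$ and $R$ are measurable with respect to the higher-level variables (via Remark \ref{lem:indep_skip}) and independent of the level-$\ell$ increments, so that the Wald-type conditional identities genuinely apply. A secondary technical point is that \eqref{eq:shift_q} is stated only for local functions, whereas $Y^{{i}}_{k-\ell}(\,\cdot\,,n)$ is non-local; this is resolved by approximating $Y^{{i}}_{k-\ell}(\,\cdot\,,n)$ by local functionals (it is determined by finitely many records and solitons on events of probability tending to one) or, equivalently, by observing that \eqref{eq:shift_q} identifies the law of $\Psi_{\ell}(\eta)$ under $\nu_{\q}$ with that of $\eta$ under $\nu_{\theta^{\ell}\q}$ on the measure-determining class of local events, which is all that is needed to transport the expectation.
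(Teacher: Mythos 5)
Your proof is correct and follows essentially the same route as the paper: conditioning on $\sigma\left(\zeta_{h} ; h \ge \ell + 1\right)$, using Remark \ref{lem:indep_skip} to make the endpoints measurable and the level-$\ell$ increments independent and centered, and then squaring the decomposition \eqref{eq:decom_Y}. The only minor imprecision is that $M^{i}_{k}\left(n\right)$ is $\sigma\left(\zeta_{h} ; h \ge k\right)$-measurable (via \eqref{eq:expbound_0}, since $J_{k}$ depends on $\zeta_{k}$) rather than $\sigma\left(\zeta_{h} ; h \ge k+1\right)$-measurable, but since $\ell \le k-1$ this does not affect the argument.
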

    \begin{proof}[Proof of Proposition \ref{prop:dec}]
        Since the case $k = 1$ is trivial, we consider the case $k \ge 2$. 
        We fix $i \in \Z$. 
        Since $X^{{i}}_{k-\ell}\left(\Psi_{\ell}\left(\tilde{\eta}\right),n\right)$ is $\sigma\left( \zeta_{h} \ ; \ h \ge \ell + 1 \right)$-m'ble for any $1 \le \ell \le k - 1$ and $n \in \Z_{\ge 0}$, from Remark \ref{lem:indep_skip} we have 
            \begin{align}
                \E_{\nu_\q}\left[ \Delta Y^{{i}}_{k, \ell}\left(\mathbf{q}, n\right) \Big| \sigma\left( \zeta_{h} \ ; \ h \ge \ell + 1 \right) \right]{\left(\eta\right)} = 0 \quad \nu_{\q}\text{-a.s.}
            \end{align}
        Hence we obtain \eqref{eq:uncorrelated_1} and \eqref{eq:uncorrelated_2}. In addition, { from \eqref{eq:expbound_0},} $M^{{i}}_{k}\left(n\right)$ is $\sigma\left( \zeta_{h} \ ; \ h \ge k \right)$-m'ble for any $n \in \Z_{\ge 0}$. Hence for any $1 \le \ell \le k - 1$ and $n \in \Z_{\ge 0}$ we get
            \begin{align}
                &\E_{\nu_\q}\left[\Delta Y^{{i}}_{k, \ell}\left(\mathbf{q}, n\right) M^{{i}}_{k}\left(n\right) \Big| \sigma\left( \zeta_{h} \ ; \ h \ge \ell + 1 \right) \right]{\left(\eta\right)} \\
                &= M^{{i}}_{k}\left({\eta},n\right) \E_{\nu_\q}\left[\Delta Y^{{i}}_{k, \ell}\left(\mathbf{q}, n\right) \Big| \sigma\left( \zeta_{h} \ ; \ h \ge \ell + 1 \right) \right]{\left(\eta\right)} \\
                &= 0 \quad \nu_{\q}\text{-a.s.}
            \end{align}
        Therefore we have \eqref{eq:uncorrelated}. 
        
    \end{proof}
        \begin{remark} 
            The decomposition \eqref{eq:decom_Y} might be useful to consider the long-time correlations between solitons with different sizes. Actually, from Remark \ref{lem:indep_skip}, for any $\mathbf{q} \in \mathcal{Q}$,  $k, \ell \in \N$, $k < \ell$, $i, j \in \N$, we have 
                \begin{align}
                    &\lim_{n \to \infty} \frac{1}{n} \E_{\nu_{\mathbf{q}}}\left[  \Delta Y^{{i}}_{k, h}\left( \mathbf{q}, n\right) \Delta Y^{{j}}_{\ell, h'}\left( \mathbf{q}, n\right)  \right]
                     \\
                    &= 
                    \begin{dcases}
                        v_{k - h - 1}\left(\theta^{h+1}\q\right) v^{\mathrm{eff}}_{1}\left(\theta^{k-1}\q\right) \b_{\ell+1}\left(\q\right) \ & \ h = h', \\
                        0 \ & \  h \neq h'. 
                    \end{dcases}
                \end{align}
            In addition, for any $0 \le h \le k - 2$ and $0 \le h' \le \ell - 2$, we have 
                \begin{align}
                    &\lim_{n \to \infty} \frac{1}{n} \E_{\nu_{\mathbf{q}}}\left[  \Delta Y^{{i}}_{k, h}\left( \mathbf{q}, n\right) Y^{{j}}_{1}\left(\Psi_{\ell - 1}\left(\eta\right), n\right) \right] \\
                    &= \lim_{n \to \infty} \frac{1}{n} \E_{\nu_{\mathbf{q}}}\left[ Y^{{i}}_{1}\left(\Psi_{k - 1}\left(\eta\right), n\right) \Delta Y^{{j}}_{\ell, h'}\left( \mathbf{q}, n\right)  \right] = 0.
                \end{align}
            {Hence, if the covariance of   $Y^{{i}}_{1}\left(\Psi_{k - 1}\left(\eta\right), n\right)$ and $Y^{{j}}_{1}\left(\Psi_{\ell - 1}\left(\eta\right), n\right)$ can be computed explicitly, then one can obtain the explicit correlation between the $i$-th $k$-soliton and $j$-th $\ell$-soliton,} but it does not seem to be easy to compute. 
        \end{remark}

{\section{Proof of Theorem \ref{thm:lln_lp}} \label{sec:lln_lp}

    Since $Y^{{i}}_{k}(n) / n$ converges to $v^{\mathrm{eff}}_{k}(\q)$ a.s., to show the $\mathbb{L}^p$ convergence, it is sufficient to prove that $\left( \left| Y^{{i}}_{k}(n) / n \right|^{p} \right)_{n \in \N}$ is uniformly integrable, i.e., 
        \begin{align}
            \lim_{L \to \infty} \sup_{n \in \N} \nu_{\q}\left(  \frac{Y^{{i}}_{k}(n)}{n}  > L \right) = 0, 
        \end{align}
    and
        \begin{align}
            \lim_{L \to \infty}\sup_{n \in \N}\E_{\nu_{\q}}\left[ \left| \frac{Y^{{i}}_{k}(n)}{n} \right|^{p} \mathbf{1}_{\left\{ Y^{{i}}_{k}(n) / n \ge L \right\}} \right] = 0.
        \end{align}
    We recall that $U_{\q, k}\left(\lambda\right)$ is defined in \eqref{def:U} and is smooth on $(-\infty, \delta_{\q,k})$. 
    Thanks to \eqref{eq:expmoment_YM}, we get 
        \begin{align}
            \E_{\nu_{\q}}\left[\exp\left( \frac{\lambda Y^{{i}}_{k}\left(n\right)}{n} \right) \right] 
            &= \E_{\nu_{\q}}\left[\exp\left( nU_{\q, k}\left(\frac{\lambda}{n} \right) \frac{\left(n - M^{{i}}_{k}\left(n\right)\right)}{n} \right) \right] \\
            &\le \exp\left( nU_{\q, k}\left(\frac{\lambda}{n} \right) \right),
        \end{align}
    where we use the fact $0 \le M^{{i}}_{k}\left(n\right) \le n$. 
    By the Chebyshev inequality we have 
        \begin{align}
            &\sup_{n \in \N}\nu_{\q}\left(  \frac{Y^{{i}}_{k}(n)}{n} > L \right) 
            \le e^{-\lambda L}\sup_{n \in \N} \E_{\nu_{\q}}\left[\exp\left( \frac{\lambda Y^{{i}}_{k}\left(n\right)}{n} \right) \right] \\
            &\le e^{-\lambda L} \sup_{n \in \N}\exp\left( n U_{\q, k}\left(\frac{\lambda}{n} \right) \right)  \to 0 \quad \text{as} \ L \to \infty,
        \end{align}
        because of the smoothness of $U_{\q, k}$.
        
    Moreover, from an elementary inequality $x^{p} \le \left( \lfloor p \rfloor  + 1\right)!  e^{x}$, $x \ge 0$, and the Schwarz inequality, by choosing $0 < \lambda < \delta_{\q,k} / 2$, we obtain 
        \begin{align}
            &\sup_{n \in \N}\E_{\nu_{\q}}\left[ \left| \frac{Y^{{i}}_{k}(n)}{n} \right|^{p} \mathbf{1}_{\left\{ Y^{{i}}_{k}(n) / n \ge L \right\}} \right] \\
            &\le \frac{\left( \lfloor p \rfloor  + 1\right)!}{\lambda^{p}} \sup_{n \in \N} \E_{\nu_{\q}}\left[ \exp\left( \frac{\lambda Y^{{i}}_{k}\left(n\right)}{n} \right)  \mathbf{1}_{\left\{ Y^{{i}}_{k}(n) / n \ge L \right\}} \right] \\
            &\le \frac{\left( \lfloor p \rfloor  + 1\right)!}{\lambda^{p}} \sup_{n \in \N} \E_{\nu_{\q}}\left[ \exp\left( \frac{2\lambda Y^{{i}}_{k}\left(n\right)}{n} \right) \right]^{\frac{1}{2}}  \nu_{\q}\left(  \frac{Y^{{i}}_{k}(n)}{n} > L \right)^{\frac{1}{2}} \\
            &\le \frac{\left( \lfloor p \rfloor  + 1\right)!}{\lambda^{p}}  \left(\sup_{n \in \N}\exp\left( n U_{\q, k}\left(\frac{2\lambda}{n} \right) \right)^{\frac{1}{2}} \right) \left( \sup_{n \in \N}\nu_{\q}\left(  \frac{Y^{{i}}_{k}(n)}{n} > L \right)^{\frac{1}{2}} \right) \\
            & \to 0 \quad \text{as } L \to \infty. 
        \end{align}
    Therefore Theorem \ref{thm:lln_lp} is proved.}

\section{Proof of Theorem \ref{thm:main}}\label{sec:main_proof}

\subsection{Proof of (\ref{item:1})}

    First we prepare the following simple lemmas. 
        \begin{lemma}\label{lem:clt_1}
            Let $\zeta(i)$, $i \in \N$ be i.i.d. random variables define on any probability space with $\E[\zeta(0)] = 0$ and $\E[\zeta(0)^2] = 1$, and define $S(n) := \sum_{i = 1}^{n} \zeta(i)$, $n \in \N$. Assume that $a_{\e}\left(t\right), b_{\e}\left(t\right)$ are non-deacresing function on $[0,\infty)$ such that {for any ${\mathbf{T}} > 0$,}
                \begin{align}
                    \lim_{\e \to 0} \sup_{0 \le t \le {\mathbf{T}}}\left|a_{\e}\left(t\right) -  b_{\e}\left(t\right)\right| = 0, \quad \lim_{\e \to 0} \sup_{0 \le t \le {\mathbf{T}}}\left|a_{\e}\left(t\right) -  a t \right| = 0,
                \end{align}
            with some constant $a > 0$. Then, for any ${\mathbf{T}} > 0$ and $\delta > 0$, we have 
                \begin{align}
                    \lim_{\e \to 0} \mathbb{P}\left( \sup_{0 \le t \le {\mathbf{T}}} \e \left| S\left( \left\lfloor \frac{a_{\e}\left(t\right)}{\e^2} \right\rfloor \right) - S\left( \left\lfloor \frac{b_{\e}\left(t\right)}{\e^2} \right\rfloor \right) \right| > \delta \right) = 0.
                \end{align}
        \end{lemma}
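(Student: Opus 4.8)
The plan is to reduce the statement to the asymptotic equicontinuity of the Donsker-rescaled random walk, so that the uniform closeness of the two time changes $a_{\e}$ and $b_{\e}$ transfers to uniform closeness of the two time-changed processes. First I would introduce the rescaled partial-sum process $W_{\e}(s) := \e S(\lfloor s\e^{-2} \rfloor)$ for $s \ge 0$, a c\`adl\`ag process. Since $\E[\zeta(0)] = 0$ and $\E[\zeta(0)^2] = 1$, Donsker's invariance principle applies: $W_{\e}$ converges weakly in $D[0,L]$ to a standard Brownian motion for every $L > 0$; in particular $(W_{\e})_{\e}$ is tight, and its jumps vanish in the sense that $\e \max_{i \le L\e^{-2}} |\zeta(i)| \to 0$ in probability, so the c\`adl\`ag modulus and the ordinary uniform modulus agree in the limit. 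With this notation, the quantity to be controlled is exactly $\sup_{0 \le t \le \mathbf{T}} |W_{\e}(a_{\e}(t)) - W_{\e}(b_{\e}(t))|$.

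Next I would fix $L := a\mathbf{T} + 1$ and set $\rho_{\e} := \sup_{0 \le t \le \mathbf{T}} |a_{\e}(t) - b_{\e}(t)|$, which is deterministic and tends to $0$ by hypothesis. Because $a_{\e}$ and $b_{\e}$ are non-decreasing, $\sup_{t \le \mathbf{T}} a_{\e}(t) = a_{\e}(\mathbf{T}) \to a\mathbf{T}$ and $\sup_{t \le \mathbf{T}} b_{\e}(t) = b_{\e}(\mathbf{T}) \le a_{\e}(\mathbf{T}) + \rho_{\e} \to a\mathbf{T}$, so for all small $\e$ both $a_{\e}(t)$ and $b_{\e}(t)$ lie in $[0,L]$ for every $t \in [0,\mathbf{T}]$. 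Writing the modulus of continuity $w_{\e}(\rho) := \sup_{s,s' \in [0,L],\, |s-s'| \le \rho} |W_{\e}(s) - W_{\e}(s')|$, the bound $|a_{\e}(t) - b_{\e}(t)| \le \rho_{\e}$ gives $\sup_{0 \le t \le \mathbf{T}} |W_{\e}(a_{\e}(t)) - W_{\e}(b_{\e}(t))| \le w_{\e}(\rho_{\e})$, so it suffices to prove $w_{\e}(\rho_{\e}) \to 0$ in probability. This follows from tightness: for each $\eta > 0$ one has $\lim_{\rho \to 0} \limsup_{\e \to 0} \mathbb{P}(w_{\e}(\rho) > \eta) = 0$, and since $\rho \mapsto w_{\e}(\rho)$ is non-decreasing and $\rho_{\e} \to 0$, choosing $\rho_0$ small and then $\e$ small enough that $\rho_{\e} \le \rho_0$ yields $\limsup_{\e} \mathbb{P}(w_{\e}(\rho_{\e}) > \eta) \le \limsup_{\e} \mathbb{P}(w_{\e}(\rho_0) > \eta)$, which can be made arbitrarily small.

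The main obstacle is precisely the step $w_{\e}(\rho_{\e}) \to 0$. A naive approach that partitions $[0,L]$ into $O(\rho_{\e}^{-1})$ windows of width $\rho_{\e}$, controls the oscillation of $S$ on each by Doob's or Kolmogorov's maximal inequality (second moment of order $\rho_{\e}\e^{-2}$), and then takes a union bound only produces a bound of order $L/\delta^2$, which does not vanish. One genuinely needs the asymptotic equicontinuity furnished by the tightness part of Donsker's theorem, equivalently a dyadic chaining in place of the flat union bound. The remaining ingredients---the embedding of $a_{\e}(t), b_{\e}(t)$ into the fixed interval $[0,L]$ and the reduction to the modulus of continuity---are elementary consequences of the monotonicity of $a_{\e}, b_{\e}$ and the two uniform-convergence hypotheses.
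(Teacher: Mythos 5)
Your proof is correct, but it takes a genuinely different route from the paper's. The paper applies the Skorokhod embedding theorem to write $S(n)=B(\tau(n))$ for a Brownian motion $B$ and stopping times with i.i.d.\ mean-one increments, proves (via the strong law at grid points plus monotonicity) that $\e^2\tau\left(\left\lfloor a_{\e}(t)\e^{-2}\right\rfloor\right)\to at$ uniformly on $[0,\mathbf{T}]$ almost surely, and then concludes from the uniform continuity of the single limiting Brownian path on a compact interval. You instead keep the random walk, introduce $W_{\e}(s)=\e S(\lfloor s\e^{-2}\rfloor)$, and reduce the claim to $w_{\e}(\rho_{\e})\to 0$ in probability for the deterministic gap $\rho_{\e}=\sup_{t\le\mathbf{T}}|a_{\e}(t)-b_{\e}(t)|$, which you obtain from the C-tightness of the Donsker-rescaled walk (tightness in $D$ plus the vanishing of the maximal jump $\e\max_{i\le L\e^{-2}}|\zeta(i)|$, which indeed follows from the finite second moment). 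The trade-off is clear: the paper gets the modulus-of-continuity estimate for free from path continuity of $B$ but must pay for it by controlling a \emph{random} time change uniformly, whereas you invoke the nontrivial (but entirely classical) tightness half of Donsker's theorem and in exchange only ever evaluate $W_{\e}$ at \emph{deterministic} arguments, so no further probabilistic work is needed; your argument also uses the hypothesis $a_{\e}(t)\to at$ only to confine $a_{\e},b_{\e}$ to a fixed compact interval, and is in that sense marginally more general. You are also right that a flat union bound over windows of width $\rho_{\e}$ with Kolmogorov's maximal inequality is insufficient here and that the genuine equicontinuity input cannot be avoided. Both proofs are complete and correct.
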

        \begin{proof}
            Let $B(t), t \ge 0$ be a standard Brownian motion defined on some probability space. 
            Thanks to the Skorokhod embedding theorem (cf. \cite[Theorem 37.7]{B}), there exists a sequence of stopping times $\tau\left(n\right)$, $n \in \Z_{\ge 0}$, $\tau_{0} := 0$ such that $\tau\left(n\right) - \tau\left(n-1\right)$, $n \in \N$ are i.i.d. and 
                \begin{align}
                    \left(B\left( \tau\left(n\right) \right), n \in \N \right) \overset{d}{=} \left(S\left(n\right), n \in \N \right).
                \end{align}
            Hence, we have 
                \begin{align}
                    &\mathbb{P}\left( \sup_{0 \le t \le T} \e \left| S\left( \left\lfloor \frac{a_{\e}\left(t\right)}{\e^2} \right\rfloor \right) - S\left( \left\lfloor \frac{b_{\e}\left(t\right)}{\e^2} \right\rfloor \right) \right| > \delta \right) \\
                    &= \mathbb{P}\left( \sup_{0 \le t \le T} \e \left| B\left( \tau\left( \left\lfloor \frac{a_{\e}\left(t\right)}{\e^2} \right\rfloor \right) \right) - B\left( \tau\left( \left\lfloor \frac{b_{\e}\left(t\right)}{\e^2} \right\rfloor \right) \right) \right| > \delta \right) \\
                    &= \mathbb{P}\left( \sup_{0 \le t \le T} \left| B\left( \e^2 \tau\left( \left\lfloor \frac{a_{\e}\left(t\right)}{\e^2} \right\rfloor \right) \right) - B\left( \e^2 \tau\left( \left\lfloor \frac{b_{\e}\left(t\right)}{\e^2} \right\rfloor \right) \right) \right| > \delta \right).
                \end{align}
            Now we claim that 
                \begin{align}\label{lim:tau_a_b}
                    \varlimsup_{\e \to 0} \sup_{0 \le t \le T} \left|\e^2 \tau\left( \left\lfloor \frac{a_{\e}\left(t\right)}{\e^2} \right\rfloor \right) - \e^2 \tau\left( \left\lfloor \frac{b_{\e}\left(t\right)}{\e^2} \right\rfloor \right) \right| = 0 \quad \text{a.s.} 
                \end{align}
            Actually, for any $t \ge 0$, we have 
                \begin{align}
                    \e^2 \tau\left( \left\lfloor \frac{a_{\e}\left(t\right)}{\e^2} \right\rfloor \right) 
                    &= \e^2 \sum_{n = 1}^{\left\lfloor a_{\e}\left(t\right) \e^{-2} \right\rfloor} \left(\tau\left(n\right) - \tau\left(n-1\right)\right)
                    \to a t \quad \text{a.s.} , 
                \end{align}
            where we use $\E[\tau(1)] = 1$. Then, for any $n \in \N$, we have 
                \begin{align}\label{eq:clt_1}
                    \varlimsup_{\e \to 0} \max_{0 \le m \le n} \left|\e^2 \tau\left( \left\lfloor \frac{a_{\e}\left(\frac{mT}{n}\right)}{\e^2} \right\rfloor \right) - \frac{amT}{n} \right| = 0 \quad \text{a.s}.
                \end{align}
            On the other hand, by the monotonicity of $\tau, a_{\e}$, we have 
                \begin{align}
                    &\sup_{t \in \left[\frac{mT}{n}, \frac{(m+1)T}{n} \right]} \left|\e^2 \tau\left( \left\lfloor \frac{a_{\e}\left(t\right)}{\e^2} \right\rfloor \right) - at \right| \\
                    &\le \left|\e^2 \tau\left( \left\lfloor \frac{a_{\e}\left(\frac{(m+1)T}{n}\right)}{\e^2} \right\rfloor \right) - \frac{amT}{n} \right| + \left|\e^2 \tau\left( \left\lfloor \frac{a_{\e}\left(\frac{mT}{n}\right)}{\e^2} \right\rfloor \right) - \frac{a(m+1)T}{n} \right| \\
                    &\le \left|\e^2 \tau\left( \left\lfloor \frac{a_{\e}\left(\frac{(m+1)T}{n}\right)}{\e^2} \right\rfloor \right) - \frac{a(m+1)T}{n} \right| + \left|\e^2 \tau\left( \left\lfloor \frac{a_{\e}\left(\frac{mT}{n}\right)}{\e^2} \right\rfloor \right) - \frac{amT}{n} \right| + \frac{2aT}{n} \\
                    &\le 2\max_{0 \le m \le n} \left|\e^2 \tau\left( \left\lfloor \frac{a_{\e}\left(\frac{mT}{n}\right)}{\e^2} \right\rfloor \right) - \frac{amT}{n} \right| + \frac{2aT}{n}.
                \end{align}
            From \eqref{eq:clt_1}, we see that 
                \begin{align}
                    \varlimsup_{\e \to 0} \sup_{0 \le t \le T} \left|\e^2 \tau\left( \left\lfloor \frac{a_{\e}\left(t\right)}{\e^2} \right\rfloor \right) - at \right| &\le \varlimsup_{\e \to 0} \max_{0 \le m \le n} \sup_{t \in \left[\frac{mT}{n}, \frac{(m+1)T}{n} \right]} \left|\e^2 \tau\left( \left\lfloor \frac{a_{\e}\left(t\right)}{\e^2} \right\rfloor \right) - at \right| \\
                    &\le \frac{2aT}{n} \quad \text{a.s.},
                \end{align}
            for any $n \in \N$. Hence we get 
                \begin{align}
                    \varlimsup_{\e \to 0} \sup_{0 \le t \le T} \left|\e^2 \tau\left( \left\lfloor \frac{a_{\e}\left(t\right)}{\e^2} \right\rfloor \right) - at \right| = 0 \quad \text{a.s.,} 
                \end{align}
            and thus from the assumption of this lemma we obtain \eqref{lim:tau_a_b}. From \eqref{lim:tau_a_b}, for any $\delta' > 0$ we have 
                \begin{align}
                    &\varlimsup_{\e \to 0}\mathbb{P}\left( \sup_{0 \le t \le T} \left| B\left( \e^2 \tau\left( \left\lfloor \frac{a_{\e}\left(t\right)}{\e^2} \right\rfloor \right) \right) - B\left( \e^2 \tau\left( \left\lfloor \frac{b_{\e}\left(t\right)}{\e^2} \right\rfloor \right) \right) \right| > \delta \right) \\
                    &\le \mathbb{P}\left( \sup_{\substack{0 \le t, s \le a(T + \delta'), \\  |t - s| \le \delta'}} \left| B\left( t \right) - B\left(s\right) \right| > \delta \right) \\ 
                    & \ + \varlimsup_{\e \to 0}\mathbb{P}\left( \sup_{0 \le t \le T} \left|\e^2 \tau\left( \left\lfloor \frac{a_{\e}\left(t\right)}{\e^2} \right\rfloor \right) - \e^2 \tau\left( \left\lfloor \frac{b_{\e}\left(t\right)}{\e^2} \right\rfloor \right) \right| > \delta' \right) \\
                    & \ + 2\varlimsup_{\e \to 0}\mathbb{P}\left( \sup_{0 \le t \le T} \left|\e^2 \tau\left( \left\lfloor \frac{a_{\e}\left(t\right)}{\e^2} \right\rfloor \right) - at \right| > \delta' \right) \\
                    &= \mathbb{P}\left( \sup_{\substack{0 \le t, s \le a(T + \delta'), \\  |t - s| \le \delta''}} \left| B\left( t \right) - B\left(s\right) \right| > \delta \right) \to 0 \quad \text{ as } \delta' \to 0. 
                \end{align}
            From the above, Lemma \ref{lem:clt_1} is proved. 
            
        \end{proof}
    Recall that $\Delta Y^{{i}}_{k, \ell}\left(\eta, \mathbf{q}, n \right)$ is defined in \eqref{def:Delta_Y}. 
        \begin{lemma}\label{lem:ran_det}
            For any $\q \in \mathcal{Q}$, $k \in \N$, $1 \le \ell \le k - 1$, $i \in \Z$, $\mathbf{T} > 0$ and $\delta > 0$, we have
                \begin{align}
                    \varlimsup_{{n \to \infty}}\nu_{\q}\left( \sup_{0 \le t \le \mathbf{T}} \frac{1}{n} \left| \Delta Y^{{i}}_{k, \ell}\left(\mathbf{q}, \lfloor n^2 t \rfloor\right) - \sum_{j = 1}^{ \lfloor v_{k-\ell}\left(\theta^{\ell}\q\right)  n^2 t \rfloor} \left( \zeta_{\ell}\left(j\right) - \a_{\ell}\left(\mathbf{q}\right) \right) \right| > \delta \right) = 0.
                \end{align}
        \end{lemma}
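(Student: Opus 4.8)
The plan is to read $\Delta Y^{i}_{k,\ell}$ off \eqref{def:Delta_Y} as an increment of a random walk in $\zeta_\ell$ between two endpoints that are measurable with respect to the \emph{larger} solitons, and then to transport the discrepancy between the true upper endpoint and the deterministic one $\lfloor v^{\mathrm{eff}}_{k-\ell}(\theta^\ell\q)n^2t\rfloor$ into the hypotheses of Lemma \ref{lem:clt_1}. We may assume $q_k>0$, since otherwise $\zeta_{k-\ell}(\Psi_\ell(\eta),\cdot)=\zeta_k(\eta,\cdot)=0$ $\nu_\q$-a.s. by \eqref{eq:semig_zeta} and both quantities inside the supremum vanish. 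Set $\hat\zeta(j):=\zeta_\ell(\eta,j)-\a_\ell(\q)$ and, for $m\in\Z$, let $S(m):=\sum_{j=1}^{m}\hat\zeta(j)$ with the usual convention $S(m):=-\sum_{j=m+1}^{0}\hat\zeta(j)$ for $m<0$, so that $\sum_{j=A+1}^{B}\hat\zeta(j)=S(B)-S(A)$. Writing $A:=X^{i}_{k-\ell}(\Psi_\ell(\tilde\eta),0)$, $B_m:=X^{i}_{k-\ell}(\Psi_\ell(\tilde\eta),m)$ and $v:=v^{\mathrm{eff}}_{k-\ell}(\theta^\ell\q)$, \eqref{def:Delta_Y} gives $\Delta Y^{i}_{k,\ell}(\q,m)=S(B_m)-S(A)$, and hence
\[
\Delta Y^{i}_{k,\ell}(\q,\lfloor n^2t\rfloor)-\sum_{j=1}^{\lfloor v n^2t\rfloor}\hat\zeta(j)
=\big(S(B_{\lfloor n^2t\rfloor})-S(\lfloor v n^2t\rfloor)\big)-S(A).
\]
Because $A$ is independent of $n$ and a.s. finite, $S(A)$ is an a.s. finite random variable not depending on $n$ or $t$, so $n^{-1}|S(A)|\to0$ $\nu_\q$-a.s.; it remains to bound the first bracket uniformly in $t$.

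I would next isolate the two structural inputs. First, by Remark \ref{lem:indep_skip} the family $(\hat\zeta(j))_{j\in\Z}$ is i.i.d. with mean $0$ and variance $\b_\ell(\q)$ and is independent of $\mathcal{G}:=\sigma(\zeta_h;\,h\ge\ell+1)$, while $A$ and $(B_m)_{m\ge0}$ are $\mathcal{G}$-measurable functionals of $\Psi_\ell(\tilde\eta)$; moreover $m\mapsto B_m$ is non-decreasing since soliton positions never decrease. Second, by \eqref{eq:shift_q} together with Lemmas \ref{lem:sp_shift} and \ref{lem:palm_psi}, the law of $Y^{i}_{k-\ell}(\Psi_\ell(\eta),\cdot)=B_\cdot-A$ under $\nu_\q$ equals the law of $Y^{i}_{k-\ell}(\eta,\cdot)$ under $\nu_{\theta^\ell\q}$; since $(\theta^\ell\q)_{k-\ell}=q_k>0$, the law of large numbers \eqref{eq:LLN_q} yields $B_m/m\to v>0$ $\nu_\q$-a.s. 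Combining this with the monotonicity of $m\mapsto B_m$ and the continuity of the limit $t\mapsto vt$ (a Dini-type argument for monotone functions) upgrades the pointwise limit to
\[
\lim_{n\to\infty}\ \sup_{0\le t\le\mathbf{T}}\Big|\tfrac{1}{n^2}B_{\lfloor n^2t\rfloor}-vt\Big|=0\qquad\nu_\q\text{-a.s.}
\]

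Finally I would condition on $\mathcal{G}$ and invoke Lemma \ref{lem:clt_1}. With $\e=1/n$, put $a_\e(t):=\e^2B_{\lfloor t\e^{-2}\rfloor}$ and $b_\e(t):=vt$: both are non-decreasing, and by the displayed uniform convergence the hypotheses $\sup_{[0,\mathbf{T}]}|a_\e-b_\e|\to0$ and $\sup_{[0,\mathbf{T}]}|a_\e-vt|\to0$ hold for $\mathcal{G}$-a.e. realization, with $a:=v>0$. Since $(\hat\zeta(j))_{j}$ is i.i.d. (after dividing by $\sqrt{\b_\ell(\q)}$, of unit variance) and independent of $\mathcal{G}$, the conditional law of $(\hat\zeta(j))_j$ given $\mathcal{G}$ is the unconditional one; freezing $a_\e,b_\e$ at their (now deterministic) $\mathcal{G}$-values and applying Lemma \ref{lem:clt_1} under $\nu_\q(\,\cdot\mid\mathcal{G})$ gives, for $\mathcal{G}$-a.e. realization,
\[
\lim_{n\to\infty}\nu_{\q}\Big(\sup_{0\le t\le\mathbf{T}}\tfrac{1}{n}\big|S(B_{\lfloor n^2t\rfloor})-S(\lfloor v n^2t\rfloor)\big|>\delta\ \Big|\ \mathcal{G}\Big)=0.
\]
Taking expectations and using reverse Fatou (the conditional probabilities are bounded by $1$) makes the $\varlimsup$ of the unconditional probability of the first bracket vanish; together with $n^{-1}|S(A)|\to0$ this yields the claim.

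The step I expect to be the main obstacle is the legitimacy of freezing the random time change: the walk $S$ is driven along the random, $\mathcal{G}$-measurable clock $B_\cdot$, and the argument works only because Remark \ref{lem:indep_skip} makes $(\zeta_\ell(j))_j$ independent of that clock. Turning this into a deterministic input for Lemma \ref{lem:clt_1} requires the conditioning above and, crucially, the upgrade of the pointwise LLN for $B_m$ to a.s. \emph{uniform} convergence of $a_\e$, for which the monotonicity of soliton positions is indispensable.
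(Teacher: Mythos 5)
Your proof is correct and follows essentially the same route as the paper's (very terse) argument: upgrade the LLN for the soliton position to a uniform-in-$t$ limit via monotonicity and the Dini-type argument from Lemma \ref{lem:clt_1}, then use the independence from Remark \ref{lem:indep_skip} together with \eqref{eq:shift_q} to apply Lemma \ref{lem:clt_1} to the random time change. You supply details the paper leaves implicit (the conditioning on $\sigma(\zeta_h;\,h\ge\ell+1)$ to freeze the clock, and the disposal of the $n$-independent boundary term $S(A)$), but the underlying mechanism is identical.
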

        \begin{proof}
            
            First we observe that since $n \mapsto X^{{i}}_{k}(n)$ is increasing in $n$, by using \eqref{eq:LLN_q} and the same argument used to derive \eqref{lim:tau_a_b} in Lemma \ref{lem:clt_1}, for any $\q \in \mathcal{Q}$, $k \in \N$ $i \in \N$ and $0 \le t \le \mathbf{T}$,  we have 
                \begin{align}\label{lim:lln_uni}
                    \lim_{\e \to 0} \sup_{0 \le t \le \mathbf{T}} \left| \frac{1}{n} X^{{i}}_{k}\left({\eta,}\left\lfloor nt \right\rfloor\right) - v_{k}\left(\q\right)t \right| = 0 \quad \nu_{\q}\text{-a.s.}
                \end{align}
            Hence, from \eqref{eq:shift_q} Remark \ref{lem:indep_skip}, Lemma \ref{lem:clt_1} and \eqref{lim:lln_uni}, the assertion of this lemma is proved.
        \end{proof}

    Thanks to Remark \ref{lem:indep_skip}, we see that the following  stochastic processes,
        \begin{align}\label{map:thm_CLT_1}
            t \mapsto \frac{1}{n} \sum_{j = 1}^{ \lfloor v^{\mathrm{eff}}_{k-\ell}\left(\theta^{\ell}\q\right) n^2 t   \rfloor} \left( \zeta_{\ell}\left({\eta,}j\right) - \a_{\ell}\left(\mathbf{q}\right) \right),
        \end{align}
    for $1 \le \ell \le k - 1$, and 
        \begin{align}\label{map:thm_CLT_2}
            t \mapsto  \frac{1}{n} \left( M^{{i}}_{k}\left(\eta, \left\lfloor n^2 t \right\rfloor \right) - \E_{\nu_{\mathbf{q}}}\left[ M^{{i}}_{k}\left(\left\lfloor n^2 t \right\rfloor \right)  \right] \right),
        \end{align}
    are independent under $\nu_{\q}$. 
    By following the standard way one can show that \eqref{map:thm_CLT_1} converges weakly to the Brownian motion in $D[0,T]$ with mean $0$ and variance $v^{\mathrm{eff}}_{k-\ell}(\theta^{\ell}\mathbf{q})\b_{\ell}(\mathbf{q})$. 
    {Now we show that for any $i \in \Z$ and $\delta > 0 $,  
        \begin{align}\label{lim:clt_1}
            \lim_{n \to \infty} \nu_{\q}\left( \frac{1}{n}\sup_{0 \le t \le \mathbf{T}} \left| M^{i}_{k}\left( \left\lfloor n^2 t \right\rfloor \right) - M^{\left(0\right)}_{k}\left( \left\lfloor n^2 t \right\rfloor \right) \right| > \delta \right) = 0.
        \end{align}
    We observe that from \eqref{ineq:overtaken}, the difference between $M^{\left(i\right)}_{k}\left(\eta,n\right)$ and $M^{\left(j\right)}_{k}\left(\eta,n\right)$ can be estimated via the number of solitons that overtake only one of them, i.e., for any $\eta \in \Omega$, $i , j \in \Z$ with $i < j$ and $n \in \N$, we have 
        \begin{align}
            &\left| M^{\left(i\right)}_{k}\left(\eta,n\right) - M^{\left(j\right)}_{k}\left(\eta,n\right) \right| \\
            &\le 2 \left| \left\{ \gamma \in \cup_{\ell \ge k+1} \Gamma_{\ell}\left(\eta\right) \ ; \  X^{\left(i\right)}_{k}\left(\eta, 0\right) < X\left(\gamma\right) < X^{\left(j\right)}_{k}\left(\eta, 0\right) \right\} \right| \\
            & \ + 2 \left| \left\{ \gamma \in \cup_{\ell \ge k+1} \Gamma_{\ell}\left(\eta\right) \ ; \  X^{\left(i\right)}_{k}\left(\eta, n\right) < X\left(\gamma\left(n\right)\right) < X^{\left(j\right)}_{k}\left(\eta, n\right) \right\} \right| + 1.
        \end{align}
    Then, by the same argument used in the proof of Lemma \ref{lem:NM_kl}, we get 
        \begin{align}
            &\left| \left\{ \gamma \in \cup_{\ell \ge k+1} \Gamma_{\ell}\left(\eta\right) \ ; \  X^{\left(i\right)}_{k}\left(\eta, 0\right) < X\left(\gamma\right) < X^{\left(j\right)}_{k}\left(\eta, 0\right) \right\} \right| \\
            &= \left| \left\{ \gamma \in \cup_{\ell \ge 1} \Gamma_{\ell}\left(\Psi_{k}\left(\tilde{\eta}\right)\right) \ ; \  J_{k}\left(\tilde{\eta},i\right) < X\left(\gamma\right) < J_{k}\left(\tilde{\eta},j\right) \right\} \right|,
        \end{align}
    and 
        \begin{align}
            &\left| \left\{ \gamma \in \cup_{\ell \ge k+1} \Gamma_{\ell}\left(\eta\right) \ ; \  X^{\left(i\right)}_{k}\left(\eta, n\right) < X\left(\gamma\left(n\right)\right) < X^{\left(j\right)}_{k}\left(\eta, n\right) \right\} \right| \\
            &= \left| \left\{ \gamma \in \cup_{\ell \ge 1} \Gamma_{\ell}\left(\Psi_{k}\left(\tilde{\eta}\right)\right) \ ; \  J_{k}\left(\tilde{\eta},i\right) < X\left(\gamma\left(n\right)\right) < J_{k}\left(\tilde{\eta},j\right) \right\} \right|.
        \end{align}
    From the above observations, we have the following uniform estimate of $\left| M^{\left(i\right)}_{k}\left(\eta,n\right) - M^{\left(j\right)}_{k}\left(\eta,n\right) \right|$ with respect to $n$: 
        \begin{align}\label{ineq:MiMj}
            \left| M^{\left(i\right)}_{k}\left(\eta,n\right) - M^{\left(j\right)}_{k}\left(\eta,n\right) \right| \le 2 \left(J_{k}\left(\tilde{\eta},j\right) - J_{k}\left(\tilde{\eta},i\right) - 1\right) + 1.
        \end{align}
    In particular, for any $i \in \Z$, we have
        \begin{align}\label{ineq:MiMj_1}
            \left| M^{i}_{k}\left(\eta,n\right) - M^{\left(0\right)}_{k}\left(\eta,n\right) \right| =  \left| M^{(i^{\prime})}_{k}\left(\eta,n\right) - M^{\left(0\right)}_{k}\left(\eta,n\right) \right| \\
            \le
            \left| M^{(i)}_{k}\left(\eta,n\right) - M^{\left(0\right)}_{k}\left(\eta,n\right) \right|  \le 
            2 \left|J_{k}\left(\tilde{\eta},i\right) - J_{k}\left(\tilde{\eta},0\right)\right| - 1,
        \end{align}
    where we use the fact that there exists a unique $i^{\prime}$ such that $\gamma^{i}_{k} \in Con\left(\gamma^{(i^{\prime})}_{k}\right)$ and $|i^{\prime}| \le |i|$. Since $\left(J_{k}\left(j\right) - J_{k}\left(j-1\right) \right)_{j \in \Z}$ are i.i.d. geometric distribution random variables with mean $q_{k}$, we have \eqref{lim:clt_1}. }
    Hence, from the assumption of this theorem, Lemmas \ref{lem:shift_NM},  \ref{lem:ran_det}, the representation \eqref{eq:decom_Y} {and \eqref{lim:clt_1},} the process \eqref{def:step_Y} converges weakly to a sum of independent Brownian motions, and its variance $D_{k}\left(\q\right)$ is given by the sum of their variance. Therefore Theorem \ref{thm:main} (\ref{item:1}) is proved.

\subsection{Proof of (\ref{item:2})}

    {We recall that $U_{\q,k}\left(\lambda\right)$ is defined in \eqref{def:U}. From Lemma \ref{lem:expbound}, if $\lambda < { \delta_{\q,k}}$, then for any  $i \in \Z$ and $n \ge 0$, we have 
            \begin{align}
                \frac{1}{n}\log \E_{\nu_{\q}}\left[\exp\left( \lambda Y^{{i}}_{k}\left(n\right)\right) \right] = \frac{1}{n}\log  \E_{\nu_{\q}}\left[\exp\left( U_{\q, k}\left(\lambda \right) \left(n - M^{{i}}_{k}\left(n\right)\right) \right) \right].
            \end{align} 
    Hence from the assumption of Theorem \ref{thm:main} (\ref{item:2}), we have 
        \begin{align}\label{eq:Lambda_Y}
            \Lambda^{Y{,i}}_{\q,k}\left(\lambda\right) = \Lambda^{M{,i}}_{\q,k}\left( U_{\q,k}\left(  \lambda \right)\right).
        \end{align}}
    We also recall that $U_{\q, k}\left( \ \cdot \  \right)$ is a smooth monotone convex function, and $\sup_{\lambda \le \delta} U_{\q, k}\left(\lambda \right) < \infty$ for sufficiently small $\delta > 0$. In addition, since $0 \le M^{(i)}_{k}(n) \le n$, we have $|\Lambda^{M{,i}}_{\q,k}(\lambda)| \le |\lambda|$ for any $\lambda$.
    Hence if $\Lambda^{M{,i}}_{\q,k}$ is essentially smooth, then so is $\Lambda^{Y{,i}}_{\q,k}$. Therefore Theorem \ref{thm:main} (2) is proved.

\section{Proof of Theorem \ref{thm:st_cor}}\label{sec:st_proof}

    First we prepare {several} lemmas. 
    {\begin{lemma}\label{lem:T_inv}
        Assume that $\q \in \mathcal{Q}$. For any $n \in \N$, $\nu_{\q}$ is $\tau_{s_{\infty}\left(T^n\eta, 0\right)} \circ T^{n}$-invariant. 
    \end{lemma}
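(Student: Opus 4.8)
The plan is to prove the equivalent statement that $\E_{\nu_\q}[f(\widetilde{T^n\eta})]=\E_{\nu_\q}[f]$ for every bounded measurable $f$, where I write $\widetilde{\omega}:=\tau_{s_\infty(\omega,0)}\omega$, so that $\widetilde{T^n\eta}=(\tau_{s_\infty(T^n\cdot,0)}\circ T^n)(\eta)$ is exactly the map in the statement and lands in $\Omega_0$. The three facts I would lean on are that $\mu_\q$ is invariant under both the dynamics $T^n$ and the spatial shifts $\tau_x$ (it is translation-invariant and stationary), that $T$ and $\tau_x$ commute, and the Palm relation $\E_{\mu_\q}[r(\cdot,0)G]=\lambda_R\,\E_{\nu_\q}[G]$ with $\lambda_R:=\mu_\q(\Omega_0)=1/\E_{\nu_\q}[s_\infty(1)]>0$, which is the infinitesimal form of \eqref{def:inv_Palm}. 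The only genuinely new input is a \emph{democratic} averaging over records that cancels the length bias inherent in re-centering at the single record nearest the origin; note that $\widetilde{\eta}$ under $\mu_\q$ is a \emph{size-biased} version of $\nu_\q$, which is why a naive pushforward argument fails.

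Concretely, I would introduce the two bounded functionals $g(\omega):=r(\omega,0)\,f(\widetilde{T^n\omega})$ and $h(\omega):=r(T^n\omega,0)\,f(T^n\omega)$ and study their Birkhoff averages along spatial shifts. Using that $\mu_\q$ is shift-ergodic (the excursions are i.i.d.\ under $\mu_\q$, by Remark \ref{lem:ex_iid} and the inverse Palm construction), Birkhoff's theorem gives $\frac1L\sum_{x=0}^{L-1}g(\tau_x\eta)\to\E_{\mu_\q}[g]$ and $\frac1L\sum_{y=0}^{L-1}h(\tau_y\eta)\to\E_{\mu_\q}[h]$ $\mu_\q$-a.s. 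The point is that these two limits are the two sides of the desired identity: by the Palm relation $\E_{\mu_\q}[g]=\lambda_R\E_{\nu_\q}[f(\widetilde{T^n\eta})]$, while substituting $\zeta=T^n\eta$ and using $T^n$- and shift-invariance of $\mu_\q$ gives $\E_{\mu_\q}[h]=\E_{\mu_\q}[r(\eta,0)f(\eta)]=\lambda_R\E_{\nu_\q}[f]$.

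It then remains to compare the two Birkhoff sums pathwise. Writing $S_1(L):=\sum_{x=0}^{L-1}g(\tau_x\eta)$ and $S_2(L):=\sum_{y=0}^{L-1}h(\tau_y\eta)$ and denoting by $R(\omega)$ the record set of $\omega$, the functional $g$ only charges records $x$ of $\eta$, and the elementary identity $\widetilde{\tau_x T^n\eta}=\tau_{y^*(x)}T^n\eta$, with $y^*(x):=\max\{y\in\Z:\ y\le x,\ r(T^n\eta,y)=1\}$ the record of $T^n\eta$ nearest to the left of $x$, lets me rewrite $S_1(L)=\sum_{y\in R(T^n\eta)}f(\tau_y T^n\eta)\,c_L(y)$, where $c_L(y)$ counts the records of $\eta$ in the interval from $y$ to the next record of $T^n\eta$, truncated to $[0,L)$. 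Since the records of $\eta$ and of $T^n\eta$ have the same density $\lambda_R$ (as $T^n\eta\stackrel{d}{=}\eta$ under $\mu_\q$), each weight $c_L(y)$ equals $1$ on average and $S_1(L)-S_2(L)$ telescopes to boundary terms supported on the single $T^n\eta$-excursions straddling $0$ and $L$.

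The main obstacle is precisely the control of these boundary terms: a priori they can be large because the straddling excursion is length-biased and for general $\q\in\mathcal Q$ only the first moment $\E_{\nu_\q}[s_\infty(1)]<\infty$ is available, so an expectation bound may diverge. I would therefore resolve the estimate at the almost-sure level: each boundary term is bounded by the number of $\eta$-records inside one inter-record gap of $T^n\eta$, and since these gaps have finite mean their running maximum over $[0,L)$ is $o(L)$ a.s., whence $(S_1(L)-S_2(L))/L\to 0$ a.s. Combining with the two Birkhoff limits yields $\lambda_R\E_{\nu_\q}[f(\widetilde{T^n\eta})]=\lambda_R\E_{\nu_\q}[f]$, and dividing by $\lambda_R>0$ gives the claim for all bounded measurable $f$, i.e.\ the asserted $\tau_{s_\infty(T^n\eta,0)}\circ T^n$-invariance of $\nu_\q$.
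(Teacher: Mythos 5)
Your reduction to the identity $\E_{\mu_{\q}}[r(\eta,0)f(\widetilde{T^n\eta})]=\E_{\mu_{\q}}[r(T^n\eta,0)f(T^n\eta)]$ and the rewriting of the two Birkhoff sums are fine, but the key step --- ``$S_1(L)-S_2(L)$ telescopes to boundary terms'' --- is where the argument breaks. Writing $y_j$ for the records of $T^n\eta$ in $[0,L)$, you have $S_1(L)-S_2(L)=\sum_j f(\tau_{y_j}T^n\eta)\,(c_L(y_j)-1)+(\text{boundary})$, and the bulk terms do not cancel pathwise: $c_L(y_j)$, the number of records of $\eta$ in the cell $[y_j,y_{j+1})$, is not identically $1$. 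For instance, for $\eta=\dots 0\,1\,1\,0\,0\,0\dots$ with the $2$-soliton at sites $1$--$4$, one has $R(\eta)=\{\dots,-1,0,5,6,7,\dots\}$ and $R(T\eta)=\{\dots,-1,0,1,2,7,\dots\}$, so the cell $[1,2)$ of $R(T\eta)$ contains no record of $\eta$ while the cell $[2,7)$ contains two. Equality of the two record densities only controls the unweighted difference $\sum_j(c_L(y_j)-1)$; once the weights $f(\tau_{y_j}T^n\eta)$ are inserted, there is no telescoping and the difference is generically of order $L$ unless the fluctuations of $c_L(y_j)$ decorrelate from the weights --- which is exactly the nontrivial point, not a consequence of equal intensity.

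Worse, the abstract statement your argument would establish is false: for two jointly stationary point processes of equal intensity and a functional that is cell-constant with respect to the second one, the two Palm expectations need not agree (take the two processes to be independent Bernoulli fields and the functional to be the number of points of the first process in the cell of the second containing the origin; one Palm expectation picks up a size-biasing the other does not). So some input specific to the box-ball dynamics --- an equivariant correspondence between the records of $\eta$ and those of $T^n\eta$ compatible with the recentering, or equivalently the description of the recentered dynamics in the $\zeta$-coordinates --- cannot be avoided. This is precisely what the paper imports from \cite{FNRW} (Lemma 5.8, the footnote on p.~25 and the proof of Lemma 4.3 there, applied with $T$ replaced by $T^n$); your proposal replaces that structural input with an equal-density observation, which does not suffice.
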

    \begin{proof}[Proof of Lemma \ref{lem:T_inv}]
        Thanks to \cite[Lemma 5.8, footnote on page 25 and the proof of Lemma 4.3]{FNRW}, by replacing $T$ in their discussion by $T^{n}$, one can show that     
            \begin{align}
                \mu_{\q} \circ \left(T^{n}\right)^{-1} = \nu_{\q} \circ \left(\tau_{s_{\infty}\left(T^n\eta, 0\right)} \circ T^{n}\right)^{-1}.
            \end{align}
        Since $\mu_{\q}$ is $T^{n}$-invariant, $\nu_{\q}$ is $\tau_{s_{\infty}\left(T^n\eta, 0\right)} \circ T^{n}$-invariant. 
    \end{proof}}

    To describe {the next lemma,} we define $\Xi_{k}\left(\eta,i\right) := \xi_{k}\left(\eta, s_{\infty}\left(\eta,i\right) \right)$, and
        \begin{align}
            \bar{J}_{k}\left(\q\right) &:= \E_{\nu_{\q}}\left[ J_{k}(1) \right], \\
            \bar{s}_{\infty}\left(\mathbf{q}\right) &:= {\E_{\nu_{\q}}\left[ s_{\infty}\left(1\right) \right]}, \\
            \bar{\Xi}_{k}\left(\mathbf{q}\right) &:= {\E_{\nu_{\q}}\left[ \Xi_{k}\left(1\right)\right]},
        \end{align}
    where $J_k\left(i\right)$, $i \in \Z$ is defined in \eqref{def:J_stop}. 
        \begin{lemma}\label{lem:lln_ini_prob} 
                Suppose that $\mathbf{q} \in \mathcal{Q}$ and $\E_{\nu_\q}\left[ {s_{\infty}\left(1\right)}^2 \right] < \infty$. Then, for any $k \in \N$, $u \in \R$, we have
                    \begin{align}\label{eq:lln_ini_as}
                        \lim_{n \to \infty} \frac{1}{n} X^{\left( \left\lfloor nu \right\rfloor \right)}_{k}\left(0\right) = \frac{\bar{s}_{\infty}\left(\mathbf{q}\right) \bar{J}_{k}\left(\mathbf{q}\right) u}{\bar{\Xi}_{k}\left(\mathbf{q}\right)}, \quad \nu_{\q}\text{-a.s.}, 
                    \end{align}
                and 
                    \begin{align}\label{eq:lln_ini_prob}
                        \varlimsup_{L \to \infty} \varlimsup_{n \to \infty} \nu_{\mathbf{q}}\left( \left| \frac{1}{n} X^{\left( \left\lfloor nu \right\rfloor \right)}_{k}{\left(0\right)} -  \frac{\bar{s}_{\infty}\left(\mathbf{q}\right) \bar{J}_{k}\left(\mathbf{q}\right) u}{\bar{\Xi}_{k}\left(\mathbf{q}\right)} \right| > \frac{L}{\sqrt{n}} \right) = 0.
                    \end{align}             
            \end{lemma}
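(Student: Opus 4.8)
The plan is to express the initial position $X^{(\lfloor nu\rfloor)}_{k}(0)$ as the value, at a random (renewal) index, of a partial sum built from the i.i.d.\ excursion structure of $\nu_{\q}$, and then to combine the strong law with a diffusive second-moment bound. I work $\nu_{\q}$-a.s.\ on $\Omega_{0}$, so that $s_{\infty}(0)=0$, and I take $u>0$; the cases $u=0$ and $u<0$ are handled identically using the leftward part of the numbering. The geometric input is the chain of identifications coming from \eqref{eq:H_xi}: the $i$-th $k$-soliton with volume sits at the slot index $J_{k}(\eta,i)=\xi_{k}(\eta,X^{(i)}_{k}(0))$, this slot lies in the excursion $m=m(i)$ singled out by $\Xi_{k}(\eta,m)\le J_{k}(\eta,i)<\Xi_{k}(\eta,m+1)$, and the soliton's position differs from the record $s_{\infty}(\eta,m(i))$ by at most the length $|\mathbf{e}^{(m(i))}|$ of one excursion.

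The first step is three renewal laws of large numbers. By Remark \ref{lem:ex_iid} the excursions $(\mathbf{e}^{(m)})_{m}$ are i.i.d.\ under $\nu_{\q}$, so the sequences $(s_{\infty}(m+1)-s_{\infty}(m))_{m}$ and $(\Xi_{k}(m+1)-\Xi_{k}(m))_{m}$ are i.i.d.\ with finite means (finiteness of $\bar{s}_{\infty}(\q)$ for $\q\in\mathcal{Q}$ is recorded in the excerpt, and $\Xi_{k}(1)\le s_{\infty}(1)$ yields finiteness of $\bar{\Xi}_{k}(\q)$), whence $s_{\infty}(m)/m\to\bar{s}_{\infty}$ and $\Xi_{k}(m)/m\to\bar{\Xi}_{k}$. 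By \eqref{def:nu_q} the occupancies $(\zeta_{k}(h))_{h}$ are i.i.d., so the successive nonempty-slot indices $J_{k}(\eta,i)$ obey a renewal law whose slope, together with $\bar{s}_{\infty}$ and $\bar{\Xi}_{k}$, is what assembles into the constant $\bar{s}_{\infty}\bar{J}_{k}/\bar{\Xi}_{k}$. Composing $J_{k}(i)\approx\Xi_{k}(m(i))$ with the two record-indexed limits gives $m(i)\approx \bar{J}_{k}\, i/\bar{\Xi}_{k}$ and then $X^{(i)}_{k}(0)\approx\bar{s}_{\infty}\, m(i)$; specializing to $i=\lfloor nu\rfloor$ yields the a.s.\ statement \eqref{eq:lln_ini_as}, the excursion remainder being negligible since $\max_{m\le Cn}|\mathbf{e}^{(m)}|=o(n)$ a.s.\ under a finite first moment.

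For the quantitative bound \eqref{eq:lln_ini_prob} I would upgrade each convergence to the diffusive scale, and this is precisely where $\E_{\nu_{\q}}[s_{\infty}(1)^{2}]<\infty$ is used: it gives $\E_{\nu_{\q}}[\Xi_{k}(1)^{2}]<\infty$ and, together with the finite second moment of the geometric slot-gaps, controls the centered renewal sums at order $\sqrt{n}$. It suffices to prove the variance bound $\E_{\nu_{\q}}[(X^{(\lfloor nu\rfloor)}_{k}(0)-cun)^{2}]=O(n)$ with $c=\bar{s}_{\infty}\bar{J}_{k}/\bar{\Xi}_{k}$, after which Chebyshev's inequality gives $\nu_{\q}(|n^{-1}X^{(\lfloor nu\rfloor)}_{k}(0)-cu|>L/\sqrt{n})\le O(L^{-2})$, which tends to $0$ as $L\to\infty$, yielding the iterated $\varlimsup$ claim.

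The hard part will be transporting the $L^{2}$ control through the two coupled random index changes $i\mapsto J_{k}(i)\mapsto m(i)$: the record index $m(i)$ is itself the inverse of a renewal count, so bounding the variance of $s_{\infty}(m(i))$ is not a direct i.i.d.\ estimate. I would resolve this either by a Wald-type second-moment identity for randomly stopped sums of i.i.d.\ increments, or by a two-stage concentration---first showing that $m(i)$ lies within $O(\sqrt{n})$ of its deterministic value with the required probability, then concentrating $s_{\infty}(m(i))$ around $\bar{s}_{\infty}\,m(i)$ on that event---keeping the estimates uniform via the finite second moments. Finally, the leftover excursion term $|\mathbf{e}^{(m(i))}|$ must be shown to be $O(\sqrt{n})$ in probability over the $O(n)$ relevant records, which again uses $\E_{\nu_{\q}}[s_{\infty}(1)^{2}]<\infty$ through a maximal inequality.
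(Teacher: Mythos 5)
Your proposal follows essentially the same route as the paper's proof: the identification $J_{k}(\eta,i)=\xi_{k}(\eta,X^{(i)}_{k}(0))$ from \eqref{eq:H_xi}, the three renewal laws of large numbers for $s_{\infty}$, $\Xi_{k}$ and $J_{k}$ built on the i.i.d.\ excursion structure of Remark \ref{lem:ex_iid}, the comparison $J_{k}(\lfloor nu\rfloor)\approx\Xi_{k}(\lfloor n\bar{J}_{k}u/\bar{\Xi}_{k}\rfloor)$, and the monotonicity of $\xi_{k}$ to sandwich $X^{(\lfloor nu\rfloor)}_{k}(0)$ between two values of $s_{\infty}$. The a.s.\ part is in order. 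The one place you diverge is the quantitative bound \eqref{eq:lln_ini_prob}: your primary plan is to prove $\E_{\nu_{\q}}\bigl[(X^{(\lfloor nu\rfloor)}_{k}(0)-cun)^{2}\bigr]=O(n)$ and apply Chebyshev, which would indeed suffice but forces you through the second moment of a doubly randomly indexed renewal sum --- exactly the difficulty you flag, and one the paper never confronts. The paper instead runs your fallback: it defines the good event $A_{L,n}$ on which the renewal comparison holds at scale $L/\sqrt{n}$, sandwiches $X^{(\lfloor nu\rfloor)}_{k}(0)$ between $s_{\infty}(\lfloor n\bar{J}_{k}u/\bar{\Xi}_{k}\rfloor\pm\lfloor L\sqrt{n}/(4\bar{s}_{\infty})\rfloor)$ on that event, and controls each piece in probability using only the $\mathbb{L}^{2}$ renewal estimates; no variance bound for $X$ itself is needed, since the statement only requires the iterated $\varlimsup$ to vanish. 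So treat the in-probability conditioning as the main argument rather than the backup, and the proof closes; the Wald-type identity is unnecessary and would in any case be delicate because the random index $m(i)$ and the increments of $s_{\infty}$ are dependent through the common excursion sequence.
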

        \begin{proof}[Proof of Lemma \ref{lem:lln_ini_prob}]

            {First we observe that 
                \begin{align}
                    s_{\infty}\left(\eta,i+1\right) - s_{\infty}\left(\eta,i \right) 
                    &= |\mathbf{e}^{(i)}\left(\eta\right)| \\
                    &= |\mathbf{e}^{(i)}\left(\tilde{\eta}\right)| \\
                    &= s_{\infty}\left(\tilde{\eta},i+1\right) - s_{\infty}\left(\tilde{\eta},i\right),
                \end{align}
            where $\mathbf{e}^{(i)}\left(\eta\right)$ and $\tilde{\eta}$ are defined in \eqref{def:excursion} and \eqref{def:palm_eta}, respectively. Similarly, we have 
                \begin{align}
                    \Xi_{k}\left(\eta,i+1\right) - \Xi_{k}\left(\eta,i\right) 
                    &= 1 + \sum_{\ell \in \N} \sum_{x = s_{\infty}\left(\eta,i \right) + 1}^{s_{\infty}\left(\eta,i+1\right)} \left( \eta^{\uparrow}_{k + \ell}\left(x\right) + \eta^{\downarrow}_{k + \ell}\left(x\right) \right) \\
                    &= 1 + \sum_{\ell \in \N} \sum_{x = s_{\infty}\left(\tilde{\eta},i\right) + 1}^{s_{\infty}\left(\tilde{\eta},i+1\right)}  \left( \tilde{\eta}^{\uparrow}_{k + \ell}\left(x\right) + \tilde{\eta}^{\downarrow}_{k + \ell}\left(x\right) \right) \\
                    &= 1 + \sum_{\ell \in \N} \left| \left\{ \left(k + \ell, \sigma\right)\text{-seats in } \mathbf{e}^{(i)}\left(\tilde{\eta}\right), \ \sigma \in \{\uparrow, \downarrow\}\right\}  \right|.
                \end{align}
            In particular, both $s_{\infty}\left(\tilde{\eta},i+1\right) - s_{\infty}\left(\tilde{\eta},i\right)$ and $\Xi_{k}\left(\tilde{\eta},i+1\right) - \Xi_{k}\left(\tilde{\eta},i\right)$ are functions of $\mathbf{e}^{(i)}\left(\tilde{\eta}\right)$. In addition, 
                \begin{align}
                    \Xi_{k}\left(\eta,i+1\right) - \Xi_{k}\left(\eta,i\right) \le s_{\infty}\left(\eta,i+1\right) - s_{\infty}\left(\eta,i \right).
                \end{align}
            Hence from Remark \ref{lem:ex_iid} and the assumption of this lemma, both $(s_{\infty}\left(i+1\right)$ $-$ $ s_{\infty}\left(i\right))_{i \in \Z}$ and $\left(\Xi_{k}\left(i+1\right) - \Xi_{k}\left(i\right)\right)_{i \in \Z}$ are i.i.d. $\mathbb{L}^2$ sequences under $\nu_{\q}$. In addition, since $ \left(\zeta_{k}(i)\right)_{i \in \Z}$ are i.i.d. geometric random variables under $\nu_\q$, $\left(J_{k}\left(i+1\right) - J_{k}\left(i\right) \right)_{i \in \Z}$ is also an i.i.d. $\mathbb{L}^2$ sequence under $\nu_\q$.} Thus we see that for any $u \in \R$,
                    \begin{align}
                        &\lim_{n \to \infty} \frac{1}{n} s_{\infty}\left({\eta,}\left\lfloor nu \right\rfloor \right) = \bar{s}_{\infty}\left(\mathbf{q}\right) u, \quad \nu_{\q}\text{-a.s. and in } {\mathbb{L}^{2}}, \label{lim:lln_ini_s_as} \\
                        &\varlimsup_{L \to \infty} \varlimsup_{n \to \infty} \nu_{\mathbf{q}}\left( \left|\frac{1}{n} s_{\infty}\left(\left\lfloor \frac{u}{\e} \right\rfloor \right) - \bar{s}_{\infty}\left(\mathbf{q}\right) u \right| > \frac{L}{\sqrt{n}} \right) = 0 \label{lim:lln_ini_s_prob},
                    \end{align}
                    \begin{align}
                        &\lim_{n \to \infty} \frac{1}{n} J_{k}\left({\eta,}\left\lfloor nu \right\rfloor\right) = \bar{J}_{k}\left(\mathbf{q}\right) u, \quad \nu_{\q}\text{-a.s. and in } {\mathbb{L}^{2}}, \label{lim:lln_ini_H_as} \\
                        &\varlimsup_{L \to \infty} \varlimsup_{n \to \infty} \nu_{\mathbf{q}}\left( \left| \frac{1}{n} J_{k}\left(\left\lfloor nu \right\rfloor\right) - \bar{J}_{k}\left(\mathbf{q}\right) u \right| > \frac{L}{\sqrt{n}} \right) = 0,
                    \end{align}
                and 
                    \begin{align}
                        &\lim_{n \to \infty} \frac{1}{n} \Xi_{k}\left({\eta,} \left\lfloor nu \right\rfloor\right) = \bar{\Xi}_{k}\left(\mathbf{q}\right) u, \quad \nu_{\q}\text{-a.s. and in } {\mathbb{L}^{2}}, \\
                        &\varlimsup_{L \to \infty}\varlimsup_{n \to \infty} \nu_{\mathbf{q}}\left( \left| \frac{1}{n} \Xi_{k}\left( \left\lfloor nu \right\rfloor\right) - \bar{\Xi}_{k}\left(\mathbf{q}\right) u \right| > \frac{L}{\sqrt{n}} \right) = 0.
                    \end{align}
                Hence we get 
                    \begin{align}
                        &\lim_{n \to \infty} \left| \frac{1}{n} J_{k}\left(\left\lfloor nu \right\rfloor\right) -  \frac{1}{n} \Xi_{k}\left( \left\lfloor \frac{n\bar{J}_{k}\left(\mathbf{q}\right) u}{\bar{\Xi}_{k}\left(\mathbf{q}\right)} \right\rfloor\right) \right| = 0, \quad \nu_{\q}\text{-a.s. and in } {\mathbb{L}^{2}}, \label{lim:lln_ini_as} \\
                        &\varlimsup_{L \to \infty} \varlimsup_{n \to \infty} \nu_{\mathbf{q}}\left( \left| \frac{1}{n} J_{k}\left(\left\lfloor nu \right\rfloor\right) -  \frac{1}{n} \Xi_{k}\left( \left\lfloor \frac{n \bar{J}_{k}\left(\mathbf{q}\right) u}{\bar{\Xi}_{k}\left(\mathbf{q}\right)} \right\rfloor\right) \right| > \frac{L}{\sqrt{n}} \right) = 0 \label{lim:lln_ini_prob}.
                    \end{align}
                First we show \eqref{eq:lln_ini_as}. We fix $\delta > 0$, $m \in \N$ and 
                {define $A_{\delta,m} \subset \Omega_{0}$ as }
                    \begin{align}
                        {A_{\delta,m} :=} \left\{ \eta \in \Omega_{0} \ ; \ {\sup_{n \ge m}} \left| \frac{1}{n} J_{k}\left(\eta, \left\lfloor nu \right\rfloor\right) - \frac{1}{n} \Xi_{k}\left(\eta, \left\lfloor \frac{n\bar{J}_{k}\left(\mathbf{q}\right)u}{\bar{\Xi}_{k}\left(\mathbf{q}\right)} \right\rfloor\right) \right| \le \delta \right\}.
                    \end{align}
                Then, since $\xi_{k}(\eta, X^{(i)}_{k}(\eta)) =J_{k}(\eta,i)$ for any $k \in \N$ and $i \in \Z$, and 
                $\xi_{k}( \cdot )$ increases at each record, {for any $\eta \in A_{\delta,m}$ and $n \ge m$, we have}
                    \begin{align}
                        &\Xi_{k}\left(\eta, \left\lfloor \frac{n\bar{J}_{k}\left(\mathbf{q}\right)u}{\bar{\Xi}_{k}\left(\mathbf{q}\right)} \right\rfloor - \lfloor n \delta \rfloor\right) \\
                        &\le \Xi_{k}\left(\eta, \left\lfloor \frac{n\bar{J}_{k}\left(\mathbf{q}\right)u}{\bar{\Xi}_{k}\left(\mathbf{q}\right)} \right\rfloor \right) - \lfloor n \delta \rfloor \\
                        &\le \xi_{k}\left(\eta, X^{\left( \left\lfloor nu \right\rfloor \right)}_{k}\left(\eta\right)\right) \\
                        &\le \Xi_{k}\left(\eta, \left\lfloor \frac{n\bar{J}_{k}\left(\mathbf{q}\right)u}{\bar{\Xi}_{k}\left(\mathbf{q}\right)} \right\rfloor \right) + \lfloor n \delta \rfloor \\
                        &\le \Xi_{k}\left(\eta, \left\lfloor \frac{n\bar{J}_{k}\left(\mathbf{q}\right)u}{\bar{\Xi}_{k}\left(\mathbf{q}\right)} \right\rfloor + \lfloor n \delta \rfloor\right).
                    \end{align}
                {Since $\xi_{k}\left(\cdot\right)$ is an increasing function, the above inequalities imply}
                    \begin{align}
                         \frac{1}{n} s_{\infty}\left(\eta, \left\lfloor \frac{n\bar{J}_{k}\left(\mathbf{q}\right)u}{\bar{\Xi}_{k}\left(\mathbf{q}\right)} \right\rfloor - \lfloor n \delta \rfloor \right) 
                         &\le \frac{1}{n} X^{\left( \left\lfloor nu \right\rfloor \right)}_{k}\left(\eta{,0}\right) \\
                         &\le \frac{1}{n} s_{\infty}\left(\eta, \left\lfloor \frac{n\bar{J}_{k}\left(\mathbf{q}\right)u}{\bar{\Xi}_{k}\left(\mathbf{q}\right)} \right\rfloor + \lfloor n \delta \rfloor \right).
                    \end{align}
                {Thanks to \eqref{lim:lln_ini_as} and the above inequalities, we have 
                    \begin{align}
                        &\varlimsup_{n \to \infty}\left| \frac{1}{n} X^{\left( \left\lfloor nu \right\rfloor \right)}_{k}\left(\eta,0\right) - \frac{\bar{s}_{\infty}\left(\mathbf{q}\right) \bar{J}_{k}\left(\mathbf{q}\right) u}{\bar{\Xi}_{k}\left(\mathbf{q}\right)} \right|  \\
                        &\le \varlimsup_{n \to \infty}\left| \frac{1}{n} s_{\infty}\left(\eta, \left\lfloor \frac{n\bar{J}_{k}\left(\mathbf{q}\right)u}{\bar{\Xi}_{k}\left(\mathbf{q}\right)} \right\rfloor + \lfloor n \delta \rfloor \right) -  \frac{\bar{s}_{\infty}\left(\mathbf{q}\right) \bar{J}_{k}\left(\mathbf{q}\right) u}{\bar{\Xi}_{k}\left(\mathbf{q}\right)} \right| \\
                        &\quad + \varlimsup_{n \to \infty}\left| \frac{1}{n} s_{\infty}\left(\eta, \left\lfloor \frac{n\bar{J}_{k}\left(\mathbf{q}\right)u}{\bar{\Xi}_{k}\left(\mathbf{q}\right)} \right\rfloor - \lfloor n \delta \rfloor \right) -  \frac{\bar{s}_{\infty}\left(\mathbf{q}\right) \bar{J}_{k}\left(\mathbf{q}\right) u}{\bar{\Xi}_{k}\left(\mathbf{q}\right)} \right| \quad \nu_{\q}\text{-a.s.},
                    \end{align}
                for any $\delta > 0$.}
                {By \eqref{lim:lln_ini_s_as}, we get  
                    \begin{align}
                        \varlimsup_{n \to \infty}\left| \frac{1}{n} s_{\infty}\left(\eta, \left\lfloor \frac{n\bar{J}_{k}\left(\mathbf{q}\right)u}{\bar{\Xi}_{k}\left(\mathbf{q}\right)} \right\rfloor \pm \lfloor n \delta \rfloor \right) -  \frac{\bar{s}_{\infty}\left(\mathbf{q}\right) \bar{J}_{k}\left(\mathbf{q}\right) u}{\bar{\Xi}_{k}\left(\mathbf{q}\right)} \right| \le \delta \quad \nu_{\q}\text{-a.s.}
                    \end{align}
                Since $\delta > 0$ is arbitrary, we obtain \eqref{eq:lln_ini_as}. 
                Next we show \eqref{eq:lln_ini_prob}. For any $L > 0$ and $n \in \N$, we define 
                    \begin{align}
                        A_{L,n} := \left\{ \eta \in \Omega_{0} \ ; \  \left| \frac{1}{n} J_{k}\left(\eta, \left\lfloor nu \right\rfloor\right) - \frac{1}{n} \Xi_{k}\left(\eta, \left\lfloor \frac{n\bar{J}_{k}\left(\mathbf{q}\right)u}{\bar{\Xi}_{k}\left(\mathbf{q}\right)} \right\rfloor\right) \right| \le \frac{L}{4 \bar{s}_{\infty}\left(\mathbf{q}\right)\sqrt{n}} \right\}.
                    \end{align}
                From \eqref{lim:lln_ini_s_prob}, \eqref{lim:lln_ini_prob} and similar arguments used to show \eqref{eq:lln_ini_as} above, we have 
                    \begin{align}
                        &\nu_{\q}\left( \left| \frac{1}{n} X^{\left( \left\lfloor nu \right\rfloor \right)}_{k}\left(0\right) - \frac{\bar{s}_{\infty}\left(\mathbf{q}\right) \bar{J}_{k}\left(\mathbf{q}\right) u}{\bar{\Xi}_{k}\left(\mathbf{q}\right)} \right| > \frac{L}{\sqrt{n}} \right) \\
                        &\le \nu_{\q}\left( \mathbf{1}_{A_{L,n}} \left| \frac{1}{n} X^{\left( \left\lfloor nu \right\rfloor \right)}_{k}\left(0\right) - \frac{\bar{s}_{\infty}\left(\mathbf{q}\right) \bar{J}_{k}\left(\mathbf{q}\right) u}{\bar{\Xi}_{k}\left(\mathbf{q}\right)} \right| > \frac{L}{\sqrt{n}} \right) + \nu_{\q}\left(A_{L,n}^{c}\right) \\
                        &\le \nu_{\q}\left( \mathbf{1}_{A_{L,n}}\left| \frac{1}{n} s_{\infty}\left( \left\lfloor \frac{n\bar{J}_{k}\left(\mathbf{q}\right)u}{\bar{\Xi}_{k}\left(\mathbf{q}\right)} \right\rfloor + \left\lfloor \frac{L\sqrt{n}}{4 \bar{s}_{\infty}\left(\mathbf{q}\right)} \right\rfloor \right) -  \frac{\bar{s}_{\infty}\left(\mathbf{q}\right) \bar{J}_{k}\left(\mathbf{q}\right) u}{\bar{\Xi}_{k}\left(\mathbf{q}\right)} \right| > \frac{L}{2\sqrt{n}}  \right) \\
                        &\ + \nu_{\q}\left( \mathbf{1}_{A_{L,n}}\left| \frac{1}{n} s_{\infty}\left(\left\lfloor \frac{n\bar{J}_{k}\left(\mathbf{q}\right)u}{\bar{\Xi}_{k}\left(\mathbf{q}\right)} \right\rfloor - \left\lfloor \frac{L\sqrt{n}}{4 \bar{s}_{\infty}\left(\mathbf{q}\right)} \right\rfloor \right) -  \frac{\bar{s}_{\infty}\left(\mathbf{q}\right) \bar{J}_{k}\left(\mathbf{q}\right) u}{\bar{\Xi}_{k}\left(\mathbf{q}\right)} \right| > \frac{L}{2\sqrt{n}} \right) \\
                        & \ + \nu_{\q}\left(A_{L,n}^{c}\right) \\
                        &\le \nu_{\q}\left( \left| \frac{1}{n} s_{\infty}\left( \left\lfloor \frac{n\bar{J}_{k}\left(\mathbf{q}\right)u}{\bar{\Xi}_{k}\left(\mathbf{q}\right)} \right\rfloor + \left\lfloor \frac{L\sqrt{n}}{4 \bar{s}_{\infty}\left(\mathbf{q}\right)} \right\rfloor \right) -  \frac{\bar{s}_{\infty}\left(\mathbf{q}\right) \bar{J}_{k}\left(\mathbf{q}\right) u}{\bar{\Xi}_{k}\left(\mathbf{q}\right)} - \frac{L}{4 \sqrt{n}} \right| > \frac{L}{4\sqrt{n}}  \right) \\
                        &\ + \nu_{\q}\left(\left| \frac{1}{n} s_{\infty}\left( \left\lfloor \frac{n\bar{J}_{k}\left(\mathbf{q}\right)u}{\bar{\Xi}_{k}\left(\mathbf{q}\right)} \right\rfloor - \left\lfloor \frac{L\sqrt{n}}{4 \bar{s}_{\infty}\left(\mathbf{q}\right)} \right\rfloor \right) -  \frac{\bar{s}_{\infty}\left(\mathbf{q}\right) \bar{J}_{k}\left(\mathbf{q}\right) u}{\bar{\Xi}_{k}\left(\mathbf{q}\right)} + \frac{L}{4 \sqrt{n}} \right| > \frac{L}{4\sqrt{n}} \right) \\
                        & \ + \nu_{\q}\left(A_{L,n}^{c}\right) \\
                        &\to 0, \quad n \to \infty, \text{ then } L \to \infty.
                    \end{align}
                Therefore \eqref{eq:lln_ini_prob} holds.}
                
        \end{proof}

        Recall that $\sigma^{\left( i \right)}_{k,\ell}\left(\eta,n\right)$ is defined in \eqref{def:sigma_kl}.
        \begin{lemma}\label{lem:st_cor_3}
            Suppose that $\q \in \mathcal{Q}$ and $\E_{\nu_\q}\left[ {s_{\infty}\left(1\right)}^2 \right] < \infty$. Then for any $k, \ell \in \N$ with $k < \ell$ and $u, v \in \R$, we have 
                \begin{align}\label{eq:st_cor_3_1}
                    &\lim_{n \to \infty} \frac{1}{n} \sigma^{\left( \left\lfloor n u \right\rfloor \right)}_{k,\ell}\left({\eta}, 0\right) = f_{k,\ell}\left(\q, u\right), \quad \nu_{\q}\text{-a.s.,} \\
                    &\varlimsup_{L \to \infty} \varlimsup_{n \to \infty}\nu_{\q}\left( \left| \frac{1}{n} \sigma^{\left( \left\lfloor n u \right\rfloor \right)}_{k,\ell}\left(0\right) - f_{k,\ell}\left(\q, u\right) \right| > \frac{L}{\sqrt{n}} \right) = 0 \label{eq:st_cor_3_3},
                \end{align}
            and 
                \begin{align}
                    &\varlimsup_{L \to \infty} \varlimsup_{n \to \infty}\nu_{\q}\left( \left| \frac{1}{n} \sigma^{\left( \left\lfloor n u \right\rfloor \right)}_{k,\ell}\left(n^2\right) - \frac{1}{n} \sigma^{\left( \left\lfloor n v \right\rfloor \right)}_{k,\ell}\left(n^2\right) - f_{k,\ell}\left(\q, u - v\right)  \right| > \frac{L}{\sqrt{n}} \right) = 0, \\  \label{eq:st_cor_3_4}
                \end{align}
            where 
                \begin{align}
                    f_{k,\ell}\left(\q, u\right) := \frac{\bar{\Xi}_{\ell - k}\left(\theta^{k}\q\right) \bar{J}_{k}\left(\q\right)u}{\bar{s}_{\infty}\left(\theta^{k}\q\right) \bar{J}_{\ell}\left(\q\right)}.
                \end{align}
        \end{lemma}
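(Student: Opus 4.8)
The plan is to read $\sigma^{(i)}_{k,\ell}$ off as the generalized inverse of the (nondecreasing in $j$) position function $j \mapsto X^{(j)}_{\ell-k}(\Psi_k(\tilde\eta), n)$ evaluated at the threshold $J_k(\tilde\eta, i)$, and then to combine two laws of large numbers through this inversion. Two structural facts make this tractable. First, $J_k(\tilde\eta, \cdot)$ is built only from $(\zeta_k(j))_j$, whereas by Remark \ref{lem:indep_skip} the configuration $\Psi_k(\tilde\eta)$---hence every $X^{(j)}_{\ell-k}(\Psi_k(\tilde\eta), n)$---is a function of $(\zeta_m)_{m\ge k+1}$ only; thus under $\nu_\q$ the threshold and the whole $\Psi_k$-dynamics are independent. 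Second, by \eqref{eq:shift_q} and Lemma \ref{lem:palm_psi} the law of $\Psi_k(\tilde\eta)=\widetilde{\Psi_k(\eta)}$ under $\nu_\q$ is that of a centered configuration under $\nu_{\theta^k\q}$, so Lemma \ref{lem:lln_ini_prob} transports to $\Psi_k(\tilde\eta)$ with $\q \to \theta^k\q$ and soliton size $k \to \ell-k$. I would also record the identity $\bar{J}_{\ell-k}(\theta^k\q)=\bar{J}_\ell(\q)$: since $(\theta^k\q)_{\ell-k}=q_\ell$, the indicator processes $\{\zeta_{\ell-k}(\cdot)>0\}$ under $\nu_{\theta^k\q}$ and $\{\zeta_\ell(\cdot)>0\}$ under $\nu_\q$ coincide in law, and $J$ depends only on this indicator process.

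For the time-$0$ statements \eqref{eq:st_cor_3_1} and \eqref{eq:st_cor_3_3}, set $A_n := J_k(\tilde\eta, \lfloor nu\rfloor)$. From (the proof of) Lemma \ref{lem:lln_ini_prob}, $n^{-1}A_n \to \bar{J}_k(\q)u$ both a.s.\ and at the $L/\sqrt n$ scale, because the increments of $J_k$ are i.i.d.\ and $\mathbb{L}^2$. Applying Lemma \ref{lem:lln_ini_prob} to $\Psi_k(\tilde\eta)$ gives $n^{-1}X^{(\lfloor nw\rfloor)}_{\ell-k}(\Psi_k(\tilde\eta), 0)\to c_1 w$ with $c_1 := \bar{s}_\infty(\theta^k\q)\bar{J}_{\ell-k}(\theta^k\q)/\bar{\Xi}_{\ell-k}(\theta^k\q)$, again with $L/\sqrt n$ control. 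Since $j \mapsto X^{(j)}_{\ell-k}$ is nondecreasing, the defining sandwich $X^{(\sigma-1)}_{\ell-k} < A_n \le X^{(\sigma)}_{\ell-k}$ lets me invert: a sandwich argument identical to the one carrying \eqref{lim:lln_ini_as} to \eqref{eq:lln_ini_as} and \eqref{eq:lln_ini_prob} yields $n^{-1}\sigma^{(\lfloor nu\rfloor)}_{k,\ell}(0) \to \bar{J}_k(\q)u/c_1$, which equals $f_{k,\ell}(\q,u)$ after invoking $\bar{J}_{\ell-k}(\theta^k\q)=\bar{J}_\ell(\q)$; the $L/\sqrt n$ control propagates through the same sandwich.

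For the time-$n^2$ statement \eqref{eq:st_cor_3_4}, I would rewrite the difference, for $u>v$, as a window count $\sigma^{(\lfloor nu\rfloor)}_{k,\ell}(n^2) - \sigma^{(\lfloor nv\rfloor)}_{k,\ell}(n^2) = |\{j : A_n^v \le X^{(j)}_{\ell-k}(\Psi_k(\tilde\eta), n^2) < A_n^u\}|$, the number of $(\ell-k)$-solitons with volume lying, at time $n^2$, in the window $[A_n^v, A_n^u)$ of length $A_n^u - A_n^v \approx \bar{J}_k(\q)(u-v)n$. Conditioning on the thresholds (legitimate by the independence above), the task becomes estimating this count for $\Psi_k(\tilde\eta)\sim\nu_{\theta^k\q}$ run to time $n^2$. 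Here I would invoke Lemma \ref{lem:T_inv} with $\q \to \theta^k\q$: the map $\tau_{s_\infty(T^{n^2}\cdot, 0)}\circ T^{n^2}$ preserves $\nu_{\theta^k\q}$, so after global recentering the time-$n^2$ soliton point process is again $\nu_{\theta^k\q}$-distributed. Together with translation invariance and the i.i.d.\ excursion structure of Remark \ref{lem:ex_iid}, this should give that the conditional mean of the window count is $c_1^{-1}(A_n^u - A_n^v) + o(\sqrt n) = f_{k,\ell}(\q, u-v)n + o(\sqrt n)$ and that its conditional variance is $O(A_n^u - A_n^v) = O(n)$; a Chebyshev bound then gives $\nu_\q(|\,\cdot\, - f_{k,\ell}(\q, u-v)n| > L\sqrt n) = O(1/L^2)$, which is exactly \eqref{eq:st_cor_3_4}.

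The main obstacle is precisely this variance/mean estimate at the diffusive time $n^2$. The recentering shift $s_\infty(T^{n^2}\cdot, 0)$ is of order $n^2$, far larger than the window length $\sim n$, and it is correlated with the recentered configuration, so one cannot translate the window freely. Keeping the window count within the $\sqrt n$ scale therefore requires stationarity (Lemma \ref{lem:T_inv}) combined with the i.i.d.\ excursion structure of $\nu_{\theta^k\q}$ and, crucially, the hypothesis $\E_{\nu_\q}[s_\infty(1)^2]<\infty$, which supplies the second moments needed for the Chebyshev step and prevents the $n^2$-scale drift from inflating the fluctuations beyond order $\sqrt n$.
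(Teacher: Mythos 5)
Your proposal follows essentially the same route as the paper's proof: the time-$0$ statements via the sandwich inversion of $J_{k}$ against the positions $X^{(j)}_{\ell-k}\left(\Psi_{k}\left(\tilde{\eta}\right),0\right)$ (transporting Lemma \ref{lem:lln_ini_prob} through \eqref{eq:shift_q} and using $\bar{J}_{\ell-k}\left(\theta^{k}\q\right)=\bar{J}_{\ell}\left(\q\right)$), and the time-$n^2$ statement via the window-count representation, the independence of $J_{k}$ from $\Psi_{k}\left(\tilde{\eta}\right)$, the invariance of Lemma \ref{lem:T_inv}, and the boundary error bounded by a single excursion. The only real difference is in the last step of \eqref{eq:st_cor_3_4}: you finish with a conditional mean/variance estimate and Chebyshev, whereas the paper notes that the recentered time-$n^2$ window count is equal in distribution to the time-$0$ count $\sigma^{\left(\lfloor nu\rfloor\right)}_{k,\ell}\left(0\right)-\sigma^{\left(\lfloor nv\rfloor\right)}_{k,\ell}\left(0\right)$ and simply reapplies \eqref{eq:st_cor_3_3}, which spares you the variance computation; also, your remark that the recentering shift $s_{\infty}\left(T^{n^2}\cdot,0\right)$ is of order $n^2$ is inaccurate --- it is bounded by one excursion length and hence $O_{P}\left(1\right)$ under the invariant measure, which is exactly why the argument closes.
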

        \begin{proof}
            We fix $\q$, $k < \ell$ and $u, v$. Without loss of generality, we can assume that $u > v$. For notational simplicity, we will write $\tilde{u} := f_{k,\ell}\left(\q, u\right)$. 
            
            First we show \eqref{eq:st_cor_3_1}. 
            We observe that from {\eqref{eq:shift_q} and} Lemma \ref{lem:lln_ini_prob}, 
                \begin{align}
                    \lim_{n \to \infty}\frac{1}{n}X^{\left( \left\lfloor n \tilde{u} \right\rfloor \right)}_{\ell - k}\left( \Psi_{k}\left(\eta\right){,0} \right) = \bar{J}_{k}\left(\q\right) u, \quad \nu_{\q}\text{-a.s.},
                \end{align}
            where we use \eqref{eq:semig_zeta} and \eqref{eq:shift_q} to show $\bar{J}_{\ell - k}\left(\theta^{k}\q\right) = \bar{J}_{\ell}\left(\q\right)$.
            We fix $\delta > 0$ and $\eta \in \Omega$ such that 
                \begin{align}
                    \varlimsup_{n \to \infty}\left| \frac{1}{n}X^{\left( \left\lfloor n \tilde{u} \right\rfloor \right)}_{\ell - k}\left( \Psi_{k}\left(\eta\right){,0} \right) - \bar{J}_{k}\left(\q\right) {u} \right| \le \delta,
                \end{align}
            and 
                \begin{align}
                    \varlimsup_{n \to \infty} \left| \frac{1}{n}J_{k}\left({\eta,}\left\lfloor n u \right\rfloor\right) - \bar{J}_{k}\left(\q\right) u \right| \le \delta.
                \end{align}
            Then, we see that 
                \begin{align}
                    {X^{\left( \left\lfloor n \tilde{u} \right\rfloor -  \left\lfloor n \delta / k \right\rfloor \right)}_{\ell - k}\left( \Psi_{k}\left(\eta\right),0 \right) \le J_{k}\left(\eta, \left\lfloor n u \right\rfloor\right) \le X^{\left( \left\lfloor n \tilde{u} \right\rfloor + \left\lfloor n \delta / k \right\rfloor \right)}_{\ell - k}\left( \Psi_{k}\left(\eta\right),0 \right),}
                \end{align}
            where we use the fact that the number of $k$-solitons contained in an interval $[a, b], a < b$ is at most $(b - a) / (2k)$. 
            {The above inequality implies 
                \begin{align}
                    \left\lfloor n \tilde{u} \right\rfloor - \left\lfloor n \delta / k \right\rfloor \le \sigma^{\left( \left\lfloor n u \right\rfloor \right)}_{k,\ell}\left(\eta,0\right) \le \left\lfloor n \tilde{u} \right\rfloor + \left\lfloor n \delta / k \right\rfloor
                \end{align}}
            Hence, by \eqref{lim:lln_ini_H_as}, we obtain
                \begin{align}
                    \lim_{n \to \infty} \frac{1}{n} \left| \sigma^{\left( \left\lfloor n u \right\rfloor \right)}_{k,\ell}\left({\eta,}0\right) - \tilde{u} \right| \le {\frac{\delta}{k}}, \quad \nu_{\q}\text{-a.s.},
                \end{align}
            for any $\delta > 0$. 
            Therefore we have \eqref{eq:st_cor_3_1}. 

            Next we show \eqref{eq:st_cor_3_3}.
            We fix $L_1 > 0$. Then, on the following event,
                \begin{align}
                    A_{L_1} &:= \left\{ \left| \frac{1}{n}X^{\left( \left\lfloor n \tilde{u} \right\rfloor \right)}_{\ell - k}\left( \Psi_{k}\left(\eta\right){,0} \right) - \bar{J}_{k}\left(\q\right) \tilde{u} \right| \le \frac{L_{1}}{\sqrt{n}} \right\} \\
                    & \ \cap \left\{ \left| \frac{1}{n}J_{k}\left({\eta,}\left\lfloor n u \right\rfloor\right) - \bar{J}_{k}\left(\q\right) u \right| \le \frac{L_{1}}{\sqrt{n}} \right\},
                \end{align}
            {by the argument used to show \eqref{eq:st_cor_3_1},} we see that 
                \begin{align}
                    \left| \frac{1}{n} \sigma^{\left( \left\lfloor n u \right\rfloor \right)}_{k,\ell}\left({\eta,}0\right) - \tilde{u} \right| \le \frac{L_1}{k \sqrt{n}} + \frac{1}{n}.
                \end{align}
            Hence we have 
                \begin{align}
                    &\varlimsup_{L_2 \to \infty} \varlimsup_{n \to \infty}\nu_{\q}\left( \left| \frac{1}{n} \sigma^{\left( \left\lfloor n u \right\rfloor \right)}_{k,\ell}\left(0\right) - \tilde{u} \right| > \frac{L_{2}}{\sqrt{n}} \right) \\
                    &\le \varlimsup_{L_2 \to \infty} \varlimsup_{n \to \infty}\nu_{\q}\left( \mathbf{1}_{A_{L_1}} \left| \frac{1}{n} \sigma^{\left( \left\lfloor n u \right\rfloor \right)}_{k,\ell}\left(0\right) - \tilde{u} \right| > \frac{L_{2}}{\sqrt{n}} \right) + \nu_{\q}\left( A^{{c}}_{L_1} \right) \\
                    &= \varlimsup_{n \to \infty} \nu_{\q}\left( A^{{c}}_{L_1} \right) \to 0, \quad L_1 \to \infty,
                \end{align}
            which concludes \eqref{eq:st_cor_3_1}. 

            Finally we show \eqref{eq:st_cor_3_4}. We observe that {for any $m \in \Z_{\ge 0}$, the difference} $\sigma^{\left( \left\lfloor n u \right\rfloor \right)}_{k,\ell}\left({\eta,m}\right) - \sigma^{\left( \left\lfloor n v \right\rfloor \right)}_{k,\ell}\left({\eta,m}\right)$ is equal to the number of $\left(\ell - k\right)$-solitons with volume in {$\Psi_{k}\left(\eta\right)$} contained in $\left[J_{k}\left({\eta,} \left\lfloor n v \right\rfloor \right), J_{k}\left({\eta,} \left\lfloor n u \right\rfloor \right)  \right]$ at time ${m}$, i.e., 
                \begin{align}
                    &\sigma^{\left( \left\lfloor n u \right\rfloor \right)}_{k,\ell}\left({\eta,m}\right) - \sigma^{\left( \left\lfloor n v \right\rfloor \right)}_{k,\ell}\left({\eta,m}\right) \\
                    &= \left| \left\{ j \in \Z \ ; \ J_{k}\left({\eta,} \left\lfloor n v \right\rfloor \right) \le X^{(j)}_{\ell - k}\left(T^{{m}} \Psi_{k}\left({\eta}\right){,0} \right) \le J_{k}\left({\eta,} \left\lfloor n u \right\rfloor \right) \right\} \right|.
                \end{align}
    {From Remark \ref{lem:indep_skip}, for any $a \in \Z_{\ge 0}$, we have
        \begin{align}
            &\nu_{\q}\left(\left| \left\{ j \in \Z ; J_{k}\left( \left\lfloor n v \right\rfloor \right) \le X^{(j)}_{\ell - k}\left(\tau_{s_{\infty}\left(T^{m}\Psi_{k}\left(\eta\right), 0\right)} T^{m}\Psi_{k}\left(\eta\right),0 \right) \le J_{k}\left( \left\lfloor n u \right\rfloor \right) \right\} \right| = a\right) \\
            &= \sum_{b,c \in \Z} \nu_{\q}\left(J_{k}\left( \left\lfloor n v \right\rfloor \right) = b, J_{k}\left( \left\lfloor n u \right\rfloor \right) = c\right) \\
            & \quad \times \nu_{\q}\left(\left| \left\{ j \in \Z \ ; b \le X^{(j)}_{\ell - k}\left(\tau_{s_{\infty}\left(T^{m}\Psi_{k}\left(\eta\right), 0\right)} T^{m}\Psi_{k}\left(\eta\right),0 \right) \le c \right\} \right| = a\right),
        \end{align}
    and by \eqref{eq:shift_q} and Lemma \ref{lem:T_inv}, we get
        \begin{align}
            &\nu_{\q}\left(\left| \left\{ j \in \Z \ ; b \le X^{(j)}_{\ell - k}\left(\tau_{s_{\infty}\left(T^{m}\Psi_{k}\left(\eta\right), 0\right)} T^{m}\Psi_{k}\left(\eta\right),0 \right) \le c \right\} \right| = a\right) \\
            &=\nu_{\theta^{k}\q}\left(\left| \left\{ j \in \Z \ ; b \le X^{(j)}_{\ell - k}\left(\tau_{s_{\infty}\left(T^{m}\eta, 0\right)} T^{m}\eta,0 \right) \le c \right\} \right| = a\right) \\
            &= \nu_{\theta^{k}\q}\left(\left| \left\{ j \in \Z \ ; b \le X^{(j)}_{\ell - k}\left(0\right) \le c \right\} \right| = a\right) \\
            &= \nu_{\q}\left(\left| \left\{ j \in \Z \ ; \ b \le X^{(j)}_{\ell - k}\left(\Psi_{k}\left(\eta\right),0 \right) \le c \right\} \right| = a\right).
        \end{align}
    Thus we obtain 
        \begin{align}
            &\nu_{\q}\left(\left| \left\{ j \in \Z ; J_{k}\left( \left\lfloor n v \right\rfloor \right) \le X^{(j)}_{\ell - k}\left(\tau_{s_{\infty}\left(T^{m}\Psi_{k}\left(\eta\right), 0\right)} T^{m}\Psi_{k}\left(\eta\right),0 \right) \le J_{k}\left( \left\lfloor n u \right\rfloor \right) \right\} \right| = a\right) \\
            &= \nu_{\q}\left(\left| \left\{ j \in \Z ; J_{k}\left( \left\lfloor n v \right\rfloor \right) \le X^{(j)}_{\ell - k}\left(\Psi_{k}\left(\eta\right),0 \right) \le J_{k}\left( \left\lfloor n u \right\rfloor \right) \right\} \right| = a\right) \\
            &= \nu_{\q}\left(\sigma^{\left( \left\lfloor n u \right\rfloor \right)}_{k,\ell}\left(0\right) - \sigma^{\left( \left\lfloor n v \right\rfloor \right)}_{k,\ell}\left(0\right) = a\right),
        \end{align}
    that is, for any $m \in \Z_{\ge 0}$, we have
        \begin{align}
            &\left| \left\{ j \in \Z ; J_{k}\left( \left\lfloor n v \right\rfloor \right) \le X^{(j)}_{\ell - k}\left(\tau_{s_{\infty}\left(T^{m}\Psi_{k}\left(\eta\right), 0\right)} T^{m}\Psi_{k}\left(\eta\right),0 \right) \le J_{k}\left( \left\lfloor n u \right\rfloor \right) \right\} \right| \\
            &\overset{d}{=} \sigma^{\left( \left\lfloor n u \right\rfloor \right)}_{k,\ell}\left(0\right) - \sigma^{\left( \left\lfloor n v \right\rfloor \right)}_{k,\ell}\left(0\right),
        \end{align}
    under $\nu_{\q}$.}
            On the other hand, since the length of a $k$-soliton is $2k$, we get 
                \begin{align}
                    &\Bigg| \sigma^{\left( \left\lfloor n u \right\rfloor \right)}_{k,\ell}\left(n^2\right) - \sigma^{\left( \left\lfloor n v \right\rfloor \right)}_{k,\ell}\left(n^2\right)   \\
                    & \quad - \left| \left\{ j \in \Z \ ; \ J_{k}\left( \left\lfloor n v \right\rfloor \right) \le X^{(j)}_{\ell - k}\left({\tau_{s_{\infty}\left(T^{n^2}\Psi_{k}\left(\eta\right), 0\right)} T^{n^2}}\Psi_{k}\left(\eta\right){,0} \right) \le J_{k}\left( \left\lfloor n u \right\rfloor \right) \right\} \right| \Bigg| \\
                    &\le \frac{\left| s_{\infty}\left(T^{n^2}\Psi_{k}\left(\eta\right), 0\right) \right|}{k} \\
                    &\le \frac{ s_{\infty}\left(T^{n^2}\Psi_{k}\left(\eta\right), 1\right)  - s_{\infty}\left(T^{n^2}\Psi_{k}\left(\eta\right), 0\right)}{k}  \\
                    &= \frac{ s_{\infty}\left({\tau_{s_{\infty}\left(T^{n^2}\Psi_{k}\left(\eta\right), 0\right)} T^{n^2}}\Psi_{k}\left(\eta\right), 1\right)}{k}.
                \end{align}
            By \eqref{eq:shift_q}{, Remark \ref{lem:indep_skip}} and Lemma \ref{lem:T_inv}, we have 
                \begin{align}
                    \nu_{\q}\left(\frac{ s_{\infty}\left({\tau_{s_{\infty}\left(T^{n^2}\Psi_{k}\left(\eta\right), 0\right)} T^{n^2}}\Psi_{k}\left(\eta\right), 1\right)}{n k} > \frac{L}{\sqrt{n}}\right) 
                    &= \nu_{\theta^{k}\q}\left(\frac{ s_{\infty}\left(1\right)}{n k} > \frac{L}{\sqrt{n}}\right)
                    \to 0,
                \end{align}
            as $n \to \infty$ for any $L > 0$. From the above and \eqref{eq:st_cor_3_3}, we obtain \eqref{eq:st_cor_3_4}.
    \end{proof}

\begin{proof}[Proof of Theorem \ref{thm:st_cor}]

    Without loss of generality, we can assume $u > v$. 
    From Lemma \ref{lem:rep_W}, for any $0 \le \ell \le k - 1$, we have 
        \begin{align}
            &Y^{\left( \left\lfloor n^{a} u \right\rfloor \right)}_{k-\ell}\left(\Psi_{\ell}\left(\eta\right), n^2 \right) -  Y^{\left(\left\lfloor n^{a} v \right\rfloor \right)}_{k-\ell}\left(\Psi_{\ell}\left(\eta\right),n^2\right) \\
                    &= \frac{v^{\mathrm{eff}}_{k-\ell}\left(\theta^{\ell}\mathbf{q}\right)}{\bar{r}_{k}\left(\q\right)} \left(M^{\left( \left\lfloor n^{a} v \right\rfloor \right)}_{k}\left(\tilde{\eta},n^2\right) -  M^{\left( \left\lfloor n^{a} u \right\rfloor \right)}_{k}\left(\tilde{\eta},n^2\right) \right) \\
                    & \ + 2\sum_{h = \ell + 1}^{k - 1} \frac{v^{\mathrm{eff}}_{h-\ell}\left(\theta^{\ell}\mathbf{q}\right)}{\bar{r}_{h}\left(\q\right)} \left( \sum_{j = X^{\left( \left\lfloor n^{a} u \right\rfloor \right)}_{k - h}\left(\Psi_{h}\left(\tilde{\eta}\right), 0 \right)  + 1}^{X^{\left( \left\lfloor n^{a} u \right\rfloor \right)}_{k - h}\left(\Psi_{h}\left(\tilde{\eta}\right), n^2 \right) } - \sum_{j = X^{\left( \left\lfloor n^{a} v \right\rfloor \right)}_{k - \ell}\left(\Psi_{h}\left(\tilde{\eta}\right), 0 \right)  + 1}^{X^{\left( \left\lfloor n^{a} v \right\rfloor \right)}_{k - h}\left(\Psi_{h}\left(\tilde{\eta}\right), n^2 \right) } \right) \\
                    & \quad \quad \quad \times \left( \zeta_{h}\left(\tilde{\eta},j\right) - \a_{h}\left(\q\right) \right) \label{eq:st_cor_s_0}.
        \end{align}
    By using Remark \ref{lem:indep_skip} and \eqref{eq:st_cor_s_0}, we get
        \begin{align}
            &\E_{\nu_{\mathbf{q}}}\left[ \left| \frac{1}{n} Y^{\left( \left\lfloor n^{a} u \right\rfloor \right)}_{k-\ell}\left( \Psi_{\ell}\left(\eta\right), n^2 \right) - \frac{1}{n} Y^{\left( \left\lfloor n^{a} v \right\rfloor \right)}_{k-\ell}\left( \Psi_{\ell}\left(\eta\right), n^2\right) \right|^{2} \right] \\
            &= \frac{v^{\mathrm{eff}}_{k-\ell}\left(\theta^{\ell}\mathbf{q}\right)^2}{\bar{r}_{k}\left(\q\right)^2 n^2} \E_{\nu_{\mathbf{q}}}\left[ \left| M^{\left( \left\lfloor n^{a} u \right\rfloor \right)}_{k}\left(n^2\right) - M^{\left( \left\lfloor n^{a} v \right\rfloor \right)}_{k}\left(n^2\right) \right|^2 \right] \\
            & \ + 4 \sum_{h = \ell + 1}^{k - 1} \frac{v^{\mathrm{eff}}_{h-\ell}\left(\theta^{\ell}\mathbf{q}\right)^2 \b_{h}\left(\mathbf{q}\right)}{\bar{r}_{h}\left(\q\right)^2 n^2}  \\
            & \quad \quad \times \E_{\nu_{\q}}\left[ \left| Y^{\left( \left\lfloor n^{a} u \right\rfloor \right)}_{k - h}\left(\Psi_{h}\left(\eta\right), n^2\right) - Y^{\left( \left\lfloor n^{a} v \right\rfloor \right)}_{k - h}\left(\Psi_{h}\left(\eta\right), n^2\right) \right|  \right] \label{eq:st_cor_s}.
        \end{align}
    where at the last line we use Lemma \ref{lem:sp_shift}. 
    For notational simplicity, we define 
        \begin{align}
            \widetilde{M}^{u,v}_{k,n} := \E_{\nu_{\mathbf{q}}}\left[ \left| M^{\left( \left\lfloor n^{a} u \right\rfloor \right)}_{k}\left(n^2\right) - M^{\left( \left\lfloor n^{a} v \right\rfloor \right)}_{k}\left(n^2\right) \right|^2 \right]. 
        \end{align}
    Then, from \eqref{eq:st_cor_s_0} with $\ell = k - 1$ and the Schwarz inequality, we get 
        \begin{align}
            &\E_{\nu_{\mathbf{q}}}\left[ \left| Y^{\left( \left\lfloor n^{a} u \right\rfloor \right)}_{1}\left(\Psi_{k-1}\left(\eta\right), n^2\right) - Y^{\left( \left\lfloor n^{a} v \right\rfloor \right)}_{1}\left(\Psi_{k-1}\left(\eta\right), n^2\right) \right|  \right] \\
            &= \frac{v^{\mathrm{eff}}_{1}\left(\theta^{k-1}\mathbf{q}\right)}{\bar{r}_{k}\left(\q\right)} \E_{\nu_{\mathbf{q}}}\left[ \left| M^{\left( \left\lfloor n^{a} v \right\rfloor \right)}_{k}\left(n^2\right) -  M^{\left( \left\lfloor n^{a} u \right\rfloor \right)}_{k}\left(n^2\right) \right|  \right] \\
            &\le \frac{v^{\mathrm{eff}}_{1}\left(\theta^{k-1}\mathbf{q}\right)}{\bar{r}_{k}\left(\q\right)} \left(\widetilde{M}^{u,v}_{k,n}\right)^{\frac{1}{2}}. 
        \end{align}
    By using this, \eqref{eq:st_cor_s} with $\ell = k - 2$ and the 
    Schwarz inequality, we see that 
        \begin{align}
            &\E_{\nu_{\mathbf{q}}}\left[ \left| Y^{\left( \left\lfloor n^{a} u \right\rfloor \right)}_{2}\left(\Psi_{k-2}\left(\eta\right), n^2\right) - Y^{\left( \left\lfloor n^{a} v \right\rfloor \right)}_{2}\left(\Psi_{k-2}\left(\eta\right), n^2\right) \right|  \right] \\
            &\le \E_{\nu_{\mathbf{q}}}\left[ \left| Y^{\left( \left\lfloor n^{a} u \right\rfloor \right)}_{2}\left(\Psi_{k-2}\left(\eta\right), n^2\right) - Y^{\left( \left\lfloor n^{a} v \right\rfloor \right)}_{2}\left(\Psi_{k-2}\left(\eta\right), n^2\right) \right|^2  \right]^{\frac{1}{2}}  \\
            &\le \left( \frac{v^{\mathrm{eff}}_{2}\left(\theta^{k-2}\mathbf{q}\right)^2}{\bar{r}_{k}\left(\q\right)^2}\widetilde{M}^{u,v}_{k,n} + \frac{4 v^{\mathrm{eff}}_{1}\left(\theta^{k-2}\q\right)^2 \b_{k-1}\left(\mathbf{q}\right) v^{\mathrm{eff}}_{1}\left(\theta^{k-1}\mathbf{q}\right)}{\bar{r}_{k - 1}\left(\q\right)^2 \bar{r}_{k}\left(\q\right)} \left(\widetilde{M}^{u,v}_{k,n}\right)^{\frac{1}{2}}  \right)^{\frac{1}{2}} \\
            &\le \frac{v^{\mathrm{eff}}_{2}\left(\theta^{k-2}\mathbf{q}\right)}{\bar{r}_{k}\left(\q\right)}\left(\widetilde{M}^{u,v}_{k,n}\right)^{\frac{1}{2}} + \frac{2 v^{\mathrm{eff}}_{1}\left(\theta^{k-2}\q\right) \b_{k-1}\left(\mathbf{q}\right) v^{\mathrm{eff}}_{1}\left(\theta^{k-1}\mathbf{q}\right)^{\frac{1}{2}}}{\bar{r}_{k - 1}\left(\q\right) \bar{r}_{k}\left(\q\right)^{\frac{1}{2}}} \left(\widetilde{M}^{u,v}_{k,n}\right)^{\frac{1}{4}}. 
        \end{align}
    By repeating the above procedure from $\ell = k - 1$ to $0$, we see that there exists some constant $c_{k} = c_{k}\left(\q\right)$ such that for any $0 \le \ell \le k - 1$, 
        \begin{align}
            \E_{\nu_{\mathbf{q}}}\left[ \left| Y^{\left( \left\lfloor n^{a} u \right\rfloor \right)}_{\ell}\left(\Psi_{k-\ell}\left(\eta\right), n^2\right) - Y^{\left( \left\lfloor n^{a} v \right\rfloor \right)}_{\ell}\left(\Psi_{k-\ell}\left(\eta\right), n^2\right) \right|  \right] \le c_{k}   \sum_{h = 0}^{k} \left(\widetilde{M}^{u,v}_{k,n}\right)^{\left(\frac{1}{2}\right)^{h}}. 
        \end{align}
    Hence, it is sufficient to show that 
        \begin{align}
            \varlimsup_{n \to \infty} \frac{1}{n^2} \E_{\nu_{\mathbf{q}}}\left[ \left| M^{\left( \left\lfloor n^{a} u \right\rfloor \right)}_{k}\left(n^2\right) - M^{\left( \left\lfloor n^{a} v \right\rfloor \right)}_{k}\left(n^2\right) \right|^2 \right] = 0 \label{eq:stcor_pur}. 
        \end{align}

    From now on we prove \eqref{eq:stcor_pur}. From \eqref{ineq:overtaken}, \eqref{ineq:M_kl} and the triangle inequality, we have 
        \begin{align}
            &\E_{\nu_{\mathbf{q}}}\left[ \left| M^{\left( \left\lfloor n^{a} u \right\rfloor \right)}_{k}\left(n^2\right) - M^{\left( \left\lfloor n^{a} v \right\rfloor \right)}_{k}\left(n^2\right) \right|^2 \right]^{\frac{1}{2}} \\
            &\le 2\E_{\nu_{\mathbf{q}}}\left[ \left| \sum_{\ell = k + 1}^{\infty} \left( M^{\left( \left\lfloor n^{a} u \right\rfloor \right)}_{k,\ell}\left(n^2\right) - M^{\left( \left\lfloor n^{a} v \right\rfloor \right)}_{k,\ell}\left(n^2\right) \right) \right|^2 \right]^{\frac{1}{2}} \\
            &\le 2\sum_{\ell = k + 1}^{\infty} \E_{\nu_{\mathbf{q}}}\left[ \left| M^{\left( \left\lfloor n^{a} u \right\rfloor \right)}_{k,\ell}\left(n^2\right) - M^{\left( \left\lfloor n^{a} v \right\rfloor \right)}_{k,\ell}\left(n^2\right) \right|^2 \right]^{\frac{1}{2}} \\
            &\le 2\sum_{\ell = k + 1}^{\infty} \E_{\nu_{\mathbf{q}}}\left[ \left| \sum^{J_{\ell}\left( \sigma^{\left(\left\lfloor n^{a} u \right\rfloor\right)}_{k, \ell}\left( 0 \right) \right) - 1}_{j = J_{\ell}\left(  \sigma^{\left(\left\lfloor n^{a} u \right\rfloor\right)}_{k, \ell}\left(n^2 \right) \right)} \zeta_{\ell}\left( j \right) - \sum^{J_{\ell}\left( \sigma^{\left(\left\lfloor n^{a} v \right\rfloor\right)}_{k, \ell}\left( 0 \right) \right)}_{j = J_{\ell}\left(  \sigma^{\left(\left\lfloor n^{a} v \right\rfloor\right)}_{k, \ell}\left(n^2 \right) \right) - 1} \zeta_{\ell}\left( j \right) \right|^2 \right]^{\frac{1}{2}} \\
            & \ + 2\sum_{\ell = k + 1}^{\infty} \E_{\nu_{\mathbf{q}}}\left[ \left| \sum^{J_{\ell}\left( \sigma^{\left(\left\lfloor n^{a} u \right\rfloor\right)}_{k, \ell}\left( 0 \right) \right)}_{j = J_{\ell}\left(  \sigma^{\left(\left\lfloor n^{a} u \right\rfloor\right)}_{k, \ell}\left(n^2 \right) \right) - 1} \zeta_{\ell}\left( j \right) - \sum^{J_{\ell}\left( \sigma^{\left(\left\lfloor n^{a} v \right\rfloor\right)}_{k, \ell}\left( 0 \right) \right) - 1}_{j = J_{\ell}\left(  \sigma^{\left(\left\lfloor n^{a} v \right\rfloor\right)}_{k, \ell}\left(n^2 \right) \right)} \zeta_{\ell}\left( j \right) \right|^2 \right]^{\frac{1}{2}}.
        \end{align}
    Hence, it is sufficient to show 
        \begin{align}
            &\varlimsup_{n \to \infty} \frac{1}{n^2} \E_{\nu_{\mathbf{q}}}\left[ \left| \sum^{J_{\ell}\left( \sigma^{\left(\left\lfloor n^{a} u \right\rfloor\right)}_{k, \ell}\left( 0 \right) \right) - 1}_{j = J_{\ell}\left(  \sigma^{\left(\left\lfloor n u \right\rfloor\right)}_{k, \ell}\left(n^2 \right) \right)} \zeta_{\ell}\left( j \right) - \sum^{J_{\ell}\left( \sigma^{\left(\left\lfloor n^{a} v \right\rfloor\right)}_{k, \ell}\left( 0 \right) \right)}_{j = J_{\ell}\left(  \sigma^{\left(\left\lfloor n v \right\rfloor\right)}_{k, \ell}\left(n^2 \right) \right) - 1} \zeta_{\ell}\left( j \right) \right|^2 \right] = 0, \\ \label{eq:stcor_1}\\
            &\varlimsup_{n \to \infty} \frac{1}{n^2} \E_{\nu_{\mathbf{q}}}\left[ \left| \sum^{J_{\ell}\left( \sigma^{\left(\left\lfloor n^{a} u \right\rfloor\right)}_{k, \ell}\left( 0 \right) \right)}_{j = J_{\ell}\left(  \sigma^{\left(\left\lfloor n u \right\rfloor\right)}_{k, \ell}\left(n^2 \right) \right) - 1} \zeta_{\ell}\left( j \right) - \sum^{J_{\ell}\left( \sigma^{\left(\left\lfloor n^{a} v \right\rfloor\right)}_{k, \ell}\left( 0 \right) \right) - 1}_{j = J_{\ell}\left(  \sigma^{\left(\left\lfloor n v \right\rfloor\right)}_{k, \ell}\left(n^2 \right) \right)} \zeta_{\ell}\left( j \right) \right|^2 \right] = 0, \\ \label{eq:stcor_2}
        \end{align}
    for any $u > v$ and $\ell \ge k + 1$. In the following we will only show \eqref{eq:stcor_1}. We note that  \eqref{eq:stcor_2} can be proved by the same computation. 

    First we prepare an estimate for $\sigma^{\left(i\right)}_{k, \ell}\left( 0 \right)$. Since $|X^{(i)}_{h}(0)| \ge 2h |i|$ for any $i \in \Z$ and $h \in \N$, we have 
        \begin{align}
            0 \le \sigma^{\left(i\right)}_{k, \ell}\left( {\eta,} 0 \right) \le J_{k}\left({\eta}, i \right), \label{eq:stcor_3}
        \end{align}
    for any $i \ge 1$, and 
        \begin{align}
            J_{k}\left({\eta}, i \right) \le \sigma^{\left(i\right)}_{k, \ell}\left( {\eta,} 0 \right) \le 1, \label{eq:stcor_4}
        \end{align}
    for any  $i \le 0$. 
    In addition, for notational simplicity, we define 
        \begin{align} 
            I^{(i)}_{k,\ell}\left(\eta {,n} \right) :=  \sigma^{(i)}_{k, \ell}\left( {\eta,} 0 \right) -  \sigma^{(i)}_{k, \ell}\left( {\eta,} n^2 \right). 
        \end{align}
    Now we estimate \eqref{eq:stcor_1}. Observe that 
        \begin{align}
            &\sum^{J_{\ell}\left( \sigma^{\left(\left\lfloor n^{a} u \right\rfloor\right)}_{k, \ell}\left( \eta, 0 \right) \right) - 1}_{j = J_{\ell}\left(  \sigma^{\left(\left\lfloor n^{a} u \right\rfloor\right)}_{k, \ell}\left( \eta, n^2 \right) \right)}\zeta_{\ell}\left(\eta, j \right) - \sum^{J_{\ell}\left(  \sigma^{\left(\left\lfloor n^{a} v \right\rfloor\right)}_{k, \ell}\left( \eta, 0 \right) \right)}_{j = J_{\ell}\left( \sigma^{\left(\left\lfloor n^{a} v \right\rfloor\right)}_{k, \ell}\left( \eta, n^2 \right) \right) - 1} \zeta_{\ell}\left(\eta, j \right) \\
            &= \sum^{\sigma^{\left(\left\lfloor n^{a} u \right\rfloor\right)}_{k, \ell}\left( \eta, 0 \right) - 1}_{j =   \sigma^{\left(\left\lfloor n^{a} u \right\rfloor\right)}_{k, \ell}\left( \eta, n^2 \right)}\zeta_{\ell}\left(\eta, J_{\ell}\left(\eta, j\right) \right) - \sum^{ \sigma^{\left(\left\lfloor n^{a} v \right\rfloor\right)}_{k, \ell}\left( \eta, 0 \right) }_{j = \sigma^{\left(\left\lfloor n^{a} v \right\rfloor\right)}_{k, \ell}\left( \eta, n^2 \right) - 1} \zeta_{\ell}\left(\eta, J_{\ell}\left(\eta, j\right) \right) \\
            &=  \sum^{\sigma^{\left(\left\lfloor n^{a} u \right\rfloor\right)}_{k, \ell}\left( \eta, 0 \right) - 1}_{j =   \sigma^{\left(\left\lfloor n^{a} v \right\rfloor\right)}_{k, \ell}\left( \eta, 0 \right) + 1}\left( \zeta_{\ell}\left(\eta, J_{\ell}\left(\eta, j\right) \right) - \frac{1}{1 - q_\ell} \right) \\
            & \ - \sum^{ \sigma^{\left(\left\lfloor n^{a} u \right\rfloor\right)}_{k, \ell}\left( \eta, n^2 \right) }_{j = \sigma^{\left(\left\lfloor n^{a} v \right\rfloor\right)}_{k, \ell}\left( \eta, n^2 \right)} \left( \zeta_{\ell}\left(\eta, J_{\ell}\left(\eta, j\right) \right) - \frac{1}{1 - q_\ell} \right)  \\
            & \ + \frac{1}{1-q_\ell} \left( I^{\left(\left\lfloor n^{a} u \right\rfloor\right)}_{k,\ell}\left(\eta{,n}\right) - I^{\left(\left\lfloor n^{a} v \right\rfloor\right)}_{k,\ell}\left(\eta{,n}\right) \right)  \label{eq:st_cor_fin0}. 
        \end{align}
    For the first term in \eqref{eq:st_cor_fin0}, by using \eqref{eq:stcor_3} and \eqref{eq:stcor_4}, we have 
        \begin{align}
            &\left| \sum^{\sigma^{\left(\left\lfloor n^{a} u \right\rfloor\right)}_{k, \ell}\left( \eta, 0 \right) - 1}_{j =   \sigma^{\left(\left\lfloor n^{a} v \right\rfloor\right)}_{k, \ell}\left( \eta, 0 \right) + 1}\left( \zeta_{\ell}\left(\eta, J_{\ell}\left(\eta, j\right) \right) - \frac{1}{1 - q_\ell} \right) \right| \\
            &\le \sup_{ J_{k}\left( \eta,  -n^{a}\left(|u| + |v|\right) \right) \le m < m' \le J_{k}\left( \eta,  n^{a}\left(|u| + |v|\right) \right) } \left| \sum^{m'-1}_{j = m+1} \left( \zeta_{\ell}\left(\eta, J_{\ell}\left(\eta, j\right) \right) - \frac{1}{1 - q_\ell} \right) \right|.
        \end{align}
    Since $\left(\zeta_{\ell}\left( J_{\ell}\left(j\right)\right) \right)_{j \in \Z}$ are i.i.d. with mean $(1-q_\ell)^{-1}$, and $\zeta_\ell$ is independent of $\zeta_k$, by Doob's inequality we get 
        \begin{align}
            &\E_{\nu_\q}\left[ \sup_{ J_{k}\left(  -n^{a}\left(|u| + |v|\right) \right) \le m < m' \le J_{k}\left(  n^{a}\left(|u| + |v|\right) \right) } \left| \sum^{m'-1}_{j = m+1} \left( \zeta_{\ell}\left( J_{\ell}\left( j\right) \right) - \frac{1}{1 - q_\ell} \right) \right|^2 \right] \\
            &\le 4 \E_{\nu_\q}\left[ \left| \sum^{J_{k}\left(  n^{a}\left(|u| + |v|\right) \right)-1}_{j = J_{k}\left(  -n^{a}\left(|u| + |v|\right)\right) + 1} \left( \zeta_{\ell}\left( J_{\ell}\left( j\right) \right) - \frac{1}{1 - q_\ell} \right) \right|^2 \right] \\
            &\le 8 n^{a}\left(|u| + |v|\right) \bar{J}_{k}\left(\q\right) \E_{\nu_\q}\left[ \left(\zeta_{\ell}\left( J_{\ell}\left( 0\right) \right) - \frac{1}{1 - q_\ell}\right)^2 \right].
        \end{align}
    Hence we have 
        \begin{align}
            \lim_{n \to \infty} \frac{1}{n^2}\E_{{\nu_{\q}}}\left[ \left| \sum^{\sigma^{\left(\left\lfloor n^{a} u \right\rfloor\right)}_{k, \ell}\left(  0 \right) - 1}_{j =   \sigma^{\left(\left\lfloor n^{a} v \right\rfloor\right)}_{k, \ell}\left( 0 \right) + 1}\left( \zeta_{\ell}\left( J_{\ell}\left( j\right) \right) - \frac{1}{1 - q_\ell} \right) \right|^2 \right] = 0.
        \end{align}
    For the second term in \eqref{eq:st_cor_fin0}, we observe that
        \begin{align}
            &\sum^{ \sigma^{\left(\left\lfloor n^{a} u \right\rfloor\right)}_{k, \ell}\left( \eta, n^2 \right) }_{j = \sigma^{\left(\left\lfloor n^{a} v \right\rfloor\right)}_{k, \ell}\left( \eta, n^2 \right)} \left( \zeta_{\ell}\left(\eta, J_{\ell}\left(\eta, j\right) \right) - \frac{1}{1 - q_\ell} \right) \\
            &= \sum^{ \sigma^{\left(\left\lfloor n^{a} u \right\rfloor\right)}_{k, \ell}\left( \eta, n^2 \right) }_{j = \sigma^{\left(\left\lfloor n^{a} v \right\rfloor\right)}_{k, \ell}\left( \eta, n^2 \right)} \left( \zeta_{\ell - k}\left(\Psi_{k}\left(\eta\right), J_{\ell- k}\left(\Psi_{k}\left(\eta\right), j\right) \right) - \frac{1}{1 - q_\ell} \right) \\
            &= \sum^{ \tilde{\sigma}^{\left(\left\lfloor n^{a} u \right\rfloor\right)}_{k, \ell}\left( \eta, n^2 \right) }_{j = \tilde{\sigma}^{\left(\left\lfloor n^{a} v \right\rfloor\right)}_{k, \ell}\left( \eta, n^2 \right)} \left( \zeta_{\ell-k}\left(T^{n^2}\Psi_{k}\left(\eta\right), J_{\ell - k}\left(T^{n^2}\Psi_{k}\left(\eta\right), j\right) \right) - \frac{1}{1 - q_\ell} \right),
        \end{align}
    where at the second line we use \eqref{eq:semig_zeta}, and $\tilde{\sigma}^{\left(i\right)}_{k, \ell}\left( \eta, m \right)$ is defined as
        \begin{align}
            \tilde{\sigma}^{\left(i\right)}_{k, \ell}\left( \eta, m \right) := \inf\left\{ j \in \Z \ ; \ X^{(j)}_{\ell - k}\left( T^{m} \Psi_{k}\left(\tilde{\eta}\right), 0 \right) \ge J_{k}\left(\tilde{\eta}, i\right)  \right\},
        \end{align}
    for any $k < \ell$, $i \in \Z$ and $m \in \Z_{\ge 0}$. Since $\tilde{\sigma}^{\left(i\right)}_{k, \ell}\left( \eta, m \right)$ also satisfies \eqref{eq:stcor_3} and \eqref{eq:stcor_4} by replacing $\sigma^{\left(i\right)}_{k, \ell}\left( \eta, 0 \right)$ with $\tilde{\sigma}^{\left(i\right)}_{k, \ell}\left( \eta, m \right)$, we have 
        \begin{align}
            &\left| \sum^{ \tilde{\sigma}^{\left(\left\lfloor n^{a} u \right\rfloor\right)}_{k, \ell}\left( \eta, n^2 \right) }_{j = \tilde{\sigma}^{\left(\left\lfloor n^{a} v \right\rfloor\right)}_{k, \ell}\left( \eta, n^2 \right)} \left( \zeta_{\ell-k}\left(T^{n^2}\Psi_{k}\left({\eta}\right), J_{\ell - k}\left(T^{n^2}\Psi_{k}\left({\eta}\right), j\right) \right) - \frac{1}{1 - q_\ell} \right) \right| \\
            &\le \sup_{ J_{k}\left( \eta,  -n^{a}\left(|u| + |v|\right) \right) \le m < m' \le J_{k}\left( \eta,  n^{a}\left(|u| + |v|\right) \right) } \\
            & \quad \quad \quad \left| \sum^{m'-1}_{j = m+1} \left( \zeta_{\ell-k}\left(T^{n^2}\Psi_{k}\left(\eta\right), J_{\ell - k}\left(T^{n^2}\Psi_{k}\left(\eta\right), j\right) \right) - \frac{1}{1 - q_\ell} \right) \right|. 
        \end{align}
    {Since the spatial shift $\tau_{s_{\infty}\left(T^{n^2}\Psi_{k}\left(\eta\right), 0\right)}$ does not change the order of solitons in $T^{n^2}\Psi_{k}\left(\eta\right)$, we have 
        \begin{align}
            &\zeta_{\ell-k}\left(T^{n^2}\Psi_{k}\left(\eta\right), J_{\ell - k}\left(T^{n^2}\Psi_{k}\left(\eta\right), j\right) \right) \\ 
            &= \zeta_{\ell-k}\left(\tau_{s_{\infty}\left(T^{n^2}\Psi_{k}\left(\eta\right), 0\right)} T^{n^{2}}\Psi_{k}\left(\eta\right), J_{\ell - k}\left(\tau_{s_{\infty}\left(T^{n^2}\Psi_{k}\left(\eta\right), 0\right)} T^{n^{2}}\Psi_{k}\left(\eta\right), j\right) \right).
        \end{align}
    By \eqref{eq:shift_q}, Remark \ref{lem:indep_skip} and Lemma \ref{lem:T_inv}, we see that 
        \begin{align}
            \left(\zeta_{\ell-k}\left(T^{n^2}\Psi_{k}\left(\eta\right), J_{\ell - k}\left(T^{n^2}\Psi_{k}\left(\eta\right), j\right) \right)\right)_{j \in \Z} \overset{d}{=} \left( \zeta_{\ell}\left(\eta, J_{\ell}\left(\eta,j\right)\right) \right)_{j \in \Z},
        \end{align}
    under $\nu_{\q}$, and $\left(\zeta_{\ell-k}\left(T^{n^2}\Psi_{k}\left(\eta\right), J_{\ell - k}\left(T^{n^2}\Psi_{k}\left(\eta\right), j\right) \right)\right)_{j \in \Z}$ is independent of $\left(\zeta_{k}\left(\eta,j\right)\right)_{j \in \Z}$. 
    From the above discussion and} Doob's inequality, we obtain 
        \begin{align}
            &\E_{\nu_{\q}}\Bigg[ \sup_{ J_{k}\left(  -n^{a}\left(|u| + |v|\right) \right) \le m < m' \le J_{k}\left( n^{a}\left(|u| + |v|\right) \right) } \\
            & \quad \quad \quad \left| \sum^{m'-1}_{j = m+1} \left( \zeta_{\ell- k}\left(T^{n^2}\Psi_{k}\left(\eta\right), J_{\ell - k}\left(T^{n^2}\Psi_{k}\left(\eta\right), j\right) \right) - \frac{1}{1 - q_\ell} \right) \right|^2 \Bigg] \\
            &\le 8  n^{a}\left(|u| + |v|\right) \bar{J}_{k}\left(\q\right) \E_{\nu_\q}\left[ \left(\zeta_{\ell}\left( J_{\ell}\left( 0\right) \right) - \frac{1}{1 - q_\ell}\right)^2 \right].
        \end{align}
    Hence we have 
        \begin{align}
            \lim_{n \to \infty} \frac{1}{n^2}\E\left[ \left| \sum^{\sigma^{\left(\left\lfloor n^{a} u \right\rfloor\right)}_{k, \ell}\left(  n^2 \right) }_{j = \sigma^{\left(\left\lfloor n^{a} v \right\rfloor\right)}_{k, \ell}\left( n^2 \right)}\left( \zeta_{\ell}\left(J_{\ell}\left( j\right) \right) - \frac{1}{1 - q_\ell} \right) \right|^2 \right] = 0.
        \end{align}
    For the third term in \eqref{eq:st_cor_fin0}, since $\left(J_{k}(j) - J_{k}(j-1) \right)_{j \in \Z}$ are i.i.d. and have geometric distribution with mean $\left(1 - q_{k}\right) q_k^{-1}$, by using \eqref{eq:stcor_3} and \eqref{eq:stcor_4} we have 
        \begin{align}
            &\varlimsup_{n \to \infty} \E_{\nu_\q}\left[ \left( \frac{\sigma^{\left(\left\lfloor n^{a}u \right\rfloor\right)}_{k, \ell}\left(0\right) - \sigma^{\left(\left\lfloor n^{a}v \right\rfloor\right)}_{k, \ell}\left(0\right)}{n^{a}} \right)^4 \right] \\
            &\le 4\varlimsup_{n \to \infty} \E_{\nu_\q}\left[  \frac{J_{k}\left(\left\lfloor |n^{a}u| \right\rfloor\right)^4 +  J_{k}\left(\left\lfloor |n^{a}v| \right\rfloor\right)^4}{n^{4a}} \right] \\
            &\le \frac{4\left(1 - q_{k}\right)^4 \left(|u|^4 + |v|^4 \right)}{q_{k}^4}.
        \end{align}
    {Also, by Lemma \ref{lem:T_inv}, \eqref{eq:stcor_3} and \eqref{eq:stcor_4}, we get 
        \begin{align}
            &\varlimsup_{n \to \infty} \E_{\nu_\q}\left[ \left( \frac{\sigma^{\left(\left\lfloor n^{a}u \right\rfloor\right)}_{k, \ell}\left(n^2\right) - \sigma^{\left(\left\lfloor n^{a}v \right\rfloor\right)}_{k, \ell}\left(n^2\right)}{n^{a}} \right)^4 \right] \\
            &= \varlimsup_{n \to \infty} \E_{\nu_\q}\left[ \left( \frac{\tilde{\sigma}^{\left(\left\lfloor n^{a}u \right\rfloor\right)}_{k, \ell}\left(n^2\right) - \tilde{\sigma}^{\left(\left\lfloor n^{a}v \right\rfloor\right)}_{k, \ell}\left(n^2\right)}{n^{a}} \right)^4 \right] \\
            &\le 4\varlimsup_{n \to \infty} \E_{\nu_\q}\left[  \frac{J_{k}\left(\tau_{s_{\infty}\left(T^{n^2}\eta,0\right)}T^{n^2} \eta,  \left\lfloor |n^{a}u| \right\rfloor\right)^4 }{n^{4a}} \right] \\
            & \quad + 4\varlimsup_{n \to \infty} \E_{\nu_\q}\left[  \frac{  J_{k}\left(\tau_{s_{\infty}\left(T^{n^2}\eta,0\right)}T^{n^2} \eta,\left\lfloor |n^{a}v| \right\rfloor\right)^4}{n^{4a}} \right] \\
            &= 4\varlimsup_{n \to \infty} \E_{\nu_\q}\left[  \frac{J_{k}\left(\left\lfloor |n^{a}u| \right\rfloor\right)^4 +  J_{k}\left(\left\lfloor |n^{a}v| \right\rfloor\right)^4}{n^{4a}} \right] \\
            &\le \frac{4\left(1 - q_{k}\right)^4 \left(|u|^4 + |v|^4 \right)}{q_{k}^4}.
        \end{align}}
    Therefore, from \eqref{eq:st_cor_3_3}, by setting 
        \begin{align}
            A_{L,n} := \left\{  \left| \frac{1}{n^{a}} \sigma^{\left( \left\lfloor n^{a} u \right\rfloor \right)}_{k,\ell}\left(0\right) - \frac{1}{n^{a}} \sigma^{\left( \left\lfloor n^{a} v \right\rfloor \right)}_{k,\ell}\left(0\right) - f_{k,\ell}\left(\q, u-v \right) \right| > \frac{L}{\sqrt{n^{a}}}\right\},
        \end{align}     
    with some $L > 0$, and using the Schwarz inequality, we have 
        \begin{align}
            &\E_{\nu_{\q}}\left[ \left( \frac{\sigma^{\left( \left\lfloor n^{a} u \right\rfloor \right)}_{k, \ell}\left(0\right) - \sigma^{\left(\left\lfloor n^{a}v \right\rfloor\right)}_{k, \ell}\left(0\right)}{n^{a}} - f_{k,\ell}\left(\q, u-v \right) \right)^2 \right] \\
            &= \E_{\nu_{\q}}\left[ \mathbf{1}_{A_{L,n}^{c}} \left( \frac{\sigma^{\left( \left\lfloor n^{a} u \right\rfloor \right)}_{k, \ell}\left(0\right) - \sigma^{\left(\left\lfloor n^{a}v \right\rfloor\right)}_{k, \ell}\left(0\right)}{n^{a}} - f_{k,\ell}\left(\q,u -v \right) \right)^2 \right] \\
            & + \E_{\nu_{\q}}\left[ \mathbf{1}_{A_{L,n}} \left( \frac{\sigma^{\left( \left\lfloor n^{a} u \right\rfloor \right)}_{k, \ell}\left(0\right) - \sigma^{\left(\left\lfloor n^{a}v \right\rfloor\right)}_{k, \ell}\left(0\right)}{n^{a}} - f_{k,\ell}\left(\q,u -v \right) \right)^2 \right] \\
            &\le \frac{L^2}{n^{a}} + 2 \E_{\nu_{\q}}\left[ \mathbf{1}_{A_{L,n}} \left( \frac{\sigma^{\left( \left\lfloor n^{a} u \right\rfloor \right)}_{k, \ell}\left(0\right) - \sigma^{\left(\left\lfloor n^{a}v \right\rfloor\right)}_{k, \ell}\left(0\right)}{n^{a}} \right)^2 \right] \\
            & \quad + 2\nu_{\q}\left(A_{L,n}\right) \left|f_{k,\ell}\left(\q,u -v \right) \right|^2 \\
            &\le \frac{L^2}{n^{a}} + 2 \nu_{\q}\left(A_{L,n}\right)^{\frac{1}{2}} \E_{\nu_\q}\left[ \left( \frac{\sigma^{\left( \left\lfloor n^{a} u \right\rfloor \right)}_{k, \ell}\left(0\right) - \sigma^{\left(\left\lfloor n^{a}v \right\rfloor\right)}_{k, \ell}\left(0\right)}{n^{a}} \right)^4 \right]^{\frac{1}{2}} \\
            & \quad + 2\nu_{\q}\left(A_{L,n}\right) \left|f_{k,\ell}\left(\q, u-v \right) \right|^2,
        \end{align}
    and thus we obtain 
        \begin{align}
            \varlimsup_{n \to \infty} \E_{\nu_{\q}}\left[ \left( \frac{\sigma^{\left(\left\lfloor n^{a}u \right\rfloor\right)}_{k, \ell}\left(0\right) - \sigma^{\left(\left\lfloor n^{a}v \right\rfloor\right)}_{k, \ell}\left(0\right)}{n^{a}} - f_{k,\ell}\left(\q, -v \right) \right)^2 \right] = 0.
        \end{align}
    {By a similar argument and using \eqref{eq:st_cor_3_4} instead of \eqref{eq:st_cor_3_3}, we also have}
        \begin{align}
            \varlimsup_{n \to \infty} \E_{\nu_{\q}}\left[ \left( \frac{\sigma^{\left(\left\lfloor n^{a}u \right\rfloor\right)}_{k, \ell}\left(n^2\right) - \sigma^{\left(\left\lfloor n^{a}v \right\rfloor\right)}_{k, \ell}\left(n^2\right)}{n^{a}} - f_{k,\ell}\left(\q, -v \right) \right)^2 \right] = 0.
        \end{align}
    Since $0 \le a \le 1$, from the above estimates we obtain
        \begin{align}
            \varlimsup_{n \to \infty} \frac{1}{n^2} \E_{\nu_{\mathbf{q}}}\left[ \left| I^{\left(\left\lfloor n^{a}u \right\rfloor\right)}_{k,\ell}{\left(n\right)} - I^{\left(\left\lfloor n^{a} v\right\rfloor\right)}_{k,\ell}{\left(n\right)} \right|^{2} \right] = 0.
        \end{align}
    By combining the above and using the Schwarz inequality, we have \eqref{eq:stcor_1},
    and thus Theorem \ref{thm:st_cor} is proved.

    \end{proof}
    
\section{Proof of Theorem \ref{thm:Markov}}\label{sec:Markov_proof}

    In this section we show Theorem \ref{thm:Markov}. {First}, we prepare two lemmas. Before describing the lemmas, we recall that the inverse of one-step time evolution $T^{-1} : \Omega \to \Omega$ is given by 
        \begin{align}
            T^{-1}\eta\left(x\right) = \overleftarrow{\left(T\overleftarrow{\eta}\right)}\left(x\right), 
        \end{align}
    where $\overleftarrow{\eta}\left(x\right) := \eta(-x{-1})$, $x \in \Z$, see \cite[(2.12)]{CKST}. By using the carrier $W$ and \eqref{def:T_carrier}, $T^{-1}\eta$ can be represented as
        \begin{align}
            T^{-1}\eta\left(x\right) = \eta\left(x\right) - W\left(\overleftarrow{\eta}, -x\right) + W\left(\overleftarrow{\eta}, -x - 1\right).
        \end{align}
    We also recall that the ball density $\rho\left(\q\right)$ is defined in \eqref{def:ball_den}.
        
        \begin{lemma}\label{lem:markov_eta}
            {Suppose that $\q \in \mathcal{Q}_{\mathrm{M}}$ and fix $x \in \Z$. Under $\mu_{\q}$, } $\left(T^{n}\eta\left(x\right) \right)_{n \in \Z}$ is an irreducible and non-periodic two-sided Markov chain on $\{ 0,1 \}$ whose transition matrix is given by 
        \begin{align}\label{def:tran_R}
            R = \begin{pmatrix}
                R_{00} & R_{01} \\
                R_{10} & R_{11}
            \end{pmatrix} := 
            \begin{pmatrix}
                1 - \frac{{\rho\left(\q\right)}}{1 - {\rho\left(\q\right)}} & \frac{{\rho\left(\q\right)}}{1 - {\rho\left(\q\right)}} \\
                1 & 0
            \end{pmatrix},
        \end{align}
            and invariant measure $\pi \in [0,1]^2$ for $R$ is the Bernoulli measure with density ${\rho\left(\q\right)}$. 
        \end{lemma}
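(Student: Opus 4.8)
The plan is to realize $\left(T^{n}\eta(x)\right)_{n\in\Z}$ as a stationary two‑sided Markov chain by marrying the local update rule of the BBS with the spatial Markov property of $\mu_{\q}$. Since $\mu_{\q}$ is invariant and $T$ is a bijection on $\Omega$, the two‑sided sequence $\left(T^{n}\eta\right)_{n\in\Z}$ is stationary under $\mu_{\q}$, hence so is the $\{0,1\}$-valued sequence $\left(T^{n}\eta(x)\right)_{n\in\Z}$. The characterization \eqref{eq:char_dy} pins down the admissible local transitions at the fixed site: if $T^{n}\eta(x)=1$ then $x$ is not a record of $T^{n}\eta$, so $T^{n+1}\eta(x)=1-1=0$, while if $T^{n}\eta(x)=0$ then $T^{n+1}\eta(x)=0$ or $1$ according to whether $x$ is a record of $T^{n}\eta$. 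Thus only the patterns $00,01,10$ occur, which already gives $R_{11}=0$ and $R_{10}=1$; it remains to prove the Markov property and to compute $R_{01}$.

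The core of the argument is an asymmetric one‑sided localization. I would first record the elementary \emph{forward locality} that, for every $\zeta\in\Omega$ and $w\in\Z$, the value $(T\zeta)(w)$ is $\sigma\big((\zeta(y))_{y\le w}\big)$-measurable: this is immediate from \eqref{def:carrier} and \eqref{def:T_carrier}, since the carrier $W(\zeta,\cdot)$ is generated from the left. By induction this yields $T^{m}\eta(x)\in\sigma\big((\eta(y))_{y\le x}\big)$ for all $m\ge 0$, because $T^{m}\eta(x)=T(T^{m-1}\eta)(x)$ depends on $(T^{m-1}\eta(y))_{y\le x}$, each of which is itself $\sigma\big((\eta(z))_{z\le x}\big)$-measurable. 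For the backward direction I would use the reversal identity $T^{-1}\eta(x)=(T\overleftarrow{\eta})(-x-1)$ together with forward locality applied to $\overleftarrow{\eta}$: since $\overleftarrow{\eta}(y)=\eta(-y-1)$, the indices $y\le -x-1$ correspond exactly to $\eta(z)$ with $z\ge x$, so $T^{-1}\eta(x)\in\sigma\big((\eta(z))_{z\ge x}\big)$, and again induction gives $T^{-m}\eta(x)\in\sigma\big((\eta(y))_{y\ge x}\big)$ for all $m\ge 0$. Consequently the entire future $(T^{n}\eta(x))_{n\ge 1}$ lies in $\sigma\big((\eta(y))_{y\le x}\big)$ and the entire past $(T^{n}\eta(x))_{n\le -1}$ lies in $\sigma\big((\eta(y))_{y\ge x}\big)$, while the present is $\eta(x)$. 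Because $\q\in\mathcal{Q}_{\mathrm{M}}$ means $\mu_{\q}$ is a spatial two‑sided Markov measure, conditionally on $\eta(x)$ the algebras $\sigma\big((\eta(y))_{y\le x-1}\big)$ and $\sigma\big((\eta(y))_{y\ge x+1}\big)$ are independent; hence so are $\sigma\big((\eta(y))_{y\le x}\big)$ and $\sigma\big((\eta(y))_{y\ge x}\big)$, i.e. future and past of $(T^{n}\eta(x))_{n}$ are conditionally independent given the present. This is exactly the two‑sided Markov property.

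To identify $R$ I would compute its single nontrivial entry. Conditioning on $\eta(x)=0$, the event $\{T\eta(x)=1\}$ coincides with $\{x\text{ is not a record}\}$, so by \eqref{eq:r_rho} and the marginal $\mu_{\q}(\eta(x)=0)=1-\rho(\q)$,
\begin{align}
R_{01}=\mu_{\q}\big(T\eta(x)=1 \mid \eta(x)=0\big)=1-\frac{\E_{\mu_{\q}}[r(0)]}{1-\rho(\q)}=1-\frac{1-2\rho(\q)}{1-\rho(\q)}=\frac{\rho(\q)}{1-\rho(\q)},
\end{align}
which, together with $R_{10}=1$, $R_{11}=0$ and $R_{00}=1-R_{01}$, gives \eqref{def:tran_R}. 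Irreducibility holds since $R_{01}>0$ (as $\rho(\q)>0$) and $R_{10}=1$, so the two states communicate; aperiodicity follows from the self‑loop $R_{00}=\tfrac{1-2\rho(\q)}{1-\rho(\q)}>0$, valid because $0<\rho(\q)<1/2$ for $\q\in\mathcal{Q}_{\mathrm{M}}$. Finally, solving $\pi R=\pi$ with $\pi_0+\pi_1=1$ gives $\pi=(1-\rho(\q),\rho(\q))$, the Bernoulli measure of density $\rho(\q)$, as required (and consistent with the stationary marginal $\mu_{\q}(\eta(x)\in\cdot)$).

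I expect the main obstacle to be the clean one‑sided localization, and specifically the backward claim $T^{-m}\eta(x)\in\sigma\big((\eta(y))_{y\ge x}\big)$ with \emph{no} leak to $\eta(x-1)$: the reversal $T^{-1}\eta=\overleftarrow{T\overleftarrow{\eta}}$ must be unfolded so that the carrier of $\overleftarrow{\eta}$ reaches exactly the sites $(\eta(y))_{y\ge x}$, after which the induction shows that the dependence does not spread across $x$ in either time direction. Once this asymmetric ``forward depends on the left, backward on the right'' statement is established, the conditional independence is an immediate consequence of the spatial Markov property and the remaining points (stationarity, the transition computation, irreducibility, aperiodicity, and the invariant measure) are routine.
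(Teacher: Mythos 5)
Your proof is correct and follows essentially the same route as the paper: one-sided measurability of the dynamics (forward iterates measurable with respect to the left half-line via the carrier, backward iterates with respect to the right half-line via the reversal $T^{-1}\eta = \overleftarrow{T\overleftarrow{\eta}}$), combined with the spatial Markov property of $\mu_{\q}$ and $T$-invariance, followed by a direct computation of $R$ using $\eta(x)=1\Rightarrow T\eta(x)=0$ and $\E_{\mu_{\q}}[r(0)]=1-2\rho(\q)$. Your assignment of directions (future $\leftrightarrow$ left, past $\leftrightarrow$ right) is in fact the correct one given that the carrier is built from $-\infty$, whereas the paper's proof states the two directions in the opposite (apparently typographical) order; the argument is otherwise identical, and your write-up of the transition-matrix entries and of irreducibility/aperiodicity is a more explicit version of the paper's ``direct computation.''
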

        \begin{proof}
            Since $\eta \in \Omega$ is a two-sided Markov chain under ${\mu_{\q}}$, $\left(\eta\left(y\right)\right)_{y \ge x}$ and $\left(\eta\left(x\right)\right)_{x \le y}$ are independent conditional on $\eta(x)$. On the other hand, since the carrier $W\left(\eta, x\right)$ is $\left(\eta\left(y\right)\right)_{y \le x}$-m'ble, we see that 
            $\left(T^{n}\eta\left(x\right) \right)_{n \ge 0}$ (resp. $\left(T^{n}\eta\left(x\right) \right)_{n \le 0}$ ) is $\left(\eta\left(y\right)\right)_{y \ge x}$-m'ble (resp. $\left(\eta\left(y\right)\right)_{y \le x}$-m'ble). Hence, {the processes} $\left(T^{n}\eta\left(x\right) \right)_{n \ge 0}$ and $\left(T^{n}\eta\left(x\right) \right)_{n \le 0}$ are independent conditional on $\eta(x)$, and this implies the Markov property at $n = 0$. Since ${\mu_{\q}}$ is $T$-invariant, $\left(T^{n+m}\eta\left(x\right) \right)_{n \ge 0}$ and $\left(T^{n+m}\eta\left(x\right) \right)_{n \le 0}$ are independent conditional on $T^{m}\eta(x)$ for any $m \in \Z$, and thus $\left(T^{n}\eta\left(x\right) \right)_{n \in \Z}$ is a two-sided Markov chain. 

            Since the invariant measure for $\left(T^{n}\eta\left(x\right) \right)_{n \in \Z}$ is the Bernoulli measure with density ${\rho\left(\q\right)}$, we can obtain $R$ by direct computation. 
        \end{proof}

        \begin{lemma}\label{lem:rec_Mix}
            Suppose that $\q \in \mathcal{Q}_{\mathrm{M}}$.  
            Then, for any {$x \in \Z$ and} $z \le 0$, the process $r\left(T^{n}\eta, x + z\right)$, $n \ge {1}$  and the event $\{ s_{\infty}(0) =z \}$ are independent conditional on ${T}\eta\left(x+z\right)$ {if $x \le 0$, and conditional on $\left(T\eta\left(y\right) ; z \le y \le x + z \right)$ if $x \ge 1$}. 
        \end{lemma}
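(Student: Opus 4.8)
The plan is to recast everything in terms of the configuration $T\eta$, which by the $T$-invariance of $\mu_{\q}$ has law $\mu_{\q}$ and hence, since $\q\in\mathcal{Q}_{\mathrm{M}}$, is again a two-sided (spatial) Markov chain, and then to apply its spatial Markov property with a conditioning window dictated by the case split. Write $w:=x+z$. The first step is to localize the time-process: since a site is a record of a configuration $\zeta$ exactly when $\zeta(\cdot)=T\zeta(\cdot)=0$, we have $r(T^{n}\eta,w)=\mathbf{1}\{T^{n}\eta(w)=0,\ T^{n+1}\eta(w)=0\}$, so $\bigl(r(T^{n}\eta,w)\bigr)_{n\ge 1}$ is a function of the single-site forward orbit $\bigl(T^{m}\eta(w)\bigr)_{m\ge 1}=\bigl(T^{j}(T\eta)(w)\bigr)_{j\ge 0}$. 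Because the carrier $W(\zeta,\cdot)$ is $(\zeta(y))_{y\le\cdot}$-measurable and $T\zeta(y)=\zeta(y)-W(\zeta,y)+W(\zeta,y-1)$, the map $T$ acts measurably with respect to sites on the left, so inductively the forward orbit of $T\eta$ at $w$ is $\sigma\!\bigl((T\eta(y))_{y\le w}\bigr)$-measurable. Hence the whole process $\bigl(r(T^{n}\eta,w)\bigr)_{n\ge 1}$ is $\sigma\!\bigl((T\eta(y))_{y\le w}\bigr)$-measurable.

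The second and key step is to show that $\{s_{\infty}(\eta,0)=z\}$ is $\sigma\!\bigl((T\eta(y))_{y\ge z}\bigr)$-measurable, even though by \eqref{eq:char_rec} a record of $\eta$ appears to depend on sites arbitrarily far to the left. Write $\{s_{\infty}(\eta,0)=z\}=\{z\text{ is a record of }\eta\}\cap\bigcap_{z<y\le 0}\{y\text{ is not a record of }\eta\}$, and recall that $y$ is a record of $\eta$ iff $\eta(y)=T\eta(y)=0$. Using the inversion formula $T^{-1}\zeta(y)=\overleftarrow{\left(T\overleftarrow{\zeta}\right)}(y)$, which makes $T^{-1}$ measurable with respect to sites on the right, we have $\eta(y)=T^{-1}(T\eta)(y)$, so the indicator $\mathbf{1}\{y\text{ is a record of }\eta\}=\mathbf{1}\{T^{-1}(T\eta)(y)=0,\ T\eta(y)=0\}$ is $\sigma\!\bigl((T\eta(y'))_{y'\ge y}\bigr)$-measurable. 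Since every constraint defining $\{s_{\infty}(\eta,0)=z\}$ concerns a site $y\ge z$, the event is $\sigma\!\bigl((T\eta(y))_{y\ge z}\bigr)$-measurable.

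With these two measurability facts the conclusion follows from the spatial Markov property of $T\eta$, treated in the two cases. If $x\le 0$, then $w\le z$, so the event is $\sigma\!\bigl((T\eta(y))_{y\ge z}\bigr)\subseteq\sigma\!\bigl((T\eta(y))_{y\ge w}\bigr)$-measurable; conditioning on the single value $T\eta(w)$, the Markov property gives that $(T\eta(y))_{y\le w}$ and $(T\eta(y))_{y\ge w}$ are conditionally independent, so the process and the event are conditionally independent given $T\eta(w)$. If $x\ge 1$, then $w>z$; conditioning on the window $(T\eta(y))_{z\le y\le w}$, the Markov property gives that $(T\eta(y))_{y<z}$ and $(T\eta(y))_{y>w}$ are conditionally independent given the window. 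The process is measurable with respect to the window together with $(T\eta(y))_{y<z}$, while the event is measurable with respect to the window together with $(T\eta(y))_{y>w}$, so again they are conditionally independent given $(T\eta(y))_{z\le y\le w}$.

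The main obstacle is the second step: recognizing that the record event $\{s_{\infty}(\eta,0)=z\}$ is \emph{right}-measurable in the $T\eta$ frame. This is counterintuitive because records of $\eta$ are naturally read off from the left, and the resolution is precisely that passing to $T\eta$ and using $\eta=T^{-1}(T\eta)$ reverses the direction of dependence. Keeping all the measurability directions consistent — forward orbits to the left, the inversion to the right — and matching them to the correct conditioning set in each case is where the care is needed; once this bookkeeping is in place, the statement is an immediate consequence of the spatial Markov property of $\mu_{\q}$.
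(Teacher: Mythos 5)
Your proof is correct and follows essentially the same route as the paper's: localize the record process to $\sigma\bigl((T\eta(y))_{y\le x+z}\bigr)$, show via $T^{-1}$ that $\{s_{\infty}(0)=z\}$ is $\sigma\bigl((T\eta(y))_{y\ge z}\bigr)$-measurable, and conclude by the spatial Markov property of $T\eta$. You are in fact somewhat more careful than the paper, which loosely asserts the event is $(\eta(z),T\eta(z))$-measurable and leaves the two-case conditioning implicit, whereas you correctly unpack $\{s_{\infty}(0)=z\}$ as a record condition at $z$ together with no-record conditions on $(z,0]$ and spell out the window argument for $x\ge 1$.
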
 
\begin{proof}[Proof of Lemma \ref{lem:rec_Mix}]
    
    Since $r(\eta, x) = \mathbf{1}_{\{\eta(x) = T\eta(x) = 0\}}$, the event $\{ s_{\infty}(0) = z \}$ is $\left(\eta\left(z\right),T\eta\left(z\right)\right)$-measurable. In addition, by taking the action $T^{-1}$, we see that both $\eta(z)$ and $T\eta(z)$ are $({T}\eta(y))_{y \ge z}$-measurable. Hence $\{ s_{\infty}(0) = z \}$ is $({T}\eta(y))_{y \ge z}$-measurable. On the other hand, $\left(r\left(T^{n}\eta, x+z\right)\right)$, $n \ge {1}$ is $({T}\eta(y))_{y \le x + z}$-measurable. Thus by the Markov property of ${T}\eta$, {the claim of this lemma holds.}
    \end{proof}

\subsection{Convergence of (\ref{def:step_M})}\label{subsec:IP}

    {In this subsection we will prove the weak convergence of \eqref{def:step_M}, and compute $G_{k}\left(\q\right)$. We fix $\q \in \mathcal{Q}_{\mathrm{AM}}$ and $k \in \N$ such that $k \ge K\left(\q\right)$. Thanks to Proposition \ref{prop:main} (\ref{item:prop1}) and \eqref{ineq:MiMj_1}, it suffices to consider the weak convergence under $\nu_\q$.}
    {First} we recall the formula \eqref{eq:M_kl}.
        \begin{align}
            M^{(0)}_{k}\left(\eta,n\right) 
            &= \sum_{m = 0}^{n-1} \left( 1-   r\left( T^{m}\Psi_{k}\left(\tilde{\eta}\right), J_{k}\left(\tilde{\eta}, 0\right) \right) \right).
        \end{align} 
    {We will compute the expectation of $n - M^{\left(0\right)}_{k}\left(\eta,n\right)$ under $\nu_\q$. Since $\theta^{k}\q \in \mathcal{Q}_{\mathrm{M}}$, by \eqref{eq:shift_q}, Remark \ref{lem:indep_skip}, Lemmas \ref{lem:markov_eta} and \ref{lem:rec_Mix}, for any $m \ge 1$, we get 
        \begin{align}
            &\E_{\nu_{\q}}\left[ r\left( T^{m}\Psi_{k}\left(\eta\right), J_{k}\left(0\right) \right) \right] \\
            &= \sum_{x \le -1} \nu_{\q}\left( J_{k}\left(0\right) = x \right) \E_{\nu_{\theta^k \q}}\left[ r\left( T^{m}\eta, x \right) \right] \\
            &=  \sum_{x \le -1}  \nu_{\q}\left( J_{k}\left(0\right) = x \right) \E_{\mu_{\theta^k \q}}\left[ r\left( T^{m}\eta, x \right) |s_{\infty}\left(0\right) = 0 \right] \\
            &=  \sum_{w = 0,1} \sum_{x \le -1} \frac{\mu_{\theta^k \q}\left( s_{\infty}\left(0\right) = 0, T\eta\left(x\right) = w  \right)}{ \mu_{\theta \q}\left( s_{\infty}\left(0\right) = 0 \right)}  \\
            & \quad \quad \times \nu_{\q}\left( J_{k}\left(0\right) = x \right) \E_{\mu_{\theta^k \q}}\left[ r\left( T^{m}\eta, x \right) | T\eta\left(x\right) = w \right] \\
            &= \sum_{w = 0,1} \E_{\mu_{\theta^k \q}}\left[ r\left( T^{m-1}\eta, 0 \right) | \eta\left(0\right) = w \right] \sum_{x \le -1} \nu_{\q}\left( J_{k}\left(0\right) = x \right) \nu_{\theta^k \q}\left( T\eta\left(x\right) = w  \right).
        \end{align}
    Since $\left(T^{m}\eta(0) \right)_{m \in \Z}$ is a finite ergodic Markov chain under $\mu_{\theta^k \q}$, and is strongly mixing with exponentially decay rate, for any $t > 0$ and $w = 0,1$, we have 
        \begin{align}
            \lim_{n \to \infty} \left| \frac{1}{n} \sum_{m = 0}^{\lfloor n^2 t  \rfloor -1} \left( \E_{\mu_{\theta^k \q}}\left[ r\left( T^{m}\eta, 0 \right) | \eta\left(0\right) = w \right] - \bar{r}_{k}\left(\q\right) \right) \right| = 0.
        \end{align}
    Hence we obtain  
        \begin{align}\label{eq:dif_ex}
            \lim_{n \to \infty} \left| \frac{\E_{\nu_{\theta^k \q}}\left[\lfloor n^2 t  \rfloor - M^{(0)}_{k}\left(\lfloor n^2 t  \rfloor\right)\right] - \lfloor n^2 t  \rfloor \bar{r}_{k}\left(\q\right)}{n} \right| = 0.
        \end{align}}
    {By using \eqref{eq:shift_q}, \eqref{ineq:MiMj_1}, \eqref{eq:dif_ex} and Lemma \ref{lem:rec_Mix}}, to show the weak convergence of \eqref{def:step_M} {under $\nu_{\q}$}, it is sufficient to show that the following process, 
        \begin{align}\label{def:step_r}
            t \mapsto \frac{1}{n}\sum_{m = 0}^{\lfloor n^2 t  \rfloor -1} \left( r\left( T^{m}{\eta}, {0} \right) - {\bar{r}_{k}\left(\q\right) } \right),
        \end{align}
    converges weakly to a Brownian motion under {$\mu_{\theta^k\q}$ conditional on $\left\{ \eta\left(0\right) = w \right\}$, for each $w = 0,1$}. {This} can be shown by the invariance principle for strongly mixing stationary sequences (cf. \cite[Theorem 3.1]{EK}). Therefore the {weak} convergence of \eqref{def:step_r} {under $\nu_\q$} has been shown.

    Now we compute the variance $G_{1}\left(\q\right)$ {with $\q \in \mathcal{Q}_{\mathrm{M}}$}. We observe that by Lemma \ref{lem:markov_eta}, the sum in \eqref{def:step_r} can be viewed as a functional of the ergodic Markov chain 
    $( (T^{m}\eta\left(x+z\right)$, $T^{m+1}\eta\left(x+z\right) )_{m \in \Z}$ on $\{ (0,0), (0,1), (1,0) \}$, where its transition matrix $R'$ and invariant measure $\pi' \in [0,1]^{3}$ are given by 
        \begin{align}
            R' = 
            \begin{pmatrix}
                1 - \frac{\rho\left(\theta\q\right)}{1 - \rho\left(\theta\q\right)} & \frac{\rho\left(\theta\q\right)}{1 - \rho\left(\theta\q\right)} & 0 \\
                0 & 0 & 1 \\
                1 - \frac{\rho\left(\theta\q\right)}{1 - \rho\left(\theta\q\right)} & \frac{\rho\left(\theta\q\right)}{1 - \rho\left(\theta\q\right)} & 0
            \end{pmatrix},
        \end{align}
    and 
        \begin{align}
            \pi' \left(\left(0,0\right)\right) = 1 - 2\rho\left(\theta\q\right), \quad \pi' \left(\left(0,1\right)\right) = \pi' \left(\left(1,0\right)\right) = 2\rho\left(\theta\q\right).
        \end{align}
    Since the explicit solution of the following Poisson equation, 
        \begin{align}
            \begin{pmatrix}
                1 \\ 0 \\ 0
            \end{pmatrix} = \left(\begin{pmatrix}
                1 & 0 & 0 \\
                0 & 1 & 0 \\
                0 & 0 & 1
            \end{pmatrix} - R'\right) \mathbf{f},
        \end{align}
    is given by 
        \begin{align}
            \mathbf{f} =  \begin{pmatrix}
                1  \\ - \left(1 - 2\rho\left(\theta\q\right)\right) \\ - 2\left(1 - 2\rho\left(\theta\q\right)\right) 
            \end{pmatrix},
        \end{align}
     from \cite[Theorem 1.2]{KLO12}, $G_{1}\left(\q\right)$ can be computed as 
        \begin{align}
            G_{1}\left(\q\right) &= \E_{\pi' }\left[ |\mathbf{f}|^2 \right] - \E_{\pi' }\left[ | R' \mathbf{f}|^2 \right] \\
            &= 4 \rho\left(\theta\q\right) \left(1 - \rho\left(\theta\q\right)\right) \left(1 - 2\rho\left(\theta\q\right)\right).
        \end{align}
    We note that thanks to \eqref{eq:shift_q}, the formula of $G_{k}\left(\q\right)$ for $k \in \N$ can be obtained by using $G_{k}\left(\q\right) = G_{1}\left(\theta^{k-1}\q\right)$.

\subsection{Convergence of (\ref{def:Lambda_M})}

    {We observe that $M^{i}_{k}\left(\eta, n\right)$ can be decomposed as
        \begin{align}\label{eq:dec_Mi}
            M^{i}_{k}\left(\eta, n\right) &= \sum_{j \in \Z} \mathbf{1}_{B_{i,j}} M^{\left(j\right)}_{k}\left(\eta, n\right),
        \end{align}
    where $B_{i,j}$ is defined as
        \begin{align}
            B_{i,j} &:= 
            \begin{dcases}
                \left\{  \zeta_{k}\left(\eta, J_{k}\left(\eta,1\right)\right) \ge i \right\} & i \ge 1, \ j = 1, \\
                \left\{  \sum_{z = 1}^{j} \zeta_{k}\left(\eta, J_{k}\left(\eta,z\right)\right) \ge i, \sum_{z = 1}^{j-1} \zeta_{k}\left(\eta, J_{k}\left(\eta,z\right)\right) < i \right\} & i \ge 1, \ 2 \le j, \\
                \left\{  \zeta_{k}\left(\eta, J_{k}\left(\eta,0\right)\right) \ge -i \right\} & i \le 0, \ j = 0, \\
                \left\{  \sum_{z = j}^{0} \zeta_{k}\left(\eta, J_{k}\left(\eta,z\right)\right) \ge -i, \sum_{z = j + 1}^{0} \zeta_{k}\left(\eta, J_{k}\left(\eta,z\right)\right) < -i \right\} & i \le 0, \ j \le - 1, \\
                \emptyset & \text{otherwise.}
            \end{dcases}
        \end{align}
    Here we note that since $\zeta_{k}\left(\eta, J_{k}\left(\eta, z\right)\right) \ge 1$ for any $z \in \Z$,  $B_{i,j} = \emptyset$ if $1 \le i \le j - 1$ or $j \le i \le 0$. Hence, \eqref{eq:dec_Mi} is a finite sum. By \eqref{eq:M_kl}, Remark \ref{lem:indep_skip} and \eqref{eq:dec_Mi}, we get 
        \begin{align}
            &\E_{\nu_{\q}}\left[ \exp\left( \lambda\left( n -   M^{i}_{k}\left(\eta, n\right) \right) \right) \right] \\
            &= \sum_{j \in \Z} \E_{\nu_{\q}}\left[ \mathbf{1}_{B_{i,j}} \exp\left( \lambda \sum_{m = 0}^{n - 1} r\left(T^{m}\Psi_{k}\left(\eta\right), J_{k}\left(j\right)\right) \right) \right] \\
            &\ge 
            \begin{dcases}
                \E_{\nu_{\q}}\left[ \mathbf{1}_{B_{i,1}} \exp\left( \lambda \sum_{m = 0}^{n - 1} r\left(T^{m}\Psi_{k}\left(\eta\right), J_{k}\left(1\right)\right) \right) \right] \ & \ i \ge 1, \\
                \E_{\nu_{\q}}\left[ \mathbf{1}_{B_{i,0}} \exp\left( \lambda \sum_{m = 0}^{n - 1} r\left(T^{m}\Psi_{k}\left(\eta\right), J_{k}\left(0\right)\right) \right) \right] \ & \ i \le 0.
            \end{dcases} 
        \end{align}
    By Lemma \ref{lem:rec_Mix} and similar computations used in Section \ref{subsec:IP}, we have
        \begin{align}
            &\E_{\nu_{\q}}\left[ \mathbf{1}_{B_{i,1}} \exp\left( \lambda \sum_{m = 1}^{n - 1} r\left(T^{m}\Psi_{k}\left(\eta\right), J_{k}\left(1\right)\right) \right) \right] \\
            &\ge \E_{\mu_{\theta^k\q}}\left[ \exp\left( \lambda \sum_{m = 1}^{n - 1} r\left(T^{m}\eta, 0\right) \right) \Bigg| T\eta\left(0\right) = 0 \right] \\
            & \quad \times \nu_{\q}\left(J_{k}\left(1\right) = 0, \zeta_{k}\left(0\right) \ge i \right) \nu_{\theta^{k}\q}\left( T\eta\left(0\right) = 0 \right), 
        \end{align}
    and 
        \begin{align}
            &\E_{\nu_{\q}}\left[ \mathbf{1}_{B_{i,0}} \exp\left( \lambda \sum_{m = 1}^{n - 1} r\left(T^{m}\Psi_{k}\left(\eta\right), J_{k}\left(0\right)\right) \right) \right] \\
            &\ge \E_{\mu_{\theta^k\q}}\left[ \exp\left( \lambda \sum_{m = 1}^{n - 1} r\left(T^{m}\eta, 0\right) \right) \Bigg| T\eta\left(0\right) = 0 \right] \\
            & \quad \times \nu_{\q}\left(J_{k}\left(0\right) = -1, \zeta_{k}\left(-1\right) \ge -i \right) \nu_{\theta^{k}\q}\left( T\eta\left(-1\right) = 0 \right).
        \end{align}
    Hence we obtain 
        \begin{align}
            &\varliminf_{n \to \infty} \frac{1}{n} \log\left(\E_{\nu_{\q}}\left[ \exp\left( \lambda\left( n -   M^{i}_{k}\left(\eta, n\right) \right) \right) \right]\right) \\ 
            &\ge \varliminf_{n \to \infty} \frac{1}{n} \log\left(\E_{\mu_{\theta^k\q}}\left[ \exp\left( \lambda \sum_{m = 0}^{n - 1} r\left(T^{m}\eta, 0\right) \right) \Bigg| \eta\left(0\right) = 0 \right]\right).
        \end{align}
    On the other hand, since 
        \begin{align}
            &\E_{\nu_{\q}}\left[ \mathbf{1}_{B_{i,j}} \exp\left( \lambda \sum_{m = 0}^{n - 1} r\left(T^{m}\Psi_{k}\left(\eta\right), J_{k}\left(j\right)\right) \right) \right] \\
            &= \sum_{x \in \Z} \nu_{\q}\left( B_{i,j} \cap \left\{ J_{k}\left(j\right) = x \right\} \right) \E_{\nu_{\theta^{k}\q}}\left[ \exp\left( \lambda \sum_{m = 0}^{n - 1} r\left(T^{m}\eta, x\right) \right) \right] \\
            &\le \sum_{x \in \Z} \frac{\nu_{\q}\left( B_{i,j} \cap \left\{ J_{k}\left(j\right) = x \right\} \right)}{\mu_{\theta^{k}\q}\left( s_{\infty}\left(0\right) = 0 \right)} \E_{\mu_{\theta^{k}\q}}\left[ \exp\left( \lambda \sum_{m = 0}^{n - 1} r\left(T^{m}\eta, x\right) \right) \right] \\
            &= \frac{\nu_{\q}\left( B_{i,j} \right)}{\mu_{\theta^{k}\q}\left( s_{\infty}\left(0\right) = 0 \right)} \E_{\mu_{\theta^{k}\q}}\left[ \exp\left( \lambda \sum_{m = 0}^{n - 1} r\left(T^{m}\eta, 0\right) \right) \right],
        \end{align}
    by the inequality $\varlimsup_{n \to \infty} n^{-1} \log\left(\sum_{i = 1}^{m} a^{i}_{n}\right) \le \max_{1 \le i \le m} \varlimsup_{n \to \infty} n^{-1} \log\left(a^{i}_{n}\right)$ for any $m \in \N$ and $\left(a^{i}_{n}\right)_{n \in \N} \subset \left(0,\infty\right)^{\N}$, $1 \le i \le m$, we get 
        \begin{align}
            &\varlimsup_{n \to \infty} \frac{1}{n} \log\left(\E_{\nu_{\q}}\left[ \exp\left( \lambda\left( n -   M^{i}_{k}\left(\eta, n\right) \right) \right) \right]\right) \\ 
            &\le \varlimsup_{n \to \infty} \frac{1}{n} \log\left(\E_{\mu_{\theta^k\q}}\left[ \exp\left( \lambda \sum_{m = 0}^{n - 1} r\left(T^{m}\eta, 0\right) \right)\right]\right) \\
            &\le \max_{w = 0,1} \varlimsup_{n \to \infty} \frac{1}{n} \log\left(\E_{\mu_{\theta^k\q}}\left[ \exp\left( \lambda \sum_{m = 0}^{n - 1} r\left(T^{m}\eta, 0\right) \right) \Bigg| \eta\left(0\right) = w \right]\right).
        \end{align}}
    
    {From the above, we see that} if the following limit,
            \begin{align}
             \lim_{n \to \infty} \frac{1}{n} \log\left(\E_{{\mu}_{\theta^{{k}} \q}}\left[ \exp\left( \lambda \sum_{m = 0}^{n-1} r\left(T^{m}\eta, 0\right) \right) \Bigg| \eta\left(0\right) = w \right]\right),
        \end{align}
    exists and independent of $w = 0,1$, then it coincides with $\Lambda^{M{,i}}_{\q,{k}}\left(\lambda\right)${, and $\Lambda^{M{,i}}_{\q,{k}}\left(\lambda\right)$ does not depend on $i$}. 
    By using $r\left(\eta, 0\right) = \left(1 - \eta\left(0\right)\right)\left(1 - T\eta\left(0\right)\right)$, 
    for any $w_0 \in \{0,1\}$, we have 
        \begin{align}
            &\E_{{\mu}_{\theta^{{k}} \q}}\left[ \exp\left( \lambda \sum_{m = 0}^{n-1} r\left(T^{m}\eta, 0\right) \right) \Bigg| \eta\left(0\right)  = w_{0} \right] \\
            &= \sum_{w_1,\dots,w_n} \prod_{i= 0}^{n-1}R_{w_{i}w_{i+1}} e^{\lambda \left(1-w_{i}\right) \left(1-w_{i+1}\right)} \\
            &= \left(\tilde{R}\left(\lambda\right)^{n}\right)_{w_{0}0} + \left(\tilde{R}\left(\lambda\right)^{n}\right)_{w_{0}1},
        \end{align}
    where $R_{ij}$ is defined in \eqref{def:tran_R}, and $\tilde{R}\left(\lambda\right)$ is given by 
        \begin{align}
            \tilde{R}\left(\lambda\right) := \begin{pmatrix}
                \frac{1 - 2\rho\left(\theta^{{k}} \q\right)}{1 - \rho\left(\theta^{{k}} \q\right)} e^{\lambda} &  \frac{\rho\left(\theta^{{k}} \q\right)}{1 - \rho\left(\theta^{{k}} \q\right)}  \\
                1 & 0
            \end{pmatrix}.
        \end{align}
    Hence, from \cite[Theorem 3.1.1]{DZ}, we have 
        \begin{align}
            \lim_{n \to \infty} \frac{1}{n} \log\left(\E_{{\mu}_{\theta^{{k}} \q}}\left[ \exp\left( \lambda \sum_{m = 0}^{n-1} r\left(T^{m}\eta, 0\right) \right) \Bigg| \eta\left(0\right) = w \right]\right) = \log\left(\mathrm{PF}\left(\lambda\right)\right),
        \end{align} 
    where $\mathrm{PF}(\lambda)$ is the Perron-Frobenius eigenvalue of $\tilde{R}$. 
    By a direct computation, we see that 
        \begin{align}
            \mathrm{PF}\left(\lambda\right) &=  \frac{1 - 2\rho\left(\theta^{{k}}\q\right)}{2\left(1 - \rho\left(\theta^{{k}}\q\right)\right)} \left(e^{\lambda} + \sqrt{e^{2\lambda} - 1 + \frac{1}{\left(1 - 2\rho\left(\theta^{{k}}\q\right)\right)^2}}  \right).
        \end{align}
    In particular, $\log \left( \mathrm{PF}\left(\lambda\right) \right)$ is a smooth convex function on $\R$. The convex conjugate of $\log \left( \mathrm{PF}\left(\lambda\right) \right)$ can be computed as
        \begin{align}
            I^{M}_{\q,{k}}\left(u\right) &= \sup_{\lambda \in \R} \left( \lambda u - \log \left( \mathrm{PF}\left(\lambda\right) \right) \right) \\
            &= 
            \begin{dcases}
                 \frac{u}{2} \log\left( \frac{4\rho\left(\theta^{{k}}\q\right)\left(1 - \rho\left(\theta^{{k}}\q\right)\right) u^2}{\left(1 - 2\rho\left(\theta^{{k}}\q\right)\right)^2 \left(1 - u^2\right)}  \right) \\
                 \quad - \frac{1}{2}\log\left(\frac{\rho\left(\theta^{{k}}\q\right)\left(1 + u\right)}{\left(1-\rho\left(\theta^{{k}}\q\right)\right)\left(1 - u\right)} \right) \ & \ 0 \le u < 1, \\
                 \log \left( \frac{2\left(1 - \rho\left(\theta^{{k}}\q\right)\right)}{1 - 2\rho\left(\theta^{{k}}\q\right)} \right) \ & \ u = 1, \\
                 \infty \ & \ \text{otherwise.}
            \end{dcases}
        \end{align}
    We note that the minimizer of $I^{M}_{\q,{k}}\left(u\right)$ is $1 - 2\rho\left(\theta^{{k}}\q\right)$, and from \eqref{eq:v_eff_1_r}, \eqref{eq:r_rho}, the value of minimizer coincides with $v^{\mathrm{eff}}_{1}\left(\theta^{{k-1}}\q\right)$. 

\section{Proof of Theorem \ref{thm:finite}}\label{sec:finite_proof}

    We fix $\q \in \mathcal{Q}$ satisfying the assumption of Theorem \ref{thm:finite} and define $k := \max\{ 1 \le h \le \ell - 1 \ ; \ q_{h} > 0 \}$. 

    First we claim that under ${\mu}_{\theta^{k}\q}$, $\left( \eta\left(x\right), W\left(x\right) \right)$ is an ergodic Markov chain in $x \in \Z$ on the state space, 
        \begin{align}
                    S_{\ell - k} := \left\{ \left(0,0\right), \left(0,1\right), \dots, \left(0,\ell - k - 1\right), \left(1,1\right), \left(1,2\right), \dots, \left(1,\ell - k\right)  \right\},
        \end{align}
    with transition matrix 
                \begin{align}
                    P_{\ell - k} = 
                    \begin{pmatrix}
                        P^{(1)}_{\ell - k} & P^{(2)}_{\ell - k} \\
                        P^{(3)}_{\ell - k} & P^{(4)}_{\ell - k}
                    \end{pmatrix},
                \end{align}
            where $P^{(i)}_{h}$, $i = 1, \dots 4$ are $h \times h$ matrices given by 
                \begin{align}
                    P^{(1)}_{1} =1- q_{\ell}, \quad P^{(2)}_{1} = q_{\ell}, \quad P^{(3)}_{1} = 1, \quad P^{(4)}_{1} = 0,
                \end{align}
            for $h = 1$, and
                \begin{align}
                    P^{(1)}_{h} &= 
                    \begin{pmatrix}
                        1 - q_{\ell} & 0 & \dots & 0 & 0 \\ 
                        1 & 0 & \dots & 0 & 0 \\
                        0 & 1 & \dots & 0 & 0 \\
                        \vdots & \vdots & \ddots & \vdots & \vdots \\
                        0 & 0 & \dots & 1 & 0
                    \end{pmatrix}, & P^{(2)}_{h}  &= \begin{pmatrix}
                        q_{\ell} & 0 & \dots  & 0 \\ 
                        0 & 0 & \dots  & 0 \\
                        \vdots & \vdots & \ddots & \vdots \\
                        0 & 0 & \dots & 0
                    \end{pmatrix}, \\
                    P^{(3)}_{h} &= 
                    \begin{pmatrix}
                        0 & \dots  & 0 \\ 
                        \vdots & \ddots & \vdots \\
                        0 & \dots  & 0 \\
                        0 & \dots & 1
                    \end{pmatrix}, & P^{(4)}_{h}  &= \begin{pmatrix}
                        0 & 1 & \dots  & 0 \\ 
                        0 & 0 & \ddots & 0 \\
                        0 & 0 & \dots & 1
                    \end{pmatrix},
                \end{align}
        for $h \ge 2$. 
        Actually, if we define $X_{n} = \left(x^{(1)}_{n},x^{(2)}_{n} \right)_{n \in \Z_{\ge 0}}$ is the Markov process on $S_{k'}$ with the above transition matrix, and recursively define stopping times as
            \begin{align}
                    \tau_{1} &:= \inf \left\{ m \in \N \ ; \  X_{m-1} = X_{m} = \left(0,0\right) \right\}, \\
                    \tau_{n + 1} &:= \inf \left\{ m \ge \tau_{n} + 1 \ ; \  X_{m-1} = X_{m} = \left(0,0\right) \right\},
                \end{align}
        then the distribution of $\left(X^{(1)}_{m}\right)_{\tau_{1} \le m \le \tau_{2} - 1}$ coincides with $\nu_{\theta^{k}\q} \circ \mathbf{e}^{(i)}$. In addition, $\left(X^{(1)}_{m}\right)_{\tau_{n} \le m \le \tau_{n+1} - 1}$ and $\left(X^{(1)}_{m}\right)_{\tau_{n'} \le m \le \tau_{n'} - 1}$ are independent if $n \neq n'$. Hence, from the construction of $\q$-statistics, $\left( \eta\left(x\right), W\left(x\right) \right)$, $x \in \Z$ is the desired ergodic Markov process under ${\mu}_{\theta^{k}\q}$. 
    
        On the other hand, since there are only $(\ell - k)$-solitons under $\nu_{\theta^{k}\q}$, from \eqref{eq:M_kl}, we have 
                \begin{align}
                    M^{(0)}_{k}\left(\eta,n\right) &= \sum_{m = 0}^{n-1} \left( 1 - r\left( T^{m}\Psi_{k}\left(\tilde{\eta}\right), J_{k}\left(\tilde{\eta}, 0\right) \right) \right) \\
                    &= \sum_{m = 0}^{n-1} \left( 1 - r\left(\Psi_{k}\left(\tilde{\eta}\right), J_{k}\left(\tilde{\eta}, 0\right) - (\ell - k)m \right) \right),
                \end{align}
            a.s. under $\nu_{\q}$. From the above, we see that  $M^{(0)}_{k}\left(\eta,n\right)$ is a functional of an ergodic Markov process. Therefore, by using a similar argument to Section \ref{sec:Markov_proof}, one can show that \eqref{def:step_M} converges weakly to a Brownian motion. Also, by using the relation $r(x) = \mathbf{1}_{\left\{W(x) = 0\right\}}\mathbf{1}_{\left\{W(x+1) = 0\right\}}$, for any $\mathbf{s}_{0} = (s^{(1)}_{0}, s^{(2)}_{0}) \in S_{\ell - k}$, we have 
                \begin{align}
                    &\E_{\nu_{\theta^{\ell}C_{k}\q}}\left[ \exp\left( \lambda \sum_{m = 0}^{n-1} r\left((\ell - k) m\right) \right) | \left(\eta\left(0\right), W\left(0\right)\right) = \mathbf{s}_{0} \right] \\
                    &= \sum_{\mathbf{s}_{1},\mathbf{s}_{\ell - k},\mathbf{s}_{\ell - k+1},\dots, \mathbf{s}_{(n-1)(\ell - k)}, \mathbf{s}_{(n-1)(\ell - k) + 1}} \\
                    & \quad \left( \prod_{i = 0}^{n-2} \left(P_{\ell - k}\right)_{\mathbf{s}_{i(\ell - k)}\mathbf{s}_{i(\ell - k)+1}} e^{\lambda \delta_{0}\left(s^{(2)}_{i(\ell - k)}\right)\delta_{0}\left(s^{(2)}_{i(\ell - k)+1}\right)} \left(P_{k'}\right)^{(\ell - k)-1}_{\mathbf{s}_{i(\ell - k) + 1}\mathbf{s}_{(i+1)(\ell - k)}} \right) \\
                    & \quad \quad \times \left(P_{\ell - k}\right)_{\mathbf{s}_{(n-1)k'}\mathbf{s}_{(n-1)(\ell - k)+1}} e^{\lambda \delta_{0}\left(s^{(2)}_{(n-1)(\ell - k)}\right)\delta_{0}\left(s^{(2)}_{(n-1)(\ell - k)+1}\right)} \\
                    &= \sum_{\mathbf{s}} \left(\tilde{P}_{\ell - k}\right)^{n-1}_{\mathbf{s}_{0}\mathbf{s}} \sum_{\mathbf{s}'}\left(P_{\ell - k}\right)_{\mathbf{s}\mathbf{s}'} e^{\lambda \delta_{0}\left(s^{(2)}\right)\delta_{0}\left((s')^{(2)}\right)},
                \end{align}
            where 
                \begin{align}
                    \tilde{P}_{\ell - k}\left(\lambda\right) := 
                    \begin{pmatrix}
                        \tilde{P}^{(1)}_{\ell - k}\left(\lambda\right) & \tilde{P}^{(2)}_{\ell - k}\left(\lambda\right) \\
                        \tilde{P}^{(3)}_{\ell - k} & \tilde{P}^{(4)}_{\ell - k}
                    \end{pmatrix},
                \end{align}
            and $\tilde{P}^{(i)}_{h}$, $i = 1, \dots, 4$ are given by 
                \begin{align}
                    \tilde{P}^{(1)}_{1} = e^{\lambda}\left(1- q_{k}\right), \quad \tilde{P}^{(2)}_{1} = q_{k}, \quad \tilde{P}^{(3)}_{1} = 1, \quad \tilde{P}^{(4)}_{1} = 0,
                \end{align}
            for $h = 1$, and
                \begin{align}
                    \tilde{P}^{(1)}_{h}\left(\lambda\right) &:= 
                    \begin{pmatrix}
                        e^{\lambda} \left(1 - q_{\ell}\right)^{h} & 0 & \dots & 0 \\ 
                        \left(1 - q_{\ell}\right)^{h-1} & 0 & \dots & 0 \\
                        \vdots & \vdots & \ddots & \vdots \\
                        1 - q_{\ell} & 0 & \dots & 0
                    \end{pmatrix}, \\ 
                    \tilde{P}^{(2)}_{h}\left(\lambda\right)  &:= \begin{pmatrix}
                        e^{\lambda} \left(1 - q_{\ell}\right)^{h-1} q_{\ell} & \dots & e^{\lambda} \left(1 - q_{\ell}\right)q_{\ell} & q_{\ell} \\ 
                        \left(1 - q_{\ell}\right)^{h-2} q_{\ell} & \dots  &  q_{\ell} & 0 \\
                        \vdots & \udots  & \udots & \vdots \\
                        q_{\ell} & 0 & \dots & 0
                    \end{pmatrix}, \\
                    \tilde{P}^{(3)}_{h} &:= 
                    \begin{pmatrix}
                        0 & \dots  & 1 \\ 
                        \vdots & \udots & \vdots \\
                        1 & \dots  & 0 
                    \end{pmatrix}, \quad \tilde{P}^{(4)}_{h}  := \begin{pmatrix}
                        0 &  \dots  & 0 \\ 
                        \vdots & \ddots & \vdots \\
                        0 &\dots & 0
                    \end{pmatrix},
                \end{align}
            for $h \ge 2$.
            From \cite[Theorem 3.1.1 (e)]{DZ} and the ergodicity of the Markov chain defined above, we have 
                \begin{align}
                    \Lambda^{M}_{\q,\ell}\left(\lambda\right) &= 
                    \lim_{n \to \infty} \frac{1}{n} \log\left(\E_{\nu_{\theta^{k}\q}}\left[ \exp\left( \lambda \sum_{m = 0}^{n-1} r\left((\ell - k) m\right) \right) | \left(\eta\left(0\right), W\left(0\right)\right) = \mathbf{s}_{0} \right] \right) \\ &= \log \left( \widetilde{\mathrm{PF}}\left(\lambda\right) \right),
                \end{align}
            where $\widetilde{\mathrm{PF}}(\lambda)$ is the Perron-Frobenius eigenvalue of $\tilde{P}_{k'}\left(\lambda\right)$. Since 
                \begin{align}
                    \operatorname{det}\left(\tilde{P}_{\ell - k}\left(\lambda\right) - x I_{2(\ell - k)}\right) = \operatorname{det}\left(x I_{\ell - k}\left(x I_{\ell - k} -\tilde{P}^{(1)}_{\ell - k}\left(\lambda\right) \right)  - \tilde{P}^{(2)}_{\ell - k}\tilde{P}^{(3)}_{\ell - k} \right),
                \end{align}
            where $I_{h}$ is the $h\times h$ identity matrix, by direct computation we see that 
                \begin{align}
                    \widetilde{\mathrm{PF}}\left(\lambda\right) =  \frac{\left(1 - q_{\ell}\right)^{\ell - k}e^{\lambda} }{2} + \sqrt{\frac{\left(1 - q_{\ell}\right)^{2(\ell - k)}e^{2\lambda}}{4} + q_{\ell}}.
                \end{align}
            Hence ${G}_{k}\left({\q}\right)$ can be computed as 
                \begin{align}
                    {G}_{k}\left(\q\right) &= \frac{d^2 \log \left( \widetilde{\mathrm{PF}}\left(\lambda\right) \right)}{d\lambda^2} |_{\lambda = 0} \\
                    &= \frac{4q_\ell}{\left(1 - q_{\ell}\right)^{2(\ell - k)}} \left(1 + \frac{4q_\ell}{\left(1 - q_{\ell}\right)^{2(\ell - k)}}\right)^{-\frac{3}{2}}.
                \end{align}

\section{Proof of Theorem \ref{thm:IPLDP}}\label{sec:final}

    We recall that if ${\mu}$ is a {space-homogeneous} Bernoulli product measure  or two-sided Markov distribution supported on $\Omega$, then there exists $\q \in \mathcal{Q}_{\mathrm{M}}$ such that ${\mu} ={\mu}_{\q}$ and $K(\q) = 1$. In addition, {if $\q \in \mathcal{Q}_{{\mathrm{M}}}$, then thanks to Lemma \ref{lem:expbound_ex},} we have the exponential bound of ${s_{\infty}\left(1\right)}$ {under $\nu_{\q}$}. 
        
    By combining Theorems \ref{thm:lln_lp}, \ref{thm:main}, \ref{thm:Markov}, Proposition \ref{prop:main} and Lemma \ref{lem:expbound_ex}, we have Theorem \ref{thm:IPLDP}.
    
\section*{Acknowledgment}

The authors thank to Kenkichi Tsunoda for suggesting to consider the large deviation principle of the tagged soliton. 
The work of MS has been supported by JSPS KAKENHI GRANT No. 19H0179, 23K22414, 24K21515 and 24K00528.
The work of HS has been supported by JSPS KAKENHI GRANT No. 24KJ1037, 24K16936 and 24K00528.
SO appreciates the hospitality of the University of Tokyo.

\appendix

\section{Proof of Lemma \ref{lem:dif_xi}}\label{app:lem_1}

            For notational simplicity, we only consider the case $n = 1$. We can use the same proof presented below for any $n \in \N$.

            We will quote some formulae and results from \cite{S}. First, we recall that the carrier with capacity $\ell \in \N$, which is a variant of the carrier process, is defined as 
                \begin{align}
                        W_{\ell}\left(s_{\infty}\left(i\right)\right) & := 0, \\
                        W_{\ell}\left(x\right) - W_{\ell}\left(x - 1\right) 
                        &= 
                        \begin{dcases}
                            1 \ & \ \text{if } \eta\left(x\right) = 1, W_{\ell}\left(x - 1\right) < \ell \\
                            -1 \ & \ \text{if } \eta\left(x\right) = 0, W_{\ell}\left(x - 1\right) > 0, \\
                            0 \ & \ \text{otherwise}.
                        \end{dcases}
                    \end{align}
            We note that from the construction of $\mathcal{W}_{k}, k \in \N$, for any $\ell \in \N$ and $x \in \Z$ we have the relation 
            \begin{align}\label{eq:cap_seat}
                W_{\ell}\left(x\right) = \sum_{k = 1}^{\ell} \mathcal{W}_{k}\left(x\right).
            \end{align}
            Next, from Remark \ref{rem:seat_soliton} and \cite[Lemma 4.2]{S}, we see that for any $\gamma \in \Gamma_{k}$ , $k \in \N$, $X\left(\gamma\right)$ is either a record or a $(\ell,\sigma)$-seat with $\ell \ge k$ and $\sigma \in \{0,1\}$. In particular, for any $\gamma \in \Gamma_{k}$, there exists some $i \in \Z$ such that 
                \begin{align}\label{eq:X_sk_i}
                    X\left(\gamma\right) = s_{k}(i). 
                \end{align}
            From \eqref{eq:cap_seat}, \eqref{eq:X_sk_i} and \cite[Lemma 4.2]{S}, we see that for any $\gamma \in \Gamma_{k}$, $k \in \N$ and  $1\le \ell \le k$, 
                \begin{align}
                    W_{\ell}\left(X\left(\gamma\right)\right) = 
                    \begin{dcases}
                        \ell \ & \ \text{ if } \eta\left(X\left(\gamma\right)\right) = 1, \\
                        0 \ & \ \text{ if } \eta\left(X\left(\gamma\right)\right) = 0.
                    \end{dcases}
                \end{align}
            
            From the proof of \cite[Theorem 4.{5}]{S}, for any $\ell \in \N$ and $x \in \Z$, we get 
                \begin{align}
                    \xi_{\ell}\left( T\eta, x \right) - \xi_{\ell}\left( \eta, x\right) = W_{\ell}\left(T\eta, x\right) +  W_{\ell}\left(\eta, x\right) + o_{\ell}\left(\eta\right). 
                \end{align}
            If the $i$-th $k$-soliton is free, then $X^{(i)}_{k}$ is a record, and $X^{(i)}_{k}\left(1\right) = T_{1}\left( \gamma^{(i)}_{k} \right) - 1$. We observe that from the TS algorihtm, if a $\ell$-soliton $\gamma$ is contained in $\left(H_{1}\left(\gamma^{(i)}_{k}\right), T_{k}\left(\gamma^{(i)}_{k}\right)\right)$, we have either $\gamma \subset [H_{1}\left(\gamma^{(i)}_{k}\right), T_{1}\left(\gamma^{(i)}_{k}\right))$ or $\gamma \subset \left(T_{1}\left(\gamma^{(i)}_{k}\right), T_{k}\left(\gamma^{(i)}_{k}\right)\right)$. From this observation, Remark \ref{rem:seat_soliton} and \cite[Lemma 4.2]{S}, for any $1\le \ell \le k$ we have 
                \begin{align}
                    W_{\ell}\left(\eta, X^{(i)}_{k}\left(1\right) \right) = \sum_{h = 1}^{\ell} \mathcal{W}_{h}\left(\eta, T_{1}\left(\gamma^{(i)}_{k}\right) - 1 \right) = \sum_{h = 1}^{\ell} \mathcal{W}_{h}\left(\eta, H_{k}\left(\gamma^{(i)}_{k}\right) \right)  = \ell.
                \end{align}
            In addition, since $T\eta\left(X^{(i)}_{k}\left(1\right)\right) = 0$, for any $1\le \ell \le k$ we have
                \begin{align}
                    W_{\ell}\left(T\eta, X^{(i)}_{k}\left(1\right) \right) = 0.
                \end{align}
            On the other hand, if the $i$-th $k$-soliton is not free, then $X^{(i)}_{k}\left(1\right) = X^{(i)}_{k}$. In addition, $\eta\left(X^{(i)}_{k}\right) = 1 - T \eta\left(X^{(i)}_{k}\right) = 0$. Thus for any $1\le \ell \le k$ we get 
                \begin{align}
                    W_{\ell}\left(\eta, X^{(i)}_{k}\left(1\right) \right) = 0, \quad W_{\ell}\left(T\eta, X^{(i)}_{k}\left(1\right) \right) = \ell.
                \end{align}
            From the above, for any $1\le \ell \le k$ we have 
                \begin{align}\label{eq:dif_xi_1}
                    \xi_{\ell}\left( T\eta, X^{(i)}_{k}\left(1\right) \right) - \xi_{\ell}\left( \eta, X^{(i)}_{k}\left(1\right) \right) = \ell + o_{\ell}.
                \end{align}
            Now we assume that the $i$-th $k$-soliton is not free at time $0$. Then we have $X^{(i)}_{k}\left(1\right) = X^{(i)}_{k}$,
            and thus from \eqref{eq:dif_xi_1} we obtain \eqref{eq:dif_xi} for this case. Next we assume that the $i$-th $k$-soliton is free at time $0$. In this case we obtain 
                \begin{align}
                    &\xi_{\ell}\left( \eta, X^{(i)}_{k}\left(1\right) \right) - \xi_{\ell}\left( \eta, X^{(i)}_{k} \right) \\ 
                    &= \sum_{y \in \left[X^{(i)}_{k} + 1, X^{(i)}_{k}\left(1\right) \right]} \sum_{h \in \N} \left( \eta^{\uparrow}_{\ell + h}\left(y\right) + \eta^{\downarrow}_{\ell + h}\left(y\right) \right) \\
                    &=  \sum_{y \in \left[X^{(i)}_{k} + 1, X^{(i)}_{k}\left(1\right) \right] \cap \gamma^{(i)}_{k}} \sum_{h = \ell + 1}^{k} \left( \eta^{\uparrow}_{h}\left(y\right) + \eta^{\downarrow}_{h}\left(y\right) \right) \\
                    & \ + \sum_{y \in \left[X^{(i)}_{k} + 1, X^{(i)}_{k}\left(1\right) \right] \cap \left(\gamma^{(i)}_{k}\right)^{c}}\sum_{h = \ell + 1}^{k-1} \left( \eta^{\uparrow}_{h}\left(y\right) + \eta^{\downarrow}_{h}\left(y\right) \right),
                \end{align}
            where we use the fact that in the interval $[H_{1}\left(\gamma^{(i)}_{k}\right), T_{k}\left(\gamma^{(i)}_{k}\right))$, there are only $(h,\sigma)$-seats with $h \le k$, and all $(k,\sigma)$-seats are elements of $\gamma^{(i)}_{k}$. 
            For the first term, we get 
                \begin{align}
                    \sum_{y \in \left[X^{(i)}_{k} + 1, X^{(i)}_{k}\left(1\right) \right] \cap \gamma^{(i)}_{k}} \sum_{h = \ell + 1}^{k}\left( \eta^{\uparrow}_{\ell + h}\left(y\right) + \eta^{\downarrow}_{\ell + h}\left(y\right) \right) = \sum_{y \in  \gamma^{(i)}_{k}} \sum_{h = \ell + 1}^{k}\eta^{\uparrow}_{\ell + h}\left(y\right) = k - \ell.
                \end{align}
            For the second term, we observe that if $\left[X^{(i)}_{k} + 1, X^{(i)}_{k}\left(1\right) \right] \cap \left(\gamma^{(i)}_{k}\right)^{c}$ is not empty, then each element is a component of some $h$-soliton $\gamma$ with $h < k$, and $\gamma \subset \left[X^{(i)}_{k} + 1, X^{(i)}_{k}\left(1\right) \right]$. In addition, a $h$-soliton is composed by one of each $(h',\sigma)$-seats for $1 \le h' \le h$ and $\sigma \in \{\uparrow, \downarrow\}$. Hence for any $1 \le h \le k - 1$, we have 
                \begin{align}
                    \sum_{y \in \left[X^{(i)}_{k} + 1, X^{(i)}_{k}\left(1\right) \right] \cap \left(\gamma^{(i)}_{k}\right)^{c}} \eta^{\uparrow}_{h}\left(y\right) = \sum_{y \in \left[X^{(i)}_{k} + 1, X^{(i)}_{k}\left(1\right) \right] \cap \left(\gamma^{(i)}_{k}\right)^{c}} \eta^{\downarrow}_{h}\left(y\right),
                \end{align}
            and 
                \begin{align}
                    N^{(i)}_{k,h}\left(1\right) = \sum_{y \in \left[X^{(i)}_{k} + 1, X^{(i)}_{k}\left(1\right) \right] \cap \left(\gamma^{(i)}_{k}\right)^{c}} \left(\eta^{\uparrow}_{h}\left(y\right) - \eta^{\uparrow}_{h+1}\left(y\right) \right).
                \end{align}
            Thus we get
                \begin{align}
                    &\sum_{y \in \left[X^{(i)}_{k} + 1, X^{(i)}_{k}\left(1\right) \right] \cap \left(\gamma^{(i)}_{k}\right)^{c}}\sum_{h = \ell + 1}^{k-1} \left( \eta^{\uparrow}_{h}\left(y\right) + \eta^{\downarrow}_{h}\left(y\right) \right) \\
                    &= 2\sum_{y \in \left[X^{(i)}_{k} + 1, X^{(i)}_{k}\left(1\right) \right] \cap \left(\gamma^{(i)}_{k}\right)^{c}}\sum_{h = \ell + 1}^{k-1} \eta^{\uparrow}_{ h}\left(y\right) \\
                    &= 2\sum_{y \in \left[X^{(i)}_{k} + 1, X^{(i)}_{k}\left(1\right) \right] \cap \left(\gamma^{(i)}_{k}\right)^{c}}  \sum_{h = \ell + 1}^{k-1} \left(h - \ell\right)\left(\eta^{\uparrow}_{h}\left(y\right) - \eta^{\uparrow}_{h+1}\left(y\right) \right) \\
                    &= 2 \sum_{h = \ell + 1}^{k-1} \left(h - \ell\right) N^{(i)}_{k,h}\left(1\right).
                \end{align}
            From the above, we have 
                \begin{align}
                \xi_{\ell}\left( \eta, X^{(i)}_{k}\left(1\right) \right) - \xi_{\ell}\left( \eta, X^{(i)}_{k} \right) 
                &= k - \ell + 2 \sum_{h = \ell + 1}^{k - 1} \left( h - \ell \right) N^{(i)}_{k,h}\left(1\right),
                \end{align}
            and thus from \eqref{eq:dif_xi_1} we obtain \eqref{eq:dif_xi} when the $i$-th $k$-soliton is free.

\section{Computations omitted in Section \ref{sec:general}}

    {In this section, first we prove Lemmas \ref{lem:expbound_ex_0} and \ref{lem:expbound_ex}. In Section \ref{app:Lp_X0}, we will show that if $s_{\infty}\left(1\right)$ has exponential integrability, then $X^{i}_{k}(0)$ has the finite $p$-th moment for any $p \ge 1$. }
    Then, we will derive \eqref{eq:system_r}, \eqref{eq:eff_lam} {and show Propositions \ref{prop:char_velo} and \ref{prop:main},} whose proofs were omitted in Section \ref{sec:general}. 

\subsection{Proof of Lemma \ref{lem:expbound_ex_0}}\label{app:expbound_ex_0}

    {First we consider the case $k = 1$. By \eqref{eq:ex_size}, if $\eta \in \Omega_{0}$, then we get 
        \begin{align}
            s_{\infty}\left(\Psi_{1}\left(\eta\right), 1\right) &= 1 + 2 \sum_{\ell = 1}^{\infty} \sum_{j = 0}^{s_{\infty}\left(\Psi_{\ell}\left(\Psi_{1}\left(\eta\right)\right), 1\right) - 1} \ell \zeta_{\ell}\left(\Psi_{1}\left(\eta\right), j \right) \\
            &= 1 + 2 \sum_{\ell =  2}^{\infty} \sum_{j = 0}^{s_{\infty}\left(\Psi_{\ell}\left(\eta\right), 1\right) - 1} \left(\ell - 1 \right) \zeta_{\ell}\left(\eta, j \right),
        \end{align}
    and thus we obtain 
        \begin{align}
            s_{\infty}\left(\eta, 1\right) - s_{\infty}\left(\Psi_{1}\left(\eta\right), 1\right) &= 2 \sum_{\ell = 1}^{\infty} \sum_{j = 0}^{s_{\infty}\left(\Psi_{\ell}\left(\eta\right), 1\right) - 1} \zeta_{\ell}\left(\eta, j \right) \\
            &\le s_{\infty}\left(\Psi_{1}\left(\eta\right), 1\right) + 2\sum_{j = 0}^{s_{\infty}\left(\Psi_{1}\left(\eta\right), 1\right) - 1} \zeta_{1}\left(\eta, j \right).
        \end{align}
    From the above inequality, \eqref{eq:shift_q} and Remark \ref{lem:indep_skip}, for any $\lambda' > 0$, we have 
        \begin{align}
            &\E_{\nu_{\q}}\left[ e^{\lambda s_{\infty}\left(\eta, 1\right)} \right] \\
            &\le \E_{\nu_{\q}}\left[ \exp\left(2\lambda' s_{\infty}\left(\Psi_{1}\left(\eta\right), 1\right) + 2 \lambda'\sum_{j = 0}^{s_{\infty}\left(\Psi_{1}\left(\eta\right), 1\right) - 1} \zeta_{1}\left(\eta, j \right) \right)\right] \\
            &= \E_{\nu_{\q}}\left[ e^{\left( u_{\q,1}\left(\lambda'\right) + 2\lambda'\right) s_{\infty}\left(\Psi_{1}\left(\eta\right), 1\right)} \right] \\
            &= \E_{\nu_{\theta\q}}\left[ e^{\left( u_{\q,1}\left(\lambda'\right) + 2\lambda'\right) s_{\infty}\left(\eta, 1\right)} \right],
        \end{align}
    where $u_{\q,k}$ is defined in \eqref{def:uqk}. Hence, if $\lambda' > 0$ satisfies $u_{\q,1}\left(\lambda'\right) + 2\lambda' < \lambda$, then $\E_{\nu_{\q}} \left[ e^{\lambda' {s_{\infty}\left(1\right)}} \right] < \infty$. 

    For general $k \in \N$, one can show the claim of this lemma by repeating the above computation $k$ times, so we omit the proof. }
\subsection{Proof of Lemma \ref{lem:expbound_ex}}\label{app:expbound_ex}
    
    First we consider the case $\q \in \mathcal{Q}_{\mathrm{M}}$. 
    From \cite[Lemma 3.7]{FG}, if we write $\tilde{\nu}_\q$ the distribution of $\mathbf{e}^{(0)}$ on $\mathcal{E}$ under $\nu_\q$, then {$\E_{\nu_{\q}}\left[e^{\lambda s_{\infty}\left(1\right)}\right] = \E_{\tilde{\nu}}\left[e^{\lambda |\mathbf{e}|} \right]$, and} the probability $\tilde{\nu}_\q(\mathbf{e})$, $\mathbf{e} \in \mathcal{E}$ is 
        \begin{align}
            \tilde{\nu}_\q(\mathbf{e}) &= \nu_\q\left( \eta(1) = 0 |  \eta(0) = 0 \right) \prod_{k \in \N} \left(a\left(\q\right)b\left(\q\right)^{k-1}\right)^{\zeta_{k}\left(\mathbf{e}\right)} \\
            &= \nu_\q\left( \eta(1) = 0 |  \eta(0) = 0 \right) \left(a'\left(\q\right)\right)^{\sum_{k \in \N}\zeta_{k}\left(\mathbf{e}\right)} b\left(\q\right)^{\frac{\left|\mathbf{e}\right| - 1}{2}},
        \end{align}
    where $a'\left(\q\right) := a\left(\q\right)/b\left(\q\right)$ and $\zeta_{k}\left(\mathbf{e}\right)$ is the total number of $k$-solitons in $\mathbf{e}$. We observe that $\sum_{k \in \N}\zeta_{k}\left(\mathbf{e}\right)$ is equal to the number of $1 \le x \le |\mathbf{e}|$ such that $\mathbf{e}\left(x\right) = 1$, $\mathbf{e}\left(x+1\right) = 0$ and it is known that 
        \begin{align}
            &\left| \left\{ \mathbf{e} \in \mathcal{E}\left(m\right) \ ; \ \left| \left\{ 1\le x \le 2m+1 \ ; \ \mathbf{e}\left(x\right) = 1, \mathbf{e}\left(x+1\right) = 0  \right\} \right| = z  \right\} \right| \\
            &= \frac{1}{m}
            \begin{pmatrix}
                m \\ z
            \end{pmatrix}
            \begin{pmatrix}
                m \\ z - 1
            \end{pmatrix},
        \end{align}
    for any $m \in \N$, where the right-hand side is called the Narayana numbers. 
    Hence, we get 
        \begin{align}
            &\E_{\tilde{\nu}}\left[e^{\lambda |\mathbf{e}|} \right] \nu_{\q}\left( \eta(1) = 0 |  \eta(0) = 0 \right)^{-1} \\
            &= 1+  \sum_{m = 1}^{\infty} \sum_{z = 1}^{m} \frac{1}{m}
            \begin{pmatrix}
                m \\ z
            \end{pmatrix}
            \begin{pmatrix}
                m \\ z - 1
            \end{pmatrix} \left(a'\left(\q\right)\right)^{z} \left(e^{2\lambda}b\left(\q\right)\right)^{m} \\
            &= 1 + \frac{a\left(\q\right)}{1 - e^{2\lambda}\left(a\left(\q\right) + b\left(\q\right)\right) + \sqrt{\left(1 - e^{2\lambda}\left(a\left(\q\right) + b\left(\q\right)\right)\right)^2 - 4e^{2\lambda}a\left(\q\right)b\left(\q\right)}},
        \end{align}
    where we use the fact that the generating function of the Narayana numbers $F\left(a,b\right)$ is given by 
        \begin{align}
            F\left(a,b\right) := \frac{1 - b\left(1 + a\right) - \sqrt{\left(1 - b\left(1 + a\right)\right)^2 - 4ab^2} }{2 b}.
        \end{align}
    From the above, for sufficiently small $\lambda > 0$, we have  $\E_{\tilde{\nu}}\left[e^{\lambda |\mathbf{e}|} \right] < \infty$. 

    {Next we consider the case $\q \in \mathcal{Q}_{\mathrm{AM}}$ and $K\left(\q\right) \ge 2$. Since $\theta^{K\left(\q\right)} \q \in \mathcal{Q}_{\mathrm{M}}$, there exists $\lambda > 0$ such that  
        \begin{align}
            \E_{\nu_{\theta^{K\left(\q\right)-1} \q}}\left[e^{\lambda s_{\infty}\left(1\right)}\right] < \infty. 
        \end{align}
    Hence by Lemma \ref{lem:expbound_ex_0}, there exists some $\lambda' > 0$ such that $\E_{\nu_{\q}}\left[e^{\lambda' s_{\infty}\left(1\right)}\right] < \infty$. Therefore Lemma \ref{lem:expbound_ex} is proved. }
    
\subsection{\texorpdfstring{$L^p$}{Lp} bound for \texorpdfstring{$X^{i}_{k}(0)$}{X(0)}}\label{app:Lp_X0}

    {We assume that $\E_{\nu_{\q}}\left[e^{\lambda s_{\infty}\left(1\right)}\right] < \infty$ with some $\lambda > 0$. We will show that for any $k \in \N$, $i \in \Z$ and $p \ge 1$,
        \begin{align}\label{ineq:lp_0} 
            \E_{\nu_\q}\left[ \left| X^{{i}}_{k}\left(0\right) \right|^{p} \right] < \infty.        
        \end{align}
    Before proving this, we note that by Lemma \ref{lem:expbound_ex}, \eqref{ineq:lp_0} and the Schwarz inequality, we get 
        \begin{align}
            \E_{\mu_\q}\left[ \left| X^{{i}}_{k}\left(0\right) \right|^{p} \right]^{2} \le \frac{1}{\bar{s}_{\infty}\left(\q\right)} \E_{\nu_\q}\left[ \left| s_{\infty}\left(1\right) \right|^{2} \right]\E_{\nu_\q}\left[ \left| X^{{i}}_{k}\left(0\right) \right|^{2p} \right] < \infty.
        \end{align}
    In the following we only consider the case $i \in \N$, and the case $i \in \Z_{\le 0}$ can be shown by using the same strategy. 
    We recall that $J_{k}\left(\eta,i\right)$, $i \in \Z$ is defined in \eqref{def:J_stop}. From the definition of $J_{k}\left(\eta,i\right)$ and the following inequality $s_{k}\left(\eta, x\right) \le s_{\infty}\left(\eta, x\right)$ for any $x \in \Z_{\ge 0}$, we get 
        \begin{align}
            0 < X^{{i}}_{k}\left(\eta,0\right) \le X^{\left(i\right)}_{k}\left(\eta,0\right) = s_{k}\left(\eta, J_{k}\left(\eta,i\right) \right) \le s_{\infty}\left(\eta, J_{k}\left(\eta,i\right) \right) \le \sum_{j = 0}^{J_{k}\left(\eta,i\right) - 1} \left| \mathbf{e}^{(j)}\right|.
        \end{align}
    Since $J_{k}\left(\eta,j + 1\right) - J_{k}\left(\eta,j\right)$, $j \ge 1$ and $J_{k}\left(\eta,1\right)$ are i.i.d. geometric random variables with mean $q_{k}^{-1}$, we have
        \begin{align}
            \sum_{x \in \N} x^{{2p}} \nu_{\q}\left( J_{k}\left(i\right) = x \right)^{\frac{1}{2}} < \infty.
        \end{align}
    Then by using Remark \ref{lem:ex_iid} and the Schwarz inequality, we obtain 
        \begin{align}
            \E_{\nu_\q}\left[ \left| X^{{i}}_{k}\left(0\right) \right|^{p} \right] 
            &\le \E_{\nu_\q}\left[ \left| s_{\infty}\left(J_{k}\left(i\right) \right) \right|^{p} \right] \\
            &= \sum_{x \ge i} \E_{\nu_\q}\left[ \left| s_{\infty}\left(x \right) \right|^{p} \mathbf{1}_{\left\{ J_{k}\left(i\right) = x \right\}} \right] \\
            &\le \sum_{x \ge i} \E_{\nu_\q}\left[ \left| s_{\infty}\left(x \right) \right|^{2p} \right]^{\frac{1}{2}} \nu_{\q}\left( J_{k}\left(i\right) = x \right)^{\frac{1}{2}} \\
            &\le \sum_{x \ge i} \E_{\nu_\q}\left[ \left| \sum_{j = 0}^{x-1} \left|\mathbf{e}^{(j)}\right| \right|^{2p} \right]^{\frac{1}{2}} \nu_{\q}\left( J_{k}\left(i\right) = x \right)^{\frac{1}{2}} \\
            &\le \E_{\nu_\q}\left[ \left|\mathbf{e}^{(0)} \right|^{2p} \right]^{\frac{1}{2}} \sum_{x \ge i} x^{{2p}} \nu_{\q}\left( J_{k}\left(i\right) = x \right)^{\frac{1}{2}} \\
            &< \infty.
        \end{align}
    Hence we have \eqref{ineq:lp_0}.}

\subsection{Derivation of (\ref{eq:system_r})}\label{app:rec}

    First we observe that { from \eqref{def:inv_Palm},
        \begin{align}
            \E_{{\mu}_\q}\left[ r\left(0\right) \right] = \frac{1}{\bar{s}_{\infty}\left(\q\right)}.
        \end{align}}
    Hence we have 
        \begin{align}
            \bar{r}_{k}\left(\q\right) = \frac{1}{\bar{s}_{\infty}\left(\theta^{k}\q\right)}.
        \end{align}
    Now we fix an excursion $\mathbf{e} \in \mathcal{E}$. Note that $\mathbf{e}$ can be regarded as an element of $\Omega$, by considering $\eta = \eta(\mathbf{e})$ as 
        \begin{align}
            \eta(x) = \begin{dcases}
                \mathbf{e}\left(x + 1\right) \ & \ 0 \le x \le |\mathbf{e}| - 1, \\
                0 \ & \ \text{otherwise}.
            \end{dcases}
        \end{align}
    Then, we can apply $\Psi_{\ell}$ to $\mathbf{e}$, and we will write $\Psi_{\ell}\left(\e\right)$ instead of $\Psi_{\ell}\left(\eta(\mathbf{e})\right)$. {The length of an excursion $\left|\mathbf{e}\right|$ is given by}
        \begin{align}
            \left|\mathbf{e}\right| &= 1 + \sum_{\ell = 1}^{\infty}  \sum_{x = 1}^{\left|\mathbf{e}\right| - 1} \left( \eta^{\uparrow}_{\ell}\left(x\right) + \eta^{\downarrow}_{\ell}\left(x\right) \right) \\
            &= 1 + 2 \sum_{\ell = 1}^{\infty} \sum_{j = 0}^{\left|  \Psi_{\ell}\left(\e\right)\right| - 1} \ell \zeta_{\ell}\left(j\right) \label{eq:ex_size},
        \end{align}
    where at the last line we use \eqref{eq:seat_psi} to derive
        \begin{align}
            \xi_{\ell}\left(\eta(\mathbf{e}), \left|\mathbf{e}\right| - 1 \right) 
            &= 1 + \sum_{h = \ell + 1}^{\infty} \sum_{x = 1}^{\left|\mathbf{e}\right| - 1} \left( \eta^{\uparrow}_{h}\left(x\right) + \eta^{\downarrow}_{h}\left(x\right) \right) \\
            &= \left| \Psi_{\ell}\left(\e\right)\right|.
        \end{align}
    By using the above, \eqref{eq:shift_q} and Remark \ref{lem:indep_skip}, we have 
        \begin{align}
            \frac{1}{\bar{r}_{k}\left(\q\right)} 
            &= \E_{\nu_\theta^{k}\q}\left[\left|\mathbf{e}\right|\right] = 1 + 2 \sum_{\ell = 1}^{\infty} \ell \E_{\nu_\theta^{k}\q}\left[\left| \Psi_{\ell}\left(\e\right) \right|\right] \a_{\ell}\left(\theta^{k}\q\right) \\
            &= 1 + 2 \sum_{\ell = 1}^{\infty} \left( \ell  - k\right)\frac{\a_{\ell}\left(\theta^{k}\q\right)}{\bar{r}_{\ell}\left(\q\right)}.
        \end{align}
    Hence we have \eqref{eq:system_r}.

\subsection{Proof of Proposition \ref{prop:char_velo}}\label{app:prop_1}

    First we derive \eqref{eq:v_eff}
    From \eqref{eq:lem_shift_NM} and \eqref{eq:N_kl}, we have 
        \begin{align}
            \frac{1}{n}Y^{(i)}_{k}\left(\eta, n\right)
            = \frac{k}{n}Y^{(i)}_{1}\left(\Psi_{k-1}\left(\eta\right), n\right)  + \frac{2}{n} \sum_{\ell = 1}^{k - 1} \ell \sum_{j = X^{(i)}_{k - \ell}\left(\Psi_{\ell}\left(\tilde{\eta}\right), 0 \right)  + 1}^{X^{(i)}_{k - \ell}\left(\Psi_{\ell}\left(\tilde{\eta}\right), n \right) } \zeta_{\ell}\left(\tilde{\eta},j\right).
        \end{align}
    From \eqref{eq:LLN_q}, by taking $n \to \infty$ we have 
        \begin{align}
            \frac{1}{n}Y^{(i)}_{k}\left(\eta, n\right) = v^{\mathrm{eff}}_{k}\left(\q\right) \quad  \nu_\q\text{-a.s.}
        \end{align}
    and 
        \begin{align}
            \frac{1}{n} Y^{(i)}_{1}\left( \Psi_{k-1}\left(\eta\right), n \right) = v^{\mathrm{eff}}_{1}\left(\theta^{k - 1}\q\right) \quad  \nu_\q\text{-a.s.}
        \end{align}
    In addition, since $X^{(i)}_{k - \ell}\left(\Psi_{\ell}\left(\tilde{\eta}\right), n \right)$ is $\sigma\left(\zeta_{h} \ ; \  h \ge \ell + 1 \right)$-m'ble for any $1 \le \ell \le  k - 1$ and $n \in \Z_{\ge 0}$, by \eqref{eq:LLN_q} and Remark \ref{lem:indep_skip}, we have 
        \begin{align}
            \frac{1}{n} \sum_{j = X^{(i)}_{k - \ell}\left(\Psi_{\ell}\left(\tilde{\eta}\right), 0 \right)  + 1}^{X^{(i)}_{k - \ell}\left(\Psi_{\ell}\left(\tilde{\eta}\right), n \right) } \zeta_{\ell}\left(\eta,j\right) = \a_{\ell}\left(\q\right) v^{\mathrm{eff}}_{k-\ell}\left(\theta^{\ell}\q\right)  \quad \nu_\q\text{-a.s.}
        \end{align}
    From the above, we have \eqref{eq:v_eff}. 

    Next, we show \eqref{eq:v_eff_1_r}. From \eqref{eq:lem_shift_NM} and \eqref{eq:M_kl}, we have 
        \begin{align}
            \frac{1}{n} Y^{(i)}_{1}\left( \Psi_{k-1}\left(\eta\right), n \right) = \frac{1}{n} \sum_{m = 0}^{n-1} r\left( T^{m}\Psi_{k}\left(\tilde{\eta}\right), J_{k}\left(\tilde{\eta}, i\right) \right). 
        \end{align}
    Since $Y^{(i)}_{1}\left( \Psi_{k-1}\left(\eta\right), n \right) $ converges to $v^{\mathrm{eff}}_{1}\left(\theta^{k - 1}\q\right)$, by Remark \ref{lem:indep_skip}, we see that if 
        \begin{align}\label{eq:erg_r}
            \lim_{n \to \infty} \frac{1}{n} \sum_{m = 0}^{n-1} r\left( T^{m}{\eta}, x \right) = \bar{r}_{k}\left(\q\right),
        \end{align}
    {$\nu_{\theta^{k}\q}\text{-a.s.}$} 
    for any $x \in \Z$, then we obtain \eqref{eq:v_eff_1_r}. 
    To show \eqref{eq:erg_r}, we observe that by $T$-invariance of {$\q$-statistics} and the ergodic theorem, we see that $n^{-1} \sum_{m = 0}^{n-1} r\left( T^{m}{\eta}, x \right)$ converges a.s. to $\E_{{\mu_{\theta^{k}\q}}}[r\left( T^{m}{\eta}, x \right) | \mathcal{I} ]$, where $\mathcal{I}$ is the set of invariant sets of $T$. On the other hand, since ${\mu_{\theta^{k}\q}}$ is shift-ergodic and the limit $\E_{{\mu_{\theta^{k}\q}}}[r\left( T^{m}{\eta}, x \right) | \mathcal{I} ]$ is shift-invariant, we see that $\E_{{\mu_{\theta^{k}\q}}}[r\left( T^{m}{\eta}, x \right) | \mathcal{I} ]$ is a.s. constant. Hence we have the limit \eqref{eq:erg_r} ${\mu_{\theta^{k}\q}}$-a.s., and this implies that \eqref{eq:erg_r} also holds $\nu_{{\theta^{k}}\q}\text{-a.s.}$ Thus \eqref{eq:v_eff_1_r} is proved.

\subsection{Proof of (\ref{eq:eff_lam})}\label{app:eff_lam}

    From \eqref{eq:Lambda_Y}, the derivative of $\Lambda^{Y}_{\q,k}$ with $\lambda = 0$ is given by 
        \begin{align}
            \frac{d \Lambda^{Y}_{\q,k}}{d \lambda}\left(0\right) = \frac{d U_{\q,k}}{d\lambda}\left(0\right) \frac{d \Lambda^{M}_{\q,k}}{d \lambda}\left(0\right),
        \end{align}
    where $U_{\q,k}\left(\lambda\right)$ is defined in \eqref{def:U}. 
    First we check that the expression \eqref{eq:eff_lam} is the same as \cite[(1.12)]{FNRW}. 
    We observe that $\frac{d U_{\q,k}}{d\lambda}\left(0\right)$ satisfies the following system,
        \begin{align}\label{app:U_l}
            \frac{d U_{\q,k}}{d\lambda}\left(0\right) =  k + 2\sum_{\ell = 1}^{k - 1} \left(k - \ell\right) \frac{d U_{\q,\ell}}{d\lambda}\left(0\right). 
        \end{align}
    On the other hand, from \eqref{eq:system_r}, \eqref{eq:v_eff_1_r} and \eqref{eq:lem_shift_NM}, we have 
        \begin{align}
            \frac{d \Lambda^{M}_{\q,k}}{d \lambda}\left(0\right) 
            &= \frac{d \Lambda^{Y}_{\theta^{k-1}\q,1}}{d \lambda}\left(0\right) \\
            &= \bar{r}_{k}\left(\q\right). 
        \end{align}
    Then by combining \eqref{eq:system_r} and \eqref{app:U_l}, we see that $\left( \frac{d U_{\q,k}}{d\lambda}\left(0\right), \bar{r}_{k}^{-1}\left(\q\right) \right)$ coincide with the quantities $\left( s_k, w_k \right)$ in \cite[(1.12)]{FNRW}, respectively, and that \eqref{eq:eff_lam} and \cite[(1.12)]{FNRW} are the same. 

    To show \eqref{eq:eff_lam}, it is sufficient to prove that $\frac{d U_{\q,k}}{d\lambda}\left(0\right) = v^{\mathrm{eff}}_{k}\left(C_k \q\right)$. Since $M^{(i)}_{k}\left( \ \cdot \ \right) = 0$ a.s. under $\nu_{C_{k}\q}$, we have 
        \begin{align}
            v^{\mathrm{eff}}_{k}\left(C_k \q\right) = \frac{d U_{C_k\q,k}}{d\lambda}\left(0\right).
        \end{align}
    On the other hand, from \eqref{app:U_l}, we get
        \begin{align}
            \frac{d U_{C_k\q,k}}{d\lambda}\left(0\right) = \frac{d U_{\q,k}}{d\lambda}\left(0\right). 
        \end{align}
    Hence $\frac{d U_{\q,k}}{d\lambda}\left(0\right) = v^{\mathrm{eff}}_{k}\left(C_k \q\right)$, and thus we have  \eqref{eq:eff_lam}.    

\subsection{Proof of Proposition \ref{prop:main}}\label{app:propmain}
{
\subsubsection{Proof of (\ref{item:prop1})}

    We fix $\mathbf{T} > 0$, $k \in \N$ and $i \in \Z$. We denote by $Z^{i}_{n,k}\left(\eta, \ \cdot \ \right)$ the scaled process defined in \eqref{def:step_Y}, and denote by $B_{k}\left(\ \cdot \ \right)$ the centered Brownian motion with variance $D_{k}\left(\q\right)$. We define a scaled process $\tilde{Z}_{n,k}\left(\eta, \ \cdot \ \right)$ as 
        \begin{align}
            \tilde{Z}_{n,k}\left(\eta,t\right) 
            &:= \frac{v^{\mathrm{eff}}_{k}\left(\mathbf{q}\right)}{n v^{\mathrm{eff}}_{1}\left(\theta^{k-1}\q\right)}\left(  M^{(0)}_{k}\left(\eta, \left\lfloor n^2 t \right\rfloor \right) - \E_{\nu_{\q}}\left[ M^{(0)}_{k}\left(\left\lfloor n^2 t \right\rfloor \right) \right] \right) \\
            & \ + \frac{2}{n}\sum_{h = 1}^{k - 1} \frac{v^{\mathrm{eff}}_{h}\left(\mathbf{q}\right)}{v^{\mathrm{eff}}_{1}\left(\theta^{h-1}\q\right)} \sum_{j = n + 1}^{ \max\{ \lfloor v^{\mathrm{eff}}_{k-\ell}\left(\theta^{\ell}\q\right) n^2 t   \rfloor, n \}} \left( \zeta_{h}\left(j\right) - \a_{h}\left(\q\right) \right).
        \end{align}
    From Lemmas \ref{lem:rep_W}, \ref{lem:ran_det} and \eqref{ineq:MiMj}, we see that for any $\delta > 0$, 
        \begin{align}
            \lim_{n \to \infty}\nu_{\q}\left(\sup_{0 \le t  \le \mathbf{T}} \left| Z^{i}_{n,k}\left(t\right) - \tilde{Z}_{n,k}\left(t\right) \right| > \delta \right) = 0,
        \end{align}
    i.e., $\tilde{Z}_{n,k}\left(\eta, \ \cdot \ \right)$ also converges weakly to $B_{k}\left(\ \cdot \ \right)$ under $\nu_{\q}$. In particular, for any $N \in \N$, $\lambda_{i} \in \R$, $1 \le i \le N$, $0 \le t_1 <  \dots < t_N \le \mathbf{T}$, we get 
        \begin{align}\label{lim:weak_B_nu}
            \lim_{n \to \infty} \E_{\nu_{\q}}\left[ \exp\left( \i \sum_{i = 1}^{N} \lambda_{i}  \tilde{Z}_{n,k}\left(t_{i}\right) \right) \right] = \E\left[ \exp\left( \i \sum_{i = 1}^{N} \lambda_{i}  B_{k}\left(t_{i}\right) \right) \right].
        \end{align}
    On the other hand, from Lemma \ref{lem:sp_shift}, for any measurable set $B \subset D\left([0,\mathbf{T}]\right)^{2}$, we get 
        \begin{align}
            \mu_{\q}\left( \left\{ \left( Z^{i}_{n,k}, \tilde{Z}_{n,k} \right) \in B \right\} \right) = \frac{\E_{\nu_\q}\left[ \left| \mathbf{e}^{(0)} \right| \mathbf{1}_{\left\{\left( Z^{\left(i\right)}_{n,k}, \tilde{Z}_{n,k} \right) \in B\right\}} \right]}{\bar{s}_{\infty}\left(\q\right)}.
        \end{align}
    Hence, to show the weak convergence of $\left(Z^{i}_{n,k}\left(\ \cdot \ \right)\right)_{n \in \N}$ under $\mu_{\q}$, it is sufficient to prove the weak convergence $\left(\tilde{Z}_{n,k}\left(\ \cdot \ \right)\right)_{n \in \N}$ under $\mu_{\q}$. Since the tightness of $\left(\tilde{Z}_{n,k}\left(\ \cdot \ \right)\right)_{n \in \N}$ under $\mu_{\q}$ is clear, it is sufficient to show that for any $N \in \N$, $\lambda_{i} \in \R$, $1 \le i \le N$, $0 \le t_1 <  \dots < t_N \le \mathbf{T}$, 
        \begin{align}
            \lim_{n \to \infty} \E_{\mu_{\q}}\left[ \exp\left( \i \sum_{i = 1}^{N} \lambda_{i}  \tilde{Z}_{n,k}\left(t_{i}\right) \right) \right] = \E\left[ \exp\left( \i \sum_{i = 1}^{N} \lambda_{i}  B_{k}\left(t_{i}\right) \right) \right] \label{lim:weak_B}.
        \end{align}
    For notational simplicity, we only consider the case $N = 1$. The same proof is possible in general cases. We observe that for any $m \in \N$, the event $\{ \left| \mathbf{e}^{(0)} \right| \le 2m + 1 \}$ is $\sigma\left( \zeta_{\ell}\left(j\right) ; \ell \in \N, 0 \le j \le m \right)$-m'ble. Thus from Remark \ref{lem:indep_skip}, we have 
        \begin{align}
            \E_{\mu_{\q}}\left[ \exp\left( \i  \lambda_{1}  \tilde{Z}_{n,k}\left(t_{1}\right) \right) \right]
            &= \frac{\E_{\nu_\q}\left[ \mathbf{1}_{\{\left| \mathbf{e}^{(0)} \right| \le 2n + 1\}} \left| \mathbf{e}^{(0)} \right| \right] \E_{\nu_\q}\left[ \exp\left( \i \lambda_{1} \tilde{Z}_{n,k}\left(t_{1}\right) \right) \right]}{\bar{s}_{\infty}\left(\q\right)} \\
            & \quad + \frac{\E_{\nu_\q}\left[ \mathbf{1}_{\{\left| \mathbf{e}^{(0)} \right| \ge 2n + 2\}} \left| \mathbf{e}^{(0)} \right| \exp\left( \i \lambda_{1} \tilde{Z}_{n,k}\left(t_{1}\right) \right) \right] }{\bar{s}_{\infty}\left(\q\right)}.
        \end{align}
    Since $\E_{\nu_\q}\left[\left| \mathbf{e}^{(0)} \right| \right] < \infty$, by \eqref{lim:weak_B_nu}, we obtain \eqref{lim:weak_B}. 

\subsubsection{Proof of (\ref{item:prop2})}

    By Lemma \ref{lem:sp_shift}, \eqref{eq:expmoment_YM} and the H\"{o}lder inequality, for any $p > 1$ and $\lambda \in \R$ with $p \lambda < \delta_{\q,k}$, we have 
        \begin{align}
            &\E_{\mu_{\q}}\left[ \exp\left(\lambda Y^{i}_{k}\left(n\right)\right)  \right] \\
            &= \frac{1}{s_{\infty}\left(\q\right)} \E_{\nu_{\q}}\left[ s_{\infty}\left(1\right)  \exp\left(\lambda Y^{i}_{k}\left(n\right)\right) \right] \\
            &\le \frac{1}{s_{\infty}\left(\q\right)}\E_{\nu_{\q}}\left[ s_{\infty}\left(1\right)^{\frac{p}{p-1}} \right]^{\frac{p-1}{p}} \E_{\nu_{\q}}\left[  \exp\left(p \lambda Y^{i}_{k}\left(n\right)\right) \right]^{\frac{1}{p}} \\
            &= \frac{1}{s_{\infty}\left(\q\right)}\E_{\nu_{\q}}\left[ s_{\infty}\left(1\right)^{\frac{p}{p-1}} \right]^{\frac{p-1}{p}} \E_{\nu_{\q}}\left[  \exp\left( U_{\q,k}\left( p \lambda \right) \left(n - M^{i}_{k}\left(n\right)\right) \right) \right]^{\frac{1}{p}}.
        \end{align}
    Hence we have 
        \begin{align}
            \varlimsup_{n \to \infty} \frac{1}{n} \log\left(\E_{\mu_{\q}}\left[ \exp\left(\lambda Y^{i}_{k}\left(n\right)\right)  \right] \right) \le \frac{\Lambda^{M,i}_{\q,k}\left(U_{\q,k}\left( p \lambda \right)\right)}{p}.
        \end{align}
    By taking the limit $p \downarrow 1$, for any $\lambda < \delta_{\q,k}$, we get 
        \begin{align}
            \varlimsup_{n \to \infty} \frac{1}{n} \log\left(\E_{\mu_{\q}}\left[ \exp\left(\lambda Y^{i}_{k}\left(n\right)\right)  \right] \right) \le \Lambda^{M,i}_{\q,k}\left(U_{\q,k}\left( \lambda \right)\right) = \Lambda^{Y,i}_{\q,k}\left(\lambda\right).
        \end{align}
    On the other hand, since 
        \begin{align}
            \E_{\mu_{\q}}\left[ \exp\left(\lambda Y^{i}_{k}\left(n\right)\right)  \right]
            \ge \frac{1}{s_{\infty}\left(\q\right)} \E_{\nu_{\q}}\left[ \exp\left(\lambda Y^{i}_{k}\left(n\right)\right) \right],
        \end{align}
    we obtain 
        \begin{align}
            \varliminf_{n \to \infty} \frac{1}{n} \log\left(\E_{\mu_{\q}}\left[ \exp\left(\lambda Y^{i}_{k}\left(n\right)\right) \right] \right) \ge 
            \begin{dcases}
                \Lambda^{Y,i}_{\q,k}\left(\lambda\right) \ & \ \lambda < \delta_{\q,k}, \\
                \infty \ & \ \lambda \ge \delta_{\q,k}.
            \end{dcases}
        \end{align}
    Thus for any $\lambda \in \R$, the limit $\lim_{n \to \infty} \frac{1}{n} \log\left(\E_{\mu_{\q}}\left[ \exp\left(\lambda Y^{i}_{k}\left(n\right)\right) \right] \right)$ exists in $\R \cup \{\infty\}$, and coincides with $\Lambda^{Y,i}_{\q,k}\left(\lambda\right)$. 
    Therefore by the G\"{a}rtner-Ellis theorem, under $\mu_{\q}$, $\left(Y^{i}_{k}\left(n\right) / n\right)_{n \in \N}$ satisfies the LDP with the good rate function $I^{Y,i}_{\q,k}$. 

\subsubsection{Proof of (\ref{item:prop3})}

    By using the H\"{o}lder inequality, for any $p' \ge 1$, we get 
        \begin{align}
            &\E_{\mu_{\q}}\left[ \left| \frac{Y^{i}_{k}\left(n\right)}{n} - v^{\mathrm{eff}}_{k}\left(\q\right)\right|^{p'}  \right] \\ 
            &\le \frac{1}{\bar{s}_{\infty}\left(\q\right)}\E_{\nu_{\q}}\left[ s_{\infty}\left(1\right)^{p} \right]^{\frac{1}{p}} \E_{\nu_{\q}}\left[ \left| \frac{Y^{i}_{k}\left(n\right)}{n} - v^{\mathrm{eff}}_{k}\left(\q\right)\right|^{\frac{p p'}{p-1}}  \right]^{\frac{p-1}{p}}.
        \end{align}
    Thus we have
        \begin{align}
            \lim_{n \to \infty} \E_{\mu_{\q}}\left[ \left| \frac{Y^{i}_{k}\left(n\right)}{n} - v^{\mathrm{eff}}_{k}\left(\q\right)\right|^{p'}  \right] = 0.
        \end{align}
}

\end{document}